\newtheorem{theorem}{Theorem}[section]
\newtheorem{remark}{Remark}[section]
\newtheorem{definition}{Definition}[section]
\newtheorem{lemma}[theorem]{Lemma}
\newtheorem{pro}[theorem]{Proposition}
\newenvironment{pf}{{\noindent \it \bf Proof:}}{{\hfill$\Box$}\\}
\renewcommand{\div}{{\rm div }}
\newcommand{\dis}{\displaystyle}
\newcommand{\bt}{\begin{theorem}}
\newcommand{\bl}{\begin{lemma}}
\newcommand{\el}{\end{lemma}}
\newcommand{\et}{\end{theorem}}
\newcommand{\la}{\label}
\newcommand{\bn}{\begin{eqnarray}}
\newcommand{\en}{\end{eqnarray}}
\newcommand{\bnn}{\begin{eqnarray*}}
\newcommand{\enn}{\end{eqnarray*}}
\newcommand{\ba}{\begin{aligned}}
\newcommand{\ea}{\end{aligned}}
\newcommand{\be}{\begin{equation}}
\newcommand{\ee}{\end{equation}}
\renewcommand{\la}{\label}
\newcommand{\Bv}{{\boldsymbol{v}}}
\newcommand{\Bn}{{\boldsymbol{n}}}
\newcommand{\Bu}{{\boldsymbol{u}}}
\newcommand{\Be}{{\boldsymbol{e}}}
\newcommand{\BF}{{\boldsymbol{F}}}
\newcommand{\bBU}{\bar{{\boldsymbol{U}}}}
\newcommand{\Bo}{{\boldsymbol{\omega}}}
\newcommand{\Bp}{{\boldsymbol{\phi}}}
\newcommand{\mcE}{\mathcal{E}}
\newcommand{\mcA}{\mathcal{A}}
\newcommand{\mcH}{\mathcal{H}}
\begin{document}

\title[Stability of Poiseuille Flows]
{Uniqueness and uniform structural stability of Poiseuille flows in a periodic pipe with Navier boundary conditions}

\author{Yun Wang}
\address{School of Mathematical Sciences, Center for dynamical systems and differential equations, Soochow University, Suzhou, China}
\email{ywang3@suda.edu.cn}

\author{Chunjing Xie}
\address{School of mathematical Sciences, Institute of Natural Sciences, and
Ministry of Education Key Laboratory of Scientific and Engineering Computing,
 Shanghai Jiao Tong University, 800 Dongchuan Road, Shanghai, China}
\email{cjxie@sjtu.edu.cn}

\begin{abstract}
In this paper, we prove the uniqueness and structural stability of Poiseuille flows for axisymmetric solutions of steady Navier-Stokes system supplemented with Navier boundary conditions in a periodic pipe. Moreover, the stability is uniform with respect to both the flux and the slip coefficient of Navier boundary conditions. It is also showed that the non-zero frequency part of the velocity is bounded by a power function of the flux with negative power as long as the flux is suitably large.  One of the key ingredients of the analysis is to prove the uniform linear structural stability, where the analysis for the boundary layers and the swirl velocity corresponding to the flux and the slip coefficients in different regimes plays a crucial role.

 \end{abstract}

\keywords{Poiseuille flows, steady Navier-Stokes equations, Navier boundary condition, pipe, uniform structural stability.}
\subjclass[2010]{
35J40, 35Q30, 76D03}


\maketitle

\section{Introduction and Main Results}
A fundamental problem in fluid dynamics is to study steady flows in a domain with solid boundary. The typical governing equations for the incompressible  viscous flows are the following Navier-Stokes system
\begin{equation}\label{SNS}
\left\{
\begin{aligned}
& \Bu\cdot \nabla \Bu +\nabla p=\Delta \Bu+\BF,\\
&\div~\Bu=0,
\end{aligned}
\right.
\end{equation}
where $\Bu=(u^x, u^y, u^z)$ and $\BF=(F^x, F^y, F^z)$ are the velocity field and external force, respectively. Here we  formally put the viscosity coefficient to be the unity. When the domain $\Omega$ is bounded, the existence of solutions for the equation \eqref{SNS} supplemented with the Dirichlet boundary conditions for the velocity was first established by Leray in \cite{Leray} for the flows in simply connected domains, and was later obtained by Korobkov, Pileckas, and Russo in \cite{KPR} for the flows in multiply connected domains. However, the uniqueness is a very challenging problem. It was even showed in \cite{Yudovich} that there are multiple solutions for the problem on certain axisymmetric domain. For more references on the existence and uniqueness of solutions for steady Navier-Stokes system in bounded domains with Dirichlet boundary conditions, one may refer to \cite{Galdi}.

In 1933, Leray (\cite{Leray}) also proposed the problem on the existence of solutions for the Navier-Stokes system in a general nozzle supplemented with the no slip conditions on the nozzle wall and the asymptotic convergence to Poiseuille flows at far fields where the nozzle tends to be straight. The problem is called Leray problem nowadays (\cite{Galdi}). The major progress for this problem was made in \cite{Amick1, LS}, where the Leray problem was solved when the flux of the flow is small. In fact, Ladyzhenskaya and Solonnikov\cite{LS} even proved the existence of solutions with arbitrary flux  in general nozzles of bounded cross section, however, the uniqueness and far field behavior of the solutions with large fluxes are not clear.  Therefore, in order to solve the Leray problem, one needs only  to establish the uniqueness and   far field behavior of the solutions. There are lots of studies on the far field behavior for the solutions of Navier-Stokes system in a nozzle, one may refer to \cite{AmickF, MF, Horgan, HW, LS, AP,Pileckas}, etc. For more references on steady solutions of the Navier-Stokes equation in nozzles or other type of domains, please refer to the book by Galdi (\cite{Galdi}). However, all these results are under the assumption on the smallness of the flux.
A significant open problem posed in \cite[p. 19]{Galdi} is global well-posedness for Leray problem in a general nozzle when the flux $\Phi$ is large.

As discussed in \cite{LS}, in order to get the global well-posedness for  Leray problem in a general nozzle tending to a pipe,  a key step is to  prove global uniqueness of Poiseuille flow in a pipe with the no slip boundary conditions. As a first step to prove global uniqueness of Poiseuille flows, the local uniqueness was addressed in \cite{WX1,WX2}.  In fact,  the uniform structural stability of Poiseuille flows was established in \cite{WX1}, and it was even proved in \cite{WX2} that the solution is unique in a suitably large set when the flux is large. Furthermore, the solutions tend to the Hagen-Poiseuille flows exponentially fast as long as the external force tends to zero exponentially fast. For the Poiseuille flows in two dimensional infinitely long strip, the uniqueness of the solutions in the class of symmetric flows  was obtained in \cite{Rabier1}, while the uniqueness of the solutions in the class of general two dimensional flows was obtained only for the case with small flux.  The uniqueness and  uniform structural stability of two dimensional Poiseuille flows with arbitrary large flux in a periodic strip was achieved in \cite{SWX}.

For incompressible Newtonian fluid near the solid boundary,
the more general boundary conditions  are the following Navier boundary conditions,
\begin{equation}\label{slipBC}
\Bu\cdot \Bn=0,\quad 2\Bn \cdot D(\Bu) \cdot \boldsymbol{\tau} + \alpha \Bu\cdot \boldsymbol{\tau} = 0 \quad \text{on}\,\, \partial \Omega,
\end{equation}
where $D(\Bu)=\frac{1}{2}(\nabla \Bu+\nabla \Bu^t)$
is the symmetric part of the stress tensor, $\alpha\geq 0$ is the scalar coefficient which measures the tendency of a fluid to slip over the boundary and called the slip coefficient. The
boundary condition means
that the rate of strain on the boundary is proportional to the tangential slip velocity, which
was first proposed by Navier (\cite{Navier}) and derived for gases by Maxwell (\cite{Maxwell}).  This Navier boundary conditions were also justified rigorously in \cite{Jager} as an effective boundary
condition for flows over rough boundaries.
Formally, as $\alpha\to \infty$, the Navier boundary conditions become the Dirichlet  boundary conditions (no slip boundary conditions). The existence of steady solutions of Navier-Stokes system \eqref{SNS} supplemented with boundary conditions \eqref{slipBC} in simply connected bounded domains was obtained in \cite{AACG}. The existence of two dimensional and three dimensional axisymmetric solution for  Navier-Stokes system with Navier boundary conditions in a nozzle was obtained in \cite{Mucha1, Mucha2, Mucha3}  when the slip coefficient satisfies certain constraints.
 An important problem is to study the generalized Leray problem where the Navier conditions \eqref{slipBC} are prescribed on the solid boundary  and the far field behavior of the solutions corresponds to the shear flows satisfying the same boundary conditions.
 If  $\BF=0$, the straightforward computations show that  the Navier-Stokes system \eqref{SNS} in a pipe $\hat{\Omega} = B_1(0) \times \mathbb{R}$ supplemented with  the boundary conditions \eqref{slipBC}
and the flux constraint
\begin{equation}\label{fluxBC}
\int_{B_1(0)} u^z(x,y, z)\, dxdy = \Phi
\end{equation}
 has an explicit solution $\Bu=(0,0, \bar{u}^z)$ with
\be \label{1.8}
 \bar{u}^z = \bar{U}(r) = \frac{4 + 2\alpha}{4 + \alpha} \left(1 - \frac{2\alpha}{4+ 2\alpha}r^2\right)\frac{\Phi}{\pi},
\ee
where $r=\sqrt{x^2+y^2}$ and $\Phi\in \mathbb{R}$ is called the flux of the flow.
The solutions
$\bBU =\bar{U}(r) \Be_z$ are also called Poiseuille flows.
Clearly, if  $\alpha\to \infty$, the flow $\bBU$
tends to the Hagen-Poiseuille flow  $\bBU_{\infty} = \frac{2\Phi}{\pi}(1-r^2) \Be_z$, which is the shear flow of the Navier-Stokes system in a pipe with no slip boundary condition. If $\alpha =0$, then flow $\bBU_0 = \frac{\Phi}{\pi} \Be_z $  is a constant flow.  As same as the case for the classical Leray problem, the global uniqueness of Poiseuille flows \eqref{1.8} in a pipe plays an important role in studying the well-posedness for the generalized Leray problem with Navier boundary conditions.  It was mentioned in \cite{QWS} that many interesting physical phenomena show that the slip
coefficient should depend on viscosity or Reynolds number in general. Therefore, it should be important to prove the uniqueness of the solutions when both the flux and slip coefficients vary.

In this paper,
we focus on the uniqueness of Poiseuille flows \eqref{1.8} in periodic pipes which can be regarded as multiply connected domains. Without loss of generality, we assume that $\Omega=B_1(0)\times \mathbb{T}_{2\pi} $, i.e., the flows are periodic in the axial direction with period $2\pi$, and $\Phi$ is nonnegative. Let $\Bv= \Bu - \bBU$.
Consider  the following linearized perturbation  system
\be \label{2-0-1} \left\{ \ba
&\bBU \cdot \nabla \Bv + \Bv \cdot \nabla \bBU - \Delta \Bv + \nabla P = \BF \ \ \ \mbox{in}\ \Omega, \\
& {\rm div}~\Bv = 0\ \ \ \mbox{in}\ \Omega,
\ea
\right.
\ee
supplemented with the boundary conditions and flux constraint
\begin{equation}\label{slipBC1}
\Bv\cdot \Bn=0,\quad \ 2 \Bn \cdot D(\Bv) \cdot \boldsymbol{\tau}  + \alpha \Bu\cdot \boldsymbol{\tau}=0 \quad \text{on}\,\, \partial \Omega,
\ \ \ \ \int_{\Sigma} \Bv \cdot \Bn \, dS = 0.
\end{equation}


Our first main result is the following uniform linear structural stability of Poiseuille flows.

\bt \label{thm1}
Assume that $\BF= \BF(r, z) \in L^2(\Omega)$  is axisymmetric, and that $\BF$ satisfies
\be \label{compatibility1}
\int_{-\pi}^{\pi} \int_0^1 F^\theta(r, z) r^2 dr dz = 0.
 \ee
 The linear system  \eqref{2-0-1} supplemented with boundary conditions  and the flux constraint \eqref{slipBC1}  has a unique axisymmetric solution $\Bv$ which satisfies
\be \label{estuniformlinear}
\|\Bv\|_{H^{\frac32}(\Omega)} \leq C \|\BF\|_{L^2 (\Omega)}
\ee
and
\be \label{estimatelinear}
\|\Bv\|_{ H^2 (\Omega)} \leq C (1 + \Phi^{\frac14} ) \|\BF\|_{L^2 (\Omega)},
\ee
where $C$ is a uniform constant independent of $\BF$,  $\Phi$, and $\alpha$.
\et

There are several remarks on Theorem \ref{thm1}.

\begin{remark}
The key point of Theorem \ref{thm1} is that the constant $C$ in \eqref{estuniformlinear} does not depend on $\Phi$ and $\alpha$, which provides a uniform estimate for the solutions of \eqref{2-0-1}-\eqref{slipBC1} even when the coefficient of the system is very large and the slip coefficient is arbitrary.
\end{remark}

\begin{remark}
The condition \eqref{compatibility1} is a compatibility condition, which is used to guarantee  the existence of solutions in the case $\alpha =0$, but not necessary for the case $\alpha >  0$.
\end{remark}

\begin{remark}
The period of the solutions in this paper is $2\pi$. In fact, all the estimates and results in this paper hold for any solution which is periodic in $z$-direction. Certainly, the constant $C$ in \eqref{estuniformlinear}--\eqref{estimatelinear} may depend on the period.
\end{remark}

\begin{remark}
	Note that the problem \eqref{2-0-1}-\eqref{slipBC1} can be regarded as the case with $\lambda=0$ for the linearized spectral problem
	\begin{equation}\label{spectral}
		 \left\{ \ba
	&\lambda \Bv+ \bBU \cdot \nabla \Bv + \Bv \cdot \nabla \bBU - \Delta \Bv + \nabla P = \BF \ \ \ \mbox{in}\ \Omega, \\
	& {\rm div}~\Bv = 0\ \ \ \mbox{in}\ \Omega,\\
	&\Bv\cdot \Bn=0,\quad \ 2 \Bn \cdot D(\Bv) \cdot \boldsymbol{\tau}  + \alpha \Bu\cdot \boldsymbol{\tau}=0 \quad \text{on}\,\, \partial \Omega,
	\ \ \ \ \int_{\Sigma} \Bv \cdot \Bn \, dS = 0.
	\ea
	\right.
	\end{equation}
The  problem \eqref{spectral} plays an important role in studying the dynamical stability of Poiseuille flows in a pipe with Navier boundary conditions. So far, there has not yet been complete understanding for the problem \eqref{spectral}.  Recently, the stability and instability of motionless state in two dimensional strip with Navier boundary conditions was investigated in \cite{Ding1}, while the enhanced dissipation for the Poiseuille flows in two dimensional strip with Navier slip boundary condition in the case $\alpha=0$ was studied in \cite{Ding2}.
\end{remark}

With the aid of the uniform estimates for the linear system, we have the following results on uniform nonlinear structural stability of Poiseuille flows.
\begin{theorem}\label{mainthm}
Assume that $\BF=\BF(r, z)\in L^2(\Omega)$ is axisymmetric and that $\BF$ satisfies \eqref{compatibility1}.
\begin{enumerate}
\item[(a)]
There exists a constant $\varepsilon>0$ such that if
\begin{equation}\label{Fepsilon}
\|\BF\|_{L^2(\Omega)}\le \varepsilon,
\end{equation}
then the steady Navier-Stokes system \eqref{SNS} supplemented with the Navier slip boundary condition \eqref{slipBC}  and flux constraint \eqref{fluxBC} admits a unique axisymmetric strong solution $\Bu$ satisfying
\begin{equation}\label{est1}
\left\|\Bu - \bBU \right\|_{H^\frac32(\Omega)}\leq C\|\BF\|_{L^2(\Omega)}
\end{equation}
and
\begin{equation}\label{est2}
\|\Bu- \bBU\|_{H^2(\Omega)}\le C (1 + \Phi^\frac14 ) \|\BF\|_{L^2(\Omega)},
\end{equation}
where $C$ is a uniform constant independent of the flux $\Phi$, $\alpha$, and $\BF$.
\item[(b)] There exists a constant $\Phi_0$, such that for all $\Phi\ge \Phi_0$, if
\begin{equation}
\|\BF\|_{L^2(\Omega)}\le \Phi^\frac{1}{32},
\end{equation}
the steady Navier-Stokes equations \eqref{SNS} supplemented with the Navier boundary condition \eqref{slipBC} and flux constraint \eqref{fluxBC}  admit a unique axisymmetric  solution $\Bu$ satisfying \eqref{est1}-\eqref{est2}.
In particular, the solution satisfies
\be \nonumber
\| \mathcal{Q} (\Bu - \bBU ) \|_{L^2(\Omega)} \leq \Phi^{-\frac{7}{12}},
\ee
where $\mathcal{Q}$ is the projection operator defined by
\be \label{projection}
\mathcal{Q} \Bv = \sum_{n \neq 0} v_n^r e^{inz} \Be_r + v_n^z e^{inz} \Be_z + v_n^\theta e^{inz} \Be_\theta.
\ee
\end{enumerate}
\end{theorem}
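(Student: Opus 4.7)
The strategy is a Banach fixed point argument built directly on the uniform linear estimates of Theorem \ref{thm1}. Writing $\Bu = \bBU + \Bv$, the perturbation $\Bv$ satisfies the linear system \eqref{2-0-1}--\eqref{slipBC1} with forcing
\[
\BG(\Bv) := \BF - \Bv \cdot \nabla \Bv.
\]
Define the solution map $\mathcal{T}: \Bv \mapsto \tilde{\Bv}$, where $\tilde{\Bv}$ is the unique axisymmetric solution of \eqref{2-0-1}--\eqref{slipBC1} with forcing $\BG(\Bv)$ supplied by Theorem \ref{thm1}. A preliminary computation using axisymmetry, $\div \Bv = 0$, and the no-penetration condition $v^r|_{r=1}=0$ shows that the $\theta$-component of $\Bv\cdot\nabla\Bv$ integrates to zero against $r^2$, so $\BG(\Bv)$ inherits the compatibility condition \eqref{compatibility1} from $\BF$ and Theorem \ref{thm1} applies at every iterate.

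\textbf{Part (a).} The Sobolev embeddings $H^{3/2}(\Omega)\hookrightarrow L^6(\Omega)$ and $H^{1/2}(\Omega)\hookrightarrow L^3(\Omega)$ furnish the product bound
\[
\|\Bv\cdot\nabla\Bv\|_{L^2} \leq C\|\Bv\|_{L^6}\|\nabla\Bv\|_{L^3} \leq C\|\Bv\|_{H^{3/2}}^2.
\]
Combined with \eqref{estuniformlinear} this yields $\|\mathcal{T}\Bv\|_{H^{3/2}} \leq C_0\|\BF\|_{L^2} + C_0\|\Bv\|_{H^{3/2}}^2$. Setting $M = 2C_0\|\BF\|_{L^2}$ and shrinking $\varepsilon$ so that $C_0 M \leq 1/2$ makes the ball $B_M \subset H^{3/2}(\Omega)$ invariant under $\mathcal{T}$; the analogous quadratic bound on $\mathcal{T}\Bv_1 - \mathcal{T}\Bv_2$ delivers a contraction, hence a unique fixed point. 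Applying \eqref{estimatelinear} to the fixed point upgrades the bound to \eqref{est2}, and uniqueness in the stated class follows by a standard energy comparison using the contraction estimate.

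\textbf{Part (b).} Now $\|\BF\|_{L^2}$ may be as large as $\Phi^{1/32}$, so the naive radius $M \sim \Phi^{1/32}$ fails the smallness $C_0 M \leq 1/2$. The remedy is to decompose $\Bv = \mathcal{P}\Bv + \mathcal{Q}\Bv$, with $\mathcal{P} = I - \mathcal{Q}$, and to exploit the refined linear bound
\[
\|\mathcal{Q}\Bv\|_{L^2} \leq C \Phi^{-\beta} \|\BG\|_{L^2}
\]
for some $\beta > 0$, which is a byproduct of the proof of Theorem \ref{thm1} because the large convection $\bar U(r)\partial_z$ acts nontrivially on every non-zero frequency mode. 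Meanwhile the $z$-independent axisymmetric structure of $\mathcal{P}\Bv$ forces $(\mathcal{P}\Bv)^r \equiv 0$ (by $\div\Bv=0$ and regularity at $r=0$), so $\mathcal{P}\Bv\cdot\nabla\mathcal{P}\Bv$ collapses to a centrifugal term in $v^\theta$ that can be absorbed into the linear theory, while every remaining term in $\Bv\cdot\nabla\Bv$ contains at least one $\mathcal{Q}$-factor and therefore benefits from the extra $\Phi^{-\beta}$. Running the fixed-point scheme with this refined splitting closes invariance and contraction on a ball of radius $\sim \|\BF\|_{L^2}$ in $H^{3/2}(\Omega)$ whenever $\Phi \geq \Phi_0$ is large enough that $\Phi^{-\beta}\Phi^{1/32}$ is small. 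Feeding the resulting $H^2$ bound back into the improved $\mathcal{Q}$-estimate with $\BG = \BF - \Bv\cdot\nabla\Bv$ yields the explicit decay $\|\mathcal{Q}(\Bu-\bBU)\|_{L^2} \leq \Phi^{-7/12}$.

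The main obstacle is the exponent accounting in Part (b): one has to balance the linear gain $\Phi^{-\beta}$ on $\mathcal{Q}\Bv$ against the loss $\Phi^{1/4}$ in \eqref{estimatelinear} and the permitted growth $\Phi^{1/32}$ in $\|\BF\|_{L^2}$, all while controlling the $\mathcal{P}\Bv$ self-interaction, which carries no smallness in $\Phi$. Identifying the right decomposition of $\Bv\cdot\nabla\Bv$ into $\mathcal{P}$--$\mathcal{P}$, $\mathcal{P}$--$\mathcal{Q}$, and $\mathcal{Q}$--$\mathcal{Q}$ pieces, and verifying that each piece admits a bound that the iteration can absorb, is the delicate technical heart of the argument.
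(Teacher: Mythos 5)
Your Part (a) is essentially the paper's proof: the same iteration $\Bv^{j+1}=\mathcal{T}\BF-\mathcal{T}((\Bv^j\cdot\nabla)\Bv^j)$, the same verification (via $\div\Bv=0$ and $v^r|_{r=1}=0$) that the quadratic forcing inherits \eqref{compatibility1}, and the same quadratic self-map bound closed by contraction, with the $H^2$ bound \eqref{est2} recovered a posteriori from \eqref{estimatelinear}; only your choice of H\"older pairing ($L^6\times L^3$ versus the paper's $L^{12}\times L^{12/5}$) differs, immaterially.

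For Part (b) you have correctly identified the structural ingredients the paper uses: $v_0^r\equiv 0$, so the zero-mode self-interaction reduces to the centrifugal term $-(v_0^\theta)^2 r^{-1}\Be_r$ (which is a gradient and is absorbed into the pressure, not literally into ``the linear theory''); every remaining quadratic term carries at least one $\mathcal{Q}$-factor; and the linear problem gives $\|\mathcal{Q}\Bv\|_{L^2}\le C\Phi^{-2/3}\|\mathcal{Q}\BF\|_{L^2}$ (Proposition \ref{lemma2-add}; your $\beta=2/3$). But the two computations you defer as ``the delicate technical heart'' are where the proof actually lives, and neither is automatic. First, the $L^2$ gain on $\mathcal{Q}\Bv$ does not transfer directly to the $L^2$ norm of the quadratic forcing: products must be estimated by interpolation, e.g.\ $\|\mathcal{Q}\Bv\|_{H^1}\le\|\mathcal{Q}\Bv\|_{L^2}^{1/3}\|\mathcal{Q}\Bv\|_{H^{3/2}}^{2/3}$, so only a fraction of the exponent $-\tfrac{7}{12}$ survives against the permitted growth $\Phi^{1/32}$ of the other factor; the paper's bookkeeping in \eqref{7-5}--\eqref{7-7} yields the net gain $\Phi^{-25/144}$, and one must verify this exponent is negative --- it is, but only because $\tfrac{1}{32}$ was chosen small enough, which is exactly the point that needs checking. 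Second, the claim that the scheme closes as a \emph{contraction} on a ball in $H^{3/2}$ is not tenable as stated: for the difference of two solutions, the only source of $\Phi$-smallness is the $L^2$ component of $\mathcal{Q}(\Bv-\tilde{\Bv})$ supplied by the linear gain, and the paper is forced to close the uniqueness estimate in the intermediate norm $H^{7/6}$ via $\|\cdot\|_{H^{7/6}}\le C\|\cdot\|_{L^2}^{2/9}\|\cdot\|_{H^{3/2}}^{7/9}$, obtaining existence instead by weak compactness of the uniformly bounded iterates in the invariant set $\mathcal{J}$. These points are fixable within your framework, but as written Part (b) asserts rather than establishes that the exponents balance.
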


\begin{remark}
Theorem \ref{mainthm} gives not only the existence but also the uniqueness  of axisymmetric solutions for the Navier-Stokes system supplemented with Navier boundary conditions near the Poiseuille flow \eqref{1.8} in the periodic pipes, which are multiply connected domains. Furthermore, the slip coefficient in Theorem \ref{mainthm} can be arbitrary and  the flux of the flows can be arbitrarily large, i.e., the background flow can be large.
In particular,  as implied in the proof of Theorem \ref{mainthm}, when the flux $\Phi$ is sufficiently large, $\BF=0$, and $\|\Bu - \bBU\|_{H^{\frac32}(\Omega)} \leq \Phi^{\frac{1}{64}},$ then $\Bu = \bBU$. This means that $\bBU$ is the unique solution in a large bounded set with radius $\Phi^{\frac{1}{32}}$ as the flux $\Phi$ is large.
\end{remark}

 \begin{remark}
  Another key point is that the constant $C$ in Theorem \ref{mainthm} is independent of both the flux and the slip coefficient.  The uniform estimates help to figure out how the solutions behave as $\alpha$ tends to $\infty$ or $0$. Similar uniform estimates with respect to $\alpha$ were obtained for the flows in bounded domains in \cite{AACG}.
  Furthermore, the boundary layers associated with different slip coefficients were studied in \cite{WWX} when the Reynolds number is high.
\end{remark}

The organization of the paper is as follows. In Section \ref{SecStream},  the stream function formulation for both the nonlinear and linearized problem are introduced.  Some uniform a priori estimates for the stream function associated with the  linear problem \eqref{2-0-1}-\eqref{slipBC1} in the axisymmetric case are given in Section \ref{Linear}. And consequently, some uniform estimates of the radial velocity and axial velocity  are derived. These estimates are established via different analysis for the problem with different frequencies.    The existence and uniform a priori estimates of the swirl velocity are proved in Section \ref{sec-swirl}.     With the aid of the analysis on the associated linearized problem and a fixed point argument, the uniform nonlinear structural stability of Poiseuille flows in axisymmetric case is proved in Section \ref{secnonlinear}.   Two appendices are included. The first one collects some lemmas and their proofs which are used here and there in the paper.
The second one gives the existence and regularity of solutions for the linearized problem,
which is quite similar to that in \cite{WX1}.



\section{Stream function formulation and linearized problem}\label{SecStream}
 { Suppose that $\Bu$ is an axisymmetric solution of \eqref{SNS} and $\bBU=\bar{U}(r) \Be_z$ is the Poiseuille flow with $\bar{U}(r)$ defined in \eqref{1.8}. Let
	\be \label{axisymmetricv}
	\Bv = \Bu - \bBU = v^r (r, z) \Be_r + v^\theta(r, z)  \Be_\theta + v^z (r, z) \Be_z.
	\ee
	Then  $\Bv$ satisfies the following nonlinear system
	\be \label{ap-40-1}
	\left\{\ba
	& \bar{U} (r) \frac{\partial v^r }{\partial z } + \frac{\partial P}{\partial r} -\left[ \frac{1}{r}\frac{\partial }{\partial r} \left( r \frac{\partial v^r}{\partial r} \right) + \frac{\partial^2 v^r}{\partial z^2}  - \frac{v^r}{r^2}  \right]  + v^r \partial_r v^r + v^z \partial_z v^r - \frac{(v^\theta)^2 }{r}=F^r, \\
	& v^r \frac{\partial \bar{U}}{\partial r} + \bar{U}(r) \frac{\partial v^z}{\partial z }+ \frac{\partial P}{\partial z} - \left[ \frac{1}{r}\frac{\partial }{\partial r} \left( r \frac{\partial v^z }{\partial r} \right) + \frac{\partial^2 v^z}{\partial z^2}    \right] +  v^r \partial_r v^z + v^z \partial_z v^z  = F^z, \\
	& \partial_r v^r + \partial_z v^z + \frac{v^r}{r} = 0, \ea \right.
	\ee
	and
	\be \label{ap-40-2}
	\bar{U}(r) \partial_z v^\theta - \left[ \frac{1}{r}\frac{\partial }{\partial r} \left( r \frac{\partial v^\theta  }{\partial r} \right) + \frac{\partial^2 v^\theta }{\partial z^2} - \frac{v^\theta}{r^2}    \right] + v^r \partial_r v^\theta + v^z \partial_z v^\theta +  \frac{v^r v^\theta}{r}  =F^\theta
	\ee
	in $D=\{(r,z)|(r,z)\in (0, 1)\times \mathbb{T}_{2\pi} \}$.
	Here $F^r$, $F^z$, and $F^\theta$ are the radial, axial, and azimuthal component of $\BF$, respectively.
	The Navier boundary conditions and the flux constraint \eqref{slipBC}-\eqref{fluxBC}  become
	\be \label{BC-1}
	v^r(1, z)  = 0,\ \ \ \ \partial_z v^r (1, z) - \partial_r v^z(1, z) = \alpha v^z(1, z), \ \ \ \quad  \int_0^1 r v^z(r, z)\, dr = 0,
	\ee
	and
	\begin{equation}\label{BC-swirl}
		\frac{\partial v^\theta }{\partial r} (1, z) = (1 - \alpha) v^\theta (1, z) .
	\end{equation}

Let $\Bo$ denote the vorticity of $\Bv$, i.e.,
\be \nonumber
 \Bo= {\rm curl}~\Bv = \omega^r \Be_r + \omega^\theta \Be_\theta + \omega^z \Be_z.
 \ee
Denote
\be \nonumber
\mathcal{L} = \frac{\partial }{\partial r} \left( \frac1r \frac{\partial }{\partial r} (r \cdot) \right) = \frac{\partial^2 }{\partial r^2} + \frac1r \frac{\partial }{\partial r} - \frac{1}{r^2}.
\ee
Taking the curl of the equations \eqref{ap-40-1} yields
\be \label{ap-40-5}
\bar{U}(r) \partial_z \omega^\theta - (\mathcal{L} + \partial_z^2)\omega^\theta + \partial_r (v^r \omega^\theta) + \partial_z (v^z \omega^\theta)  -
\partial_z  \left( \frac{ (v^\theta)^2 }{r}        \right)= \partial_z F^r- \partial_r  F^z.
\ee
	
	\subsection{Stream function formulation}\label{sec-stream}
The velocity $\Bv$, which is $2\pi$-periodic with respect to $z$, can be represented in terms of  its Fourier series as follows,
\be \label{period-velocity}
\Bv= \sum_{n \in \mathbb{Z}} v_n^r(r) e^{in z} \Be_r + v_n^\theta(r) e^{inz} \Be_\theta + v_n^z(r) e^{inz} \Be_z,
\ee
where
\be \nonumber
(v_n^r, v_n^\theta, v_n^z) (r) = \frac{1}{2\pi} \int_{-\pi}^{\pi} (v^r, v^\theta, v^z)(r, z) e^{-inz} \, dz.
\ee
Similarly, the external force $\BF$ and the vorticity $\Bo$ can be  written as
\be \label{period-force}
\BF = \sum_{n \in \mathbb{Z}} F_n^r(r) e^{inz} \Be_r + F_n^\theta(r) e^{inz} \Be_\theta + F_n^z (r) e^{inz} \Be_z,
\ee
and
\be \nonumber
\Bo = \sum_{n \in \mathbb{Z}} \omega_n^r (r) e^{inz} \Be_r + \omega_n^\theta (r) e^{inz} \Be_\theta + \omega_n^z (r)e^{inz} \Be_z,
\ee
respectively.

Hence the equation \eqref{ap-40-5} becomes
\be \nonumber \ba
& in \bar{U}(r)\omega_n^\theta  - (\mathcal{L} - n^2) \omega_n^\theta + \frac{d}{dr} \sum_{m \in \mathbb{Z}} v_m^r \omega_{n-m}^\theta + in \sum_{m\in \mathbb{Z}}
v_{n-m}^z \omega^\theta_m -\frac{in}{r} \sum_{m \in \mathbb{Z} } v_{n-m}^\theta v_m^\theta  \\
   = & in F_n^r - \frac{d}{dr}F_n^z .
\ea
\ee
In terms of the Fourier coefficients, the boundary conditions and the flux constraint \eqref{BC-1} can be written as
\be \label{BC-2}
v_n^r(1) = 0, \ \ \ \omega_n^\theta(1) = \alpha v_n^z (1) , \ \ \ \ \ \int_0^1 r v_n^z (r) \, dr =0\quad \text{for}\,\ n \in \mathbb{Z}.
\ee

Define
\be \label{streamdefinition}
\psi_n (r) = \left\{ \ba
& - \frac{1}{r}   \int_0^r s v_n^z (s) \, ds, \ \ \ \ &\mbox{if} \ \ n = 0, \\
&- i \frac{v_n^r}{n} ,\ \ \ \ \ \ \ &\mbox{if}\ \ n \neq 0.      \ea   \right.
\ee
Due to the divergence free property of $\Bv$, it holds that
\be \nonumber
\frac{d}{dr} v_n^r + in v_n^z + \frac{v_n^r}{r} = 0.
\ee
Hence,
\be \nonumber
v_n^r = in \psi_n, \ \ \ \ v_n^z = -\frac{1}{r} \frac{d(r \psi_n)}{dr},\ \  \ \ \mbox{and}\ \ \ \ \omega_n^\theta = (\mathcal{L} - n^2)\psi_n .
\ee
Therefore, for every $n \in \mathbb{Z}$, one gets a fourth order equation for $\psi_n$,
\be \nonumber \ba
& i n \bar{U}(r) (\mathcal{L} - n^2) \psi_n - (\mathcal{L} -n^2)^2 \psi_n  \\
&\ \  = inF_n^r - \frac{d}{dr}F_n^z - \frac{d}{dr} \sum_{m \in \mathbb{Z} } v_m^r \omega_{n-m}^\theta - in \sum_{m \in \mathbb{Z}} v_{n-m}^z \omega_m^\theta + \frac{in}{r} \sum_{m \in \mathbb{Z}} v_{n-m}^\theta v_m^\theta  . \\
 \ea
\ee

 Next, we derive the boundary conditions for $\psi_n$. As discussed in \cite{Liu-Wang}, in order to get classical axisymmetric solutions of the Navier-Stokes system, some compatibility conditions at the axis should be imposed. Assume that $\Bv$ and the vorticity $\Bo$ are continuous, $v^r_n(0, z)$ and $\omega^\theta_n(0, z)$ should vanish. This implies
\be \nonumber
in  \psi_n (0 ) = (\mathcal{L} - n^2 )\psi_n (0) = 0.
\ee
 Hence, for every $n \neq 0$,  the following compatibility conditions hold at the axis,
\be \nonumber
\psi_n (0) = \mathcal{L} \psi_n (0) = 0.
\ee
If $n = 0$, then one has
\be \nonumber
\psi_0(0) = \lim_{r \rightarrow 0+} -\frac1r \int_0^r s v_0^z (s) \, ds = 0 \ \ \ \mbox{and}\ \ \ \mathcal{L} \psi_0(0) =0 .
\ee
On the other hand, it follows from \eqref{BC-2} that
\be \nonumber
\psi_0(1) = (r \psi_0 (r))|_{r=1} = -  \int_0^1 r v_0^z(r) \, dr = 0.
\ee
Furthermore, if $n \neq 0$, it also holds that
\be \nonumber
\psi_n (1) = - i \frac{v_n^r(1)}{n}  = 0.
\ee

Moreover, according to the Navier boundary conditions in \eqref{BC-1} for $\Bv$, one has
\be \nonumber
 \omega^\theta_n (1)  = \alpha v^z_n (1).
\ee
This is equivalent to
\be \nonumber
 ( \mathcal{L} - n^2  ) \psi_n  = -  \alpha \frac{1}{r} \frac{d}{dr} (r \psi_n) \ \ \ \ \mbox{at}\ \ r=1 .
\ee
Thus one has
\be \label{2-0-4-5}
\mathcal{L} \psi_n (1) = - \alpha \frac{d }{d  r} \psi_n (1) .
\ee

In summary,  the governing equation and the boundary conditions for each $\psi_n$ are  as follows,
\be \label{stream-mode}
\left\{
\ba  & i n \bar{U}(r) (\mathcal{L} - n^2) \psi_n - (\mathcal{L} -n^2)^2 \psi_n  \\
&\ \ \  = inF_n^r - \frac{d}{dr}F_n^z - \frac{d}{dr} \sum_{m \in \mathbb{Z} } v_m^r \omega_{n-m}^\theta - in \sum_{m \in \mathbb{Z}} v_{n-m}^z \omega_m^\theta + \frac{in}{r} \sum_{m \in \mathbb{Z}} v_{n-m}^\theta v_m^\theta ,\\
& \psi_n (0)= \psi_n (1) = \mathcal{L} \psi_n (0) = 0, \ \ \  \mathcal{L}\psi_n(1)= - \alpha \frac{d}{dr}\psi_n(1).
\ea
\right.
 \ee

 Next, let us turn to the problem for $v^\theta_n$.  Assume that $\Bv$ is continuous, $v^\theta$ should vanish at the axis. This implies that $v^\theta (0, z) = 0$. Hence the problem for $v^\theta_n $ can be written as
\be \label{swirl-mode}
\left\{   \ba
& in  \bar U(r) v_n^\theta  - (\mathcal{L} - n^2) v_n^\theta + \sum_{m \in \mathbb{Z}} v_m^r \frac{d}{dr}v_{n-m}^\theta
 + \sum_{m \in \mathbb{Z}} im v_{n-m}^z v_m^\theta - \sum_{m \in \mathbb{Z}} \frac{v_{n-m}^\theta }{r} v_m^r =  F^\theta_n, \\
& v^\theta_n (0) = 0, \ \ \ \ \  \ \ \ \frac{d v^\theta_n}{d r}  (1) = (1 -  \alpha ) v^\theta_n (1).
\ea
\right.
\ee

\subsection{Linearized problem}\label{sec-stream-linear}
To get the existence of nonlinear problem \eqref{ap-40-1}-\eqref{BC-swirl}, we first investigate the  linearized system around Poiseuille flows,
\be \nonumber \left\{ \ba
&\bBU \cdot \nabla \Bv + \Bv \cdot \nabla \bBU - \Delta \Bv + \nabla P = \BF \ \ \ \mbox{in}\ \Omega, \\
& {\rm div}~\Bv = 0\ \ \ \mbox{in}\ \Omega,
\ea
\right.
\ee
supplemented with the boundary conditions and flux constraint
\begin{equation}\nonumber
\Bv\cdot \Bn=0,\quad \ 2 \Bn \cdot D(\Bv) \cdot \boldsymbol{\tau}  + \alpha \Bu\cdot \boldsymbol{\tau}=0 \quad \text{on}\,\, \partial \Omega,
\ \ \ \ \int_{\Sigma} \Bv \cdot \Bn \, dS = 0.
\end{equation}
The system for axisymmetric solutions can be written in the following form,
\be \label{2-0-1-1}
\left\{
\ba
& \bar U(r)  \frac{\partial v^r}{\partial z} + \frac{\partial P}{\partial r} -\left[ \frac{1}{r} \frac{\partial}{\partial r}\left(
r \frac{\partial v^r}{\partial r} \right) + \frac{\partial^2 v^r}{\partial z^2} - \frac{v^r}{r^2} \right] = F^r  \ \ \ \mbox{in}\ D ,\\
& v^r \frac{\partial \bar U }{\partial r} + \bar U(r)  \frac{\partial v^z}{\partial z} + \frac{\partial P}{\partial z}
- \left[ \frac{1}{r} \frac{\partial }{\partial r} \left( r \frac{\partial v^z}{\partial r}\right) + \frac{\partial^2 v^z}{\partial z^2} \right] = F^z \ \ \mbox{in}\ D ,                     \\
& \partial_r v^r + \partial_z v^z + \frac{v^r}{r} =0\ \ \ \mbox{in}\ D, \\
& v^r(1, z)  = 0,\ \ \ \ \partial_z v^r (1, z) - \partial_r v^z(1, z) = \alpha v^z(1, z), \ \ \ \quad  \int_0^1 r v^z(r, z)\, dr = 0
\ea \right.
\ee
and
\be \label{vswirl}  \left\{
\ba
&\bar U(r)  \partial_z  v^\theta - \left[ \frac{1}{r} \frac{\partial }{\partial r} \left( r \frac{\partial v^\theta}{\partial r}\right) + \frac{\partial^2 v^\theta}{\partial z^2} - \frac{v^\theta}{r^2} \right] =  F^\theta \ \ \ \mbox{in}\ \ D, \\
& v^\theta(0, z) = 0, \ \ \ \ \ \frac{\partial v^\theta}{\partial r}(1, z) = (1 - \alpha) v^\theta (1, z).
\ea  \right.
\ee

Similar to the case for the nonlinear problem \eqref{ap-40-1}, for each $n$-th mode, one can introduce the stream function $\psi_n$ for \eqref{2-0-1-1}. As same as what has been done in Subsection \ref{sec-stream},  $\psi_n $ satisfies the following linear problem,
\be \label{2-0-8}
\left\{
\ba  & i n \bar{U}(r) (\mathcal{L} - n^2) \psi_n - (\mathcal{L} -n^2)^2 \psi_n = f_n = inF_n^r - \frac{d}{dr}F_n^z \ \ \ \ \mbox{in}\ D,\\
& \psi_n (0)= \psi_n (1) = \mathcal{L} \psi_n (0) = 0, \ \ \  \mathcal{L}\psi_n(1)= - \alpha \frac{d}{dr}\psi_n(1).
\ea
\right.
\ee
The $n$-th mode $v_n^\theta$ of the swirl velocity satisfies
\be \label{swirl-system}
\left\{ \ba
& in \bar{U}(r) v_n^\theta - (\mathcal{L} - n^2) v_n^\theta = F_n^\theta\ \ \ \mbox{in}\ D, \\
& v_n^\theta(0)= 0, \ \ \ \ \frac{d}{dr}v_n^\theta(1) = (1 - \alpha) v_n^\theta(1).
 \ea \right.
\ee

\section{Uniform a priori estimates for the stream function }\label{Linear}
In this section, we focus on the linear perturbation system \eqref{2-0-8}. Some  a priori estimates for each  $\psi_n $ and $\Bv^*= \dis\sum_{n \in \mathbb{Z}} v_n^r e^{inz}\Be_r + v_n^z e^{inz} \Be_z
= \sum_{n \in \mathbb{Z}} i n \psi_n e^{inz} \Be_r - \frac{1}{r} \frac{d}{dr} (r \psi_n)e^{inz}  \Be_z $ are established. These estimates are uniform with respect to $\Phi$ and $\alpha$. Regarding the existence of solutions to \eqref{2-0-8}, we put the proof in the appendix, since it is quite similar to Section 4 of \cite{WX1}.

\subsection{ Basic A priori estimates for $\Bv^*$ }\label{sec-apri}
In this subsection, we prove some basic  uniform a priori estimates for $\psi_n$ and $\Bv^*$,  when  $\Phi$ is not large.
\begin{pro}\label{lemapri1-0}
Let $\psi_n$ be a smooth solution of the problem \eqref{2-0-8}, then the corresponding velocity $\Bv^*$ satisfies
\be \label{2-1-0}
\|\Bv^* \|_{H^2(\Omega)} \leq C (1 + \Phi^{\frac32}) \|\BF^*\|_{L^2(\Omega)},
\ee
where $\BF^* = F^r \Be_r + F^z \Be_z$, and  $C$ is a uniform constant independent of $\BF^*$, $\Phi$, and $\alpha$.
\end{pro}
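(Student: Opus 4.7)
\bigskip

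\noindent\textbf{Proof plan for Proposition 3.1.} The strategy is to work mode-by-mode with the stream function problem \eqref{2-0-8}, establish a basic energy identity for each $n\in\mathbb{Z}$, bootstrap to higher regularity in $r$, and then reassemble via Parseval to get the $H^2(\Omega)$ bound on $\Bv^*$. First, I would handle the zero mode $n=0$ separately, where the equation reduces to the genuine ODE $-\mathcal{L}^2\psi_0 = f_0 = -\frac{d}{dr}F_0^z$ with boundary data $\psi_0(0)=\psi_0(1)=\mathcal{L}\psi_0(0)=0$ and $\mathcal{L}\psi_0(1)=-\alpha \psi_0'(1)$. Testing with $\bar\psi_0$ and integrating by parts, noting that the Navier boundary term contributes the non-negative quantity $\alpha|\psi_0'(1)|^2$, yields $\|\mathcal{L}\psi_0\|_{L^2_r}^2+\alpha|\psi_0'(1)|^2\le C\|F_0^z\|_{L^2_r}^2$ uniformly in $\alpha$, and hence $\|v_0^z\|_{H^2}\le C\|\BF^*\|_{L^2}$.

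For a nonzero mode, set $w_n=(\mathcal{L}-n^2)\psi_n$. Multiplying the equation by $\bar\psi_n r$ and integrating by parts twice on the operator $(\mathcal{L}-n^2)^2$, I would use $\psi_n(0)=\psi_n(1)=0$ together with the Navier-type condition $w_n(1)=-\alpha\psi_n'(1)$ to produce the identity
\begin{equation*}
\int_0^1|w_n|^2\,r\,dr+\alpha|\psi_n'(1)|^2=-in\int_0^1\bar U(r)\,w_n\,\bar\psi_n\,r\,dr-\int_0^1 f_n\,\bar\psi_n\,r\,dr.
\end{equation*}
Taking real parts eliminates the purely imaginary contribution from the self-adjoint piece of the convective term, and the remaining convective commutator is estimated by $\|\bar U\|_\infty\le C\Phi$, Cauchy--Schwarz, and the weighted Poincaré inequalities $\|\psi_n\|_{L^2_r}\le C|n|^{-1}A_n^{1/2}$ and $A_n\le C\|w_n\|_{L^2_r}^2$, where $A_n:=\int_0^1(r|\psi_n'|^2+|\psi_n|^2/r+n^2|\psi_n|^2 r)\,dr$. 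After absorbing and using a Young-type split, this gives the basic a priori bound $\|w_n\|_{L^2_r}^2+\alpha|\psi_n'(1)|^2\le C(1+\Phi^2)\|f_n\|_{H^{-1}_r}^2$ where the $H^{-1}$-norm here corresponds to the natural pairing with $\psi_n$ so that the right-hand side in terms of $F^r_n,F^z_n$ has $L^2_r$ weight (using $f_n=inF_n^r-\tfrac{d}{dr}F_n^z$ and IBP back).

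Once $w_n$ is controlled in $L^2_r$, I would bootstrap: rewrite the equation as the purely elliptic problem
\begin{equation*}
(\mathcal{L}-n^2)^2\psi_n=in\bar U(r)w_n-f_n,
\end{equation*}
with four boundary conditions, and invoke standard elliptic regularity in the weighted $r$-variable for the bi-Laplace-like operator $(\mathcal{L}-n^2)^2$ (a variant of the biharmonic in axisymmetric geometry) to gain two derivatives. Writing everything in terms of $v_n^r=in\psi_n$ and $v_n^z=-\frac{1}{r}(r\psi_n)'$ and summing $n$-modes via Parseval, I expect the $(1+\Phi^{3/2})$ factor to arise as a product: a factor $\Phi$ from the direct convective estimate and an additional $\Phi^{1/2}$ from the bootstrap that is needed to move the term $in\bar U w_n$ on the right into $L^2_r$ while only spending half a derivative on $w_n$. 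The zero-mode and nonzero-mode estimates then combine to yield \eqref{2-1-0}.

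The main obstacle I anticipate is the bookkeeping of the Navier boundary data uniformly in $\alpha\in[0,\infty)$: the boundary term $\alpha|\psi_n'(1)|^2$ sits on the good (dissipative) side for the basic identity, but in the bootstrap it re-enters as a nontrivial Neumann-type boundary condition for $w_n$ at $r=1$, and one must show that the constants in the elliptic regularity estimate do not degenerate as $\alpha\to 0$ or $\alpha\to\infty$. Controlling the behavior of the trace $\psi_n'(1)$ and of the boundary compatibility jointly with the interior dissipation, without losing uniformity in $\alpha$, is the technical heart of the argument; the axis $r=0$ requires the usual weighted-space care but is routine once the boundary at $r=1$ is under control.
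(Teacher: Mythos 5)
Your overall architecture (mode-by-mode energy identity, elliptic bootstrap, Parseval) matches the paper's, but the basic a priori estimate as you state it does not close, and that is precisely where the work of the proof lies. After integrating by parts, the real part of your identity is not just ``$\|w_n\|_{L^2_r}^2+\alpha|\psi_n'(1)|^2$ equals harmless terms'': since $\bar U'(r)=-\frac{4\alpha}{4+\alpha}\frac{\Phi}{\pi}r$, the convective term leaves behind the contribution $\frac{4\Phi}{\pi}\frac{\alpha}{\alpha+4}\,n\,\Im\int_0^1\frac{d}{dr}(r\psi_n)\,r\overline{\psi_n}\,dr$, which by Cauchy--Schwarz is only bounded by $C\Phi|n|\bigl(\int_0^1|\frac{d}{dr}(r\psi_n)|^2\frac1r\,dr\bigr)^{1/2}\bigl(\int_0^1|\psi_n|^2r\,dr\bigr)^{1/2}$. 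Using only the Poincar\'e-type inequalities you invoke (Lemma \ref{lemma1}), this is of size $C\Phi|n|^{-1}\|w_n\|_{L^2_r}^2$, which cannot be absorbed into the left-hand side when $|n|\lesssim\Phi$; you would be left with $\|w_n\|^2\le C\Phi\|w_n\|^2+\dots$, vacuous for large flux. So your claimed bound $\|w_n\|^2\le C(1+\Phi^2)\|f_n\|_{H^{-1}_r}^2$ does not follow from the listed ingredients. The missing idea is to use the \emph{imaginary} part of the same identity as genuine dissipation: since $\bar U(r)\ge\tfrac{1}{2}\cdot\tfrac{2\Phi}{\pi}(1-r^2)$ uniformly in $\alpha$ (inequality \eqref{2-1-7-1}), identity \eqref{2-1-7} yields the degenerate-weight control \eqref{2-1-8}, and feeding this into the weighted interpolation inequality of Lemma \ref{weightinequality} (which trades the weight $1-r^2$ for a fractional power of the top-order norm) is exactly what makes the commutator term absorbable at cost $\Phi^{3/2}$ times the dissipation --- this is the paper's estimate \eqref{2-1-9}, leading to \eqref{2-1-11-new} and hence $\|\Bv^*\|_{H^1}+\|\partial_z\Bv^*\|_{H^1}\le C(1+\Phi^{1/2})\|\BF^*\|_{L^2}$.

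A second, smaller problem is the exponent bookkeeping in your bootstrap. Even granting a bound of the form $\|w_n\|\lesssim(1+\Phi)\|\BF_n^*\|_{L^2_r}$, you would only get $\|\Bv^*\|_{H^1}\lesssim(1+\Phi)\|\BF^*\|_{L^2}$, and the second step (the paper treats $\Bv^*$ as a solution of the Stokes system \eqref{2-1-20} with right-hand side $\BF^*-\bar U\partial_z\Bv^*-v^r\partial_r\bBU$, which costs one further factor of $\Phi$ through $\Phi\|\partial_z\Bv^*\|_{L^2}+\Phi\|v^r\|_{L^2}$) would then produce $(1+\Phi^2)$, not $(1+\Phi^{3/2})$; your suggestion that the bootstrap ``spends only half a derivative'' and recovers a factor $\Phi^{1/2}$ is not substantiated. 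The correct accounting is $\Phi^{1/2}$ from the refined energy step and one full factor of $\Phi$ from the Stokes regularity step. Also note that the paper's bootstrap is a three-dimensional Stokes estimate rather than a one-dimensional elliptic estimate for $(\mathcal{L}-n^2)^2$; the latter would require you to place $in\bar U\,w_n$ in $L^2_r$ at cost $\Phi|n|\|w_n\|$, which is far too lossy. Your treatment of the zero mode and of the favorable sign of the Navier boundary term $\alpha|\frac{d}{dr}(r\psi_n)(1)|^2$ is correct and matches the paper.
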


\begin{pf}
 Multiplying the equation in \eqref{2-0-8} by $r \overline{\psi_n} $ (here and later on $\overline{\psi_n}$ denotes the complex conjugate of $\psi_n$) and integrating the resulting equation over $[0, 1]$ yield
\be \label{2-1-1}
\int_0^1 \left[ i n \bar U(r) (\mathcal{L} - n^2) \psi_n - (\mathcal{L} - n^2)^2  \psi_n      \right] \overline{\psi_n}  r  \,  dr
= \int_0^1  f_n \overline{\psi_n}  r \, dr.
\ee
For the first  term on the left hand  of \eqref{2-1-1}, it follows from integration by parts and the homogeneous boundary conditions for $\psi_n$ that
\be \la{2-1-3} \ba
& \int_0^1 i n  \bar U(r)  ( \mathcal{L}  - n^2) \psi_n   \overline{\psi_n }  r \, dr \\
= \,\,& i n  \int_0^1 \bar{U}(r)  \frac{d}{dr} \left(  \frac1r \frac{d}{dr} (r\psi_n )     \right)  r \overline{\psi_n } \, dr
-  i n^3 \int_0^1 \bar U(r) |\psi_n |^2  r \, dr \\
= \,\,& i n \frac{4\Phi}{\pi} \frac{\alpha}{\alpha + 4}  \int_0^1 \frac{d}{dr} (r \psi_n )  r \overline{\psi_n }\, dr
- i n \int_0^1 \frac{\bar{U}(r)}{r} \left| \frac{d}{dr} (r \psi_n )   \right|^2 \, dr - i n^3 \int_0^1 \bar{U}(r) |\psi_n |^2  r \, dr.
\ea
\ee
While for the second term on the left hand  of \eqref{2-1-1}, one has
\be \la{2-1-2} \ba
& \int_0^1 ( \mathcal{L}  - n^2)^2 \psi_n  \overline{\psi_n }  r \, dr \\
 =\,\, & \int_0^1 \frac{d}{dr} \left(  \frac1r \frac{d}{dr} ( r \mathcal{L} \psi_n )      \right)  \overline{\psi_n }  r \, dr
- 2 n^2 \int_0^1 \frac{d}{dr} \left( \frac1r \frac{d}{dr} (r\psi_n )    \right)  \overline{\psi_n }  r \, dr
+ n^4 \int_0^1 |\psi_n|^2  r\, dr \\
 =\,\,& \int_0^1 | \mathcal{L} \psi_n  |^2  r \, dr - \mathcal{L} \psi_n (1) \frac{d}{dr} ( r \overline{\psi_n} )(1)  + 2 n^2 \int_0^1 \left|  \frac{d}{dr} (r \psi_n )  \right|^2  \frac1r \, dr
+ n^4 \int_0^1 |\psi_n |^2  r \, dr\\
= \,\, &  \int_0^1 | \mathcal{L} \psi_n |^2  r \, dr + \alpha \left| \frac{d}{dr}(r \psi_n )(1) \right|^2   + 2 n^2 \int_0^1 \left|  \frac{d}{dr} (r \psi_n)  \right|^2  \frac1r \, dr
+ n^4 \int_0^1 |\psi_n|^2  r \, dr.
\ea \ee

From now on, we denote $\Im g$ and $\Re g$ by the imaginary and real part of  $g$ (a function or a number), respectively. Note that the homogeneous boundary conditions for $\psi_n $ imply
\be \nonumber
\Re \int_0^1 \frac{d}{dr} (r \psi_n ) r \overline{\psi_n } \, dr  = 0.
\ee
It follows from  \eqref{2-1-1}-\eqref{2-1-2} that
\be \la{2-1-5} \ba
& \int_0^1 | \mathcal{L} \psi_n |^2  r \, dr
+ 2 n^2 \int_0^1 \left|  \frac{d}{dr} (r \psi_n )   \right|^2  \frac1r \, dr
+ n^4 \int_0^1 |\psi_n |^2  r \, dr + \alpha \left| \frac{d}{dr}(r \psi_n )(1) \right|^2  \\
 =\,\, &- \Re \int_0^1 f_n  \overline{\psi_n }  r \, dr - \frac{4 \Phi}{\pi} \frac{\alpha}{\alpha + 4 } n  \Im \int_0^1  \left[ \frac{d}{dr} (r \psi_n )  r \overline{\psi_n }      \right] \, dr
\ea \ee
and
\be \la{2-1-7}
n \int_0^1 \frac{\bar{U}(r) }{ r } \left| \frac{d}{dr} ( r \psi_n )   \right|^2 \, dr
+ n^3 \int_0^1  \bar{U}(r) |\psi_n |^2  r\, dr
= - \Im \int_0^1 f_n \overline{\psi_n  }  r \, dr .
\ee

If $n=0$, it follows from \eqref{2-1-5} that
\be \nonumber
\int_0^1 |\mathcal{L} \psi_0|^2 r \, dr +  \alpha \left|\frac{d}{dr}(r \psi_0) (1)  \right|^2
\leq \left( \int_0^1 |F_0^z|^2 r \, dr \right)^{\frac12} \left( \int_0^1 \left|  \frac{d}{dr}(r \psi_0)  \right|^2 \frac1r \, dr     \right)^{\frac12} .
\ee
By Lemma \ref{lemma1} and Young's inequality, one has
\be \label{2-1-7-0}
\int_0^1 |\mathcal{L} \psi_0|^2 r \, dr +  \alpha \left|\frac{d}{dr}(r \psi_0) (1)  \right|^2
\leq C \int_0^1 |F_0^z|^2 r \, dr .
\ee

Since $\alpha \geq  0$, for every $0\leq r < 1$,
\be \label{2-1-7-1}
\frac{\frac{2\Phi}{\pi} (1 - r^2) }{\bar{U} (r) }
= \frac{2(1-r^2)}{\frac{4 + 2\alpha}{4 + \alpha} \left( 1 - \frac{2\alpha}{4+ 2\alpha } r^2 \right)   }
\leq \frac{2(1-r^2)}{\frac{4 + 2\alpha}{4 + \alpha} \left( 1 - r^2 \right)   } \leq 2.
\ee
Therefore, if $n\neq 0$, then it follows from \eqref{2-1-7} and \eqref{2-1-7-1} that one has
\be \label{2-1-8}
\Phi |n | \int_0^1  \left| \frac{d}{dr} (r \psi_n  )   \right|^2  \frac{1-r^2}{r} \, dr +
\Phi |n|^3 \int_0^1  \left| \psi_n  \right|^2  r(1-r^2) \, dr
\leq C \left|  \int_0^1 f_n \overline{\psi_n} r \, dr \right| .
\ee

Next, let us estimate the second term on the right hand of \eqref{2-1-5}. By Lemmas \ref{lemma1} and  \ref{weightinequality},
\be \label{2-1-9}
\ba
& \left|  \frac{4\Phi }{\pi} \frac{\alpha }{\alpha + 4}  n \int_0^1 \frac{d}{dr} ( r\psi_n) r \overline{\psi_n} \, dr \right| \\
\leq & C \Phi |n| \left(  \int_0^1 \left|\frac{d}{dr}(r \psi_n)  \right|^2 \frac1r \, dr         \right)^{\frac12} \left( \int_0^1 |\psi_n|^2 r \, dr  \right)^{\frac12} \\
\leq  & C \Phi |n| \left(\int_0^1 \left| \frac{d}{dr}(r \psi_n) \right|^2 \frac{1-r^2}{r} \, dr \right)^{\frac13} \left( \int_0^1 |\mathcal{L} \psi_n |^2 r \,dr  \right)^{\frac16} \\
 &\ \ \ \ \ \ \ \ \ \cdot \left( \int_0^1 \left|  \frac{d}{dr} ( r \psi_n) \right|^2 \frac1r \, dr   \right)^{\frac16} \left( \int_0^1 |\psi_n|^2 (1 - r^2) r\, dr  \right)^{\frac13}\\
\leq & \frac14 \int_0^1 |\mathcal{L} \psi_n|^2 r \,dr + \frac14 n^2 \int_0^1 \left| \frac{d}{dr} ( r \psi_n) \right|^2 \frac1r \, dr \\
&\ \ \ \ \ \ \ \
+ C \Phi^{\frac32} \int_0^1 \left|\frac{d}{dr}(r \psi_n)   \right|^2 \frac{1-r^2}{r} \, dr + C \Phi^{\frac32} n^2 \int_0^1 |\psi_n|^2 (1 - r^2) r \, dr .
\ea
\ee
Substituting \eqref{2-1-8} and \eqref{2-1-9} into \eqref{2-1-5} yields
\be \nonumber \ba
&\int_0^1 |\mathcal{L} \psi_n|^2 r \, dr + n^2 \int_0^1 \left|  \frac{d}{dr} (r \psi_n) \right|^2 \frac1r \, dr +  n^4 \int_0^1 |\psi_n|^2 r \, dr
 +  \alpha \left| \frac{d}{dr} ( r\psi_n)(1)  \right|^2 \\
\leq & C (1 + \Phi^{\frac12}  |n|^{-1} ) \left|  \int_0^1 f_n \overline{\psi_n} r\, dr  \right|\\
\leq & C (1  + \Phi^{\frac12}|n|^{-1} ) |n|^{-1} \left( \int_0^1(  |F_n^r|^2 + |F_n^z|^2 ) r\, dr \right)^{\frac12} \left( \int_0^1 n^2 \left|  \frac{d}{dr} (r \psi_n) \right|^2 \frac1r  +  n^4  |\psi_n|^2 r \, dr \right)^{\frac12}.
\ea \ee
This implies
\be \label{2-1-11-new}
 \ba
&\int_0^1 |\mathcal{L} \psi_n|^2 r \, dr + n^2 \int_0^1 \left|  \frac{d}{dr} (r \psi_n) \right|^2 \frac1r \, dr +  n^4 \int_0^1 |\psi_n|^2 r \, dr
+  \alpha \left|  \frac{d}{dr}(r \psi_n)(1)  \right|^2 \\
\leq & C(1  + \Phi |n|^{-2} )|n|^{-2}  \int_0^1 (|F_n^r|^2 + |F_n^z|^2 ) r \, dr.
\ea
\ee
Hence one has
\be \label{2-1-15} \ba
\|v^r_n \|_{L^2(\Omega)}^2 + \|v^z_n \|_{L^2(\Omega)}^2
& \leq C  \int_0^1 \left[    n^2 | \psi_n |^2 r + \left|\frac{d }{ d r}(r \psi_n )     \right|^2 \frac1r  \right] \, dr  \\
& \leq C (1 + \Phi) \|\BF^*_n \|_{L^2(\Omega)}^2.
\ea
\ee
Moreover, since ${\rm curl}~\Bv_n^* = \omega_n^\theta e^{inz} \Be_\theta $,
it holds that
\be \label{2-1-16-0} \ba
\int_{\Omega} |\nabla \Bv^*_n|^2 \, dV & = \int_{\Omega} -\Delta \Bv_n^* \cdot \Bv_n^* \, dV + \int_{\partial \Omega}
\frac{\partial \Bv_n^*}{ \partial \Bn } \cdot \Bv_n^* \, dS  \\
& = \int_{\Omega} {\rm curl}~(\omega_n^\theta \Be_\theta) \cdot \Bv_n^* \, dV + \int_{\partial \Omega} \frac{\partial v_n^z }{\partial r} \cdot v_n^z \, dS \\
& = \int_{\Omega} |\omega_n^\theta|^2 \, dV + \alpha \int_{\partial \Omega} |v_n^z|^2 \, dS - \alpha \int_{\partial \Omega} |v_n^z|^2 \, dS \\
& = \int_{\Omega} |\omega_n^\theta|^2 \, dV.
\ea
\ee
Hence one has
\be \label{2-1-16} \ba
\| \Bv^*_n  \|_{H^1(\Omega)}^2 & = \|\Bv^*_n \|_{L^2(\Omega)}^2 +  \|\omega^\theta_n \|_{L^2(\Omega)}^2  =  \|\Bv^*_n  \|_{L^2(\Omega)}^2 +  2\pi  \int_0^1 | (\mathcal{L} - n^2 ) \psi_n|^2 r \, dr \\
& \leq C (1 + \Phi ) \|\BF^*_n \|_{L^2(\Omega)}^2.
\ea
\ee
Summing \eqref{2-1-16} with respect to $n$ yields
\be \label{2-1-16-1}
\|\Bv^* \|_{H^1(\Omega)} \leq C (1  + \Phi^{\frac12} )\|\BF^*\|_{L^2(\Omega)}.
\ee

Similarly,  multiplying \eqref{2-1-11-new} by $n^2$ gives
\be \label{2-1-17}
\ba
& n^2 \int_0^1 |\mathcal{L} \psi_n|^2 r \, dr + n^4 \int_0^1 \left|  \frac{d}{dr} (r \psi_n) \right|^2 \frac1r \, dr +  n^6 \int_0^1 |\psi_n|^2 r \, dr \\
\leq & C (1 + \Phi |n|^{-2} )  \int_0^1(  |F_n^r|^2 + |F_n^z|^2 ) r \, dr                .
\ea
\ee
Similar to the proof of \eqref{2-1-16-1}, it follows from \eqref{2-1-17} that one has
\be \label{2-1-19}
\|\partial_z \Bv^* \|_{H^1(\Omega)} \leq C (1 + \Phi^{\frac12} ) \|\BF^*\|_{L^2(\Omega)}.
\ee

As shown in Proposition \ref{back}  , $\Bv^*$ satisfies the equation
\be \label{2-1-20}
\left\{
\ba
& \bar{U} \partial_z \Bv^* + v^r \partial_r \bBU - \Delta \Bv^* + \nabla P = \BF^* \ \ \ \mbox{in}\ \Omega, \\
& {\rm div}~\Bv^* = 0 \ \ \ \mbox{in}\ \Omega.
\ea
\right.
\ee
According to the regularity theory for Stokes equations (\cite[Lemma VI.1.2]{Galdi}) and the trace theorem for axisymmetric functions, one has
\be \label{2-1-21} \ba
\|\Bv^*\|_{H^2 (\Omega)} & \leq C \left( \|\BF^*\|_{L^2(\Omega)} + \Phi \|\partial_z \Bv^*\|_{L^2(\Omega)} +  \Phi \|v^r\|_{L^2(\Omega)} +  \|\Bv^*\|_{H^1(\Omega)} + \|\Bv^* \|_{H^{\frac32}(\partial \Omega) } \right) \\
& \leq C (1 + \Phi^{\frac32} ) \|\BF^*\|_{L^2(\Omega)} + C \|\partial_z \Bv^*\|_{H^1(\Omega)} \\
& \leq C (1 + \Phi^{\frac32} ) \|\BF^*\|_{L^2(\Omega)} .
\ea
\ee
This finishes the proof of Proposition \ref{lemapri1-0}.
\end{pf}


\subsection{Uniform estimate for the zero mode}
In this subsection, we give the uniform estimate for the zero mode of the solutions of \eqref{2-0-8} with respect to $\Phi$ and  $\alpha$. First, it follows from \eqref{2-1-7-0} that the following estimate holds.
\begin{pro}\label{Bpropcase1}
Let $\psi_0 $ be a smooth solution of the problem
\eqref{2-0-8} associated with $n=0$, then it holds that
\be \label{B-1}
\int_0^1 |\mathcal{L} \psi_0 |^2 r \, dr  \leq C  \int_0^1 |\BF^*_0|^2 r \, dr.
\ee
\end{pro}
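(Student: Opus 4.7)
The claim is essentially the $n=0$ specialization of the energy identity already carried out during the proof of Proposition \ref{lemapri1-0}, culminating in \eqref{2-1-7-0}. The plan is to perform that specialization cleanly and to observe that the right-hand side is controlled uniformly in $\Phi$ and $\alpha$. When $n=0$ the transport term $in\bar U(r)(\mathcal{L}-n^2)\psi_n$ drops out identically, so \eqref{2-0-8} collapses to the $\Phi$-independent fourth-order equation $\mathcal{L}^2 \psi_0 = \frac{d}{dr} F_0^z$, subject to $\psi_0(0) = \psi_0(1) = \mathcal{L}\psi_0(0) = 0$ and $\mathcal{L}\psi_0(1) = -\alpha \frac{d\psi_0}{dr}(1)$. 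In particular, neither $\bar U$ nor the $inF_n^r$ piece of the forcing survives, and only the axial component $F_0^z$ enters through $f_0$.

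Multiplying this equation by $r\overline{\psi_0}$, integrating over $(0,1)$, and performing integration by parts twice as in the derivation of \eqref{2-1-2}, I would use $\psi_0(1)=0$ to simplify $\frac{d(r\psi_0)}{dr}(1) = \frac{d\psi_0}{dr}(1)$, so that the boundary contribution from the biharmonic term combines with the Navier condition $\mathcal{L}\psi_0(1) = -\alpha\frac{d\psi_0}{dr}(1)$ to produce the sign-definite term $+\alpha \bigl|\frac{d(r\psi_0)}{dr}(1)\bigr|^2$ on the left. One further integration by parts on the right, whose boundary contributions vanish because $\psi_0(1)=0$, yields the identity $\int_0^1 |\mathcal{L}\psi_0|^2 r\,dr + \alpha \bigl|\frac{d(r\psi_0)}{dr}(1)\bigr|^2 = \Re\int_0^1 F_0^z \frac{d(r\overline{\psi_0})}{dr}\, dr$.

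Finally, Cauchy-Schwarz bounds the right-hand side by $\|F_0^z\|_{L^2(r\,dr)} \bigl\|\frac{d(r\psi_0)}{dr}\bigr\|_{L^2(r^{-1}\,dr)}$; Lemma \ref{lemma1} (a Hardy-type weighted inequality) then controls the second factor by a multiple of $\|\mathcal{L}\psi_0\|_{L^2(r\,dr)}$, and Young's inequality absorbs that norm into the left-hand side. Discarding the nonnegative boundary term proportional to $\alpha$ yields the stated estimate with a constant independent of both $\Phi$ and $\alpha$. I do not anticipate any real obstacle here: the only $\Phi$-dependent mechanism in Proposition \ref{lemapri1-0} was the transport term, and it vanishes identically in the zero-mode case, so the whole estimate is structurally easier than the $n\neq 0$ bound \eqref{2-1-11-new}. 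The only care required is to track the boundary terms so that the Navier condition produces a nonnegative contribution rather than an obstructive one.
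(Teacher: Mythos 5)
Your proposal is correct and follows essentially the same route as the paper: the paper obtains \eqref{B-1} by specializing the energy identity \eqref{2-1-5} to $n=0$ (where the transport term vanishes), using the Navier condition to make the boundary term $\alpha\bigl|\frac{d}{dr}(r\psi_0)(1)\bigr|^2$ nonnegative, and then applying Cauchy--Schwarz, Lemma \ref{lemma1}, and Young's inequality exactly as you describe (this is the derivation of \eqref{2-1-7-0}). No gaps.
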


In fact, the estimate \eqref{B-1} gives $H^2$ estimate for $\Bv_0^*$.

\begin{pro}\label{Bpropcase1-1} The corresponding velocity $\Bv^*_0 = - \frac{1}{r} \frac{d}{dr}(r \psi_0) \Be_z $  satisfies
\be \label{B6}
\|\Bv^*_{0} \|_{H^2(\Omega)} \leq C \|\BF^*_{0}\|_{L^2(\Omega)},
\ee
where $C$ is a uniform constant independent of $\Phi$, $\alpha$, and $\BF$.
\end{pro}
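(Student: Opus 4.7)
The guiding observation is a structural simplification that occurs specifically at the zero frequency, which decouples the zero mode from the background Poiseuille flow $\bBU$ and thereby removes any dependence on $\Phi$ from the estimates. Indeed, for $n=0$ the Fourier form of the divergence-free condition becomes $\frac{d}{dr}(rv_0^r)=0$; combined with regularity at $r=0$ and the boundary condition $v_0^r(1)=0$, this forces $v_0^r\equiv 0$. Consequently $\Bv_0^* = v_0^z(r)\Be_z$ depends only on $r$, so $\partial_z \Bv_0^*=0$ and $v_0^r\partial_r\bBU=0$, and the zero-mode projection of the linearized system \eqref{2-1-20} reduces to the pure Stokes system
\begin{equation*}
-\Delta \Bv_0^* + \nabla P_0 = \BF_0^*, \qquad \mathrm{div}\,\Bv_0^* = 0 \quad \text{in } \Omega,
\end{equation*}
supplemented with Navier boundary conditions and the flux constraint.

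The next step is to upgrade Proposition \ref{Bpropcase1} to a full $H^1$ estimate. The identity $\mathcal{L}\psi_0 = -(v_0^z)'$ (which follows directly from $v_0^z = -\psi_0' - \psi_0/r$) combined with \eqref{B-1} yields $\|(v_0^z)'\|_{L^2(r\,dr)} \leq C\|\BF_0^*\|_{L^2(\Omega)}$. Because the flux constraint reads $\int_0^1 r v_0^z\,dr = 0$, the weighted Poincar\'e-type inequality on $[0,1]$ gives $\|v_0^z\|_{L^2(r\,dr)}\leq C\|(v_0^z)'\|_{L^2(r\,dr)}$. Combining these with identity \eqref{2-1-16-0} applied to the zero mode (noting $\omega_0^\theta = \mathcal{L}\psi_0$), we obtain
\begin{equation*}
\|\Bv_0^*\|_{H^1(\Omega)} \leq C\|\BF_0^*\|_{L^2(\Omega)}
\end{equation*}
uniformly in $\Phi$ and $\alpha$.

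For the $H^2$ estimate we apply Stokes regularity (\cite[Lemma VI.1.2]{Galdi}) to the reduced system exactly as in the derivation of \eqref{2-1-21}, but with the crucial simplification that the $\Phi$-dependent terms $\Phi\|\partial_z\Bv^*\|_{L^2}$ and $\Phi\|v^r\|_{L^2}$ vanish identically for the zero mode. This produces
\begin{equation*}
\|\Bv_0^*\|_{H^2(\Omega)} \leq C\left(\|\BF_0^*\|_{L^2(\Omega)} + \|\Bv_0^*\|_{H^1(\Omega)} + \|\Bv_0^*\|_{H^{3/2}(\partial\Omega)}\right),
\end{equation*}
and the previous step closes the first two terms on the right. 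The remaining trace is the essential technical point. Since $\Bv_0^*$ depends only on $r$, its restriction to $\partial\Omega$ is a constant on each $\{r=1\}\times \mathbb{T}_{2\pi}$ cross-section, so $\|\Bv_0^*\|_{H^{3/2}(\partial\Omega)} \leq C|v_0^z(1)|$, and the one-dimensional trace embedding $H^1([0,1];r\,dr)\hookrightarrow C([0,1])$ controls $|v_0^z(1)|$ by $\|v_0^z\|_{H^1(r\,dr)}$, already bounded above.

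The main obstacle to watch is ensuring uniformity in $\alpha$, especially as $\alpha\to\infty$: naively the Navier boundary condition $(v_0^z)'(1) = -\alpha v_0^z(1)$ threatens to produce an $\alpha$-dependent trace contribution. However, the Proposition \ref{Bpropcase1} estimate \eqref{2-1-7-0} already contains the extra term $\alpha|\frac{d}{dr}(r\psi_0)(1)|^2 = \alpha|v_0^z(1)|^2 \leq C\|\BF_0^*\|_{L^2}^2$, which (combined with the flux identity relating $v_0^z(1)$ to the constant pressure drop in $z$) provides uniform control of $\alpha v_0^z(1)$ and hence of all boundary quantities. Collecting the three paragraphs yields \eqref{B6} with a constant independent of $\Phi$ and $\alpha$.
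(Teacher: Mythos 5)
Your argument is correct and follows essentially the same route as the paper: reduce the zero mode to a pure Stokes system (using $v_0^r\equiv 0$ and $\partial_z\Bv_0^*=0$), obtain the $H^1$ bound from Proposition \ref{Bpropcase1} (your Poincar\'e step via the flux constraint is equivalent to the paper's use of Lemma \ref{lemma1}, since $\int_0^1 r v_0^z\,dr=0$ is exactly $\psi_0(1)=0$), and close with Stokes regularity plus a bound on the boundary trace. One small caveat: your final paragraph is unnecessary and its implication from $\alpha|v_0^z(1)|^2\leq C\|\BF_0^*\|_{L^2}^2$ to uniform control of $\alpha v_0^z(1)$ is false as stated; no such control is needed, because the Stokes estimate you invoke only requires the Dirichlet trace $\|\Bv_0^*\|_{H^{3/2}(\partial\Omega)}\leq C|v_0^z(1)|$, which you have already bounded independently of $\alpha$.
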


\begin{proof}
It follows from the similar proof of Proposition \ref{back} that  $\Bv^*_{0}$ is a strong solution to the following Stokes equations,
\be \label{stokes-low} \left\{ \ba
& - \Delta \Bv^*_{0} + \nabla P = \BF^*_{0}  \ \ \ \mbox{in}\ \Omega, \\
& {\rm div}~\Bv^*_{0} = 0\ \ \ \ \mbox{in}\ \Omega.
\ea \right. \ee
According to the regularity theory for the Stokes equations (\cite[Lemma VI.1.2]{Galdi}) and the trace theorem for axisymmetric functions, one has
\be \label{B7}  \ba
\|\Bv^*_0 \|_{H^2(\Omega)} & \leq  C \|\BF^*_{0}\|_{L^2(\Omega)} + C \|\Bv^*_{0}\|_{H^1(\Omega)} + C \| \Bv^*_{0}\|_{H^{\frac32}(\partial \Omega)}\\
& \leq  C \|\BF^*_{0}\|_{L^2(\Omega)} + C \|\Bv^*_{0}\|_{H^1(\Omega)} + C \|\partial_z \Bv^*_{0}\|_{H^1(\Omega)}\\
&  \leq C \|\BF^*_{0}\|_{L^2(\Omega)} + C \|\Bv^*_{0}\|_{H^1(\Omega)}. \ea
\ee
Herein, by virtue of the estimate \eqref{B-1} and Lemma \ref{lemma1}, following the same estimate as \eqref{2-1-16-0},
  one has
\be \label{B10-0}
\|\Bv^*_0\|_{L^2(\Omega)} = \|v^z_{0} \|_{L^2(\Omega)} \leq C \left(  \int_0^1 \left|  \frac{d}{dr}(r \psi_0  ) \right|^2 \frac1r \, dr    \right)^{\frac12}
\leq C \| \BF^*_{0} \|_{L^2(\Omega)}
\ee
and
\be \label{B10} \ba
 \|\Bv^*_{0} \|_{H^1 (\Omega)}  \leq &   \|\Bo^\theta_{0}\|_{L^2(\Omega)} + \|\Bv^*_0\|_{L^2(\Omega)} \leq  C \left(  \int_0^1  |  \mathcal{L} \psi_0 |^2 r \, dr  \right)^{\frac12} + C \|v^z_0\|_{L^2(\Omega)}  \\
\leq &   C \| \BF^*_0 \|_{L^2(\Omega)}.
\ea \ee
 Taking \eqref{B10-0}-\eqref{B10} into \eqref{B7} gives \eqref{B6} so that the proof of Proposition \ref{Bpropcase1-1} is completed.
\end{proof}

\subsection{Uniform estimate for the case with large flux and high frequency}
In this subsection, we give the uniform estimate for the solutions of \eqref{2-0-8} with respect to $\Phi$ and $\alpha$ when the flux is large and the frequency is high.
\begin{pro}\label{Bpropcase2}
Assume that $0<\epsilon_1 <1$ and $|n | \geq \epsilon_1 \sqrt{\Phi} \geq 1 $. Let $\psi_n$ be a  smooth solution to the problem \eqref{2-0-8}, then one has
\be \label{highf1} \ba
 \int_0^1 |\mathcal{L} \psi_n |^2 r + n^2  \left| \frac{d}{dr} ( r \psi_n)    \right|^2 \frac{1}{r}
+ n^4  |\psi_n |^2 r \, dr
\leq &  C(\epsilon_1) |n|^{-2}  \int_0^1 |\BF^*_n|^2 r \, dr\\
 \leq & C (\epsilon_1) (\Phi |n|)^{-\frac43}  \int_0^1 |\BF^*_n|^2 r \, dr ,
\ea
\ee
and
\be \label{highf2}
 |n| \int_0^1 \frac{\bar{U}(r)}{r}  \left| \frac{d}{dr} ( r \psi_n ) \right|^2 \, dr +  |n|^3 \int_0^1 \bar{U}(r) |\psi_n |^2 r \, dr
\leq C(\epsilon_1)  |n|^{-2}  \int_0^1 | \BF^*_n |^2 r \, dr .
\ee
\end{pro}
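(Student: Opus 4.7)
The plan is to exploit the fact that the basic a priori bound established in Proposition \ref{lemapri1-0} already encodes the desired information, provided the prefactor $1 + \Phi|n|^{-2}$ appearing there can be absorbed into a constant depending only on $\epsilon_1$. Explicitly, the intermediate inequality \eqref{2-1-11-new} reads
\[
\int_0^1 |\mathcal{L}\psi_n|^2 r + n^2 \left|\frac{d}{dr}(r\psi_n)\right|^2 \frac{1}{r} + n^4 |\psi_n|^2 r \, dr + \alpha \left|\frac{d}{dr}(r\psi_n)(1)\right|^2 \leq C(1 + \Phi|n|^{-2})|n|^{-2}\,\|\BF^*_n\|_{L^2(\Omega)}^2,
\]
and the hypothesis $|n| \geq \epsilon_1 \sqrt{\Phi}$ forces $\Phi|n|^{-2} \leq \epsilon_1^{-2}$, so the prefactor is bounded by $C(\epsilon_1)$. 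This is precisely the first inequality of \eqref{highf1}. The $(\Phi|n|)^{-\frac43}$ form that appears afterward is the common scaling that lets this high-frequency bound be summed uniformly against the complementary low-frequency estimates to be obtained later.

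For \eqref{highf2}, I would return to the ``imaginary part'' identity \eqref{2-1-7},
\[
n \int_0^1 \frac{\bar U(r)}{r}\left|\frac{d}{dr}(r\psi_n)\right|^2 dr + n^3 \int_0^1 \bar U(r)|\psi_n|^2 r \, dr = -\Im \int_0^1 f_n \overline{\psi_n}\, r \, dr,
\]
and bound the right-hand side by splitting $f_n = inF^r_n - \frac{d}{dr}F^z_n$ and integrating the second piece by parts (the boundary contributions drop because $\psi_n(1)=0$ and $r\psi_n|_{r=0}=0$). Applying Cauchy--Schwarz then gives
\[
\left|\int_0^1 f_n \overline{\psi_n}\, r \, dr\right| \leq |n|\,\|F^r_n\|_{L^2(r\,dr)}\|\psi_n\|_{L^2(r\,dr)} + \|F^z_n\|_{L^2(r\,dr)}\left(\int_0^1 r^{-1}\left|\frac{d}{dr}(r\psi_n)\right|^2 dr\right)^{\frac12}.
\]
Combined with the bounds $\|\psi_n\|_{L^2(r\,dr)} \leq C(\epsilon_1)|n|^{-3}\|\BF^*_n\|_{L^2(\Omega)}$ and $\int_0^1 r^{-1}|\frac{d}{dr}(r\psi_n)|^2 dr \leq C(\epsilon_1)|n|^{-4}\|\BF^*_n\|_{L^2(\Omega)}^2$ extracted from \eqref{highf1}, this controls the right-hand side by $C(\epsilon_1)|n|^{-2}\|\BF^*_n\|_{L^2(\Omega)}^2$, which is exactly \eqref{highf2}.

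The main difficulty is modest: essentially nothing new needs to be proved once \eqref{2-1-11-new} is in hand, because in the regime $|n| \geq \epsilon_1 \sqrt{\Phi}$ the dissipative $n^4$ term in the energy identity dominates the convective $inU(r)$ term and the elementary energy method suffices. The delicate point is rather the packaging: the bound $|n|^{-2}$ is actually tighter than $(\Phi|n|)^{-\frac43}$ on a range of $|n|$, so the chain should be read as recording two bounds in a common format, one useful in its own right and the other chosen for later pairing with the Rayleigh/Airy boundary-layer analysis that governs the complementary low-frequency regime. No new ingredients beyond Lemma \ref{lemma1} and Lemma \ref{weightinequality}, already invoked in the proof of Proposition \ref{lemapri1-0}, should enter here.
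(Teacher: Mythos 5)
Your proposal is correct and coincides with the paper's own proof: the first line of \eqref{highf1} is read off from \eqref{2-1-11-new} using $\Phi |n|^{-2}\le \epsilon_1^{-2}$, and \eqref{highf2} follows by inserting that bound into the right-hand side of the identity \eqref{2-1-7} after the same integration by parts and Cauchy--Schwarz argument. One remark on the step you only gesture at: the displayed comparison $|n|^{-2}\le C(\epsilon_1)(\Phi|n|)^{-\frac43}$ is equivalent to $\Phi^{\frac43}|n|^{-\frac23}\le C(\epsilon_1)$ and therefore fails when $|n|$ is comparable to $\epsilon_1\sqrt{\Phi}$ with $\Phi$ large (the paper does not prove it either); what the hypothesis $\Phi\le \epsilon_1^{-2}n^2$ actually yields is $|n|^{-4}\le \epsilon_1^{-\frac83}(\Phi|n|)^{-\frac43}$, so the $(\Phi|n|)^{-\frac43}$ bound holds for the lower-order quantity $\int_0^1 \left|\frac{d}{dr}(r\psi_n)\right|^2\frac1r + n^2|\psi_n|^2 r\,dr$ obtained by dividing the first line by $n^2$, which is precisely the form used later in \eqref{9-2}.
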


\begin{proof} According to \eqref{2-1-11-new}, one has
\be \label{B11}
\ba
  \int_0^1 |\mathcal{L} \psi_n |^2 r  + n^2  \left| \frac{d}{dr} ( r \psi_n)    \right|^2 \frac{1}{r}
+ n^4 |\psi_n |^2 r \, dr
\leq & C (1 + \Phi |n|^{-2} ) |n|^{-2} \int_0^1 |\BF^*_n|^2 r \, dr  \\
 \leq & C (\epsilon_1) |n|^{-2} \int_0^1 |\BF^*_n|^2 r \, dr.
\ea
\ee
Taking \eqref{B11} into \eqref{2-1-7} gives
\be \label{B12} \ba
&  |n| \int_0^1 \frac{ \bar{U}(r) }{r} \left| \frac{d}{dr} ( r \psi_n ) \right|^2 \, dr +   |n|^3 \int_0^1 \bar{U}(r) |\psi_n |^2 r \, dr \\
\leq & C \left( \int_0^1 |\BF^*_n |^2 r \, dr  \right)^{\frac12} \left( \int_0^1 \left| \frac{d}{dr}( r \psi_n )  \right|^2 \frac1r + n^2 |\psi_n |^2 r \, dr             \right)^{\frac12} \\
\leq & C(\epsilon_1)  |n|^{-2} \int_0^1 |\BF^*_n |^2 r \, dr .
\ea
\ee
This completes the proof of Proposition \ref{Bpropcase2}.
\end{proof}

Let $\psi_{high}$ denote the high-frequency part of stream function, i.e.,
\be \nonumber
\psi_{high}(r, z) = \sum_{|n| \geq \epsilon_1 \sqrt{\Phi}} \psi_n(r) e^{inz}.
\ee
Define
\be \nonumber
v^r_{high} = \partial_z \psi_{high},\ \ \ \ v^z_{high} = - \frac{\partial_r ( r \psi_{high} )}{r}, \ \ \ \ \Bv^*_{high} = v^r_{high}\Be_r + v^z_{high}\Be_z.
\ee
One can define $F^r_{high}, F^z_{high}, \BF^*_{high}$, $\Bo_{high}^\theta $ in a similar way.

\begin{pro}\label{Bpropcase2-1} The solution $\Bv^*$  satisfies
\be \label{B16}
\|\Bv^*_{high} \|_{H^2(\Omega)} \leq C(\epsilon_1) \Phi^{\frac14} \|\BF^*_{high} \|_{L^2(\Omega)}
\ee
and
\be \label{B16-1}
\|\Bv^*_{high}\|_{H^{\frac53} (\Omega)} \leq C (\epsilon_1) \|\BF^*_{high}\|_{L^2(\Omega)},
\ee
where $C$ is a uniform constant independent of $\Phi$, $\alpha$, and $\BF$.
\end{pro}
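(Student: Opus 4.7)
I would view $\Bv^*_{high}$ as a strong solution of a Stokes system and upgrade the mode-by-mode estimates of Proposition~\ref{Bpropcase2} to a full $H^2$ bound by summation, after which interpolation yields the $H^{5/3}$ bound. Since the Fourier projection in $z$ commutes with every differential operator in \eqref{2-1-20}, $\Bv^*_{high}$ is a divergence-free strong solution of
\begin{equation*}
-\Delta \Bv^*_{high} + \nabla P_{high} = \BF^*_{high} - \bar U(r)\,\partial_z \Bv^*_{high} - v^r_{high}\,\partial_r\bBU \quad \text{in } \Omega,
\end{equation*}
with $\Bv^*_{high}\cdot\Bn=0$, the Navier slip condition and vanishing flux on $\partial\Omega$. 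Applying the same Stokes regularity plus axisymmetric trace argument as in \eqref{2-1-21}, together with the uniform bounds $\|\bar U\|_{L^\infty}+\|\partial_r \bBU\|_{L^\infty}\leq C\Phi$, yields
\begin{equation*}
\|\Bv^*_{high}\|_{H^2} \le C\bigl(\|\BF^*_{high}\|_{L^2} + \Phi\|\partial_z\Bv^*_{high}\|_{L^2} + \Phi\|v^r_{high}\|_{L^2} + \|\Bv^*_{high}\|_{H^1} + \|\Bv^*_{high}\|_{H^{3/2}(\partial\Omega)}\bigr).
\end{equation*}

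I would then bound each of the five terms by summing \eqref{highf1}--\eqref{highf2} over $|n|\geq\epsilon_1\sqrt\Phi$. The crude $|n|^{-2}$ bound combined with $|n|\geq\epsilon_1\sqrt\Phi$ immediately gives $\|\Bv^*_{high}\|_{H^1}\leq C(\epsilon_1)\Phi^{-1/2}\|\BF^*_{high}\|_{L^2}$. For the coupling terms $\Phi\|\partial_z\Bv^*_{high}\|_{L^2}$ and $\Phi\|v^r_{high}\|_{L^2}$ the $|n|^{-2}$ bound alone only produces $\Phi^{1/2}$, which is not sharp enough; the key point is to invoke the sharper $(\Phi|n|)^{-4/3}$ alternative in \eqref{highf1}, which after multiplication by $\Phi^2 n^2$ gives a per-mode estimate of order $\Phi^{2/3}|n|^{-4/3}\|\BF^*_n\|_{L^2}^2$, and summation over $|n|\geq\epsilon_1\sqrt\Phi$ absorbs the $\Phi^{2/3}$ prefactor against the $\Phi^{-2/3}$ generated by the summation. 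For the boundary trace I would use $v^r_n(1)=0$ together with the identity $\partial_r v^z_n=-\mathcal L\psi_n$ and the one-dimensional Gagliardo--Nirenberg trace inequality $|v^z_n(1)|^2\leq C\|v^z_n\|_{L^2_r}\|\partial_r v^z_n\|_{L^2_r}+C\|v^z_n\|_{L^2_r}^2$, which reduces $\sum(1+n^2)^{3/2}|v^z_n(1)|^2$ to quantities already controlled by Proposition~\ref{Bpropcase2}. Assembling the four pieces delivers \eqref{B16}, with the $\Phi^{1/4}$ factor serving as a comfortable safety margin.

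The $H^{5/3}$ bound \eqref{B16-1} then follows from interpolation. Choosing $\theta=2/3$ so that $5/3=(1-\theta)+2\theta$,
\begin{equation*}
\|\Bv^*_{high}\|_{H^{5/3}} \leq C\|\Bv^*_{high}\|_{H^1}^{1/3}\|\Bv^*_{high}\|_{H^2}^{2/3} \leq C(\epsilon_1)\,\Phi^{-1/6}\Phi^{1/6}\|\BF^*_{high}\|_{L^2}=C(\epsilon_1)\|\BF^*_{high}\|_{L^2},
\end{equation*}
so the two opposite powers of $\Phi$ cancel precisely. The step I expect to be the main obstacle is the uniform handling of the boundary trace in $\alpha$: the weighted boundary term $\alpha|v^z_n(1)|^2$ appearing in \eqref{2-1-11-new} degenerates as $\alpha\to 0^+$, so the $H^{3/2}(\partial\Omega)$ estimate must be obtained by a direct trace argument that is independent of $\alpha$, and this is precisely where the identity $\partial_r v^z_n=-\mathcal L\psi_n$ and the use of \emph{both} decay rates from Proposition~\ref{Bpropcase2} become essential.
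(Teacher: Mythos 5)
Your overall architecture (Stokes regularity for $\Bv^*_{high}$, mode-by-mode summation of Proposition~\ref{Bpropcase2}, trace theorem for axisymmetric functions, then interpolation between the $H^1$ bound with $\Phi^{-1/2}$ and the $H^2$ bound with $\Phi^{1/4}$) is the paper's, and your treatment of $\Phi\|v^r_{high}\|_{L^2}$, of the $H^1$ terms, and the final interpolation with $\theta=2/3$ are all correct. The gap is in the one term that actually produces the $\Phi^{1/4}$: the coupling term $\|\bar U(r)\,\partial_z\Bv^*_{high}\|_{L^2}$. You replace it by $\Phi\|\partial_z\Bv^*_{high}\|_{L^2}$ and then try to recover by invoking "the sharper $(\Phi|n|)^{-4/3}$ alternative" in \eqref{highf1}. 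But for the weighted quantity $n^2\int|\frac{d}{dr}(r\psi_n)|^2\frac1r+n^4\int|\psi_n|^2 r\,dr$ the bound $C(\Phi|n|)^{-4/3}\int|\BF^*_n|^2 r\,dr$ is \emph{not} a consequence of the bound $C|n|^{-2}\int|\BF^*_n|^2 r\,dr$ under the hypothesis $|n|\geq\epsilon_1\sqrt{\Phi}$: one checks that $|n|^{-2}\leq C(\Phi|n|)^{-4/3}$ forces $\Phi^2\leq C|n|$, which fails throughout the range $\epsilon_1\sqrt{\Phi}\leq|n|\ll\Phi^2$. (The second line of \eqref{highf1} is only legitimate after dividing by $n^2$, which is how it is used in \eqref{9-2}.) With the valid $|n|^{-2}$ bound, $\Phi\|\partial_z\Bv^*_{high}\|_{L^2}$ can only be controlled by $C(\epsilon_1)\Phi^{1/2}\|\BF^*_{high}\|_{L^2}$, which is too large; and your claimed conclusion, $C(\epsilon_1)\|\BF^*_{high}\|_{L^2}$ with no power of $\Phi$ at all, would render the $\Phi^{1/4}$ in \eqref{B16} superfluous, which should itself be a warning sign.

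The correct mechanism, which is the actual content of the paper's proof, is to keep one factor of $\bar U$ inside the integral and use the \emph{weighted} estimate \eqref{highf2}: writing $\bar U^2\leq C\Phi\,\bar U$, one gets
$\|\bar U\partial_z\Bv^*_{high}\|_{L^2}^2\leq C\Phi\sum_{|n|\geq\epsilon_1\sqrt\Phi}\bigl[n^4\int_0^1\bar U|\psi_n|^2r\,dr+n^2\int_0^1\tfrac{\bar U}{r}|\tfrac{d}{dr}(r\psi_n)|^2dr\bigr]\leq C\Phi\sum|n|^{-1}\int_0^1|\BF^*_n|^2r\,dr\leq C(\epsilon_1)\Phi^{1/2}\|\BF^*_{high}\|_{L^2}^2$,
which is exactly where $\Phi^{1/4}$ comes from. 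The weighted bound \eqref{highf2} is available precisely because $\bar U$ enters linearly in the imaginary part of the energy identity \eqref{2-1-7}; it is not recoverable from the unweighted \eqref{highf1}. Your concern about the boundary trace and the degeneration as $\alpha\to0^+$ is a non-issue here: the paper controls $\|\Bv^*_{high}\|_{H^{3/2}(\partial\Omega)}$ by $C\|\Bv^*_{high}\|_{H^1(\Omega)}+C\|\partial_z\Bv^*_{high}\|_{H^1(\Omega)}$ via the trace theorem for axisymmetric functions, with no reference to the $\alpha$-weighted boundary term in \eqref{2-1-11-new}.
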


\begin{proof}In fact, $\Bv^*_{high}$ is a strong solution to the following Stokes equations,
\be \label{stokes-high} \left\{  \ba
& - \Delta \Bv^*_{high} + \nabla P = \BF^*_{high} - \bar{U} \partial_z \Bv^*_{high} - v^r_{high} \partial_r \bBU \ \ \ \mbox{in}\ \Omega, \\
& {\rm div}~\Bv^*_{high} = 0\ \ \ \ \mbox{in}\ \Omega.
\ea \right.
\ee
By virtue of the regularity theory for Stokes equations (\cite[Lemma VI.1.2]{Galdi}) again, one has
\be \label{B17} \ba
\|\Bv^*_{high}\|_{H^2(\Omega)}
\leq &  C \|\BF^*_{high}\|_{L^2(\Omega)}
 + C  \| \bar{U}(r)  \partial_z \Bv^*_{high} \|_{L^2(\Omega)} + C \Phi \|v^r_{high}\|_{L^2(\Omega)}\\
&\ \ \ \  + C \|\Bv^*_{high}\|_{H^1(\Omega)} + C \|  \Bv^*_{high} \|_{H^{\frac32}(\partial \Omega)}\\
\leq &  \|\BF^*_{high}\|_{L^2(\Omega)}
+ C  \| \bar{U}(r)  \partial_z \Bv^*_{high} \|_{L^2(\Omega)} + C \Phi \|v^r_{high}\|_{L^2(\Omega)}\\
&\ \ \ \  + C \|\Bv^*_{high}\|_{H^1(\Omega)} + C \|  \partial_z \Bv^*_{high} \|_{H^1 ( \Omega)} .
\ea
\ee
It follows from \eqref{highf1} and \eqref{highf2} that
\be \label{B18} \ba
&  \| \bar{U}(r)  \partial_z \Bv^*_{high}\|_{L^2(\Omega)}  \\
\leq & C \left\{ \sum_{|n| \geq \epsilon_1 \sqrt{\Phi} } \Phi  \left[  \int_0^1 n^4 \bar{U}(r) |\psi_n |^2 r  + n^2 \frac{\bar{U}(r)}{r} \left|  \frac{d}{dr} ( r \psi_n )  \right|^2 \, dr   \right]    \right\}^{\frac12} \\
\leq & C \left( \sum_{|n| \geq \epsilon_1 \sqrt{\Phi} } \Phi |n|^{-1} \int_0^1 |\BF^*_n |^2 r \, dr  \right)^{\frac12}\\
\leq & C (\epsilon_1) \Phi^{\frac14} \|\BF^*_{high}\|_{L^2(\Omega)}
\ea
\ee
and
\be \label{B19} \ba
\Phi \|v^r_{high}\|_{L^2(\Omega)}&  \leq C \Phi \left( \sum_{|n| \geq \epsilon_1 \sqrt{\Phi} } n^2 \int_0^1 |\psi_n |^2 r \, dr   \right)^{\frac12} \leq C(\epsilon_1)  \|\BF^*_{high}\|_{L^2(\Omega)}.
\ea
\ee
Furthermore, following the same proof as for \eqref{2-1-16-0}, one has
\be \label{B20}
\begin{aligned}
\| v^z_{high}  \|_{ L^2 (\Omega)} \leq & C \left(  \sum_{|n|\geq \epsilon_1 \sqrt{\Phi} }  \int_0^1 \left| \frac{d}{dr} ( r\psi_n )  \right|^2 \frac1r \, dr  \right)^{\frac12} \\
 \leq & C \left( \sum_{|n| \geq \epsilon_1 \sqrt{\Phi} }  |n|^{-4} \int_0^1 |\BF^*_n |^2 r \, dr  \right)^{\frac12} \leq  C \Phi^{-1}  \| \BF^*_{high}\|_{L^2(\Omega)}
 \end{aligned}
\ee
and
\be \label{B20-1} \ba
\|  \Bv^*_{high} \|_{H^1 (\Omega) } & \leq  \| \Bo^\theta_{high}\|_{L^2(\Omega)}  + \|\Bv^*_{high} \|_{L^2(\Omega)} \\
& \leq C \left( \sum_{|n| \geq \epsilon_1 \sqrt{\Phi} }  \int_0^1 | (\mathcal{L} - n^2) \psi_n |^2 r \, dr         \right)^{\frac12}  + C \Phi^{-1} \|\BF^*_{high} \|_{L^2(\Omega)}\\
& \leq C(\epsilon_1) \Phi^{-\frac12} \|\BF^*_{high} \|_{L^2(\Omega)} .
\ea
\ee
Hence it holds
\be \label{B20-2} \ba
\|  \partial_z \Bv^*_{high} \|_{H^1(\Omega) } & \leq   \| \partial_z \Bo^\theta_{high}\|_{L^2(\Omega)} +  \|\partial_z \Bv^*_{high}\|_{L^2(\Omega)} \\
& \leq C \left( \sum_{|n| \geq \epsilon_1 \sqrt{\Phi} }  \int_0^1 n^2  | (\mathcal{L} - n^2) \psi_n |^2 r \, dr         \right)^{\frac12} + C \Phi^{-\frac12} \|\BF^*_{high} \|_{L^2(\Omega)} \\
& \leq C(\epsilon_1) \|\BF^*_{high} \|_{L^2(\Omega)} .
\ea \ee
Substituting  \eqref{B18}-\eqref{B20-2} into \eqref{B17} gives \eqref{B16}. The estimate \eqref{B16-1} is the consequence of
 the interpolation between \eqref{B16} and \eqref{B20-1}.
 This finishes the proof of Proposition \ref{Bpropcase2-1}.
\end{proof}

\subsection{Uniform estimate for the case with large flux  and  intermediate frequency}\label{sec-intermediate}
In this subsection, we give the uniform estimate for the solutions of \eqref{2-0-8} with respect to $\Phi$ and $\alpha$,  when the flux is large and the frequency is intermediate. The analysis in this case is much more involved and  inspired by \cite{M}. To get the uniform estimates, we decompose the stream function into several parts. The first part comes from the solution with the slip boundary condition, the second part is the boundary layer profile, the third part and the fourth part are the irrotational solutions and the remainder term. When $|\alpha|$ is small compared to $\Phi |n|$, the boundary layer  is an exponential function, while when $|\alpha|$ is large, the boundary layer is close to an Airy function. Due to the differences of boundary layer functions, we divide the analysis into three subcases: $|\alpha|$ is small, large, and intermediate.

\subsubsection{The case with large flux, intermediate frequency, and small slip coefficient}
\begin{pro}\label{Bpropcase3} Assume that $\Phi \gg 1$.
There exist two small constants $\epsilon_1 \in (0, 1)$ and $\delta \in (0, 1)$,  such that as long as $1
\leq |n| \leq \epsilon_1 \sqrt{\Phi} $, and $4 + \alpha \leq \delta (\Phi |n|)^{\frac13}$,
the solution $\psi_n (r)$ to the problem \eqref{2-0-8} can be decomposed into four parts,
\be \label{case3-1}
\psi_n (r) = \psi_{s, n} (r) +  b\left[ \chi \psi_{BL, n}(r) + \psi_{e, n} (r) \right]  + a I_1(|n|r) .
\ee
The properties of these four parts are summarized as follows.

 $(1)$\ $\psi_{s, n} $ is a solution to the following problem
\be \label{slip}
\left\{ \ba   & i n \bar{U}(r) ( \mathcal{L} - n^2) \psi_{s, n}  - (\mathcal{L} - n^2)^2 \psi_{s, n} = f_n = - \frac{d}{dr} F^z_n + i n F^r_n, \ \ \ \mbox{in}\ (0, 1), \\
& \psi_{s, n} (0) = \psi_{s, n} (1) = \mathcal{L} \psi_{s, n} (0) = \mathcal{L} \psi_{s, n}  (1) = 0,
\ea      \right.
\ee
which satisfies the estimates
\be \label{case3-3} \ba
& \int_0^1 |\psi_{s, n} |^2 r   +
 \left|\frac{d}{dr}(r \psi_{s, n})  \right|^2  \frac{1}{r}
+ n^2 \left| \psi_{s ,n} \right|^2 r \, dr
\leq  C (\Phi |n|)^{- 2} (4 +  \alpha)^2  \int_0^1 |\BF^*_n |^2 r \, dr ,
\ea
\ee
\be \label{case3-4} \ba
& \int_0^1 | \mathcal{L} \psi_{s, n} |^2 r  + n^2  \left|  \frac{d}{dr} ( r \psi_{s, n} )\right|^2 \frac1r  + n^4
 | \psi_{s, n} |^2  r \, dr
 \leq  C ( \Phi |n|)^{-1} (4 +  \alpha) \int_0^1 |\BF^*_n |^2 r \, dr ,
 \ea
\ee
\be \label{case3-5} \ba
& \int_0^1 \left| \frac{d}{dr}( r \mathcal{L} \psi_{s, n}  )\right|^2 \frac1r  +
n^2   |\mathcal{L} \psi_{s, n} |^2 r
+\frac{n^4}{r}  \left|  \frac{d}{dr}( r \psi_{s, n} )       \right|^2   + n^6  |\psi_{s, n} |^2 r  \, dr
\leq  C \int_0^1 |\BF^*_n |^2 r\, dr ,
\ea
\ee
and
\begin{equation}\label{case3-5-1}
\left| \frac{d}{dr} ( r \psi_{s, n} ) (1) \right| \leq C ( \Phi |n|)^{-\frac34} (4 + \alpha)^{\frac34} \left( \int_0^1 | \BF^*_n |^2 r \, dr\right)^{\frac12} .
\end{equation}

$(2)$\ $I_1(\rho)$ is the modified Bessel function of the first kind, i.e.,
\be \label{eqBessel1}
\left\{
\ba & \rho^2 \frac{d^2}{d\rho^2} I_1 (\rho ) + \rho \frac{d}{d\rho} I_1 (\rho)  - (\rho^2 + 1) I_1 (\rho ) = 0, \\
& I_1 (0) = 0,\quad I_1(\rho ) >0 \,\,\text{if}\,\, \rho >0.
\ea
\right.
\ee
Furthermore, $a$ is a constant satisfying
\be \label{case3-6}
|a| \leq C (\Phi |n |)^{-\frac74} (4+  \alpha)^{\frac{11}{4}}   I_1( |n| )^{-1} \left( \int_0^1 | \BF^*_n |^2 r \, dr      \right)^{\frac12}.
\ee

$(3)$ \ $ \psi_{BL, n }$ is the boundary layer profile defined by
\be \label{case3-7}
\psi_{BL, n}  (r) = e^{- \sqrt{\beta} \cos \frac{\theta}{2} (1 - r) } e^{-i \sqrt{\beta} \sin \frac{\theta}{2} (1 - r) },
\ee
where $\beta$ and $\theta$ are defined as follows
\be \label{defbeta}
\beta^2 = \left( \frac{\Phi n}{\pi} \right)^2 \left( \frac{4}{4 + \alpha} \right)^2 + n^4,\ \ \ \cos \theta = \frac{n^2}{\beta}, \ \ \text{and}\ \
\sin \theta = \frac{4\Phi n}{\pi (4 + \alpha) \beta}.
\ee
Moreover,  $\chi$ is a  smooth increasing cut-off function satisfying
\be \label{defchi}
\chi (r) = \left\{ \ba  &  1, \ \ \ \ r\geq \frac12,  \\ & 0, \ \ \  \ r \leq \frac14,   \ea  \right.
\ee
and the constant $b$ satisfies
\be \label{case3-8}
|b|\leq C (\Phi |n|)^{-\frac74} (4 +  \alpha)^{\frac{11}{4}}   \left( \int_0^1 |\BF^*_n |^2 r \, dr  \right)^{\frac12}.
\ee

$(4)$\ $\psi_{e, n} $ is a remainder term satisfying
\be \label{case3-9}
\left\{  \ba  &  i n  \bar{U}(r) ( \mathcal{L} - n^2) (\chi \psi_{BL, n}  + \psi_{e,n} ) - (\mathcal{L} - n^2)^2 (\chi \psi_{BL, n}+ \psi_{e, n} ) = 0,   \\
& \psi_{e, n} (0) = \psi_{e, n} (1) = \mathcal{L}\psi_{e, n} (0) = \mathcal{L} \psi_{e, n} (1) = 0.
\ea  \right.
\ee
 And $\psi_{e, n} $ satisfies the following estimates,
\be \label{case3-10}
\int_0^1 \left| \frac{d}{dr}(r \psi_{e, n}) \right|^2 \frac1r   + n^2  |\psi_{e, n} |^2 r \, dr  \leq C (4 +  \alpha)^2 \beta^{- \frac12} ,
\ee
\be \label{case3-11}
\int_0^1 |\mathcal{L} \psi_{e, n} |^2 r  + n^2  \left|\frac{d}{dr}(r \psi_{e, n}) \right|^2 \frac1r+ n^4  | \psi_{e, n} |^2 r\, dr
\leq C \Phi |n| (4 +  \alpha) \beta^{-\frac12},
\ee
and
\be \label{case3-12} \ba
& \int_0^1 \left| \frac{d}{dr}( r \mathcal{L} \psi_{e, n} ) \right|^2 \frac1r  + n^2   |\mathcal{L} \psi_{e, n} |^2 r +
n^4 \left| \frac{d}{dr}( r \psi_{e, n} )  \right|^2 \frac1r    + n^6  | \psi_{e, n} |^2 r\, dr \\
\leq  &  C (\Phi |n|)^2 \beta^{-\frac12}.
\ea
\ee

In conclusion, $\psi_n $ satisfies
\be \label{case3-13}
\int_0^1 \left| \frac{d}{dr}(r \psi_n)\right|^2 \frac1r  + n^2  |\psi_n|^2  r \, dr
\leq C (\Phi |n|)^{-\frac43} \int_0^1 |\BF_n^*|^2 r \, dr
\ee
and
\be \label{case3-15}
\ba
&\int_0^1 \left| \frac{d}{dr}( r \mathcal{L} \psi_n ) \right|^2 \frac1r  + n^2   |\mathcal{L} \psi_n |^2 r +
n^4  \left| \frac{d}{dr}( r  \psi_n)  \right|^2 \frac1r   + n^6 | \psi_n |^2 r\, dr
\leq  C \int_0^1 | \BF^*_n |^2 \, dr .
\ea
\ee

\end{pro}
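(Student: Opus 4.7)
The decomposition \eqref{case3-1} is designed so that $\psi_{s,n}$ absorbs the entire inhomogeneity $f_n$ under the auxiliary free-slip conditions $\psi_{s,n}(0)=\psi_{s,n}(1)=\mathcal{L}\psi_{s,n}(0)=\mathcal{L}\psi_{s,n}(1)=0$, while the homogeneous corrector $aI_1(|n|r) + b\chi\psi_{BL,n} + b\psi_{e,n}$ restores the true Navier condition $\mathcal{L}\psi_n(1)=-\alpha \frac{d}{dr}\psi_n(1)$. Here $I_1(|n|r)$ is the regular ``inviscid'' solution of $(\mathcal{L}-n^2)\psi=0$; the exponential boundary-layer profile $\psi_{BL,n}=e^{-\lambda(1-r)}$ with $\lambda=\sqrt{\beta}e^{i\theta/2}$ comes from the frozen-coefficient Orr--Sommerfeld equation at $r=1$ through the characteristic relation $\lambda^2-n^2=in\bar{U}(1)$, which recovers \eqref{defbeta}; the cutoff $\chi$ localizes $\psi_{BL,n}$ near $r=1$ so that $\chi\psi_{BL,n}(0)=0$; and $\psi_{e,n}$ is defined by \eqref{case3-9} precisely so that $\chi\psi_{BL,n}+\psi_{e,n}$ becomes an \emph{exact} homogeneous solution of the OS equation.

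For $\psi_{s,n}$, I would first test \eqref{slip} against $r\overline{\psi_{s,n}}$: because the free-slip boundary conditions make $(\mathcal{L}-n^2)^2$ self-adjoint, the integration-by-parts identity analogous to \eqref{2-1-3}--\eqref{2-1-2} loses the extra boundary term $\alpha|\frac{d}{dr}(r\psi)(1)|^2$, and the imaginary part of the resulting identity, combined with the pointwise bound $\bar{U}(r)\geq c(1-r^2)\Phi/(4+\alpha)$ in \eqref{2-1-7-1} together with the weighted Hardy-type inequalities (Lemmas \ref{lemma1} and \ref{weightinequality}), produces the gain of a factor $(4+\alpha)/(\Phi|n|)$ that appears in \eqref{case3-3}. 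Iterating the same procedure against $r\mathcal{L}\overline{\psi_{s,n}}$ and invoking radial elliptic regularity yields \eqref{case3-4} and \eqref{case3-5}; the trace estimate \eqref{case3-5-1} then follows by interpolating \eqref{case3-4} and \eqref{case3-5} and applying the one-dimensional Sobolev trace. The constants $a,b$ are determined by the $2\times 2$ scalar system coming from $\psi_n(1)=0$ (which, using $\psi_{BL,n}(1)=1$, $I_1(0)=0$, $\psi_{s,n}(1)=\psi_{e,n}(1)=0$, collapses to $a=-b/I_1(|n|)$) together with the Navier condition, whose leading coefficient is $\lambda^2-n^2 = in\bar{U}(1)\sim \Phi|n|/(4+\alpha)$. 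The hypothesis $4+\alpha\leq\delta(\Phi|n|)^{1/3}$ guarantees that the subleading perturbations $\alpha\lambda$, $\alpha|n|I_1'(|n|)/I_1(|n|)$, and $\alpha\frac{d}{dr}\psi_{e,n}(1)$ do not destroy invertibility, and \eqref{case3-5-1} then converts into the scalar bounds \eqref{case3-6} and \eqref{case3-8}.

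Finally, $\psi_{e,n}$ solves an inhomogeneous OS problem whose source equals $-[in(\bar U(r)-\bar U(1))(\mathcal{L}-n^2) + \text{commutators with }\chi]\psi_{BL,n}$; this source is concentrated in a strip of width $\sim\beta^{-1/2}$ and its weighted $L^2$ norm scales as $\Phi|n|\beta^{-1/4}/(4+\alpha)$. Re-running the self-adjoint energy estimates used for $\psi_{s,n}$, now with a source instead of an inhomogeneity, produces \eqref{case3-10}--\eqref{case3-12}. Combining the four pieces using $\beta\sim\Phi|n|/(4+\alpha)$ together with the bookkeeping relation $|b|^2\beta^{-1/2}\lesssim (\Phi|n|)^{-4/3}\|\BF^*_n\|_{L^2}^2$ yields the synthesized bounds \eqref{case3-13} and \eqref{case3-15}. \emph{The main obstacle} is the simultaneous tracking of the three small parameters $\beta^{-1}$, $4+\alpha$, and $(\Phi|n|)^{-1}$ across the four component estimates and through the scalar equation for $b$; in particular one must verify that $\delta$ can be chosen small enough so that the perturbative terms in the equation for $b$ never dominate the leading coefficient $in\bar U(1)$, uniformly in the intermediate regime where the boundary-layer mode and the inviscid Bessel corrector are of comparable weight.
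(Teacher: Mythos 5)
Your proposal follows essentially the same route as the paper: the same four-part decomposition with $\psi_{s,n}$ absorbing $f_n$ under the extra condition $\mathcal{L}\psi_{s,n}(1)=0$, the frozen-coefficient boundary layer with $\lambda^2-n^2=in\bar U(1)$, the remainder $\psi_{e,n}$ driven by the commutator/cut-off source, the $2\times2$ system for $a,b$ made invertible by $4+\alpha\le\delta(\Phi|n|)^{1/3}$, and the trace bound \eqref{case3-5-1} converting into \eqref{case3-6} and \eqref{case3-8}. One small correction: to obtain the full gain $(\Phi|n|)^{-2}(4+\alpha)^2$ in \eqref{case3-3} (which is needed, via interpolation, for the exponent $-\tfrac34$ in \eqref{case3-5-1}) the paper uses the \emph{uniform} lower bound $\bar U(r)\ge \frac{4\Phi}{\pi(4+\alpha)}$ together with Lemma \ref{lemma1} alone, rather than the degenerate bound $\bar U\gtrsim (1-r^2)\Phi/(4+\alpha)$ plus the weighted interpolation of Lemma \ref{weightinequality}, which is the mechanism reserved for the large-$\alpha$ case and would only yield a fractional power of the gain per step.
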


Before we give the proof of Proposition \ref{Bpropcase3}, we  study the linear problem \eqref{slip} in detail. The only difference between the problems \eqref{slip} and  \eqref{2-0-8} is that the vorticity vanishes on the solid boundary for the solutions of the problem \eqref{slip}. Define the function space
\be \nonumber
L_r^2(0, 1) = \left\{   f(r): \ \|f\|_{L_r^2(0, 1)}^2 = \int_0^1 |f|^2 r \, dr < +\infty            \right\}.
\ee
\begin{lemma}\label{propslip}
Given $f_n \in L_r^2(0, 1)$, the problem \eqref{slip} admits a unique solution $ \psi_{s, n} $  satisfying  the estimates
\be \label{propslip1-1}
\int_0^1 \left| \frac{d}{dr} ( r \psi_{s, n}  )  \right|^2 \frac1r  + n^2  | \psi_{s, n} |^2 r \, dr \leq C (\Phi |n|)^{-2} (4 +  \alpha)^2  \int_0^1 |f_n|^2 r \, dr ,
\ee
\be \label{propslip1-2} \ba
& \int_0^1 | \mathcal{L} \psi_{s, n} |^2 r  + n^2  \left|  \frac{d}{dr} ( r \psi_{s, n}  )\right|^2 \frac1r  + n^4
 | \psi_{s, n} |^2  r \, dr
 \leq  C ( \Phi |n|)^{-2} (4 +  \alpha)^2 \int_0^1 |f_n |^2 r \, dr ,
 \ea
\ee
\be \label{propslip1-3} \ba
& \int_0^1 \left| \frac{d}{dr}( r \mathcal{L} \psi_{s, n} )\right|^2 \frac1r  +
n^2  |\mathcal{L} \psi_{s, n}  |^2 r
+ n^4  \left|  \frac{d}{dr}( r \psi_{s, n} )       \right|^2 \frac1r  + n^6  | \psi_{s, n}  |^2  r \, dr\\
\leq &   C \left( \Phi |n|  \right)^{-1} (4 +  \alpha) \int_0^1 |f_n |^2 r\, dr ,
\ea
\ee
\be \label{propslip1-4}
\left| \frac{d}{dr}(r \psi_{s, n} ) (1) \right| \leq C (\Phi |n|)^{-1} (4 +  \alpha) \left( \int_0^1 |f_n|^2 r \, dr \right)^{\frac12}.
\ee
Moreover, if $f_n = in F_n^r - \frac{d}{dr} F_n^z $ and $F_n^r, F_n^z \in L_r^2(0, 1 )$, the solution $\psi_{s, n} $ also satisfies the estimates \eqref{case3-3}--\eqref{case3-5-1}.

\end{lemma}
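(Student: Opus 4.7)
The plan is to prove Lemma \ref{propslip} by running the energy method of Proposition \ref{lemapri1-0} on the simpler problem \eqref{slip}, where the extra boundary condition $\mathcal{L}\psi_{s,n}(1)=0$ replaces the Navier datum $\mathcal{L}\psi_{s,n}(1)=-\alpha\frac{d}{dr}\psi_{s,n}(1)$; existence follows by the Galerkin scheme analogous to the one in the paper's second appendix, so the content lies in the four a priori estimates.

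Testing \eqref{slip} against $r\overline{\psi_{s,n}}$ and separating real and imaginary parts produces identities parallel to \eqref{2-1-5} and \eqref{2-1-7}, but with the crucial simplification that the boundary term $\alpha|\partial_r(r\psi)(1)|^2$ is absent from the real part. The imaginary part, combined with the lower bound $\bar{U}(r)\geq \Phi(1-r^2)/\pi$ obtained from \eqref{2-1-7-1}, yields the weighted estimate
$$\Phi|n|\int_0^1 \frac{1-r^2}{r}\Bigl|\frac{d(r\psi_{s,n})}{dr}\Bigr|^2 dr + \Phi|n|^3 \int_0^1 (1-r^2)|\psi_{s,n}|^2 r\,dr \leq C\Bigl|\int_0^1 f_n\overline{\psi_{s,n}}\, r\,dr\Bigr|,$$
exactly as in \eqref{2-1-8}. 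Applying Lemma \ref{weightinequality} (in the form used in \eqref{2-1-9}) to interpolate between this weighted bound and the unweighted $H^2$-type control from the real-part identity, and using Young's inequality to absorb the $\alpha$-coupling term $\frac{4\Phi\alpha n}{\pi(4+\alpha)}\Im\int \partial_r(r\psi_{s,n})\,r\overline{\psi_{s,n}}\,dr$ into the left-hand side, one arrives at \eqref{propslip1-1} and \eqref{propslip1-2}; the powers of $(4+\alpha)$ on the right-hand side arise precisely from the size of this $\alpha$-coupling.

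For the higher regularity \eqref{propslip1-3}, I would set $\omega_{s,n}=(\mathcal{L}-n^2)\psi_{s,n}$, which vanishes at both endpoints thanks to the simpler boundary data, and rewrite \eqref{slip} as the second-order Dirichlet problem $(\mathcal{L}-n^2)\omega_{s,n}=in\bar{U}(r)\omega_{s,n}-f_n$; elliptic regularity for this problem, combined with the $L_r^2$-control of $\omega_{s,n}$ from \eqref{propslip1-2} and the pointwise bound $|\bar{U}|\leq C\Phi/(4+\alpha)$, controls the $H^1$-type norms listed in \eqref{propslip1-3}. The trace bound \eqref{propslip1-4} then follows from applying the $H^2$-trace theorem to $\partial_r(r\psi_{s,n})$ together with \eqref{propslip1-2}. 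Finally, when $f_n=inF_n^r-\frac{d}{dr}F_n^z$, integration by parts in $\int f_n\overline{\psi_{s,n}} r\,dr$ transfers the $r$-derivative onto $r\overline{\psi_{s,n}}$, trading $\|f_n\|_{L_r^2}$ for $\|\BF_n^*\|_{L_r^2}$ and supplying the additional decay that produces \eqref{case3-3}--\eqref{case3-5-1}. The main obstacle I anticipate is tracking the precise powers of $\Phi|n|$ and $4+\alpha$ throughout: although the weighted imaginary identity is generous near the axis and the unweighted real identity controls the boundary region, the interpolation in Lemma \ref{weightinequality} must be calibrated exactly, and the absorption of the $\alpha$-coupling term must be done without losing an unwanted factor of $4+\alpha$, especially for the sharper bound $(\Phi|n|)^{-1}(4+\alpha)$ in \eqref{propslip1-3}.
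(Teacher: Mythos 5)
Your overall skeleton (energy identities from testing against $r\overline{\psi_{s,n}}$ and against the vorticity, existence by Galerkin, integration by parts to trade $f_n$ for $\BF_n^*$) is in the right family, but the mechanism you propose for the decay rates is not the one that works, and it would not produce the stated powers. The essential ingredient of the paper's proof of \eqref{propslip1-1}--\eqref{propslip1-2} is the \emph{uniform} positive lower bound $\bar U(r)\geq \frac{\Phi}{\pi}\frac{4}{4+\alpha}$ on all of $[0,1]$ (valid since $\alpha\geq 0$ and $\bar U$ attains its minimum at $r=1$). Plugging this into the imaginary-part identities \eqref{5-51} and \eqref{B-55} gives directly
\begin{equation}\nonumber
\frac{\Phi|n|}{4+\alpha}\int_0^1 \Bigl|\frac{d}{dr}(r\psi_{s,n})\Bigr|^2\frac{1}{r}+n^2|\psi_{s,n}|^2 r\,dr\leq C\Bigl|\int_0^1 f_n\overline{\psi_{s,n}}\,r\,dr\Bigr|,\qquad \frac{\Phi|n|}{4+\alpha}\int_0^1|(\mathcal{L}-n^2)\psi_{s,n}|^2 r\,dr\leq C\Bigl|\int_0^1 f_n(\mathcal{L}-n^2)\overline{\psi_{s,n}}\,r\,dr\Bigr|,
\end{equation}
and Cauchy--Schwarz then yields the full $(\Phi|n|)^{-2}(4+\alpha)^{2}$ decay; this is where every factor of $4+\alpha$ comes from. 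You instead use only the degenerate bound $\bar U\geq\Phi(1-r^2)/\pi$, pass through the weighted estimate \eqref{2-1-8}, interpolate via Lemma \ref{weightinequality}, and absorb the coupling term $\frac{4\Phi\alpha n}{\pi(4+\alpha)}\Im\int \partial_r(r\psi)\,r\overline{\psi}\,dr$ by Young's inequality, attributing the $(4+\alpha)$ powers to that coupling. That route is exactly the one the paper reserves for the \emph{large}-$\alpha$ regime (Lemma \ref{propslip-large}), and it can only deliver $(\Phi|n|)^{-5/3}$-- and $(\Phi|n|)^{-4/3}$--type rates, because the weight $1-r^2$ vanishes at the boundary; it cannot recover $(\Phi|n|)^{-2}$, which is strictly stronger for bounded $\alpha$. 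The coupling term is not the source of the $(4+\alpha)$ factors at all (it vanishes when $\alpha=0$, yet the bound still carries a factor $(4+0)^2$).

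Two further problems. First, your claimed pointwise bound $|\bar U|\leq C\Phi/(4+\alpha)$ is false: $\bar U(0)=\frac{4+2\alpha}{4+\alpha}\frac{\Phi}{\pi}$, so $\|\bar U\|_{L^\infty}\sim\Phi$ uniformly in $\alpha$; the quantity $\Phi/(4+\alpha)$ is (up to constants) a \emph{lower} bound. Second, for \eqref{propslip1-3} you propose to treat $in\bar U\omega_{s,n}$ as a source term in an elliptic-regularity estimate for $(\mathcal{L}-n^2)\omega_{s,n}$. Measured in $L^2_r$ that source has size $\Phi|n|\,\|\omega_{s,n}\|_{L^2_r}$, and feeding in \eqref{propslip1-2} then loses the decay (or, with your incorrect upper bound on $\bar U$, still only gives an $O(1)$ bound rather than the required $(\Phi|n|)^{-1}(4+\alpha)$). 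The point is that this term is skew: testing the equation against $r(\mathcal{L}-n^2)\overline{\psi_{s,n}}$, the contribution $in\int_0^1\bar U|(\mathcal{L}-n^2)\psi_{s,n}|^2 r\,dr$ is purely imaginary, so the real part of the identity \eqref{B-51} controls the left-hand side of \eqref{propslip1-3} by $|\int f_n(\mathcal{L}-n^2)\overline{\psi_{s,n}}\,r\,dr|$ alone, and substituting the $H^2$ bound \eqref{B66-3} gives the stated rate. You need to restructure the argument around the uniform lower bound on $\bar U$ and the skew-adjointness of the convection term rather than around weighted interpolation and source-term estimates.
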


\begin{proof}  {\it Step 1. Proof of \eqref{case3-3}--\eqref{case3-5-1}.}
Multiplying the equation in \eqref{slip} by $r (\mathcal{L}- n^2) \overline{\psi_{s, n} } $ and integrating over $[0, 1]$ yield
\be \label{B-51}
\ba
& \int_0^1 \left|\frac{d}{dr}(r\mathcal{L} \psi_{s, n}) \right|^2  \frac{1}{r} \, dr
+3 n^2 \int_0^1 \left|\mathcal{L} \psi_{s, n} \right|^2 r \, dr
+ 3 n^4 \int_0^1 \left| \frac{d}{dr} ( r \psi_{s, n} )\right|^2 \frac1r \, dr\\
&+ n^6 \int_0^1 | \psi_{s, n}|^2 r \, dr
 + i n \int_0^1 \bar{U}(r) | (\mathcal{L} - n^2) \psi_{s, n}  |^2  r \, dr
= \int_0^1 f_n  ( \mathcal{L} - n^2) \overline{ \psi_{s, n}  } r\, dr .
\ea
\ee
Note that
\begin{equation} \nonumber
\begin{aligned}
\left|\int_0^1 f_n  \mathcal{L}\overline{ \psi_{s, n} } r \, dr\right|\leq & \left|\int_0^1 F^r_n  i n \mathcal{L}\overline{ \psi_{s, n} }r + F^z_n
\frac{d}{dr}(r\mathcal{L}\overline{ \psi_{s, n} })\, dr\right|\\
\leq & 2 \int_0^1 | \BF^*_n |^2 r \, dr + \frac14   \int_0^1  \left|\frac{d}{dr}(r\mathcal{L} \psi_{s, n }) \right|^2 \frac1r  \, dr
+\frac14 n^2 \int_0^1 \left|\mathcal{L} \psi_{s, n}  \right|^2 r \, dr
\end{aligned}
\end{equation}
and
\begin{equation} \nonumber
\begin{aligned}
\left|\int_0^1 f_n n^2 \overline{\psi_{s, n} } r \, dr\right|\leq & \left|\int_0^1 F^r_n  in^3 \overline{\psi_{s, n} }r + F^z_n  n^2\frac{d}{dr} (r\overline{ \psi_{s, n} })dr\right|\\
\leq & 2 \int_0^1 | \BF^*_n |^2 rdr + \frac14 n^4 \int_0^1 \left|\frac{d}{dr}(r \psi_{s, n} ) \right|^2 \frac1r  \, dr
+\frac14 n^6 \int_0^1 \left| \psi_{s, n}  \right|^2 r \, dr.
\end{aligned}
\end{equation}
Hence one has the estimate \eqref{case3-5}.

On the other hand, it holds that
\be \label{B-55}
n \int_0^1 \bar{U}(r) |(\mathcal{L} - n^2 ) \psi_{s, n}  |^2 r \, dr = \Im \int_0^1 f_n ( \mathcal{L} - n^2) \overline{ \psi_{s, n} } r \, dr .
\ee
Since $\bar{U}(r) \geq \frac{\Phi}{\pi} \frac{4}{4 +  \alpha} $, the above expression \eqref{B-55} together with \eqref{case3-5} implies
\be \nonumber
\int_0^1 | (\mathcal{L} - n^2 ) \psi_{s, n}  |^2 r \, dr \leq C (\Phi |n|)^{-1}(4 +  \alpha) \int_0^1 | \BF^*_n |^2 r \, dr .
\ee
This is exactly the estimate \eqref{case3-4}.

Moreover,  multiplying the equation in \eqref{slip} by $r\overline{ \psi_{s, n} } $ and integrating over $[0, 1]$ yield
\be \label{5-51}
n \int_0^1 \frac{\bar{U}(r)}{r} \left|\frac{d}{dr} ( r \psi_{s, n} )   \right|^2 \, dr + n^3
\int_0^1 \bar{U}(r) |\psi_{s, n}|^2 r \, dr = - \Im \int_0^1 f_n \overline{ \psi_{s,n} } r \, dr. \ee
Hence one has
\be \label{B66-1}  \ba
& \Phi |n| \frac{4}{4 + \alpha} \left[  \int_0^1 \left| \frac{d}{dr}(r \psi_{s, n} )  \right|^2 \frac1r \, dr + n^2 \int_0^1 | \psi_{s, n} |^2 r \, dr \right] \\
\leq &  C \left( \int_0^1 | \BF^*_n |^2 r \, dr  \right)^{\frac12} \left[    \int_0^1 \left| \frac{d}{dr}(r \psi_{s, n})  \right|^2 \frac1r \, dr + n^2 \int_0^1 | \psi_{s, n} |^2 r \, dr    \right]^{\frac12} .
\ea
 \ee
 This, together with Lemma \ref{lemma1}, yields \eqref{case3-3}.

Furthermore, it follows from  Lemma \ref{lemmaA2} that one has
\begin{equation} \nonumber \ba
\left|\frac{d}{dr}(r \psi_{s, n} ) (1) \right| & \leq C \left( \int_0^1 |\mathcal{L} \psi_{s, n}  |^2 r \, dr    \right)^{\frac{1}{4}} \left( \int_0^1 \left|  \frac{d}{dr}(r \psi_{s, n}  ) \right|^2 \frac1r \, dr  \right)^{\frac14} \\
&\leq  C (\Phi |n|)^{-\frac34} (4 + \alpha)^{\frac34}  \left( \int_0^1 | \BF^*_n |^2 r \, dr \right)^{\frac12}.
\ea
\end{equation}

{\it Step 2. Proof of  \eqref{propslip1-1}--\eqref{propslip1-4}.  }
 Next, we estimate $\psi_{s, n}$ in terms of $f_n$. According to \eqref{B-55}, it holds that
\be \label{B66-3}
\int_0^1 | ( \mathcal{L} - n^2 ) \psi_{s, n} |^2 r \, dr \leq C (\Phi |n | )^{-2}(4  +  \alpha)^2 \int_0^1 | f_n |^2 r\, dr.
\ee
This is exactly the estimate \eqref{propslip1-2}, which together with Lemma \ref{lemma1} gives \eqref{propslip1-1}.
And consequently, by virtue of Lemma \ref{lemmaA2}, one has
\be \nonumber
\left|\frac{d}{dr}(r\psi_{s, n} )(1) \right| \leq C (\Phi |n|)^{-1 } (4  +  \alpha) \left( \int_0^1 | f_n |^2 r \, dr\right)^{\frac12}.
\ee

Substituting the estimate \eqref{B66-3} into \eqref{B-51} yields \eqref{propslip1-3}.

The existence of the solution $\psi_{s, n} $ follows from  similar arguments as in Section \ref{sec-ex}. Hence the proof of Lemma \ref{propslip} is completed.
\end{proof}

Now we give the proof of Proposition \ref{Bpropcase3}.

\begin{proof}[Proof of Proposition \ref{Bpropcase3}]
 Let $ \psi_{s, n} $ denote the solution to \eqref{slip}. Note that $ \psi_{s, n} $ satisfies the same equation as $ \psi_n $, but with a different boundary condition. The Navier slip boundary condition is recovered by the boundary layer analysis.
Define
\be \label{defA}
\mcA = i n \bar{U}(r) - \mathcal{L} + n^2 , \ \ \ \ \mcH= \mathcal{L} - n^2,
\ee
and
\be \label{deftildeA}
\widetilde{\mcA} = i \frac{ \Phi n }{\pi} \frac{4}{4 +  \alpha}  - \frac{d^2}{dr^2} + n^2,\ \ \ \ \widetilde{\mcH}= \frac{d^2}{dr^2} - n^2.
\ee
$\widetilde{\mathcal{A}}$ and $\widetilde{\mathcal{H}}$ can be regarded as the leading parts of $\mcA$ and $\mcH$ near the solid boundary $r=1$, respectively.
The straightforward computations show that $ \psi_{BL, n} $ satisfies
\be \nonumber
\widetilde{\mcA}\widetilde{\mcH} \psi_{BL, n}  = 0 .
\ee
Let $\psi_{BL,n}$ be defined in \eqref{case3-7} with $\beta$ and $\theta$ in \eqref{defbeta}.

Next, we construct the remainder term $\psi_{e, n} $, which is the solution to the following problem
\be \label{5-81}
\left\{  \ba
&  i n \bar{U}(r) ( \mathcal{L} - n^2) \psi_{e, n} - (\mathcal{L} - n^2)^2 \psi_{e, n} = \widetilde{\mcA}\widetilde{\mcH} ((1 - \chi) \psi_{BL, n}  ) + (\widetilde{\mcA}\widetilde{\mcH} - \mcA\mcH) ( \chi \psi_{BL, n}  ) ,\\
&  \psi_{e, n} (0) = \psi_{e, n} (1) = \mathcal{L} \psi_{e, n} (0) = \mathcal{L} \psi_{e, n} (1) = 0,
\ea
\right.
\ee
where $\chi$ is a smooth cut-off function satisfying \eqref{defchi}.
Regarding the right hand of \eqref{5-81}, one has
\be \label{5-83} \ba
&( \widetilde{\mcA}\widetilde{\mcH} - \mcA\mcH ) (\chi \psi_{BL, n}  )   =
(\widetilde{\mcA} - \mcA) \mcH ( \chi \psi_{BL, n}  )  + \widetilde{\mcA} ( \widetilde{\mcH} - \mcH) ( \chi \psi_{BL, n}  ) \\
 = &\left( i \frac{ \Phi n }{\pi} \frac{ - 2\alpha}{4 +  \alpha} (1 - r^2)  + \frac1r \frac{d}{dr} - \frac{1}{r^2}  \right)
\left( \frac{d^2}{dr^2} + \frac1r \frac{d}{dr} - \frac{1}{r^2} - n^2 \right) ( \chi \psi_{BL, n} ) \\
&\ \  \ + \left(  i \frac{ \Phi n }{\pi } \frac{4}{4 + \alpha}  - \frac{d^2}{dr^2} + n^2     \right) \left( - \frac1r \frac{d}{dr} +
\frac{1}{r^2}  \right) ( \chi \psi_{BL, n} ).
\ea
\ee
Denote $F_{e,n}^z$, $F_{e,n}^r$, and $f_{e,n}$ as follows
\be \label{83-2}
\begin{aligned}
F_{e, n}^z = &  i \frac{\Phi n }{\pi } \frac{2\alpha }{4 + \alpha} (1 -  r^2) \frac{d}{dr} (\chi \psi_{BL, n} )
-  i \frac{\Phi n}{\pi} \frac{ 4\alpha}{4 + \alpha} r \chi \psi_{BL, n} \\
 & \ + i \frac{\Phi n }{\pi}\frac{4}{4 + \alpha} \frac1r \chi \psi_{BL, n}
   -  \frac2r \frac{d^2}{dr^2} (\chi \psi_{BL, n} ) - \frac{1}{r^2} \frac{d}{dr} (\chi \psi_{BL, n} ),
\end{aligned}
\ee
\be \label{83-4}
F_{e, n}^r =  \frac{\Phi n^2}{\pi} \frac{2\alpha}{4 + \alpha} (1 - r^2) (\chi \psi_{BL, n}) +  2i n \frac{1}{r} \frac{d}{dr}(\chi \psi_{BL, n} ),
\ee
and
\be \nonumber
\ba
f_{e,n}
= &  i \frac{\Phi n}{\pi} \frac{4\alpha}{4 + \alpha} \chi \psi_{BL, n}    - i \frac{\Phi n }{\pi} \frac{4}{4 + \alpha} \frac{1}{r^2} \chi \psi_{BL, n}  + \frac{1}{r^2} \frac{d^2}{dr^2} (\chi \psi_{BL, n} ) \\
&\ - \frac{1}{r^2} \left( \frac{d^2}{dr^2} + \frac{1}{r} \frac{d}{dr}- \frac{1}{r^2} - n^2  \right) (\chi \psi_{BL, n})
 + \left( i \frac{\Phi n}{\pi} \frac{4}{4 + \alpha} - \frac{d^2}{dr^2} + n^2  \right) \frac{1}{r^2} (\chi \psi_{BL, n} ) \\
&\  + \left[ - i \frac{\Phi n}{\pi} \frac{ 2\alpha}{4 + \alpha} (1  - r^2) + \frac{1}{r} \frac{d}{dr}  \right] \left( \frac{1}{r} \frac{d}{dr}- \frac{1}{r^2} \right)  (\chi \psi_{BL, n}) .
\ea \ee
Then one has
\be  \nonumber
\ba
&( \widetilde{\mcA}\widetilde{\mcH} - \mcA\mcH ) (\chi \psi_{BL, n}  )   =
- \frac{d}{dr}F_{e,n}^z + in F_{e, n}^r + f_{e, n}.
\ea
\ee

 Note that $\theta \in (-\frac{\pi}{2},  \frac{\pi}{2} )$ and
\be \nonumber
\beta^2 = \left( \frac{\Phi n }{\pi} \right)^2 \left( \frac{4}{4 + \alpha} \right)^2 + n^4 \leq C \frac{\Phi^2 n^2}{(4 + \alpha)^2}.
\ee
 Thus
$
\cos \frac{\theta}{2} \geq \frac{\sqrt{2} }{2}
$
and
\be \label{83-7} \ba
 \int_0^1 | F_{e, n}^z |^2 r \, dr
 \leq & C \Phi^2 n^2 \int_0^1 (1 - r)^2  \left| \frac{d}{dr}(\chi \psi_{BL, n}) \right |^2 r + |\chi \psi_{BL, n} |^2 \frac1r \, dr \\
&\,\, + C\int_0^1 \left| \frac{d^2}{dr^2}(\chi \psi_{BL, n} )  \right|^2 \frac1r \, dr  + C \int_0^1 \left| \frac{d}{dr} ( \chi \psi_{BL, n} ) \right|^2 \frac{1}{r^3} \, dr \\
\leq & C \Phi^2 n^2 \left[\beta \left(\sqrt{\beta} \cos \frac{\theta}{2} \right)^{-3} +  \sqrt{\beta} e^{-\frac32 \sqrt{\beta} \cos \frac{\theta}{2}} +  \left(\sqrt{\beta} \cos \frac{\theta}{2}\right)^{-1}\right]  \\
&\ \ \ \ \   + C \beta^{\frac32} + C \sqrt{\beta} e^{-\sqrt{\beta} \cos \frac{\theta}{2} }  \\
\leq & C \Phi^2 n^2 \beta^{-\frac12}.
\ea \ee
Similarly, one has
\be \label{83-8} \ba
\int_0^1 | F_{e, n}^r|^2 r \, dr
 \leq & C \Phi^2 n^4 \left(\sqrt{\beta} \cos \frac{\theta}{2} \right)^{-3}  + C n^2 \beta^{\frac12} + C n^2 \left(\sqrt{\beta} \cos \frac{\theta}{2} \right)^{-1} e^{-\sqrt{\beta} \cos \frac{\theta}{2} }\\
 \leq & C \Phi^2 n^2 \beta^{-\frac12}
\ea
\ee
and
\be \label{83-9}
\int_0^1 | f_{e, n}|^2 r\, dr  + \int_0^1 | \widetilde{\mathcal{A}}\widetilde{\mathcal{H}} (1 - \chi ) \psi_{BL, n} |^2 r \, dr \leq C \Phi^2 n^2 \beta^{-\frac12}.
\ee

It follows from Lemma \ref{propslip} that one has
\be \label{5-84} \ba
& \int_0^1 | \psi_{e, n} |^2 r \, dr + \int_0^1 \left| \frac{d}{dr}( r \psi_{e, n}  )   \right|^2 \frac1r \, dr + n^2 \int_0^1 | \psi_{e,n} |^2 r \, dr \\
\leq &  C (\Phi |n|)^{-2} (4 + \alpha)^2 \Phi^2 n^2 \beta^{-\frac12} \\
\leq & C (4 + \alpha)^2 \beta^{-\frac12}.
\ea
\ee
Furthermore,  combining \eqref{propslip1-2} and \eqref{case3-4} gives
\be \label{5-85} \ba
& \int_0^1 | \mathcal{L} \psi_{e, n}  |^2 r\, dr + n^2 \int_0^1 \left| \frac{d}{dr} ( r \psi_{e, n} ) \right|^2 \frac1r \, dr + n^4 \int_0^1 |\psi_{e,n}|^2 r\, dr \\
\leq & C (\Phi|n|)^{-1} ( 4 + \alpha) \Phi^2 n^2 \beta^{-\frac12}
\leq  C \Phi |n| (4 + \alpha) \beta^{-\frac12}.
\ea
\ee
Similarly, it follows from the estimates \eqref{propslip1-3} and \eqref{case3-5} that one has
\be \label{5-86}\ba
& \int_0^1 \left| \frac{d}{dr} ( r \mathcal{L} \psi_{e, n} ) \right|^2 \frac1r  + n^2 |\mathcal{L} \psi_{e, n} |^2 r  + n^4  \left| \frac{d}{dr} ( r \psi_{e, n} ) \right|^2 \frac1r
 + n^6  | \psi_{e, n}|^2 r\, dr \\
\leq &  C( \Phi |n|)^2 \beta^{-\frac12} .
\ea
\ee
Moreover,
\be \label{5-87}
\left| \frac{d}{dr}( \psi_{e, n} ) (1)  \right|  \leq C (\Phi |n|)^{-\frac34} (4 + \alpha)^{\frac34} \Phi|n| \beta^{-\frac14}
\leq C (\Phi |n|)^{\frac14} ( 4 + \alpha)^{\frac34} \beta^{-\frac14}.
\ee

Finally, for the modified Bessel function of the first kind,  $I_1(z)$, which satisfies \eqref{eqBessel1}, the straightforward computations yield
\begin{equation}\nonumber
	\mcH I_1(|n|r)=0.
\end{equation}
To guarantee that $\psi_n$ satisfies the boundary conditions in \eqref{2-0-8}, one must have
\be \nonumber
\psi_n(1) = 0\ \ \ \ \ \mbox{and}\ \ \ \ \mathcal{L} \psi_n (1) = - \alpha \psi_n^{\prime}(1),
\ee
Hence the constants $a$ and $b$ must satisfy
\be \label{5-87} \left\{ \ba
& a I_1 (|n|) + b \psi_{BL, n}  (1) = 0, \\
& a\left[ |n |^2 I_1(|n|) +  \alpha |n| I_1^{\prime}(|n| ) \right] \\
&+  b \left[ \frac{d^2}{dr^2} \psi_{BL, n} (1) +  (1  + \alpha) \frac{d}{dr} \psi_{BL, n} (1)  - \psi_{BL, n} (1) +  \alpha \frac{d}{dr} \psi_{e, n} (1)  \right] =  - \alpha \frac{d}{dr} \psi_{s, n} (1) .    \ea
\right.
\ee
Solving the linear system \eqref{5-87} yields
\be \label{5-88}
b=  \frac{ -  \alpha \frac{d}{dr} \psi_{s, n}  (1)}{J}\ \ \ \ \text{and}\quad  a = -\frac{b \psi_{BL, n} (1) }{I_1(|n|)},
\ee
where
\be \label{5-89}
\begin{aligned}
J = & - \psi_{BL, n} (1) \left[ |n|^2 +  \alpha |n| \frac{I_1^{\prime}(|n|)}{I_1 (|n|)} \right] \\
&+ \left[ \frac{d^2\psi_{BL, n}}{dr^2}  (1) + (1 +  \alpha) \frac{d \psi_{BL, n} }{dr}(1) - \psi_{BL, n} (1) +  \alpha \frac{d\psi_{e, n} }{dr}  (1)   \right].
\end{aligned}
\ee
Herein, since $\beta > \frac{\Phi |n| }{\pi} \frac{4}{4 + \alpha}  $ and $4+  \alpha \leq \delta (\Phi |n| )^{\frac13}$, one has
\be \nonumber \ba
& \left|  \frac{d^2\psi_{BL, n}  }{dr^2} (1) + (1 +  \alpha) \frac{d\psi_{BL, n}}{dr}  (1) - \psi_{BL, n}(1) +  \alpha \frac{d  \psi_{e, n}}{dr} (1) \right| \\
\geq & \beta -  (1 + \alpha) \beta^{\frac12} - 1 - C \alpha (4 +  \alpha)^{\frac34}(\Phi |n|)^{\frac14} \beta^{-\frac14}  , \\
\geq &  \beta -  (1 + \alpha) \beta^{\frac12} - 1 - C (4 + \alpha)^{\frac74} (\Phi |n|)^{\frac14} \beta^{-\frac14}\\
\geq & \beta -  (1 + \alpha) \beta^{\frac12} - 1 - C \delta^3 \beta.
\ea \ee
It follows from Lemma \ref{lemBessel} that
\be \nonumber
\left| - \psi_{BL, n}(1) \left[ |n|^2 +  \alpha |n| \frac{I_1^{\prime}(|n|)}{I_1 (|n|)} \right] \right|
\leq  n^2 +  |\alpha| (1 + |n|) .
\ee
Hence, if $\epsilon_1$ and $\delta$ are small enough, then $\beta$ is large, and
\be \nonumber
|J | \geq \frac18 \beta.
\ee
Therefore, according to Lemma \ref{propslip}, one has
\be \nonumber \ba
|b| & \leq C |\alpha| \left| \frac{d}{dr} \psi_{s, n} (1)\right| \beta^{-1}
\leq C \alpha \beta^{-1} (\Phi |n| )^{-\frac34} (4 + \alpha)^{\frac34} \left( \int_0^1 |\BF^*_n|^2 r \, dr \right)^{\frac12} \\ & \leq C (4 + \alpha)^{\frac{11}{4}} ( \Phi |n|)^{- \frac74}  \left( \int_0^1  |\BF^*_n |^2 r \, dr \right)^{\frac12}
 \ea
\ee
and
\be
|a| \leq C (4 +  \alpha)^{\frac{11}{4}} (\Phi |n| )^{-\frac74}  I_1(|n|)^{-1} \left( \int_0^1  |\BF^*_n |^2 r \, dr \right)^{\frac12}.
\ee
Moreover, since $n^4 \leq \epsilon_1^{\frac83} (\Phi |n|)^{\frac43} $ and then $\beta \leq C \frac{ \Phi |n| }{4 + \alpha}$,  it holds that
\be \label{B-95-2} \ba
& b^2 \int_0^1 \left|\frac{d}{dr} \left[ r \chi \psi_{BL, n}  \right] \right|^2 \frac1r \, dr + b^2 n^2 \int_0^1 | \chi \psi_{BL, n}|^2 r \, dr   \\
\leq & C b^2 ( \beta^{\frac12} +  n^2 \beta^{-\frac12})  \leq C (4 + \alpha)^5 (\Phi |n|)^{-3} \int_0^1 |\BF^*_n|^2 r \, dr \\
\leq & C (\Phi |n|)^{-\frac43} \int_0^1 |\BF_n^*|^2 r \, dr
\ea
\ee
and
\be \label{B-95-4}\ba
& b^2 \int_0^1 \left|  \frac{d}{dr} ( r \mathcal{L} (\chi \psi_{BL, n} )) \right|^2 \frac1r \, dr + b^2 n^2 \int_0^1 \left|\mathcal{L} (\chi \psi_{BL, n} ) \right|^2 r \, dr \\
&+   b^2  n^4 \int_0^1 \left| \frac{d}{dr} ( r \chi \psi_{BL, n} )\right|^2 \frac1r \, dr    + b^2 n^6 \int_0^1 \left| \chi \psi_{BL, n} \right|^2 r \, dr\\
\leq & C b^2 (\beta^{\frac52} + n^6 \beta^{-\frac12}) \leq C(4 + \alpha)^3 (\Phi |n|)^{-1}  \int_0^1 |\BF^*_n|^2 r \, dr \\
\leq & C \int_0^1 |\BF_n^*|^2 r \, dr.
\ea
\ee

Furthermore, one has
\be \label{B-95-5} \ba
& b^2 \int_0^1 \left|\frac{d}{dr}(r \psi_{e, n}) \right|^2 \frac1r \, dr + b^2n^2 \int_0^1 |\psi_{e, n}|^2 r \, dr \\
\leq & C (4+ \alpha)^{\frac{11}{2}} (\Phi |n|)^{-\frac72} (4 + \alpha)^2 \beta^{-\frac12} \int_0^1 |\BF_n^*|^2 r \, dr \\
\leq & C (4 + \alpha)^8 (\Phi |n| )^{-4} \int_0^1 |\BF_n^*|^2 r \, dr \leq C (\Phi |n|)^{-\frac43} \int_0^1 |\BF^*_n|^2 r \, dr
\ea
\ee
and
\be \label{B-95-6}
\ba
& b^2 \int_0^1 \left| \frac{d}{dr}(r \mathcal{L} \psi_{e, n})  \right|^2 \frac1r \, dr + b^2 n^2 \int_0^1 |\mathcal{L} \psi_{e, n}|^2 r \, dr \\
&\ \ \ + b^2 n^4 \int_0^1 \left| \frac{d}{dr}(r \psi_{e,n})   \right|^2 \frac1r \, dr + b^2 n^6 \int_0^1 |\psi_{e,n}|^2 r \, dr \\
\leq & C (4+  \alpha)^{\frac{11}{2}} (\Phi |n|)^{-\frac72} (\Phi |n| )^2 \beta^{-\frac12} \int_0^1 |\BF^*_n|^2 r \, dr \\
\leq & C (4 + \alpha)^6 (\Phi |n|)^{-2} \int_0^1 |\BF^*_n|^2 r \, dr \leq C \int_0^1 |\BF^*_n|^2 r \, dr.
\ea
\ee

Meanwhile, according to Lemma \ref{AlemBessel2}, it holds that
\be \label{B-96-2}
\ba
&a^2\left( n^2 \int_0^1 | I_1(|n|r)|^2 r  \, dr +
\int_0^1 \left| \frac{d}{dr} (r I_1 (|n| r) ) \right|^2 \frac1r \, dr \right)\\
\leq & C (\min\{1, |n|^{-1}\} n^2 + \max\{1, |n|\} ) (\Phi |n|)^{-\frac72} (4 +  \alpha)^{\frac{11}{2}}    \int_0^1| \BF^*_n |^2 r \, dr \\
\leq & C  (\Phi |n|)^{-\frac43} \int_0^1|\BF^*_n |^2 r \, dr
\ea
\ee
and
\be \label{B-96-4} \ba
& a^2 \int_0^1 \left| \frac{d}{dr} ( r \mathcal{L} I_1(|n|r))  \right|^2 \frac1r \, dr + a^2 n^2 \int_0^1 |\mathcal{L} I_1(|n|r)|^2 r \, dr \\
&\quad + a^2 n^4 \int_0^1 \left| \frac{d}{dr} (r I_1 (|n| r)) \right|^2 \frac1r \, dr   + a^2  n^6 \int_0^1 |I_1 (|n| r)|^2 r \, dr\\
 \leq &  C \left( \min\{1, |n|^{-1}\} |n|^6 + \max\{1, |n|\}|n|^4 \right) (\Phi |n| )^{-\frac72} (4 +  \alpha)^{\frac{11}{2} }   \int_0^1 |\BF^*_n |^2 r \, dr \\
 \leq &  C   \int_0^1 | \BF^*_n |^2 r \, dr.
\ea
\ee
 Combining the estimates in Lemma \ref{propslip} and \eqref{B-95-2}--\eqref{B-96-4} gives \eqref{case3-13}-\eqref{case3-15}.     This finishes the proof of Proposition \ref{Bpropcase3}.
\end{proof}

\subsubsection{The case with large flux, intermediate frequency, and large slip coefficient}
\begin{pro} \label{case4}
Assume that $\Phi \gg 1$. There exist two small independent positive constants $\epsilon_1$ and  $\delta$, such that as long as
$1 \leq |n| \leq \epsilon_1 \sqrt{\Phi}$ and $4 + \alpha\geq \delta^{-1} (\Phi |n|)^{\frac13}>5$, the solution $\psi_n(r)$ to the problem \eqref{2-0-8} can be decomposed into four parts,
\be \nonumber
\psi_n (r) = \psi_{s, n}(r) + b \left[ \chi \psi_{BL, n}(r) + \psi_{e, n}(r)    \right] + a I_1 (|n| r).
\ee
Here $(1)$\ $\psi_{s, n}$ is a solution to the problem \eqref{slip} satisfying
\be \label{propslip1-large-6}
\int_0^1 \left|\frac{d}{dr}(r  \psi_{s, n} ) \right|^2  \frac{1}{r}\, dr
+ n^2 \int_0^1 \left| \psi_{s, n}  \right|^2 r \, dr
\leq C (\Phi |n|)^{- \frac43 } \int_0^1 | \BF^*_n |^2 r \, dr ,
\ee
\be \label{propslip1-large-7}
\begin{aligned}
&\int_0^1 | \mathcal{L} \psi_{s, n} |^2 r + n^2  \left|  \frac{d}{dr} ( r \psi_{s, n}  )\right|^2 \frac1r + n^4
 | \psi_{s, n} |^2  r \, dr
 \leq  C ( \Phi |n|)^{-\frac23} \int_0^1 |\BF^*_n|^2 r \, dr ,
\end{aligned}
\ee
\be \label{propslip1-large-8} \ba
& \int_0^1 \left| \frac{d}{dr}( r \mathcal{L} \psi_{s, n}  )\right|^2 \frac1r  +
n^2  |\mathcal{L} \psi_{s, n}  |^2 r  + n^4  \left|  \frac{d}{dr}( r \psi_{s, n} )       \right|^2 \frac1r  + n^6  |\psi_{s, n}  |^2 r \, dr\\
\leq &   C \int_0^1 | \BF^*_n |^2 r\, dr ,
\ea
\ee
and
\be \label{propslip1-large-9}
\left| \frac{d}{dr} \psi_{s, n}  (1) \right| \leq C (\Phi |n|)^{-\frac12} \left( \int_0^1 |\BF^*_n |^2 r \, dr \right)^{\frac12}.
\ee

$(2)$\ $I_1(\rho)$ is the modified Bessel function of the first kind as in Proposition \ref{Bpropcase3}, and $a$ is a constant satisfying
\be \nonumber
|a| \leq C (\Phi |n|)^{-\frac56} I_1(|n|)^{-1} \left(   \int_0^1 |\BF_n^*|^2 r \, dr  \right)^{\frac12}.
\ee

$(3)$\ $\psi_{BL, n}$ is the boundary layer profile,
\be \label{case4-7}
\psi_{BL, n}(r)  = G_{n, \Phi} ( |\beta| (1 - r)) \quad \text{with}\quad |\beta|=\left(\frac{4\Phi |n| }{\pi}\right)^{\frac13},
\ee
where $G_{n,  \Phi}(\rho) $ is a smooth function which decays exponentially at infinity and is uniformly bounded in the set
\be \nonumber
\mcE = \left\{ ( n, \Phi, \rho):\ \Phi \geq 1, \ 1 \leq |n| \leq \sqrt{\Phi},\  0\leq \rho < + \infty   \right\}.
\ee
Moreover, $\chi$ is the smooth function satisfying \eqref{defchi} and $b$ is a constant satisfying
\be \nonumber
|b| \leq C (\Phi |n|)^{-\frac56}  \left(   \int_0^1 |\BF_n^*|^2 r \, dr  \right)^{\frac12}.
\ee

$(4)$\ $\psi_{e, n}$ is a remainder term, which satisfies \eqref{case3-9} and the following estimates
\be \label{case4-10}
\int_0^1 \left| \frac{d}{dr}(r \psi_{e, n} ) \right|^2 \frac1r \, dr + n^2 \int_0^1 |\psi_{e, n} |^2 r \, dr \leq C (\Phi |n|)^{\frac13} ,
\ee
\be \label{case4-11}
\int_0^1 |\mathcal{L} \psi_{e, n} |^2 r \, dr + n^2 \int_0^1 \left| \frac{d}{dr}(r \psi_{e, n} ) \right|^2 \frac1r \, dr + n^4 \int_0^1 |\psi_{e, n} |^2 r \, dr \leq C \Phi |n|,
\ee
and
\be \label{case4-12} \ba
& \int_0^1 \left| \frac{d}{dr} ( r \mathcal{L} \psi_{e, n} ) \right|^2 \frac1r  + n^2  |\mathcal{L} \psi_{e, n} |^2 r  + n^4  \left| \frac{d}{dr} (r \psi_{e, n})  \right|^2 \frac1r
 + n^6 |\psi_{e, n}|^2 r \, dr
\leq C (\Phi |n| )^{\frac53}.
\ea
\ee

In conclusion, $\psi_n $ satisfies
\be \label{case4-13}
\int_0^1 \left|\frac{d}{dr}(r \psi_n) \right|^2 \frac1r \, dr + n^2 \int_0^1 | \psi_n|^2 r \, dr \leq C (\Phi |n| )^{-\frac43} \int_0^1 |\BF^*_n|^2 r \, dr
\ee
and
\be \label{case4-15}
\ba
&\int_0^1 \left| \frac{d}{dr}( r \mathcal{L} \psi_n ) \right|^2 \frac1r  + n^2   |\mathcal{L} \psi_n |^2 r +
n^4  \left| \frac{d}{dr}( r  \psi_n)  \right|^2 \frac1r  + n^6  | \psi_n |^2 r\, dr
\leq & C \int_0^1 | \BF^*_n |^2 r\, dr .
\ea
\ee
\end{pro}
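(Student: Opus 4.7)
\textbf{Proof plan for Proposition \ref{case4}.}

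The strategy mirrors that of Proposition \ref{Bpropcase3}: decompose $\psi_n$ into the ``slip'' part $\psi_{s,n}$, a localised boundary layer $\chi\psi_{BL,n}$, a remainder $\psi_{e,n}$, and a modified Bessel correction $aI_1(|n|r)$ chosen so that the full Navier boundary condition $\mathcal{L}\psi_n(1)=-\alpha\frac{d}{dr}\psi_n(1)$ is satisfied. The essential difference is the choice of $\psi_{BL,n}$. In the regime $4+\alpha\gtrsim(\Phi|n|)^{1/3}$, the coefficient $\frac{\Phi n}{\pi}\frac{4}{4+\alpha}$ in $\widetilde{\mcA}$ is no longer the dominant drift near $r=1$; instead, the linear-in-$(1-r)$ piece $\frac{4\alpha}{4+\alpha}\frac{\Phi n}{\pi}(1-r)$ of $\bar{U}(r)$ governs the boundary behaviour. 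One is therefore led to the Airy scaling $|\beta|=(4\Phi|n|/\pi)^{1/3}$ and to seek $\psi_{BL,n}$ in the form $G_{n,\Phi}(|\beta|(1-r))$, where $G_{n,\Phi}$ solves (to leading order in the boundary variable) an Airy-type fourth-order ODE $\big(i\rho-\frac{d^2}{d\rho^2}\big)\frac{d^2}{d\rho^2}G=0$, with the uniform pointwise/exponential-decay estimates on $G_{n,\Phi}$ to be verified on $\mcE$ by standard Airy-function analysis.

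First I would establish \eqref{propslip1-large-6}--\eqref{propslip1-large-9} for $\psi_{s,n}$. The estimates \eqref{case3-5}, \eqref{case3-5-1} from Lemma \ref{propslip} are already $\alpha$-independent, so \eqref{propslip1-large-8} follows immediately. For \eqref{propslip1-large-6}--\eqref{propslip1-large-7}, however, the bounds from \eqref{case3-3}--\eqref{case3-4} carry a factor $(4+\alpha)^2$ that is too large here; one must instead run the energy estimate \eqref{5-51} by pairing with $r\overline{\psi_{s,n}}$, interpolating $\int|\psi_{s,n}|^2$-type norms against $\int|\mathcal{L}\psi_{s,n}|^2r\,dr$ via Lemma \ref{weightinequality}, and exploiting the weight $\bar U(r)\gtrsim(\Phi/\pi)(1-r^2)$ that is not degraded by $\alpha$. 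This gives the desired $(\Phi|n|)^{-4/3}$, $(\Phi|n|)^{-2/3}$ scalings, and the trace bound \eqref{propslip1-large-9} then follows from Lemma \ref{lemmaA2} applied to \eqref{propslip1-large-7}--\eqref{propslip1-large-8}.

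Next I would construct $\psi_{e,n}$ as in \eqref{5-81}, with the right-hand side given by the commutator $(\widetilde{\mcA}\widetilde{\mcH}-\mcA\mcH)(\chi\psi_{BL,n})+\widetilde{\mcA}\widetilde{\mcH}((1-\chi)\psi_{BL,n})$. The key step is to bound this source in $L^2_r(0,1)$ using the Airy decay of $G_{n,\Phi}$ at scale $|\beta|=(\Phi|n|)^{1/3}$, yielding an $O(\Phi^2 n^2|\beta|^{-1/2})$ bound analogous to \eqref{83-7}--\eqref{83-9} but with $\beta$ replaced by $|\beta|$; plugging into Lemma \ref{propslip} then gives \eqref{case4-10}--\eqref{case4-12}, in each case after rebalancing powers of $|\beta|$ and using $4+\alpha\geq\delta^{-1}(\Phi|n|)^{1/3}$. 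Finally, $a$ and $b$ are fixed by solving the $2\times 2$ linear system \eqref{5-87}. The determinant $J$ in the present regime is dominated by the term $-\alpha|n|\psi_{BL,n}(1)\, I_1'(|n|)/I_1(|n|)$ (using Lemma \ref{lemBessel}), combined with $\frac{d^2}{dr^2}\psi_{BL,n}(1)\sim|\beta|^2$; a careful lower bound of the form $|J|\gtrsim\alpha|\beta|$ or $|J|\gtrsim|\beta|^2$ is what I expect to be the main obstacle, since the two leading terms could in principle cancel.

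Once the bounds $|b|\leq C(\Phi|n|)^{-5/6}\|\BF_n^*\|_{L^2_r}$ and $|a|\leq C(\Phi|n|)^{-5/6}I_1(|n|)^{-1}\|\BF_n^*\|_{L^2_r}$ are in hand, the final estimates \eqref{case4-13}--\eqref{case4-15} are assembled exactly as in \eqref{B-95-2}--\eqref{B-96-4} by adding the contributions of $\psi_{s,n}$, $b\chi\psi_{BL,n}$, $b\psi_{e,n}$ and $aI_1(|n|r)$, using Lemma \ref{AlemBessel2} for the Bessel part and the Airy decay of $G_{n,\Phi}$ for the boundary-layer part. The main obstacle throughout is the uniform construction of $G_{n,\Phi}$ and the nondegeneracy of $J$; all other steps are direct adaptations of the small-$\alpha$ argument with the exponential profile replaced by an Airy profile and the boundary-layer scale $\beta\sim\Phi|n|/(4+\alpha)$ replaced by $|\beta|\sim(\Phi|n|)^{1/3}$.
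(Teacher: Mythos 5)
Your plan follows the paper's proof of Proposition \ref{case4} essentially step for step: the same four-part decomposition, the same re-derivation of $\alpha$-independent estimates for $\psi_{s,n}$ from the weight $\bar U(r)\geq \frac{\Phi}{\pi}(1-r^2)$ together with Lemma \ref{weightinequality} (this is exactly Lemma \ref{propslip-large}), the same Airy-type boundary layer at scale $|\beta|^{-1}=(4\Phi|n|/\pi)^{-1/3}$, and the same $2\times2$ system \eqref{5-87} for $a$ and $b$. Two of your quantitative claims, however, would not survive as literally stated. The source for the $\psi_{e,n}$-equation is not ``$O(\Phi^2n^2|\beta|^{-1/2})$ with $\beta$ replaced by $|\beta|$'': in the Airy regime the layer has width $|\beta|^{-1}$ and each derivative costs a full factor of $|\beta|$, so the divergence-form components $F^r_{e,n},F^z_{e,n},f_{e,n}$ of \eqref{5-83-large} obey $\int_0^1|\cdot|^2r\,dr\leq C\frac{\Phi^2n^2}{(4+\alpha)^2}|\beta|$ (see \eqref{5-87-large}--\eqref{5-88-large}), while the non-divergence part is only $O(|\beta|^5)$ (see \eqref{5-88-1-large}). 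The split matters: the divergence-form part must be fed into the $\BF^*$-type estimate \eqref{propslip1-large-6} (exponent $-4/3$) and the rest into the $f_n$-type estimate \eqref{propslip1-large-1} (exponent $-5/3$). Using your $(\Phi|n|)^{11/6}$ bound against the $-4/3$ exponent gives $(\Phi|n|)^{1/2}$, which misses the required $(\Phi|n|)^{1/3}$ in \eqref{case4-10}; the correct combination gives $\frac{\Phi|n|}{(4+\alpha)^2}+C\leq \delta^2(\Phi|n|)^{1/3}+C$ once $4+\alpha\geq\delta^{-1}(\Phi|n|)^{1/3}$ is used.

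Second, the determinant $J$ is not dominated by $-\alpha|n|\psi_{BL,n}(1)I_1'(|n|)/I_1(|n|)$: since $|n|/|\beta|\leq C\epsilon_1^{2/3}$, that Bessel term is $O(\alpha|n|)=o(\alpha|\beta|)$ and is absorbed as an error via Lemma \ref{lemBessel}. The leading contribution is $(1+\alpha)\frac{d}{dr}\psi_{BL,n}(1)\sim(1+\alpha)|\beta|$ together with $\frac{d^2}{dr^2}\psi_{BL,n}(1)\sim|\beta|^2$, and the non-cancellation you rightly flag as the main obstacle is settled by the lower bound $K_\epsilon\geq\frac16$ on $\Re\left[C_-\int_0^{+\infty}e^{-\mu s}Ai(s+\mu^2)\,ds\right]$ in Lemma \ref{bound}, yielding $|J|\geq\frac{1}{24}\tilde{C_0}(1+\alpha)|\beta|$ — precisely the bound your estimates for $a$ and $b$, and hence \eqref{case4-13}--\eqref{case4-15}, require.
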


Before the proof for Proposition \ref{case4}, we consider the linear problem \eqref{slip} again and give some new estimates when the slip coefficient is large.
\begin{lemma}\label{propslip-large}
Assume that $\alpha > 1 $. Given $f_n \in L_r^2(0, 1)$, the system \eqref{slip} admits a unique solution $\psi_{s, n} $  satisfying  the estimates

\be \label{propslip1-large-1}
 \int_0^1 \left| \frac{d}{dr} ( r \psi_{s, n} )  \right|^2 \frac1r \, dr + n^2 \int_0^1 | \psi_{s, n} |^2 r  \, dr \leq C (\Phi |n|)^{-\frac53}   \int_0^1 | f_n |^2 r \, dr ,
\ee
\be \label{propslip1-large-2} \ba
& \int_0^1 | \mathcal{L} \psi_{s, n} |^2 r  + n^2  \left|  \frac{d}{dr} ( r \psi_{s, n}  )\right|^2 \frac1r + n^4
 | \psi_{s, n} |^2  r \, dr
 \leq  C ( \Phi |n|)^{-\frac43} \int_0^1 |f_n |^2 r \, dr ,
 \ea
\ee
\be \label{propslip1-large-3} \ba
& \int_0^1 \left| \frac{d}{dr}( r \mathcal{L} \psi_{s, n} )\right|^2 \frac1r  +
n^2  |\mathcal{L} \psi_{s, n}  |^2 r  + n^4  \left|  \frac{d}{dr}( r \psi_{s, n} )       \right|^2 \frac1r
+ n^6 \int_0^1 |\psi_{s, n}  |^2  r \, dr\\
\leq &  C \left( \Phi |n|  \right)^{-\frac23} \int_0^1 |f_n |^2 r\, dr ,
\ea
\ee
and
\be \label{propslip1-large-4}
\left| \frac{d}{dr} \psi_{s, n} (1) \right| \leq C (\Phi |n|)^{-\frac34} \left( \int_0^1 |f_n |^2 r \, dr \right)^{\frac12}.
\ee
Moreover, if $f_n  = i n F^r_n  - \frac{d}{dr} F^z_n $, and $ F^r_n , F^z_n \in L_r^2(0,1)$, the solution $\psi_{s,n}$ satisfies the estimates
\eqref{propslip1-large-6}-\eqref{propslip1-large-9}.
\end{lemma}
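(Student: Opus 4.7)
\textbf{Plan for Lemma~\ref{propslip-large}.} The proof follows the same two-test-function energy framework used in Lemma~\ref{propslip}, but replaces the $\alpha$-dependent lower bound $\bar U(r)\ge\frac{\Phi}{\pi}\frac{4}{4+\alpha}$ (which degenerates as $\alpha\to\infty$) by the $\alpha$-uniform weighted lower bound
\[
\bar U(r)\ \ge\ \frac{C\Phi}{\pi}(1-r^2),\qquad r\in[0,1],
\]
which follows from \eqref{1.8} for every $\alpha\ge 1$.

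First I multiply the equation in \eqref{slip} by $r(\mathcal L-n^2)\overline{\psi_{s,n}}$ and integrate, as in \eqref{B-51}--\eqref{B-55}. The real part produces the unweighted third-order identity, whose coercive left-hand side is the quantity $Z^2$ appearing in \eqref{propslip1-large-3}, and hence $Z^2\le C\|f_n\|_{L^2_r}X$ where $X^2:=\int_0^1|(\mathcal L-n^2)\psi_{s,n}|^2 r\,dr$. The imaginary part, combined with the weighted lower bound on $\bar U$, gives $\Phi|n|Y^2\le C\|f_n\|_{L^2_r}X$ with $Y^2:=\int_0^1(1-r^2)|(\mathcal L-n^2)\psi_{s,n}|^2 r\,dr$. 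Since $\psi_{s,n}(0)=\psi_{s,n}(1)=\mathcal L\psi_{s,n}(0)=\mathcal L\psi_{s,n}(1)=0$, one has in particular $(\mathcal L-n^2)\psi_{s,n}(1)=0$; splitting the $X^2$-integral at $r=1-\delta$ and using the fundamental theorem of calculus on $[1-\delta,1]$ yields $X^2\le C\delta^{-1}Y^2+C\delta^2 Z^2$. Optimizing in $\delta$ gives the Sobolev interpolation
\[
X^2\ \le\ C\,Y^{4/3}Z^{2/3}.
\]
Feeding this back into the two preceding energy bounds closes an algebraic system whose unique solution is
\[
Z\le C(\Phi|n|)^{-1/3}\|f_n\|_{L^2_r},\quad X\le C(\Phi|n|)^{-2/3}\|f_n\|_{L^2_r},\quad Y\le C(\Phi|n|)^{-5/6}\|f_n\|_{L^2_r},
\]
which proves \eqref{propslip1-large-2} and \eqref{propslip1-large-3}.

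For \eqref{propslip1-large-1} the crude bound $E^2\le X^2/n^2$ has the wrong power of $|n|$ for small $|n|$, so instead I set $E^2:=\int_0^1\bigl(\tfrac{1}{r}|(r\psi_{s,n})'|^2+n^2|\psi_{s,n}|^2 r\bigr)\,dr$ and use the identity $E^2=-\Re\int_0^1(\mathcal L-n^2)\psi_{s,n}\,\overline{\psi_{s,n}}\,r\,dr$ together with the weighted Cauchy--Schwarz
\[
E^2\ \le\ Y\cdot W,\qquad W^2:=\int_0^1(1-r^2)^{-1}|\psi_{s,n}|^2 r\,dr.
\]
A Hardy-type inequality exploiting $\psi_{s,n}(0)=\psi_{s,n}(1)=0$ (split at $r=1/2$; on $[1/2,1]$ use $\psi_{s,n}(1)=0$ to dominate $\int|\psi_{s,n}|^2/(1-r)\,dr$ by $\int|\psi_{s,n}'|^2\,dr$, then compare $\psi'$ with $(r\psi)'/r$ on this interval) gives $W\le C E$, hence $E\le CY$ and \eqref{propslip1-large-1} follows. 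The trace estimate \eqref{propslip1-large-4} is then immediate from Lemma~\ref{lemmaA2} applied with the $H^1$- and $H^2$-type bounds just obtained.

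Finally, when $f_n=inF_n^r-\frac{d}{dr}F_n^z$ the improved estimates \eqref{propslip1-large-6}--\eqref{propslip1-large-9} are derived by integrating by parts in each duality pairing $\int f_n\overline{\psi_{s,n}}r\,dr$ and $\int f_n(\mathcal L-n^2)\overline{\psi_{s,n}}r\,dr$, exploiting the vanishing of $\psi_{s,n}$ and of $(\mathcal L-n^2)\psi_{s,n}$ at $r=0$ and $r=1$; after IBP the right-hand sides are controlled by $\|\BF^*_n\|_{L^2_r}$ times a one-order-higher norm of $\psi_{s,n}$, which can be absorbed on the left after rerunning the interpolation chain. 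This parallels the passage from \eqref{propslip1-3} to \eqref{case3-5} in Lemma~\ref{propslip}. Existence and uniqueness are inherited from a Galerkin/limit argument identical to that of Section~\ref{sec-ex}. The principal technical obstacle is verifying the Sobolev interpolation $X^2\le CY^{4/3}Z^{2/3}$ and the Hardy inequality $W\le CE$ with constants uniform in $n$ and $\alpha$, because both require one to fully exploit the boundary vanishing at both endpoints $r=0,1$ to offset the degeneration of the weight $(1-r^2)$ near the wall.
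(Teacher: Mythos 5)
Your proposal is correct and follows essentially the same route as the paper: the two energy identities from testing with $r\overline{\psi_{s,n}}$ and $r(\mathcal L-n^2)\overline{\psi_{s,n}}$, the $\alpha$-uniform lower bound $\bar U(r)\ge \frac{\Phi}{\pi}(1-r^2)$ in place of the degenerate constant bound, and the weighted-to-unweighted interpolation $X^2\le CY^{4/3}Z^{2/3}$ (which is exactly Lemma~\ref{weightinequality}, here proved ad hoc by splitting at $r=1-\delta$ and optimizing), closed by the same bootstrap and integration-by-parts upgrade for $f_n=inF_n^r-\frac{d}{dr}F_n^z$. The only cosmetic difference is your derivation of \eqref{propslip1-large-1} via $E^2\le YW$ and the Hardy bound $W\le CE$, where the paper instead extracts weighted control of $(r\psi_{s,n})'$ and $n\psi_{s,n}$ from the imaginary part of the first-order identity and applies the weighted interpolation a second time; both arguments are valid and yield the same exponent.
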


\begin{proof} {\it Step 1. Proof of \eqref{propslip1-large-6}--\eqref{propslip1-large-9}.} Note that the proof of the estimate \eqref{case3-5} does not depend on $\alpha$, which thus also gives
 \eqref{propslip1-large-8}. Multiplying the equation in \eqref{slip} by $\mathcal{L} \overline{\psi_{s, n} } r$ and integrating over $[0, 1]$ give that
\be \label{large-11}
\begin{aligned}
& i \int_0^1 n^3 \frac{\bar{U}(r)}{r} \left|\frac{d}{dr} ( r \psi_{s, n} )   \right|^2 +  n
 \bar{U}(r) |\mathcal{L} \psi_{s, n} |^2 r  -  \frac{4\alpha \Phi}{(4 + \alpha)\pi} n^3
\left[ \frac{d}{dr}( r \overline{\psi_{s, n} }) r \psi_{s, n}   \right] \, dr\\
&+ \int_0^1 \left|\frac{d}{dr}(r\mathcal{L} \psi_{s, n} ) \right|^2 \frac1r
+2 n^2  \left|\mathcal{L} \psi_{s, n}  \right|^2 r
+ n^4  \left| \frac{d}{dr} ( r \psi_{s, n}  )\right|^2 \frac1r \, dr\\
= &   \int_0^1 f_n  \mathcal{L}\overline{\psi_{s,n} } r \, dr.
\end{aligned}
\ee
The imaginary part of \eqref{large-11} can be written as follows,
\be \label{large-12} \ba
 \int_0^1 n^3 \bar{U}(r)  \left| \frac{d}{dr} (r \psi_{s, n} )  \right|^2 \frac1r +n \bar{U}(r) |\mathcal{L} \psi_{s, n} |^2 r \, dr
=&   \Im \int_0^1 f_n \mathcal{L}\overline{\psi_{s,n} } r \, dr\\
  \leq& C \int_0^1 |\BF_n^*|^2 r \, dr,
\ea \ee
where the estimate \eqref{propslip1-large-8} has been used to get the last inequality.
Since $\bar{U}(r) \geq \frac{\Phi }{\pi } (1  - r^2) $, the above estimate \eqref{large-12}  implies
\be \label{large-15}
n^2 \int_0^1 (1 - r^2) \left| \frac{d}{dr} (r \psi_{s, n} )  \right|^2 \frac1r \, dr + \int_0^1 (1 - r^2)  |\mathcal{L} \psi_{s, n} |^2 r \, dr
\leq C (\Phi |n| )^{-1} \int_0^1 |\BF_n^*|^2 r \, dr .
\ee
Hence it follows from Lemmas \ref{weightinequality} and  \ref{lemma1}, \eqref{propslip1-large-8}, and  \eqref{large-15} that
\be \label{large-16} \ba
& n^2 \int_0^1  \left| \frac{d}{dr} (r \psi_{s, n} )  \right|^2 \frac1r \, dr + \int_0^1   |\mathcal{L} \psi_{s, n} |^2 r \, dr \\
\leq & C \left( n^2 \int_0^1 (1 - r^2) \left| \frac{d}{dr}(r \psi_{s, n}) \right|^2 \frac1r \, dr  \right)^{\frac23} \left( n^2 \int_0^1 |\mathcal{L} \psi_{s, n}|^2 r \, dr     \right)^{\frac13} \\
&\ \ \ + C \left(   \int_0^1 (1 - r^2) |\mathcal{L} \psi_{s, n}|^2 r \, dr     \right)^{\frac23} \left( \int_0^1 \left| \frac{d}{dr}(r \mathcal{L} \psi_{s, n} )  \right|^2 \frac1r \, dr   \right)^{\frac13} \\
\leq & C (\Phi |n| )^{-\frac23} \int_0^1 |\BF_n^*|^2 r \, dr .
\ea
\ee

Multiplying the equation in \eqref{slip}  by $n^2 \overline{\psi_{s, n}}r$ and integrating over $[0, 1]$ yield
\be \label{large-17}
n^3 \int_0^1 \bar{U} (r) \left| \frac{d}{dr} (r \psi_{s, n} )  \right|^2 \frac1r \, dr + n^5 \int_0^1 \bar{U}(r) |\psi_{s, n}|^2 r \, dr
= - n^2 \Im \int_0^1 f_n \overline{\psi_{s, n} } r\, dr.
\ee
Thus it follows from \eqref{propslip1-large-8} that
\be \label{large-18}
n^4 \int_0^1 (1 - r^2) |\psi_{s, n}|^2 r \, dr \leq C(\Phi |n|)^{-1} \int_0^1 |\BF_n^*|^2 r \, dr.
\ee
Hence applying Lemmas \ref{weightinequality} and  \ref{lemma1} again gives
\be \label{large-19}
n^4 \int_0^1 |\psi_{s, n}|^2 r \, dr  \leq C  (\Phi |n| )^{-\frac23} \int_0^1 |\BF_n^*|^2 r \, dr .
\ee
Combining \eqref{large-16} and \eqref{large-19} gives the estimate \eqref{propslip1-large-7}.

On the other hand, the equality \eqref{5-51} implies that
\be \label{large-20}
\Phi |n| \int_0^1 (1  - r^2) \left| \frac{d}{dr}( r \psi_{s, n} ) \right|^2 \frac1r \, dr + \Phi |n|^3 \int_0^1 (1 - r^2) |\psi_{s, n}|^2 r \, dr
\leq C \left| \int_0^1   f_n \overline{\psi_{s, n} }    r \, dr   \right|.
\ee
By Lemmas \ref{weightinequality} and \ref{lemma1}, and the estimate \eqref{large-16}, one has
\be \label{large-21} \ba
 & \int_0^1 \left| \frac{d}{dr} ( r\psi_{s, n})   \right|^2 \frac1r \, dr + n^2 \int_0^1 |\psi_{s, n}|^2 r \, dr \\
\leq & C \left(  \int_0^1 (1 - r^2) \left| \frac{d}{dr} (r \psi_{s, n})   \right|^2 \frac1r \, dr \right)^{\frac23} \left(  \int_0^1 |\mathcal{L} \psi_{s, n} |^2 r \, dr  \right)^{\frac13} \\
& + C \left(  n^2 \int_0^1 (1  - r^2) |\psi_{s, n}|^2 r \, dr \right)^{\frac23} \left(  n^2 \int_0^1 \left| \frac{d}{dr}( r \psi_{s, n})  \right|^2 \frac1r    \, dr \right)^{\frac13}\\
\leq & C (\Phi |n|)^{-\frac23} \left( \left| \int_0^1 f_n \overline{\psi_{s, n}} r\, dr       \right| \right)^{\frac23} (\Phi |n| )^{-\frac29} \left(         \int_0^1 |\BF_n^* |^2 r \, dr \right)^{\frac13}.
\ea \ee
Hence it holds that
\be \nonumber
\ba
&\int_0^1 \left| \frac{d}{dr} (r  \psi_{s, n})   \right|^2 \frac1r \, dr + n^2 \int_0^1 |\psi_{s, n}|^2 r \, dr\\
\leq  & C (\Phi |n| )^{-\frac89} \left(         \int_0^1 |\BF_n^* |^2 r \, dr \right)^{\frac23} \left( \int_0^1 \left| \frac{d}{dr} ( r \psi_{s, n})   \right|^2 \frac1r \, dr + n^2 \int_0^1 |\psi_{s, n}|^2 r \, dr \right)^{\frac13}.
\ea
\ee
This gives the estimate \eqref{propslip1-large-6}.

{\it Step 2. Proof of \eqref{propslip1-large-1}--\eqref{propslip1-large-4}.} The equalities  \eqref{B-51} and \eqref{B-55} give that
\be \label{large-23} \ba
& \int_0^1 \left| \frac{d}{dr} ( r \mathcal{L} \psi_{s, n} )  \right|^2 \frac1r  +  n^2 |\mathcal{L} \psi_{s, n} |^2 r
+ n^4 \left| \frac{d}{dr} (r \psi_{s, n} )     \right|^2 \frac1r
+ n^6  |\psi_{s, n} |^2 r \, dr \\
\leq &\left| \int_0^1 f_n (\mathcal{L} - n^2) \overline{\psi_{s, n} } r\, dr    \right|
\ea
\ee
and
\be \label{large-24}
\int_0^1 (1 - r^2) |( \mathcal{L} - n^2)  \psi_{s, n}|^2 r \, dr
\leq C (\Phi |n|)^{-1} \left| \int_0^1 f_n (\mathcal{L} - n^2) \overline{\psi_{s, n} } r\, dr    \right| .
\ee
The estimates \eqref{large-23}-\eqref{large-24}, together with Lemmas \ref{weightinequality} and  \ref{lemma1},  yield
\be \nonumber
\int_0^1 |(\mathcal{L} - n^2) \psi_{s, n} |^2 r \, dr
\leq C (\Phi |n|)^{-\frac23} \left| \int_0^1 f_n (\mathcal{L} - n^2) \overline{\psi_{s, n} } r\, dr    \right| .
\ee
By Young's inequality, one has
\be \label{large-26}
\int_0^1 |(\mathcal{L} - n^2) \psi_{s, n} |^2 r \, dr \leq C (\Phi |n| )^{-\frac43} \int_0^1 |f_n|^2 r \, dr .
\ee
With the aid of integration by parts, this is exactly the estimate \eqref{propslip1-large-2}.

Substituting \eqref{large-26} into \eqref{large-23} gives
\be \label{large-28} \ba
& \int_0^1 \left| \frac{d}{dr} ( r \mathcal{L} \psi_{s, n} )  \right|^2 \frac1r  +  n^2  |\mathcal{L} \psi_{s, n} |^2 r
+ n^4  \left| \frac{d}{dr} (r \psi_{s, n} )     \right|^2 \frac1r  + n^6 |\psi_{s, n} |^2 r \, dr \\
\leq\, & C (\Phi |n| )^{-\frac23} \int_0^1 |f_n|^2 r \, dr .
\ea
\ee

Furthermore, according to \eqref{propslip1-large-2} and \eqref{large-21}, one has
\be \label{large-29} \ba
&\int_0^1 \left|  \frac{d}{dr} ( r \psi_{s, n}) \right|^2 \frac1r \, dr + n^2 \int_0^1 |\psi_{s, n}|^2 r \, dr \\
\leq & C (\Phi |n|)^{-\frac23} \left| \int_0^1 f_n \overline{\psi_{s, n}} r \, dr  \right|^{\frac23} (\Phi |n| )^{-\frac49} \left(   \int_0^1 |f_n|^2 r \, dr    \right)^{\frac13} \\
\leq & C (\Phi |n| )^{-\frac{10}{9} } \left( \int_0^1 |f_n|^2 r \, dr      \right)^{\frac23} \left( \int_0^1 |\psi_{s, n}|^2 r \, dr  \right)^{\frac13}.
\ea \ee
This implies
\be \nonumber
\int_0^1 \left|  \frac{d}{dr} ( r \psi_{s, n}) \right|^2 \frac1r \, dr + n^2 \int_0^1 |\psi_{s, n}|^2 r \, dr  \leq C (\Phi |n|)^{-\frac53}
\int_0^1 |f_n|^2 r \, dr,
\ee
which is exactly \eqref{propslip1-large-1}. The inequality \eqref{propslip1-large-4} is the result of \eqref{propslip1-large-1}-\eqref{propslip1-large-2} and Lemma \ref{lemmaA2}.  Hence, the proof of Lemma \ref{propslip-large} is completed.
\end{proof}

Now we give the proof of Proposition \ref{case4}. The main idea is the same as that of Proposition \ref{Bpropcase3}, while the boundary layer function is different.

\begin{proof}[Proof of Proposition \ref{case4}]
Let $\psi_{s, n} $ denote the solution to \eqref{slip}. Define
\be \nonumber
\mathcal{A} = in \bar{U}(r) - \mathcal{L} +n^2, \ \ \ \ \ \ \ \ \ \  \mathcal{H} = \mathcal{L}- n^2,
\ee
\be \nonumber
\widetilde{\mcA_\infty} = i \frac{ \Phi n }{\pi} 4 (1 - r) - \frac{d^2}{dr^2} + n^2, \ \ \ \ \ \ \widetilde{\mathcal{H}} = \frac{d^2}{dr^2} - n^2.
\ee
$\widetilde{\mathcal{A_\infty}}$ can be regarded as the leading parts of the operator $\mcA$  near the boundary, when $|\alpha|$ is large.

We look for a boundary layer $\psi_{BL, n} $, which is a solution to
\be \nonumber
\widetilde{\mcA_\infty}\widetilde{\mcH} \psi_{BL, n } = 0.
\ee
First, consider the problem
\be \nonumber
\widetilde{\mcA_\infty} \phi = 0.
\ee
As discussed in \cite{M}, the operator $\widetilde{\mcA_\infty}$ can be written as  the Airy operator with  complex coefficients. Let the Airy function $Ai(z)$ denote the solution to
\be \nonumber
\frac{d^2 Ai}{dz^2} -  z Ai =0 \ \ \ \mbox{in}\ \mathbb{C}.
\ee
Define
\be \label{5-73-large}
\widetilde{G}_{n , \Phi} ( \rho) = \left\{  \ba  & Ai \left( C_{+} (\rho + \frac{\pi |\beta | n }{4 i \Phi }) \right), \ \ \ \mbox{if}\ n>0 , \\
 & Ai \left(  C_{-}  (\rho + \frac{\pi |\beta | n }{4 i \Phi })   \right),\ \ \ \ \mbox{if}\ n < 0 , \ea
\right.
\ee
where $ |\beta | = \left( \frac{ 4  \Phi |n| }{\pi}  \right)^{\frac13}$ and $C_{\pm} = e^{\pm i \frac{\pi}{6}}$.
Define
\be \label{5-74-large}
\widetilde{\widetilde{G}}_{n, \Phi}(r) = \widetilde{G}_{n, \Phi} (|\beta| ( 1  - r) ).
\ee
It is straightforward to check that
\be \nonumber
\widetilde{\mcA_\infty} \widetilde{\widetilde{G}}_{n, \Phi} = 0.
\ee

Without loss of generality, we assume that $n > 0$ from now on.
Next, define
\be \label{5-75-large}
G_{n, \Phi} (\rho) = \int_{\rho}^{+\infty} e^{- \frac{|n|}{|\beta|} ( \rho - \tau) } \int_{\tau}^{+ \infty}
e^{- \frac{|n|}{|\beta|} (\sigma - \tau)} \widetilde{G}_{n, \Phi} (\sigma) \, d\sigma d\tau .
\ee
It satisfies
\be \label{5-76-large}
\frac{d^2 G_{n, \Phi} }{d \rho^2} - \frac{|n|^2 }{|\beta|^2 } G_{n, \Phi} = \tilde{G}_{n, \Phi} (\rho).
\ee
Let
\be \label{5-77-large}
C_{0, n, \Phi} = \left\{  \ba & \frac{1}{G_{n, \Phi} (0) }, \ \ \ \mbox{if}\ |G_{n, \Phi} (0) | \geq 1,   \\ &
1,\ \ \ \ \ \ \ \ \ \ \ \ \mbox{otherwise}   \ea  \right.
\ee
and
\be \label{5-78-large}
\psi_{BL, n}  (r) : = C_{0, n, \Phi} G_{n, \Phi} (|\beta| ( 1  - r) ).
\ee
The straightforward computations show that
\be \label{5-79-large}
\widetilde{\mcA_\infty}\widetilde{\mcH} \psi_{BL, n} = 0, \ \ \ \text{for } r\in (0,1) ,
\ee
and $|\psi_{BL, n}(1)| \leq 1. $

Now we are ready to construct the remainder term $\psi_{e, n}$ such that
\be \label{5-81-large}
\left\{  \ba
&  i n \bar{U}(r) ( \mathcal{L} - n^2) \psi_{e, n} - (\mathcal{L} - n^2)^2 \psi_{e, n}  = \widetilde{\mcA_\infty}\widetilde{\mcH} ((1 - \chi) \psi_{BL, n} ) + (\widetilde{\mcA_\infty}\widetilde{\mcH} - \mcA\mcH) ( \chi \psi_{BL, n} ) ,\\
&  \psi_{e, n} (0) = \psi_{e, n} (1) = \mathcal{L} \psi_{e, n} (0) = \mathcal{L} \psi_{e, n} (1) = 0,
\ea
\right.
\ee
where $\chi$ is the smooth cut-off function satisfying \eqref{defchi}.

Denote
\be \nonumber
\mcA_{\infty} = i \frac{2 \Phi n}{\pi} (1  -  r^2) - \mathcal{L} + n^2.
\ee
Then
\be \label{5-83-large} \ba
& (\mcA_{\infty}\mcH - \mcA \mcH) (\chi \psi_{BL, n} )   =i \frac{  \Phi n }{\pi} \frac{4}{4 + \alpha} (1  - 2r^2) \mcH (\chi \psi_{BL, n})\\
= & \frac{d}{dr} \left[ i \frac{\Phi n}{\pi} \frac{4}{4 + \alpha} (1 - 2r^2) \frac{d}{dr} (\chi \psi_{BL, n})    \right]
+ i \frac{\Phi n}{\pi }\frac{16r}{4 + \alpha} \frac{d}{dr} ( \chi \psi_{BL, n})  \\
&\ \ \ + i \frac{\Phi n }{\pi} \frac{4}{4 + \alpha} (1  - 2r^2)\left(\frac1r \frac{d}{dr} - \frac{1}{r^2}\right)(\chi \psi_{BL, n})
- i \frac{\Phi n^3}{\pi}\frac{4}{4 + \alpha} (1 - 2r^2) (\chi \psi_{BL, n}) \\
= & - \frac{d}{dr} F_{e,n}^z +in F_{e,n}^r +f_{e,n},
\ea \ee
where
\be \label{5-85-large}
F^r_{e, n} = - \frac{ \Phi n^2 }{\pi} \frac{4}{4 + \alpha} (1-  2r^2) (\chi \psi_{BL, n}), \ \ F^z_{e, n}= - \frac{i  \Phi n }{\pi} \frac{4}{4 + \alpha} (1 - 2r^2) \frac{d}{dr} (\chi \psi_{BL, n}),
\ee
and
\be
f_{e, n}= \frac{i \Phi n }{\pi}\frac{16r}{4 +  \alpha} \frac{d}{dr} (\chi \psi_{BL, n}) +i \frac{\Phi n }{\pi} \frac{4}{4 + \alpha} (1  - 2r^2)\left(\frac1r \frac{d}{dr} - \frac{1}{r^2}\right)(\chi \psi_{BL, n}).
\ee
Note that $0 < \epsilon_1 < 1$ and $|n| \leq \epsilon_1 \sqrt{\Phi}$, it is easy to see $|n|\leq c |\beta|$. Thus one has
\be \label{5-87-large} \ba
\int_0^1 | F^r_{e, n} |^2  r \, dr
& \leq C \frac{\Phi^2 n^4}{(4 + \alpha)^2 } \int_0^1 \left| (1 -  2r^2) (\chi \psi_{BL, n} )\right|^2 r\, dr \\
& \leq C \frac{\Phi^2 n^4}{(4 + \alpha)^2 } \int_{\frac14}^1 |\psi_{BL, n}|^2 \, dr \\
& \leq C \frac{\Phi^2 n^4}{(4 + \alpha)^2 }  |\beta|^{-1}  \leq C \frac{ \Phi^2 n^2}{(4+  \alpha)^2} |\beta| . \ea
\ee
Similarly, it holds that
\be \label{5-87-1-large}
\ba
\int_0^1 |F_{e, n}^z|^2 r \, dr  \leq C \frac{\Phi^2 n^2}{(4 + \alpha)^2 } \int_0^1 \left| (1 - 2r^2) \frac{d}{dr}(\chi \psi_{BL, n})  \right|^2 r \, dr
\leq C \frac{\Phi^2 n^2}{(4 + \alpha)^2} |\beta|
\ea
\ee
and
\be \label{5-88-large}
\int_0^1 | f_{e, n} |^2 r \, dr \leq C \frac{ \Phi^2 n^2 }{(4 + \alpha)^2} |\beta| .
\ee
Meanwhile, as same as the computations in  \cite[Section 4]{WX1}, one has
\be \label{5-88-1-large}
\int_0^1 |  (\widetilde{\mcA_\infty} \widetilde{\mcH} - \mcA_{\infty} \mcH ) ( \chi \psi_{BL, n})  |^2 r \, dr + \int_0^1 |\widetilde{\mcA_\infty} \widetilde{\mcH} ((1 - \chi ) \psi_{BL, n} )|^2 r \, dr \leq C |\beta|^5.
\ee

Hence it follows from Lemma \ref{propslip-large} that one has
\be \label{5-89-large}\ba
\int_0^1 \left| \frac{d}{dr} (r \psi_{e, n} )  \right|^2 \frac1r  + n^2 | \psi_{e, n} |^2 r \, dr
\leq & C (\Phi |n| )^{-\frac53} |\beta|^5 + C (\Phi |n| )^{-\frac43} \frac{ (\Phi |n|)^2 }{(4 + \alpha)^2} |\beta|\\
\leq & C (\Phi |n| )^{\frac13}
\ea \ee
and
\be \label{5-90-large} \ba
 &\int_0^1 | \mathcal{L} \psi_{e, n}  |^2 r + n^2  \left| \frac{d}{dr} ( r  \psi_{e, n} ) \right|^2 \frac1r  + n^4  |\psi_{e, n}|^2 r\, dr\\
\leq &  C  (\Phi |n| )^{-\frac43} |\beta|^5 + C (\Phi |n|)^{-\frac23} \frac{(\Phi |n|)^2}{(4 + \alpha)^2} |\beta|
 \leq  C \Phi |n| .
\ea \ee
Moreover, it holds that
\be \label{5-91-large}\ba
& \int_0^1 \left| \frac{d}{dr} ( r \mathcal{L} \psi_{e, n} ) \right|^2 \frac1r  + n^2  |\mathcal{L} \psi_{e, n} |^2 r  + n^4  \left| \frac{d}{dr} ( r \psi_{e, n} ) \right|^2 \frac1r  + n^6  | \psi_{e, n} |^2 r\, dr \\
\leq & C (\Phi |n|)^{-\frac23} |\beta|^5 + C \frac{(\Phi |n|)^2}{(4 + \alpha)^2} |\beta| \\
\leq & C (\Phi |n| )^{\frac53}
\ea
\ee
and
\be \label{5-92-large}
\begin{aligned}
\left| \frac{d}{dr} \psi_{e, n}  (1) \right|
 \leq & C (\Phi |n|)^{-\frac34} |\beta|^{\frac52} + C  \frac{(\Phi |n|)^{\frac12} }{4 +  \alpha }|\beta|^{\frac12} .
\end{aligned}
\ee

Finally, as in the proof of Proposition \ref{Bpropcase3} , choose the constants $a$ and $b$ to be of the form \eqref{5-88}, i. e.,
 \be \nonumber
 b= - \frac{\alpha \frac{d}{dr} \psi_{s, n}(1)}{J}, \ \ \ \ \ \ \ \ a= - \frac{b \psi_{BL, n} (1) }{I_1 (|n|) },
 \ee
 where $J$ is defined in \eqref{5-89}. Hence, \eqref{5-87} is satisfied.
Before the estimates of $a$ and $b$, let us insert one lemma, which gives the properties of Airy function and also the estimate of $C_{0, n, \Phi}$. The proof of the lemma  is exactly the same as that of  \cite[Lemma 3.7]{M}, so we omit the details here.
\begin{lemma}\label{bound} $(1)$\ It holds that
\be \nonumber
\tilde{C_0} : = \inf \left\{  | C_{0, n, \Phi} | :\ \Phi\geq 1, \ 1 \leq |n| \leq  \sqrt{\Phi}    \right\} >0 ,
\ee
and the function $G_{n, \Phi}$ defined in \eqref{5-75-large} satisfies for $\varsigma$ large enough
\be \label{5-80-large}
\sup_{\Phi \geq 1 } \sup_{ 1 \leq |n| \leq  \sqrt{\Phi} } \sup_{\rho \geq \varsigma}  e^{\rho}
\left| \frac{d^k G_{n, \Phi}}{d \rho^k} (\rho) \right| < \infty, \ \ \ k =0, 1, 2, 3.
\ee

$(2)$\  There is a constant $\epsilon \in (0, 1)$ such that
\be \nonumber
\Sigma_{\epsilon} : =  \{  \mu \in \mathbb{C}\, | \, arg \mu = -\frac{\pi}{6} ,\  0\leq |\mu| \leq \epsilon \},
\ee
then
\be \nonumber
K_{\epsilon} : = \inf_{\mu \in \Sigma_{\epsilon}}  \Re \left[ C_{-}  \int_0^{+ \infty} e^{- \mu s } Ai (s + \mu^2) \, ds \right] \geq \frac16.
\ee

\end{lemma}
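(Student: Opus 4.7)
My plan is to verify Part (2) by direct computation at $\mu = 0$ combined with continuity, and Part (1) by standard Airy asymptotics applied uniformly in $(n,\Phi)$.

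For Part (2): Using the classical identity $\int_0^\infty Ai(s)\,ds = \frac{1}{3}$, at $\mu = 0$ one has $F(0) := C_-\int_0^\infty Ai(s)\,ds = \frac{e^{-i\pi/6}}{3}$, so $\Re F(0) = \frac{\sqrt{3}}{6} > \frac{1}{6}$. For $\mu \in \Sigma_\epsilon$, $\arg\mu = -\pi/6$ forces $\Re\mu = |\mu|\cos(\pi/6) \geq 0$, hence $|e^{-\mu s}| \leq 1$ for $s\geq 0$; combined with the superexponential decay of $Ai(s + \mu^2)$ as $s \to +\infty$ (uniform in $|\mu|\leq 1$), this makes $\mu \mapsto F(\mu) = C_-\int_0^\infty e^{-\mu s} Ai(s + \mu^2)\,ds$ continuous at $0$. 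I would then choose $\epsilon$ small enough to force $|\Re F(\mu) - \frac{\sqrt{3}}{6}| \leq \frac{\sqrt{3}}{6} - \frac{1}{6}$ on $\Sigma_\epsilon$.

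For Part (1): The standard asymptotic $Ai(z) \sim \frac{1}{2\sqrt{\pi}} z^{-1/4} e^{-\frac{2}{3} z^{3/2}}$ in $|\arg z|<\pi$ applied to $z = C_+\rho + \mu^2$ yields $|\widetilde{G}_{n,\Phi}^{(k)}(\rho)| \leq Ce^{-c\rho^{3/2}}$ with uniform $C,c>0$. Uniformity uses that $\mu^2$ is bounded (indeed $|\mu|^2 = \lambda^2 \leq (\pi/4)^{2/3}$ on $1 \leq |n| \leq \sqrt{\Phi}$) and $\arg(C_+) = \pi/6$, so $\Re\bigl((C_+\rho+\mu^2)^{3/2}\bigr) \gtrsim \rho^{3/2}$ for $\rho$ large. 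Inserting this bound into the iterated integral representation \eqref{5-75-large} (and its differentiated forms), and using the uniform bound $\lambda \leq (\pi/4)^{1/3}$, propagates the superexponential decay to $G_{n,\Phi}^{(k)}$, which dominates $e^{-\rho}$ once $\varsigma$ is sufficiently large.

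To establish $\inf|C_{0,n,\Phi}| > 0$ it suffices to show $\sup_{n,\Phi}|G_{n,\Phi}(0)| < \infty$. Swapping the order of integration in \eqref{5-75-large} at $\rho = 0$ and deforming the contour from the ray $\{C_+\sigma:\sigma\geq 0\}$ to the positive real axis (justified by Airy decay in $|\arg z|<\pi/3$) yields
\begin{equation*}
G_{n,\Phi}(0) = \frac{C_-}{\lambda}\int_0^\infty \sinh(\mu s)\,Ai(s + \mu^2)\,ds, \qquad \mu = \lambda C_-.
\end{equation*}
For $\lambda$ bounded below, Airy decay gives absolute convergence and $1/\lambda$ is bounded; as $\lambda \to 0$, $\sinh(\mu s)/\lambda \to C_- s$ locally uniformly in $s$ and $\int_0^\infty s\,|Ai(s+\mu^2)|\,ds$ is uniformly finite. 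Hence $|G_{n,\Phi}(0)|$ is uniformly bounded, giving $|C_{0,n,\Phi}| \geq 1/\max(1,\sup|G_{n,\Phi}(0)|) > 0$. The main obstacle is precisely this uniformity in $(n,\Phi)$: the Airy asymptotic, the contour deformation, and the dominated-convergence argument at $\lambda=0$ must all be applied with constants that do not degrade as $n/\sqrt{\Phi}$ sweeps the full prescribed range.
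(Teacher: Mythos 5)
Your argument is correct and is essentially the proof the paper itself omits: the paper gives no proof of Lemma \ref{bound}, stating only that it is identical to that of \cite[Lemma 3.7]{M}, and that proof rests on precisely your ingredients --- the identity $\int_0^{+\infty} Ai(s)\,ds=\frac13$ plus continuity at $\mu=0$ for part $(2)$, and uniform super-exponential Airy decay in the sector $|\arg z|<\pi/3$ combined with the bound $\lambda=|n|/|\beta|\le(\pi/4)^{1/3}$ on the range $1\le |n|\le\sqrt{\Phi}$ for part $(1)$. The one delicate point you correctly single out --- uniformity of $\sinh(\mu s)/\lambda$ as $\lambda\to 0$, since $\lambda$ is not bounded below on this range --- is closed by $|\sinh(\mu s)|/\lambda\le \sinh(\lambda s)/\lambda\le \sinh(s)$ for $\lambda\le 1$, so there is no gap.
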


Note that
\be \nonumber \ba
& \frac{d^2}{dr^2} \psi_{BL, n}(1) + (1  +  \alpha) \frac{d}{dr} \psi_{BL, n}(1) \\
= & C_{0, n, \Phi} \left[ |n|^2 G_{n, \Phi} (0) + |\beta|^2 Ai \left(C_+ \frac{\pi |\beta| n}{4 i \Phi} \right) \right]\\
&+ (1 + \alpha ) C_{0, n, \Phi} \left[ |\beta| C_{-} \int_0^\infty e^{-\mu z}Ai ( z + \mu^2) \, dz + |n| G_{n, \Phi} (0) \right],
\ea \ee
where $\mu = \frac{|n|}{|\beta| } C_{-}$ and $ |\mu| = \left( \frac{\pi}{4} \right)^{\frac13} |n|^{\frac23} \Phi^{-\frac13} \leq \left( \frac{\pi}{4} \right)^{\frac13} \epsilon_1^{\frac23} $.
If $\epsilon_1$ is small enough, according to Lemma \ref{bound},
\be \nonumber \ba
& \left| \frac{d^2}{dr^2} \psi_{BL, n}(1) + (1  + \alpha) \frac{d}{dr} \psi_{BL, n}(1) \right|\\
\geq & |C_{0, n, \Phi} | \left[ \frac12 |\beta|^2 Ai(0) + \frac16 (1 + \alpha) |\beta|    \right] - n^2 - (1 + \alpha) |n|\\
 \geq &  \frac12 \tilde{C_0} |\beta|^2 Ai (0) + \frac16 \tilde{C_0} (1 +  \alpha) |\beta| - n^2 - (1 + \alpha) |n| \\
 \geq &\frac14 \tilde{C_0} |\beta|^2 Ai (0) + \frac{1}{12} \tilde{C_0} (1 +  \alpha) |\beta|.
\ea
\ee
On the other hand, it follows from Lemma \ref{lemBessel} and \eqref{5-92-large} that
\be \nonumber
\left| \psi_{BL, n} (1) \left[ |n|^2 + \alpha |n| \frac{I_1^{\prime}(|n|)}{I_1 (|n|)} \right] \right|
\leq |n|^2 + |\alpha |  (1 + |n|)
\ee
and
\be \nonumber
\left| \psi_{BL, n} (1) -  \alpha \frac{d}{dr} \psi_{e, n} (1)  \right| \leq 1 + C |\alpha| (\Phi|n|)^{-\frac34} |\beta|^{\frac52}
+ C \frac{|\alpha|}{4 + \alpha}(\Phi |n|)^{\frac12} |\beta|^{\frac12}.
\ee
If $4+ \alpha \geq \delta^{-1} (\Phi |n|)^{\frac13}$ and $\delta $ is small enough, then
\be \nonumber
J \geq \frac{1}{24}\tilde{C_0} (1 +  \alpha) |\beta|.
\ee
Therefore, by Lemma \ref{propslip-large}, one has
\be \label{5-100-large} \ba
|b| & \leq C|\alpha| \left| \frac{d}{dr} \psi_{s, n} (1) \right| (1 +  \alpha)^{-1} |\beta|^{-1}
\leq C (\Phi |n|)^{-\frac12} |\beta|^{-1} \left( \int_0^1 |\BF^*_n |^2 r \, dr \right)^{\frac12} \\
& \leq C (\Phi |n|)^{-\frac56}\left( \int_0^1 |\BF^*_n |^2 r \, dr \right)^{\frac12}
\ea
\ee
and
\be \label{5-101-large}
|a| \leq C  (\Phi |n|)^{-\frac56}  I_1(|n|)^{-1} \left( \int_0^1 |\BF^*_n |^2 r \, dr \right)^{\frac12} .
\ee
Consequently, it holds that
\be \label{5-102-large-1} \ba
 & b^2 \int_0^1 \left| \frac{d}{dr}(r \chi \psi_{BL, n}) \right|^2 \frac1r \, dr + b^2 n^2 \int_0^1 |\chi \psi_{BL, n}|^2 r \, dr \\
 \leq  & C (\Phi |n|)^{-\frac53}   |\beta| \int_0^1 |\BF_n^*|^2 r \, dr
  \leq C (\Phi |n| )^{-\frac43} \int_0^1 |\BF^*_n|^2 r \, dr
 \ea
\ee
and
\be \label{5-102-large} \ba
& b^2 \int_0^1 \left|  \frac{d}{dr} ( r \mathcal{L} (\chi \psi_{BL, n} )) \right|^2 \frac1r \, dr + b^2 n^2 \int_0^1 \left|\mathcal{L} (\chi \psi_{BL, n} ) \right|^2 r \, dr \\ & \ \ \  +   b^2  n^4 \int_0^1 \left| \frac{d}{dr} ( r \chi \psi_{BL, n} )\right|^2 \frac1r \, dr   + b^2 n^6 \int_0^1 \left| \chi \psi_{BL, n} \right|^2 r \, dr \\
 \leq & C (\Phi |n| )^{-\frac53 } (|\beta|^5 + n^6 |\beta|^{-1})
\int_0^1 |\BF^*_n |^2 r \, dr \\
 \leq & C \int_0^1 |\BF^*_n|^2 r \, dr.
\ea \ee
Furthermore, one has
\be \label{5-103-large-1}
\ba
& a^2 \int_0^1 \left| \frac{d}{dr} (r I_1 (|n| r) )  \right|^2 \frac1r \, dr  + a^2 n^2 \int_0^1 |I_1(|n| r )|^2 r \, dr \\
\leq & C (\Phi |n|)^{-\frac53 }  |n| \int_0^1 |\BF^*_n|^2 r \, dr \leq C (\Phi |n| )^{-\frac43} \int_0^1 |\BF^*_n|^2 r \, dr
\ea
\ee
and
\be \label{5-103-large} \ba
& a^2 \int_0^1 \left|  \frac{d}{dr}(r \mathcal{L} I_1(|n|r)) \right|^2 \frac1r \, dr + a^2 n^2 \int_0^1 |\mathcal{L} I_1 (|n| r)|^2 r \, dr \\
&\ \ + a^2 n^4 \int_0^1 \left| \frac{d}{dr} (r I_1(|n| r )) \right|^2 \frac1r \, dr + a^2 n^6 \int_0^1 |I_1(|n|r)|^2 r \, dr \\
\leq & C   (\Phi |n|)^{-\frac53 }  |n|^5 \int_0^1 |\BF^*_n|^2 r \, dr \\
\leq & C \int_0^1 |\BF^*_n|^2  r \, dr.
\ea \ee

Combining the estimates in Lemma \ref{propslip-large} and \eqref{5-102-large-1}-\eqref{5-103-large} completes the proof of Proposition \ref{case4}.
\end{proof}


\subsubsection{The case with large flux, intermediate frequency, and intermediate slip coefficient}
\begin{pro}\label{case5}
Assume that $\Phi \gg 1$ and $\epsilon_1$, $\delta$ are two independent constants in $(0, 1)$. As long as $n$ satisfies
$1 \leq |n| \leq \epsilon_1 \sqrt{\Phi}$ and $  \delta (\Phi |n|)^{\frac13} \leq 4 +  \alpha \leq \delta^{-1} (\Phi |n|)^{\frac13}$, the solution $\psi_{n}$ to the problem \eqref{2-0-8} satisfies
\be \label{6-0}
 \int_0^1 |\psi_n|^2 r \, dr \leq C (\Phi |n|)^{-\frac53} \int_0^1 |\BF^*_n|^2 r \, dr,
\ee
\be \label{6-1}
\int_0^1 \left| \frac{d}{dr}(r \psi_n) \right|^2 \frac1r \, dr + n^2 \int_0^1 |\psi_n|^2 r \, dr \leq C (\Phi |n| )^{-\frac43} \int_0^1 |\BF^*_n|^2 r \, dr ,
\ee
\be \label{6-2}
|n| \int_0^1  \bar{U}(r) \left| \frac{d}{dr} ( r \psi_n)\right|^2 \frac1r \, dr + |n|^3 \int_0^1 \bar{U}(r) |\psi_n|^2 r \, dr \leq C (\Phi |n|)^{-\frac23} \int_0^1 |\BF^*_n|^2 r \, dr,
\ee
and
\be \label{6-3} \ba
& \int_0^1 |\mathcal{L} \psi_n|^2 r  + n^2  \left|\frac{d}{dr} ( r\psi_n)  \right|^2 \frac1r  + n^4  |\psi_{n} |^2 r \, dr
+  \alpha \left| \frac{d}{dr} ( r \psi_n) (1) \right|^2 \\
\leq &
C (\Phi |n|)^{-\frac12} \int_0^1 |\BF^*_n|^2 r \, dr,
\ea
\ee
\end{pro}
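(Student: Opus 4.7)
The plan is to adapt the boundary layer strategy from Propositions \ref{Bpropcase3} and \ref{case4} to the transitional regime $\delta(\Phi|n|)^{1/3}\le 4+\alpha\le \delta^{-1}(\Phi|n|)^{1/3}$, in which both the slip coefficient and the characteristic boundary layer scale are of order $(\Phi|n|)^{1/3}$. Since $\Phi\gg 1$ forces $\alpha>1$ in this range, the sharp slip-profile bounds of Lemma \ref{propslip-large} are available, and the Airy boundary layer construction of Proposition \ref{case4} carries over, subject to a more delicate matching analysis.

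First I would obtain \eqref{6-1} and \eqref{6-2} directly from the identity \eqref{2-1-7}. The uniform pointwise bound $\bar U(r)\ge 4\Phi/((4+\alpha)\pi)$, combined with Cauchy--Schwarz on the right-hand side (after integration by parts to rewrite $f_n$ in terms of $\BF_n^*$), gives
\[
\frac{\Phi|n|}{4+\alpha}\left(\int_0^1\frac{\left|\frac{d}{dr}(r\psi_n)\right|^2}{r}\, dr+n^2\int_0^1|\psi_n|^2 r\, dr\right)\le C\|\BF_n^*\|_{L^2_r}\left(\int_0^1\frac{\left|\frac{d}{dr}(r\psi_n)\right|^2}{r}\, dr+n^2\int_0^1|\psi_n|^2 r\, dr\right)^{1/2}.
\]
Dividing and using $(4+\alpha)/(\Phi|n|)\le \delta^{-1}(\Phi|n|)^{-2/3}$ produces \eqref{6-1}, and substituting back into \eqref{2-1-7} produces \eqref{6-2}.

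For the sharper estimates \eqref{6-0} and \eqref{6-3} I would invoke the decomposition $\psi_n=\psi_{s,n}+b[\chi\psi_{BL,n}+\psi_{e,n}]+aI_1(|n|r)$ of Proposition \ref{case4}, with $\psi_{BL,n}$ the Airy boundary layer \eqref{case4-7} and $a,b$ chosen by the matching system \eqref{5-87}. The slip profile $\psi_{s,n}$ inherits the estimates \eqref{propslip1-large-6}--\eqref{propslip1-large-9} from Lemma \ref{propslip-large}, and $\psi_{e,n}$ satisfies \eqref{5-89-large}--\eqref{5-92-large} since their derivation only requires $\alpha>1$. The main obstacle is the lower bound on the denominator $J$ in \eqref{5-88}: unlike the extreme cases, here the two leading terms $\psi_{BL,n}''(1)\sim|\beta|^2$ and $(1+\alpha)\psi_{BL,n}'(1)\sim(1+\alpha)|\beta|$ are both of order $(\Phi|n|)^{2/3}$, and neither alone dominates. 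I would exploit Lemma \ref{bound}(2), which gives the uniform positivity $K_\epsilon\ge 1/6$ of the real part of the Laplace-type integral appearing in $\psi_{BL,n}'(1)$, together with the positivity of $Ai(0)$, to conclude that the real parts of the two principal contributions to $J$ add rather than cancel, yielding $|\psi_{BL,n}''(1)+(1+\alpha)\psi_{BL,n}'(1)|\ge c\,(|\beta|^2+(1+\alpha)|\beta|)$. The remaining contributions $-n^2-\alpha|n|I_1'(|n|)/I_1(|n|)-\psi_{BL,n}(1)+\alpha\psi_{e,n}'(1)$ are controlled via Lemma \ref{lemBessel} and the boundary estimate \eqref{5-92-large}, and are shown to be strictly lower order in $(\Phi|n|)^{2/3}$ provided $\epsilon_1$ is chosen small enough, giving $|J|\ge c(\Phi|n|)^{2/3}$.

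From this lower bound and the boundary estimate on $\psi_{s,n}$ one gets $|b|\le C(\Phi|n|)^{-5/6}\|\BF_n^*\|_{L^2_r}$ and $|a|\le C(\Phi|n|)^{-5/6}I_1(|n|)^{-1}\|\BF_n^*\|_{L^2_r}$, matching the bounds of Proposition \ref{case4}. Assembling the three parts via the triangle inequality, and combining \eqref{propslip1-large-6}--\eqref{propslip1-large-9} for $\psi_{s,n}$, the Airy decay estimates from Lemma \ref{bound}(1) for $\chi\psi_{BL,n}$, the remainder bounds \eqref{5-89-large}--\eqref{5-92-large} for $\psi_{e,n}$, and the modified Bessel estimates of Lemma \ref{AlemBessel2} for $aI_1(|n|r)$, yields \eqref{6-0} and \eqref{6-3}. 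The comparatively modest exponent $(\Phi|n|)^{-1/2}$ in \eqref{6-3} reflects precisely the loss incurred by the simultaneous contribution of both mechanisms in this transitional window.
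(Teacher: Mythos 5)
Your derivation of \eqref{6-1} and \eqref{6-2} is exactly the paper's: the identity \eqref{2-1-7}, the pointwise bound $\bar U\ge \frac{4\Phi}{\pi(4+\alpha)}\ge \frac{4\delta}{\pi}\Phi(\Phi|n|)^{-1/3}$, and Cauchy--Schwarz. The problem is your route to \eqref{6-0} and \eqref{6-3}. The paper deliberately does \emph{not} run the boundary-layer construction in this transitional window; it gets \eqref{6-0} for free from \eqref{6-2} via $\bar U(r)\ge \frac{\Phi}{\pi}(1-r^2)$ and the weighted Hardy inequality of Lemma \ref{lemmaHLP} (so $\int_0^1|\psi_n|^2r\,dr\le C\Phi^{-1}\int_0^1\frac{\bar U}{r}|\frac{d}{dr}(r\psi_n)|^2dr\le C(\Phi|n|)^{-5/3}\|\BF_n^*\|_{L^2_r}^2$), and then \eqref{6-3} -- including the boundary term $\alpha|\frac{d}{dr}(r\psi_n)(1)|^2$ -- from the energy identity \eqref{2-1-5}, estimating the cross term by $C\Phi|n|\,(\int|\frac{d}{dr}(r\psi_n)|^2\frac1r)^{1/2}(\int|\psi_n|^2r)^{1/2}\le C(\Phi|n|)^{1-2/3-5/6}\|\BF_n^*\|^2=C(\Phi|n|)^{-1/2}\|\BF_n^*\|^2$. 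This is a ten-line argument needing no decomposition.

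The genuine gap in your approach is the lower bound $|J|\ge c(\Phi|n|)^{2/3}$. The troublesome term in \eqref{5-89} is not the Bessel or cutoff contributions (which $\epsilon_1$ small does control) but $\alpha\frac{d}{dr}\psi_{e,n}(1)$. The remainder $\psi_{e,n}$ is driven by the operator mismatch $(\widetilde{\mcA_\infty}\widetilde{\mcH}-\mcA\mcH)(\chi\psi_{BL,n})$, whose dominant part carries the factor $\frac{\Phi|n|}{4+\alpha}$; the resulting estimate \eqref{5-92-large} gives $\alpha|\frac{d}{dr}\psi_{e,n}(1)|\le C\frac{\alpha}{4+\alpha}(\Phi|n|)^{1/2}|\beta|^{1/2}\sim C(\Phi|n|)^{2/3}$, i.e.\ exactly the same order as your claimed main term $\frac14\tilde{C_0}|\beta|^2Ai(0)+\frac{1}{12}\tilde{C_0}(1+\alpha)|\beta|$, with an implicit constant $C$ that is not small. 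In the large-$\alpha$ case of Proposition \ref{case4} this error is smaller than the main term by the factor $\frac{(\Phi|n|)^{1/3}}{4+\alpha}\le\delta$, which is why $\delta$ small saves the day there; in your window that same ratio lies between $\delta$ and $\delta^{-1}$, so shrinking $\delta$ makes the error \emph{larger} relative to the main term, and shrinking $\epsilon_1$ does not touch it at all. Your assertion that the remaining contributions are "strictly lower order provided $\epsilon_1$ is small enough" is therefore unsubstantiated, and without a sharper boundary-layer ansatz the matching argument does not close. (A secondary issue: for \eqref{6-0} you also need $\int_0^1|\psi_{s,n}|^2r\,dr\le C(\Phi|n|)^{-5/3}\|\BF_n^*\|_{L^2_r}^2$, which does not follow from \eqref{propslip1-large-6} alone when $|n|$ is small; it requires an extra Hardy-type step of the same kind the paper uses for $\psi_n$ itself.)
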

\begin{proof}
According to \eqref{2-1-7}, it holds that
\be \label{6-5}
n \int_0^1 \frac{\bar{U}(r)}{r} \left|\frac{d}{dr}(r \psi_n)  \right|^2 \, dr + n^3 \int_0^1 \bar{U} (r) |\psi_n|^2 r \, dr
= - \Im \int_0^1 f_n \overline{\psi_n} r \, dr.
\ee
Note that
\be \nonumber
\bar{U}(r) = \frac{2\Phi}{\pi} (1 - r^2) \frac{ \alpha}{4 + \alpha} + \frac{\Phi}{\pi} \frac{4}{4 + \alpha} \geq \frac{4\Phi}{\pi} \delta (\Phi |n| )^{-\frac13}.
\ee
Hence one has
\be \label{6-6} \ba
& (\Phi |n| )^{\frac23} \left( \int_0^1 \left| \frac{d}{dr}(r \psi_n) \right|^2 \frac1r \, dr + n^2 \int_0^1 |\psi_n|^2r \, dr \right) \\
\leq & C (\delta) \left[  \int_0^1 |F_n^r| |n \psi_n| r \, dr + \int_0^1 |F^z_n| \left| \frac{d}{dr}(r \psi_n)    \right| \, dr              \right].
\ea \ee
This, together with Schwarz inequality, gives \eqref{6-1}.
Taking \eqref{6-1} into \eqref{6-5} proves \eqref{6-2}.

Moreover, due to the fact that $\bar{U}(r) \geq \frac{\Phi}{\pi} (1- r^2)$ and Lemma \ref{weightinequality}, one has
\be \label{6-8}
\int_0^1 |\psi_n|^2 r \, dr \leq
C\Phi^{-1}  \int_0^1 \frac{\bar{U}(r)}{r} \left| \frac{d}{dr}(r \psi_n)  \right|^2 \, dr  \leq C(\delta) (\Phi |n| )^{-\frac53} \int_0^1 |\BF^*_n|^2 r \, dr.
\ee
This is exactly the estimate \eqref{6-0}.

On the other hand, it follows from \eqref{2-1-5}, \eqref{6-1}, and \eqref{6-8} that
\be \label{6-9} \ba
& \int_0^1 |\mathcal{L} \psi_n|^2 r \, dr +  2n^2 \int_0^1 \left| \frac{d}{dr}(r \psi_n) \right|^2 \frac1r \, dr + n^4 \int_0^1 |\psi_n|^2 r \, dr \\
\leq & \int_0^1 |F_n^r| |n \psi_n| r \, dr + \int_0^1 |F_n^z| \left| \frac{d}{dr}(r \psi_n)  \right| \, dr + \frac{4\Phi |n| }{\pi}
\int_0^1 \left| \frac{d}{dr} (r \psi_n)  \right| | r \psi_n | \, dr \\
\leq & C (\Phi |n| )^{-\frac23} \int_0^1 |\BF^*_n|^2 r \, dr + C \Phi |n| \left( \int_0^1 \left| \frac{d}{dr}(r \psi_n) \right|^2 \frac1r \, dr \right)^{\frac12} \left(
\int_0^1 |\psi_n|^2 r \, dr \right)^{\frac12}\\
\leq & C (\Phi |n|)^{-\frac12} \int_0^1 |\BF^*_n|^2 r \, dr .
\\
\ea \ee
This completes the proof of Proposition \ref{case5}.
\end{proof}


Let
\be \nonumber
Z_1 = \{ n \in \mathbb{Z}:\ 1 \leq |n| \leq \epsilon_1 \sqrt{\Phi}, \ |n| \geq \delta^{-3} (4 + \alpha)^3 \Phi^{-1}        \} ,
\ee
\be \nonumber
Z_2 = \{ n \in \mathbb{Z}:\ 1\leq |n| \leq \epsilon_1 \sqrt{\Phi},\  |n| \leq \delta^3 (4 + \alpha)^3 \Phi^{-1}         \},
\ee
\be \nonumber
Z_3 = \{ n \in \mathbb{Z}:\ 1 \leq |n| \leq \epsilon_1 \sqrt{\Phi},\ \delta^3 (4 + \alpha)^3 \Phi^{-1} < |n| < \delta^{-3} (4  +  \alpha)^3 \Phi^{-1} \}.
\ee
Denote
\be \nonumber
\psi_{med, j} = \sum_{n \in Z_j} \psi_n e^{inz},\ \ \ j=1, 2, 3
\ee
and define
\be \nonumber
v^r_{med, j}= \partial_z \psi_{med, j},\ \ \ v^z_{med, j} = -\frac{\partial_r ( r \psi_{med, j}) }{r}, \ \ \ \text{and}\ \ \Bv_{med, j}^* = v^r_{med, j} \Be_r + v^z_{med, j} \Be_z.
\ee
Furthermore,  $F^r_{med, j}$, $F^z_{med, j}$, $\BF^*_{med, j}$, and $\Bo_{med, j}^\theta $ can be defined similarly.

\begin{pro}\label{medregularity1}
The solution $\Bv^*$ satisfies
\be \label{7-1}
\|\Bv^*_{med, 1} \|_{H^2(\Omega)} \leq C \|\BF^*_{med, 1}\|_{L^2(\Omega)} \ \ \ \text{and}\ \  \  \|\Bv^*_{med, 2} \|_{H^2(\Omega)} \leq C \|\BF^*_{med, 2}\|_{L^2(\Omega)},
\ee
where the constant $C$ is a uniform constant independent of $\Phi$, $\alpha$, and $\BF$.
\end{pro}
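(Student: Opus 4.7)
The plan is to follow the strategy used for the high-frequency regime in Proposition \ref{Bpropcase2-1}. Namely, observe that $\Bv^*_{med,j}$ is (via the proof of Proposition \ref{back}) a strong solution of the Stokes system
\begin{equation*}
-\Delta \Bv^*_{med,j} + \nabla P_{med,j} = \BF^*_{med,j} - \bar U(r)\,\partial_z \Bv^*_{med,j} - v^r_{med,j}\,\partial_r \bBU,\quad \operatorname{div} \Bv^*_{med,j}=0,
\end{equation*}
with Navier boundary conditions and vanishing flux on each cross-section. Applying Lemma VI.1.2 of \cite{Galdi} (whose constant is uniform in the slip coefficient, along the lines of \cite{AACG}) together with the axisymmetric trace theorem $\|\Bv^*_{med,j}\|_{H^{3/2}(\partial\Omega)}\le C\|\partial_z \Bv^*_{med,j}\|_{H^1(\Omega)}$ yields
\begin{equation*}
\|\Bv^*_{med,j}\|_{H^2(\Omega)} \le C\bigl[\|\BF^*_{med,j}\|_{L^2} + \|\bar U\partial_z \Bv^*_{med,j}\|_{L^2} + \Phi\|v^r_{med,j}\|_{L^2} + \|\Bv^*_{med,j}\|_{H^1} + \|\partial_z \Bv^*_{med,j}\|_{H^1}\bigr].
\end{equation*}
The remaining task is to bound each term on the right by $C\|\BF^*_{med,j}\|_{L^2}$ with $C$ independent of $\Phi$ and $\alpha$.

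First I would dispose of the $H^1$ pieces. By \eqref{2-1-16-0} applied modewise, $\|\Bv^*_n\|_{H^1}^2$ and $\|\partial_z \Bv^*_n\|_{H^1}^2$ are controlled by the weighted integrals of $\mathcal{L}\psi_n$ appearing on the left-hand side of \eqref{case3-15} (for $n\in Z_1$) and \eqref{case4-15} (for $n\in Z_2$); indeed $\omega^\theta_n=(\mathcal{L}-n^2)\psi_n$, and the relevant combinations $\int (\,|\partial_r(r\mathcal{L}\psi_n)|^2/r + n^2|\mathcal{L}\psi_n|^2 r + n^4|\partial_r(r\psi_n)|^2/r + n^6|\psi_n|^2 r\,)\,dr$ dominate the $H^1$-norms of $\omega^\theta_n$ and $n\omega^\theta_n$, using Hardy's inequality at the axis (where $\omega^\theta_n(0)=0$). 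Summing these estimates over $n\in Z_j$ gives $\|\Bv^*_{med,j}\|_{H^1}^2 + \|\partial_z \Bv^*_{med,j}\|_{H^1}^2 \le C\|\BF^*_{med,j}\|_{L^2}^2$ with a uniform constant.

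The crux of the proof, and the step I expect to be the main obstacle, is controlling $\|\bar U\partial_z \Bv^*_{med,j}\|_{L^2}+\Phi\|v^r_{med,j}\|_{L^2}$. A direct estimate $\|\bar U\partial_z \Bv^*_n\|_{L^2}\le C\Phi\|\partial_z \Bv^*_n\|_{L^2}$ together with either \eqref{case3-13} or \eqref{case3-15} alone loses a factor of $\Phi^{1/2}$ in the intermediate-frequency regime, so uniformity must come from the structural decomposition $\psi_n=\psi_{s,n}+b(\chi\psi_{BL,n}+\psi_{e,n})+aI_1(|n|r)$. The frequency set $Z_1$ is defined precisely so that $(4+\alpha)^3\le \delta^3 \Phi|n|$, and each piece of the decomposition must be estimated using the refined bounds: $\psi_{s,n}$ via \eqref{case3-3}--\eqref{case3-5}, the boundary layer $\chi\psi_{BL,n}$ via its exponential concentration of width $\beta^{-1/2}$ together with \eqref{case3-8}, the remainder $\psi_{e,n}$ via \eqref{case3-10}--\eqref{case3-12}, and the Bessel correction via \eqref{case3-6} and Lemma \ref{AlemBessel2}. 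In each case the scaling constraints defining $Z_1$ absorb the extra powers of $\Phi$ and $(4+\alpha)$. For $n\in Z_2$ an entirely parallel argument applies, with the Airy-type boundary layer \eqref{case4-7} and the estimates \eqref{propslip1-large-6}--\eqref{propslip1-large-9} replacing their small-$\alpha$ counterparts.

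Combining the Stokes estimate with these uniform bounds and summing in $n$ proves $\|\Bv^*_{med,j}\|_{H^2(\Omega)}\le C\|\BF^*_{med,j}\|_{L^2(\Omega)}$ with $C$ independent of $\Phi$, $\alpha$, and $\BF$, as claimed.
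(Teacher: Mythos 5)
Your proposal diverges from the paper at the decisive step, and the route you choose does not close. The paper does \emph{not} estimate $\Bv^*_{med,j}$ through the Stokes system with the transport terms on the right; it uses the kinematic identity $\Delta \Bv^*_{med,j} = -\,{\rm curl}\,\Bo^\theta_{med,j}$ (valid since ${\rm curl}\,\Bv^*_{med,j}=\Bo^\theta_{med,j}$ and ${\rm div}\,\Bv^*_{med,j}=0$) together with elliptic regularity. The right-hand side is then bounded by
$\sum_{n\in Z_j}\int_0^1 \big|\tfrac{d}{dr}[r(\mathcal{L}-n^2)\psi_n]\big|^2 \tfrac1r\,dr$,
which the uniform third-order estimates \eqref{case3-15} and \eqref{case4-15} control by $C\|\BF^*_{med,j}\|_{L^2}^2$. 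This completely bypasses the term $\|\bar U\,\partial_z\Bv^*_{med,j}\|_{L^2}$, which is exactly the term you flag as ``the main obstacle'' and then dispose of only by asserting that ``the scaling constraints defining $Z_1$ absorb the extra powers.'' That assertion is not borne out by the estimates available in the paper. For instance, for the slip part, \eqref{case3-3} gives $\int_0^1 |\tfrac{d}{dr}(r\psi_{s,n})|^2\tfrac1r + n^2|\psi_{s,n}|^2 r\,dr \le C(\Phi|n|)^{-2}(4+\alpha)^2\int_0^1|\BF^*_n|^2 r\,dr$, so the crude bound $\|\bar U\partial_z\Bv^*_{s,n}\|_{L^2}^2\le C\Phi^2 n^2\int(\cdot)$ yields a factor $(4+\alpha)^2$, which on $Z_1$ is only controlled by $\delta^2(\Phi|n|)^{2/3}$ and hence is unbounded in $\Phi$; a similar computation for the boundary-layer remainder $b\,\psi_{e,n}$ using \eqref{case3-8} and \eqref{case3-10} leaves a factor $(\Phi|n|)^{-2}(4+\alpha)^8\lesssim(\Phi|n|)^{2/3}$. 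Closing your argument would require new $\bar U$-weighted estimates for each piece of the decomposition (analogues of \eqref{highf2} or \eqref{6-2}), which Propositions \ref{Bpropcase3} and \ref{case4} do not provide. Note also that in the regimes where the paper \emph{does} use the Stokes system with the transport term (Propositions \ref{Bpropcase2-1} and \ref{medregularity2}), it only obtains $\Phi^{1/4}\|\BF^*\|_{L^2}$ for the $H^2$ norm, not a uniform constant — a strong hint that this vehicle cannot deliver the uniform bound claimed here.

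Your treatment of the lower-order pieces is essentially correct and matches the paper: $\|\Bv^*_{med,j}\|_{H^1}$, $\|\partial_z\Bv^*_{med,j}\|_{H^1}$, and hence the boundary trace, are controlled modewise by $\|(\mathcal{L}-n^2)\psi_n\|_{L^2_r}$ and $\|n(\mathcal{L}-n^2)\psi_n\|_{L^2_r}$, which follow from \eqref{case3-15} and \eqref{case4-15} using $|n|\ge1$. The fix for the remainder of the argument is to replace the Stokes step by the identity $\Delta\Bv^*_{med,j}=-{\rm curl}\,\Bo^\theta_{med,j}$ and bound $\|\tfrac1r\partial_r(r\,\omega^\theta_{med,j})\|_{L^2}$ and $\|\partial_z\omega^\theta_{med,j}\|_{L^2}$ directly from those same third-order estimates, as in \eqref{7-9-0}--\eqref{7-15}.
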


\begin{proof}
It follows from Proposition \ref{Bpropcase3} that
\be \label{7-2}
\|\Bv_{med, 1}^* \|_{L^2(\Omega)}^2 \leq C \sum_{n \in Z_1 } \int_0^1 \left( n^2 |\psi_n|^2 r + \left| \frac{d}{dr}(r \psi_n) \right|^2 \frac1r   \right) \, dr \leq C \|\BF_{med, 1}^*\|_{L^2(\Omega)}^2.
\ee
Similarly, one has
\be \label{7-3-0} \ba
\|\Bv_{med, 1}^*\|_{H^1(\Omega)}^2 & =  \|\omega^\theta_{med, 1} \|_{L^2(\Omega)}^2 +  \|\Bv^*_{med, 1 }\|_{L^2(\Omega)}^2 \\
& \leq C \sum_{n \in Z_1} \int_0^1 | (\mathcal{L} - n^2 ) \psi_n  |^2 r \, dr + C  \|\Bv^*_{med, 1 }\|_{L^2(\Omega)}^2\\
& \leq C \|\BF^*_{med, 1} \|_{L^2(\Omega)}^2
\ea \ee
and
\be \label{7-5-1} \ba
\|\partial_z \Bv^*_{med, 1} \|_{H^1(\Omega)}^2 & =  \|\partial_z \omega^\theta_{med, 1} \|_{L^2(\Omega)}^2  +  \| \partial_z \Bv^*_{med, 1}\|_{L^2(\Omega)}^2 \\
& \leq C  \sum_{n \in Z_1} \int_0^1 n^2 | (\mathcal{L} - n^2 ) \psi_n  |^2 r \, dr + C  \|\BF^*_{med, 1 }\|_{L^2(\Omega)}^2\\
& \leq C \|\BF^*_{med, 1} \|_{L^2(\Omega)}^2 .
\ea
\ee
It follows from the estimates \eqref{7-3-0}-\eqref{7-5-1} and the trace theorem for axisymmetric functions that
\be \label{7-6}
\|\Bv^*_{med, 1} \|_{H^{\frac32} (\partial \Omega)} \leq C \|\Bv^*_{med, 1} \|_{H^1(\Omega)} + C \|\partial_z \Bv^*_{med, 1}\|_{H^1(\Omega)}
\leq C \|\BF^*_{med, 1} \|_{L^2(\Omega)}.
\ee

On the other hand, the straightforward computations give
\be \label{7-9-0}
\Delta \Bv^*_{med, 1 } = -{\rm curl}~\Bo^\theta_{med, 1} = \partial_z \omega^\theta_{med, 1} \Be_r - \left( \partial_r \omega^\theta_{med, 1} + \frac{\omega^\theta_{med, 1}}{r} \right)\Be_z.
\ee
Thus according to Proposition \ref{Bpropcase3}, one has
\be \label{7-11} \ba
\left\| \partial_r \omega^\theta_{med, 1} + \frac{\omega^\theta_{med, 1}}{r}   \right\|_{L^2(\Omega)}^2
& = \left\| \frac1r \frac{\partial}{\partial r} ( r \omega^\theta_{med, 1} )          \right\|_{L^2(\Omega)}^2 \\
& \leq C \sum_{n \in Z_1}  \int_0^1 \left| \frac{d}{d r} [ r ( \mathcal{L} - n^2) \psi_n ]   \right|^2
\frac1r \, dr  \\
& \leq C\|\BF^*_{med, 1 } \|_{L^2(\Omega)}^2 .
\ea
\ee
Applying the regularity theory for the elliptic equation \eqref{7-9-0}  yields that  $\Bv^*_{med, 1}$ satisfies
\be \label{7-15}
\| \Bv^*_{med, 1} \|_{H^2(\Omega)}
\leq C \|{\rm curl}~\Bo_{med, 1}^\theta \|_{L^2(\Omega)} + C \|\Bv^*_{med, 1} \|_{H^{\frac32}(\partial \Omega)} \leq C \|\BF^*_{med, 1} \|_{L^2(\Omega)}.
\ee

Following the same lines as above and applying Proposition \ref{case4}, one can prove that
\be \label{7-16}
\| \Bv^*_{med, 2} \|_{H^2(\Omega)} \leq C \|\BF^*_{med, 2} \|_{L^2(\Omega)}.
\ee
This finishes the proof of Proposition \ref{medregularity1}.
\end{proof}

\begin{pro}\label{medregularity2}
The solution $\Bv^*$ satisfies
\be \label{8-1}
\|\Bv^*_{med, 3} \|_{H^{\frac32} (\Omega)} \leq C \|\BF^*_{med, 3} \|_{L^2(\Omega)} \ \ \mbox{and}\  \
\|\Bv^*_{med, 3} \|_{H^{2} (\Omega)} \leq C \Phi^{\frac14} \|\BF^*_{med, 3} \|_{L^2(\Omega)}.
\ee
where the constant $C$ is independent of $\Phi$, $\alpha$, and $\BF^*$.
\end{pro}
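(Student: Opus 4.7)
The plan is to follow the template of Proposition \ref{Bpropcase2-1}: treat $\Bv^*_{med,3}$ as a solution of a Stokes problem and apply the regularity theory in \cite[Lemma VI.1.2]{Galdi}, using the mode-by-mode bounds in Proposition \ref{case5} to control the forcing terms. The extra ingredient, compared with the high-frequency case, is that the a priori estimates \eqref{6-0}--\eqref{6-3} are only $(\Phi|n|)^{-1/2}$-good for the vorticity rather than $|n|^{-2}$-good, so the summation over $n\in Z_3$ must exploit the constraint $|n|\leq\epsilon_1\sqrt\Phi$ to produce the correct powers of $\Phi$.

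First I would record the low-order bounds. Summing \eqref{6-1} over $n\in Z_3$ yields $\|\Bv^*_{med,3}\|_{L^2(\Omega)}^2\leq C\Phi^{-4/3}\|\BF^*_{med,3}\|_{L^2(\Omega)}^2$, while \eqref{6-3} together with the identity $\|\Bv^*_{med,3}\|_{H^1}^2\leq C(\|\omega^\theta_{med,3}\|_{L^2}^2+\|\Bv^*_{med,3}\|_{L^2}^2)$ (see \eqref{2-1-16-0}) gives $\|\Bv^*_{med,3}\|_{H^1(\Omega)}\leq C\Phi^{-1/4}\|\BF^*_{med,3}\|_{L^2(\Omega)}$. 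Multiplying \eqref{6-3} by $n^2$ and using $|n|\leq\epsilon_1\sqrt\Phi$ in $Z_3$ then produces $\|\partial_z\Bv^*_{med,3}\|_{H^1(\Omega)}\leq C\Phi^{1/8}\|\BF^*_{med,3}\|_{L^2(\Omega)}$, and by the axisymmetric trace theorem $\|\Bv^*_{med,3}\|_{H^{3/2}(\partial\Omega)}\leq C\Phi^{1/8}\|\BF^*_{med,3}\|_{L^2(\Omega)}$.

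Next, as in \eqref{stokes-high}, $\Bv^*_{med,3}$ solves
\be \nonumber
-\Delta\Bv^*_{med,3}+\nabla P = \BF^*_{med,3}-\bar U\partial_z\Bv^*_{med,3} - v^r_{med,3}\partial_r\bBU \ \ \text{in}\ \Omega, \quad \mathrm{div}\,\Bv^*_{med,3}=0,
\ee
and the Stokes regularity theory gives
\be \nonumber
\|\Bv^*_{med,3}\|_{H^2(\Omega)}\leq C\bigl(\|\BF^*_{med,3}\|_{L^2}+\|\bar U\partial_z\Bv^*_{med,3}\|_{L^2}+\Phi\|v^r_{med,3}\|_{L^2}+\|\Bv^*_{med,3}\|_{H^1}+\|\Bv^*_{med,3}\|_{H^{3/2}(\partial\Omega)}\bigr).
\ee
The two new forcing terms are estimated mode-by-mode. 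For $\bar U\partial_z\Bv^*_{med,3}$ I would use $\bar U(r)\leq C\Phi$ and \eqref{6-2} to bound $\|\bar U\partial_z\Bv^*_{n}\|_{L^2}^2\leq Cn^2\Phi\int\bar U\bigl(|\partial_r(r\psi_n)|^2/r+n^2|\psi_n|^2 r\bigr)dr\leq C\Phi^{4/3}|n|^{1/3}\|\BF^*_n\|^2$; summing over $n\in Z_3$ with $|n|\leq\epsilon_1\sqrt\Phi$ gives $\|\bar U\partial_z\Bv^*_{med,3}\|_{L^2}\leq C\Phi^{1/4}\|\BF^*_{med,3}\|_{L^2}$. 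For $\Phi\|v^r_{med,3}\|_{L^2}$, I would instead invoke \eqref{6-0} to obtain $\|v^r_{n}\|_{L^2}^2\leq Cn^2(\Phi|n|)^{-5/3}\|\BF^*_n\|^2$, whence summation and $|n|\leq\epsilon_1\sqrt\Phi$ yield $\Phi\|v^r_{med,3}\|_{L^2}\leq C\Phi^{1/4}\|\BF^*_{med,3}\|_{L^2}$. Substituting every bound into the Stokes inequality proves the second half of \eqref{8-1}. The first half then follows by interpolation: $\|\Bv^*_{med,3}\|_{H^{3/2}}\leq C\|\Bv^*_{med,3}\|_{H^1}^{1/2}\|\Bv^*_{med,3}\|_{H^2}^{1/2}\leq C\Phi^{-1/8+1/8}\|\BF^*_{med,3}\|_{L^2}=C\|\BF^*_{med,3}\|_{L^2}$.

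The main obstacle is the weakness of the a priori bound \eqref{6-3}, which only loses $(\Phi|n|)^{-1/2}$ instead of an $|n|^{-2}$-type factor seen in the high-frequency case. Consequently the $\bar U\partial_z\Bv^*$ term cannot be absorbed purely by the mode-by-mode estimate; it is crucial to combine the $\bar U$-weighted bound \eqref{6-2} with the frequency cap $|n|\leq\epsilon_1\sqrt\Phi$ characterising $Z_3$, and to use \eqref{6-0} rather than \eqref{6-1} for the $\Phi v^r_{med,3}$ term. Balancing these two competing estimates is exactly what produces the uniform $H^{3/2}$ bound and the $\Phi^{1/4}$ loss in $H^2$.
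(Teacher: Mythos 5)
Your argument is correct and follows essentially the same route as the paper: rewrite $\Bv^*_{med,3}$ as a Stokes solution, control $\bar U\partial_z\Bv^*_{med,3}$ via the $\bar U$-weighted bound \eqref{6-2}, control $\Phi v^r_{med,3}$ via \eqref{6-0}, use \eqref{6-3} for the $H^1$ and $\partial_z$-$H^1$ bounds together with the cap $|n|\le\epsilon_1\sqrt\Phi$, and interpolate between $H^1$ and $H^2$ at the end. The only blemish is a typo in the intermediate bound for $\|\bar U\partial_z\Bv^*_n\|_{L^2}^2$, which should read $C\Phi^{1/3}|n|^{1/3}\|\BF^*_n\|^2$ rather than $C\Phi^{4/3}|n|^{1/3}\|\BF^*_n\|^2$; your stated conclusion $C\Phi^{1/4}\|\BF^*_{med,3}\|_{L^2}$ is consistent with the corrected exponent.
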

\begin{proof}
In fact, following similar proof of Proposition \ref{back}, it can be proved that $\Bv^*_{med, 3} $ satisfies the following equation
\be \label{8-2}
\left\{
\ba
& - \Delta \Bv^*_{med, 3} + \nabla P = \BF^*_{med, 3}- \bar{U} \partial_z \Bv^*_{med, 3}  - v^r_{med, 3} \partial_r \bBU\ \ \ \ \mbox{in}\ \Omega,  \\
& {\rm div}~ \Bv^*_{med, 3} = 0\ \ \ \ \mbox{in}\ \Omega.
\ea
\right.
\ee
It follows from the regularity theory for Stokes equations that
\be \label{8-3}
\ba
\|\Bv^*_{med, 3} \|_{H^2(\Omega)}
 \leq & C \|\BF^*_{med, 3} \|_{L^2(\Omega)} + C \|\bar{U} \partial_z \Bv^*_{med, 3} \|_{L^2(\Omega)} + C \|v^r_{med, 3}  \partial_r \bar{U} \|_{L^2(\Omega)}\\
 &\ \ \ +C \|\Bv^*_{med, 3} \|_{H^1(\Omega)} + C \|\Bv^*_{med, 3}\|_{H^{\frac32}(\partial \Omega)} \\
 \leq &  C \|\BF^*_{med, 3} \|_{L^2(\Omega)} + C \|\bar{U} \partial_z \Bv^*_{med, 3} \|_{L^2(\Omega)} + C \|v^r_{med, 3}  \partial_r \bar{U} \|_{L^2(\Omega)}\\
 &\ \ \ + C \|\Bv^*_{med, 3} \|_{H^1(\Omega)} + C \|\partial_z \Bv^*_{med, 3}\|_{H^1 ( \Omega)}.
\ea
\ee
Herein, according to Proposition \ref{case5}, one has
\be \label{8-5} \ba
& \|\bar{U} \partial_z \Bv^*_{med, 3} \|_{L^2(\Omega)}^2 \\
\leq & C \Phi  \sum_{n \in Z_3} \left[ n^2 \int_0^1 \bar{U}(r) \left| \frac{d}{dr}(r \psi_n) \right|^2 \frac1r \, dr  +
n^4 \int_0^1 \bar{U}(r) |\psi_n|^2 r \, dr \right]   \\
\leq & C \Phi \sum_{n \in Z_3} |n| (\Phi |n| )^{-\frac23} \int_0^1 |\BF^*_n|^2 r \, dr \\
\leq & C \Phi^{\frac12} \|\BF^*_{med, 3} \|_{L^2(\Omega)}^2
\ea \ee
and
\be \label{8-6} \ba
& \| v^r_{med, 3} \partial_r \bar{U} \|_{L^2(\Omega)}^2
\leq  C \Phi^2 \sum_{n \in Z_3} n^2 \int_0^1 |\psi_n|^2 r \, dr \\
\leq & C \Phi^2 \sum_{n \in Z_3} (\Phi |n|)^{-\frac53} n^2 \int_0^1 |\BF^*_n|^2 r \, dr \\
\leq & C \sum_{n \in Z_3} (\Phi |n| )^{\frac13} \int_0^1 |\BF^*_n|^2 r \, dr \\
\leq & C \Phi^{\frac12} \|\BF^*_{med, 3} \|_{L^2(\Omega)}^2.
\ea \ee
Similarly, it holds that
\be  \label{8-7}
\ba
& \|\Bv^*_{med, 3} \|_{H^1(\Omega)}^2
=  \|\Bo^\theta_{med, 3}\|_{L^2(\Omega)}^2 + \|\Bv^*_{med, 3}\|_{L^2(\Omega)}^2 \\
\leq & C \sum_{n \in Z_3} \left[ \int_0^1 |(\mathcal{L} - n^2) \psi_n|^2 r \, dr +  \int_0^1 \left|\frac{d}{dr} (r \psi_n)  \right|^2 \frac1r + n^2 |\psi_n|^2 r \, dr  \right]\\
\leq & C \Phi^{-\frac12} \|\BF^*_{med, 3} \|_{L^2(\Omega)}^2
\ea
\ee
and
\be \label{8-8}
\ba
 \|\partial_z \Bv^*_{med, 3} \|_{H^1(\Omega)}^2
= & \|\partial_z \Bo^\theta_{med, 3} \|_{L^2(\Omega)}^2  +  \|\partial_z \Bv^*_{med, 3} \|_{L^2 (\Omega)}^2  \\
\leq & C \sum_{n \in Z_3}  \int_0^1 n^2 |(\mathcal{L} - n^2) \psi_n|^2 r \, dr   + C \|\Bv^*_{med, 3} \|_{H^1(\Omega)}^2 \\
\leq & C \sum_{n \in Z_3} n^2 (\Phi |n|)^{-\frac12} \|\BF^*_n\|_{L^2(\Omega)}^2 + C \|\Bv^*_{med, 3} \|_{H^1(\Omega)}^2\\
\leq & C \Phi^{\frac14} \|\BF^*_{med, 3}\|_{L^2(\Omega)}^2.
\ea
\ee
Taking \eqref{8-5}--\eqref{8-8} into \eqref{8-3} yields
\be \label{8-9}
\|\Bv^*_{med, 3} \|_{H^2(\Omega)}\leq C \Phi^{\frac14} \|\BF^*_{med, 3}\|_{L^2(\Omega)}.
\ee
The interpolation for the estimates \eqref{8-7} and \eqref{8-9} gives \eqref{8-1} so that the proof of Proposition \ref{medregularity2} is completed.
\end{proof}

\begin{remark}
The estimate \eqref{8-7} gives some good control of $\|\Bv^*_{med, 3}\|_{H^1(\Omega)}$ when $\Phi$ is large.  During the proof of \eqref{8-7}, we used the fact that $|n| \geq 1$. The fact is only true for periodic pipes. That is why we can only consider the periodic case in this paper.
\end{remark}


\section{Analysis on the linearized problem for swirl velocity}\label{sec-swirl}

In this section, we give prove the existence of solutions to the linearized problem \eqref{vswirl} and give the uniform estimates for $\Bv^\theta$ defined by $\Bv^\theta = v^\theta \Be_\theta$.

\begin{pro}\label{swirl}
Assume that $\BF^\theta = F^\theta (r, z) \Be_\theta \in L^2 (\Omega)$.  If $F^\theta$ satisfies the following compatibility condition,
\be \label{compatibility}
2\pi \int_0^1 F_0^\theta(r) r^2 \, dr = \int_{-\pi}^{\pi} \int_0^1 F^\theta(r, z) r^2 \, dr dz =  0,
\ee
then the linear problem \eqref{vswirl} admits a unique solution $v^\theta$ satisfying
\be \label{swirl-1}
\| \Bv^{\theta} \|_{H^2(\Omega)} \leq C \|\BF^\theta \|_{L^2 (\Omega)},
\ee
where the  constant $C$ is independent of $\BF^\theta$, $\Phi$ and $\alpha$.
\end{pro}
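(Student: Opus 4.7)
The plan is to Fourier decompose in the axial variable so that \eqref{vswirl} reduces to the family of ODEs \eqref{swirl-system} for the modes $v_n^\theta(r)$, and then to derive uniform mode-by-mode estimates which sum to the global bound. Multiplying \eqref{swirl-system} by $r\overline{v_n^\theta}$ and integrating by parts, using $v_n^\theta(0)=0$ and the Navier condition at $r=1$, yields
\begin{equation*}
\int_0^1\!\left(r|(v_n^\theta)'|^2 + n^2 r|v_n^\theta|^2 + \frac{|v_n^\theta|^2}{r}\right)\!dr + (\alpha-1)|v_n^\theta(1)|^2 + in\!\int_0^1\!\bar U|v_n^\theta|^2 r\,dr = \int_0^1\!F_n^\theta \overline{v_n^\theta}\,r\,dr.
\end{equation*}
Using the elementary identity $\int_0^1(r|v'|^2 + |v|^2/r)\,dr - |v(1)|^2 = \int_0^1 r|v' - v/r|^2\, dr$ (which follows from $v(0)=0$), the real part reorganizes as a sum of non-negative terms
\begin{equation*}
A_n := \int_0^1 r\left|(v_n^\theta)' - \tfrac{v_n^\theta}{r}\right|^2 dr + \alpha|v_n^\theta(1)|^2 + n^2\|v_n^\theta\|_{L_r^2}^2 = \Re \int_0^1 F_n^\theta\overline{v_n^\theta}\,r\,dr,
\end{equation*}
so for every $n\neq 0$ one immediately reads off $\|v_n^\theta\|_{L_r^2}\leq \|F_n^\theta\|_{L_r^2}/n^2$ uniformly in $\alpha\geq 0$ and $\Phi>0$, and the full Dirichlet energy of each non-zero mode is then controlled by $A_n$ itself.

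The zero mode is delicate and is precisely where the compatibility condition \eqref{compatibility} enters. Testing $-\mathcal{L} v_0^\theta = F_0^\theta$ against the profile $r$ (with weight $r\,dr$) and integrating by parts yields $\alpha\,v_0^\theta(1) = \int_0^1 F_0^\theta r^2\,dr$; under \eqref{compatibility} this forces $v_0^\theta(1) = 0$ whenever $\alpha > 0$, eliminating the only term in the real-part identity with ambiguous sign. For $\alpha = 0$ the operator $-\mathcal{L}$ with the Navier condition has kernel $\mathrm{span}\{r\}$ (check: $\mathcal{L} r=0$ and $r'(1)=1=(1-0)r(1)$), and \eqref{compatibility} is exactly the Fredholm solvability condition; uniqueness is then enforced by picking the solution with $\int_0^1 v_0^\theta r^2\,dr = 0$. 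A spectral-gap argument on the $L_r^2$-orthogonal complement of $\mathrm{span}\{r\}$ gives $A_0 \geq c\|v_0^\theta\|_{H_r^1}^2$ with $c$ independent of $\alpha$, hence $\|v_0^\theta\|_{H_r^1}\leq C\|F_0^\theta\|_{L_r^2}$. Existence comes from Lax--Milgram (for $\alpha>0$) or the Fredholm alternative (for $\alpha=0$, via \eqref{compatibility}); summing the mode-wise bounds delivers the uniform $H^1$ estimate $\|\Bv^\theta\|_{H^1(\Omega)}\leq C\|\BF^\theta\|_{L^2(\Omega)}$.

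The $H^2$ bound follows by rewriting \eqref{vswirl} as $-\Delta \Bv^\theta + \Bv^\theta/r^2 = \BF^\theta - \bar U\partial_z \Bv^\theta$ and applying standard elliptic regularity for the vector Laplacian with the Robin boundary condition on the axisymmetric subspace, together with the trace theorem for axisymmetric functions used earlier in the paper. The drift $\|\bar U \partial_z \Bv^\theta\|_{L^2(\Omega)}$ is controlled mode-by-mode by combining the real-part bound $\|v_n^\theta\|_{L_r^2} \leq \|F_n^\theta\|_{L_r^2}/n^2$ with the imaginary-part estimate $|n|\int_0^1 \bar U|v_n^\theta|^2 r\,dr \leq \|F_n^\theta\|_{L_r^2}\|v_n^\theta\|_{L_r^2}$, exploiting the lower bound $\bar U(r) \geq 4\Phi/((4+\alpha)\pi)$ supplied by the Poiseuille structure. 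The main obstacle is uniformity in $\alpha$ as $\alpha\to 0$, where the linear operator becomes singular: the compatibility condition \eqref{compatibility} plays a dual role, serving both as the Fredholm obstruction-killer at $\alpha=0$ and as the boundary-term-killer $v_0^\theta(1)=0$ for $\alpha>0$, which together produce a coercivity constant uniform across $\alpha\in[0,\infty)$ and $\Phi>0$.
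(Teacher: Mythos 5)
Your mode-by-mode energy identities are correct, and your treatment of the low mode is sound and in places more direct than the paper's: testing $-\mathcal{L}v_0^\theta=F_0^\theta$ against $r$ with weight $r\,dr$ to read off $\alpha v_0^\theta(1)=\int_0^1F_0^\theta r^2\,dr$ is a clean substitute for the paper's route through the four-dimensional lift, and identifying $\mathrm{span}\{r\}$ as the kernel at $\alpha=0$ correctly explains the role of \eqref{compatibility}. Two small remarks: in your reorganized identity the boundary term $\alpha|v_n^\theta(1)|^2$ is already non-negative, so nothing has ``ambiguous sign'' --- the real issue, which you do address, is that its coercivity degenerates as $\alpha\to0$ and $v_0^\theta(1)=0$ restores an $\alpha$-uniform Poincar\'e inequality; and your normalization $\int_0^1v_0^\theta r^2\,dr=0$ at $\alpha=0$ differs from the paper's choice $v_0^\theta(1)=0$, either of which kills the kernel, but only the latter matches the limit of the $\alpha>0$ solutions.

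The genuine gap is in the $H^2$ estimate. You propose to put the drift on the right-hand side and bound $\|\bar U\partial_z\Bv^\theta\|_{L^2(\Omega)}$ using only $\|v_n^\theta\|_{L_r^2}\le n^{-2}\|F_n^\theta\|_{L_r^2}$, the imaginary-part identity, and the constant lower bound $\bar U\ge 4\Phi/((4+\alpha)\pi)$. These ingredients cannot close uniformly: for instance
\begin{equation*}
n^2\int_0^1\bar U^2|v_n^\theta|^2r\,dr\le n^2\|\bar U\|_{L^\infty}\int_0^1\bar U|v_n^\theta|^2r\,dr\le C\,n^2\cdot\Phi\cdot|n|^{-1}\|F_n^\theta\|_{L_r^2}\cdot n^{-2}\|F_n^\theta\|_{L_r^2}=C\Phi|n|^{-1}\|F_n^\theta\|_{L_r^2}^2,
\end{equation*}
which grows with $\Phi$ at $|n|=1$; replacing $\|v_n^\theta\|_{L_r^2}\le n^{-2}\|F_n^\theta\|_{L_r^2}$ by the bound $\|v_n^\theta\|_{L_r^2}\le C(4+\alpha)(\Phi|n|)^{-1}\|F_n^\theta\|_{L_r^2}$ that your constant lower bound for $\bar U$ yields only trades $\Phi$-growth for $(4+\alpha)$-growth, so no combination is uniform in both parameters. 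The paper closes this by two steps absent from your proposal: it first uses the $\alpha$-uniform lower bound $\bar U(r)\ge\frac{\Phi}{\pi}(1-r^2)$ together with the weighted interpolation inequality of Lemma \ref{weightinequality} to upgrade the imaginary-part information to the decay estimates \eqref{swirl-36}--\eqref{swirl-37}, namely $\|v_n^\theta\|_{L_r^2}^2\le C(\Phi|n|)^{-\frac43}\|F_n^\theta\|_{L_r^2}^2$; and second, instead of feeding $\bar U\partial_z\Bv^\theta$ to elliptic regularity, it tests the equation against $(\mathcal{L}-n^2)\overline{v_n^\theta}\,r$, so that after integration by parts the only dangerous contribution is the term in \eqref{swirl-32} involving $\bar U'$, of size $C\Phi|n|\,\|v_n^\theta\|_{L_r^2}\bigl(\int_0^1|\tfrac{d}{dr}(rv_n^\theta)|^2\tfrac1r\,dr\bigr)^{\frac12}$, which those decay estimates exactly absorb to give $\|(\mathcal{L}-n^2)v_n^\theta\|_{L_r^2}\le C\|F_n^\theta\|_{L_r^2}$ as in \eqref{swirl-39}; the $H^2$ bound then follows from $\Delta\Bv^\theta=(\mathcal{L}+\partial_z^2)v^\theta\Be_\theta$ and the trace theorem, with no $\alpha$-dependent Robin regularity constant to track. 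Without these ingredients your argument yields at best $\|\Bv^\theta\|_{H^2(\Omega)}\le C\min\{\Phi^{\frac12},(4+\alpha)^{\frac12}\}\|\BF^\theta\|_{L^2(\Omega)}$, not the claimed uniform estimate.
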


\begin{proof}
For each fixed $n \in \mathbb{Z}$,  the $n$-th mode $v_n^\theta$ satisfies
\be \label{swirl-2}
\left\{
\begin{aligned}
&i n \bar{U}(r) v_n^\theta - ( \mathcal{L} - n^2 )  v^\theta_n = F^\theta_n,\\
& v^\theta_n (0) = 0,\ \ \ \ \ \ \frac{d}{dr} v^\theta_n (1) = ( 1 - \alpha )v^\theta_n (1).
 \end{aligned}
 \right.
\ee

{\it Step 1. Existence.} Instead of proving  the existence of solutions to the problem \eqref{swirl-2} directly,  we first consider an associated linear problem
\be \label{swirl-4} \left\{
\ba
& i n  \bar{U}(r) V^\theta_n - \Delta_4 V^\theta_n + n^2 V^\theta_n = \widetilde{F^\theta_n}\ \ \ \ \mbox{in} \ B_1^4(0), \\
& \frac{\partial V^\theta_n}{\partial \Bn } = - \alpha V^\theta_n \ \ \ \ \ \ \mbox{on}\ \partial B_1^4(0).
\ea
\right.
\ee
Here  $B_1^4(0)$ is the unit ball centered at the origin in $\mathbb{R}^4$,
\be \nonumber
\Delta_4 = \sum_{i=1}^4 \partial_{x_i}^2 ,\ \ \ \ \ \widetilde{F^\theta_n} (x_1, x_2, x_3, x_4) = \frac{F^\theta_n(r)}{r}, \ \ \  \mbox{and}\ \ \ r= \left( \sum_{i=1}^4 x_i^2  \right)^\frac12.
\ee
The estimates for $V_n^\theta$ and the solvability of \eqref{swirl-4} are investigated by the three different cases based on whether $n$ and $\alpha$ equal $0$.

{\it Case 1. $n =0$ and  $\alpha =0$.}  The problem \eqref{swirl-4} reduces to
\be \label{swirl-4-0}
\left\{ \ba
& - \Delta_4 V^\theta_0 = \widetilde{F^\theta_0}\ \ \ \ \mbox{in} \ B_1^4(0), \\
& \frac{\partial V^\theta_0 }{\partial \Bn } = 0 \ \ \ \ \ \ \mbox{on}\ \partial B_1^4(0).
\ea \right.
\ee
Note that the condition \eqref{compatibility}
implies
\be \label{swirl-4-1}
\int_{B_1^4(0)} \widetilde{F_0^\theta}\, dx = 0.
\ee
For every $\widetilde{F_0^\theta} \in L^2(B_1^4(0))$ satisfying \eqref{swirl-4-1}, the problem \eqref{swirl-4-0} admits a unique solution $\widetilde{V_0^\theta}\in H^1(B_1^4(0))$ with
\be \nonumber
\int_{B_1^4(0)} \widetilde{V_0^\theta} \, dx = 0.
\ee
Since $\widetilde{F_0^\theta}$ is radially symmetric, the uniqueness of solutions implies that $\widetilde{V_0^\theta}$ is also radially symmetric. Let
\be \nonumber
V_0^\theta (x) = V_0^\theta(r) = \widetilde{V_0^\theta}(r) - \widetilde{V_0^\theta}(1).
\ee
It is clear that $V_0^\theta$ is also a solution to \eqref{swirl-4-0} and satisfies $V_0^\theta(1) = 0$. Moreover, it holds that
\be \label{swirl-5}
\|V_0^\theta \|_{H^2(B_1^4(0))} \leq C \|\widetilde{F_0^\theta} \|_{L^2(B_1^4(0))}.
\ee

{\it Case 2. $n =0$ and  $\alpha \neq 0$.} The problem \eqref{swirl-4} becomes
\be \label{swirl-7}
\left\{ \ba
& - \Delta_4 V_0^\theta = \widetilde{F_0^\theta}\ \ \ \ \mbox{in}\ B_1^4(0), \\
& \frac{\partial V_0^\theta}{\partial \Bn } = - \alpha V_0^\theta \ \ \ \ \ \ \ \mbox{on}\ \partial B_1^4(0).
\ea \right.
\ee
For every $\varphi$, $\phi \in H^1(B_1^4(0))$, define
\be \nonumber
\mathcal{N} ( \varphi, \,  \phi) = \int_{B_1^4(0)} \nabla_4 \varphi \cdot \nabla_4 \overline{\phi}\ dx +
\alpha \int_{\partial B_1^4(0)} \varphi \overline{\phi} \, dS .
\ee
According to Lemma \ref{sobolev}, there exists a  constant $c$ such that for every $\varphi \in H^1(B_1^4(0))$,
\be \nonumber
|\mathcal{N} (\varphi, \varphi) | \geq c \|\varphi\|_{H_1(B_1^4(0))}^2.
\ee
This means that $\mathcal{N}(\cdot, \cdot)$ is strongly coercive on $H^1(B_1^4(0))$. Hence, by Lax-Milgram theorem, for every $\widetilde{F^\theta_0} \in L^2(B_1^4(0))$, there exists a unique solution $V^\theta_0 \in H^1(B_1^4(0))$ such that
\be \nonumber
\mathcal{N} (V^\theta_0, \, \phi )= \int_{B_1^4(0)} \widetilde{F^\theta}  \overline{\phi} \, dx\ \ \ \ \text{for any } \, \phi \in H^1(B_1^4(0) ).
\ee
Thus $V^\theta_0$ is a weak solution to \eqref{swirl-7} and satisfies
\be \nonumber
\| V^\theta_0 \|_{H^1(B_1^4(0))} \leq C(\alpha) \| \widetilde{F^\theta_0}\|_{L^2(B_1^4(0))},
\ee
where the constant $C(\alpha)$ may depend on $\alpha$.
Due to the uniqueness of solutions and the fact that $\widetilde{F_0^\theta}$ is radially symmetric, the solution $V_0^\theta$ is also radially symmetric , i.e., $V_0^\theta (x) = V_0^\theta(r)$. Moreover,
\be \label{swirl-9}
0 = \int_{B_1^4(0)} \widetilde{F_0^\theta} \, dx = - \int_{\partial B_1^4(0)} \frac{\partial V_0^\theta}{\partial \Bn} \, dS =  \alpha \int_{\partial B_1^4(0)} V_0^\theta \, dS= \alpha V_0^\theta (1) |\partial B_1^4 (0)|.
\ee
This implies that $V_0^\theta (1) = 0$. Thus the problem \eqref{swirl-7} is  equivalent to
\be \nonumber
\left\{ \ba
& - \Delta_4 V_0^\theta = \widetilde{F_0^\theta}\ \ \ \ \mbox{in}\ B_1^4(0), \\
&  V_0^\theta=0 \ \ \ \ \ \ \ \mbox{on}\ \partial B_1^4(0).
\ea \right.
\ee
Applying the regularity theory for elliptic equations (\cite{ADN}), one has
\be \nonumber
\| V^\theta_0 \|_{H^2(B_1^4(0))} \leq C \|\widetilde{F_0^\theta} \|_{L^2(B_1^4(0))}.
\ee
Hence, no matter $\alpha = 0 $ or not, $V_0^\theta$ is in fact a strong solution to the problem \eqref{swirl-4}.

Let $v_0^\theta = rV_0^\theta$.  It can be verified that
\be \nonumber 
-\mathcal{L} v_0^\theta = -r \Delta_4 V_0^\theta= F_0^\theta, \ \ \ \ \ v_0^\theta(0)= 0, \ \ \ \ \ \frac{d}{dr}v_0^\theta(1) = \frac{\partial V_0^\theta}{\partial \Bn} (1) + V_0^\theta(1) = (1 - \alpha)v_0^\theta(1). 
\ee
 $v_0^\theta$ is a strong solution to the problem \eqref{swirl-2} with $n=0$.

We give some estimates (independent of $\Phi$ and $\alpha$) for $v^\theta_0$.  It has been proved that
$v_0^\theta(1) = 0$. Hence multiplying the equation in  \eqref{swirl-2} by $r \overline{v_0^\theta}$, and integrating over $[0, 1]$ yield
\be \label{swirl-17}
\int_0^1 \left| \frac{d}{dr} ( r v_0^\theta)   \right|^2 \frac1r \, dr = \Re \int_0^1 F_0^\theta \overline{v_0^\theta} r \, dr.
\ee
According to Lemma \ref{lemma1}, it holds that
\be \label{swirl-18}
\int_0^1 \left| \frac{d}{dr} ( r v_0^\theta)   \right|^2 \frac1r \, dr \leq C \int_0^1 |F_0^\theta|^2 r \, dr.
\ee
With the aid of integration by parts and the fact $v_0^\theta(0) = 0$, one has
\be \label{swirl-19}
\int_0^1 \left| \frac{d}{dr} v_0^\theta \right|^2 r \, dr+ \int_0^1 \frac{|v_0^\theta|^2}{r} \, dr
\leq \int_0^1 \left| \frac{d}{dr} ( r v_0^\theta)   \right|^2 \frac1r \, dr  \leq C \int_0^1 |F_0^\theta|^2 r \, dr.
\ee
Multiplying the equation in \eqref{swirl-2} by $r \mathcal{L} \overline{v_0^\theta}$ and integrating over $[0, 1]$ give
\be \label{swirl-20}
- \int_0^1 |\mathcal{L} v_0^\theta|^2 r\, dr = \Re \int_0^1 F_0^\theta \mathcal{L} \overline{v_0^\theta} r \, dr .
\ee
Thus it follows from  Cauchy-Schwarz inequality that one has
\be \label{swirl-21}
\int_0^1 |\mathcal{L} v_0^\theta|^2 r \, dr \leq C \int_0^1 |F_0^\theta|^2 r \, dr.
\ee

{\it Case 3. $n \neq 0$.}\ \ For every $\varphi$, $\phi \in H^1(B_1^4(0))$, define
\be \nonumber
\mathcal{N} ( \varphi, \,  \phi) = \int_{B_1^4(0)} in \bar{U}(r) \varphi \overline{\phi}   + \nabla_4 \varphi \cdot \nabla_4 \overline{\phi} + n^2 \varphi \overline{\phi} \, dx +
\alpha \int_{\partial B_1^4(0)} \varphi \overline{\phi} \, dS  .
\ee
Obviously, $\mathcal{N}(\cdot, \cdot)$ is strongly coercive on $H_1(B_1^4(0))$. Hence, by Lax-Milgram theorem, for every $\widetilde{F^\theta_n} \in L^2(B_1^4(0))$, there exists a unique solution $V^\theta_n  \in H^1(B_1^4(0))$ such that
\be \label{swirl-11}
\mathcal{N} (V^\theta_n , \, \phi )= \int_{B_1^4(0)} \widetilde{F^\theta_n}  \overline{\phi} \, dx\ \ \ \ \text{for any } \, \phi \in H^1(B_1^4(0) ).
\ee
Hence $V^\theta_n$ is a weak solution to \eqref{swirl-4}, which satisfies
\be \nonumber
\|V^\theta_n \|_{H^1(B_1^4(0))} \leq C(n, \Phi, \alpha) \|\widetilde{F_n^\theta}\|_{L^2(B_1^4(0) )}.
\ee
Applying the regularity theory for elliptic equations (\cite{ADN}) yields
\be \nonumber
\ba
\|V_n^\theta\|_{H^2(B_1^4(0))}&  \leq C \Phi |n| \|V_n^\theta \|_{L^2(B_1^4(0))} + C \|\widetilde{F_n^\theta}\|_{L^2(B_1^4(0))}
+ C(n, \alpha)  \|V_n^\theta \|_{H^1(B_1^4(0))}\\
& \leq C (n, \Phi, \alpha) \|\widetilde{F_n^\theta}\|_{L^2(B_1^4(0))}.
\ea
\ee
Hence
$V^\theta_n$ is in fact a strong solution to the problem \eqref{swirl-4}. Furthermore, due to the uniqueness of solutions and the fact that $\widetilde{F^\theta_n}$ is radially symmetric, $V^\theta_n $ is also radially symmetric, i.e., $V^\theta_n = V^\theta_n (r)$.
Let $v^\theta_n= rV^\theta_n$. Clearly, $v^\theta_n$ is a solution to the problem \eqref{swirl-2}.

{\it Step 2. Uniform estimate}.  We give some  estimates (independent of $n$, $\Phi$, and $\alpha$) for $v^\theta_n$, $n \neq 0$.
 Multiplying the equation in \eqref{swirl-2} by $r v_n^\theta$ and integrating over $[0, 1]$ yield
\be \label{swirl-22}
\int_0^1 \left| \frac{d}{dr}(r v_n^\theta)  \right|^2 \frac1r \, dr + (\alpha-2 ) |v_n^\theta(1)|^2 + n^2 \int_0^1 |v_n^\theta|^2 r \, dr
= \Re \int_0^1 F_n^\theta \overline{v_n^\theta} r \,dr
\ee
and
\be \label{swirl-23}
n \int_0^1 \bar{U}(r) |v_n^\theta|^2 r \, dr = \Im \int_0^1 F_n^\theta \overline{v_n^\theta} r \, dr .
\ee
Note that
\be \label{swirl-25} \ba
|v_n^\theta(1) | \leq \int_0^1 \left|\frac{d}{dr}(r v_n^\theta)  \right| \, dr
& \leq \left( \int_0^1 \left|\frac{d}{dr}(r v_n^\theta) \right|^2 \frac1r \, dr \right)^{\frac12} \left( \int_0^1 r \, dr \right)^{\frac12} \\
& = \frac{\sqrt{2}}{2}\left( \int_0^1 \left|\frac{d}{dr}(r v_n^\theta) \right|^2 \frac1r \, dr \right)^{\frac12}.
\ea
\ee
Taking \eqref{swirl-25} into \eqref{swirl-22} gives
\be \label{swirl-26}
 \alpha |v_n^\theta(1)|^2 + n^2 \int_0^1 |v_n^\theta|^2 r \, dr
\leq \left|  \int_0^1 F_n^\theta \overline{v_n^\theta} r \,dr \right|.
\ee
On the other hand,
\be \label{swirl-27} \ba
|v_n^\theta(1)|^2 &  \leq 2 \int_0^1 \left|\frac{d}{dr}(r v_n^\theta)  \right| |  rv_n^\theta  | \, dr
\leq \frac14 \int_0^1 \left| \frac{d}{dr}(r v_n^\theta) \right|^2 \frac1r  \, dr +4 \int_0^1 |v_n^\theta|^2 r \, dr.
\ea \ee
Substituting \eqref{swirl-26}-\eqref{swirl-27} into \eqref{swirl-22} yields
\be \label{swirl-28}
\int_0^1 \left| \frac{d}{dr}(r v_n^\theta)  \right|^2 \frac1r \, dr +  \alpha  |v_n^\theta(1)|^2 + n^2 \int_0^1 |v_n^\theta|^2 r \, dr
\leq C \left| \int_0^1 F_n^\theta \overline{v_n^\theta} r \,dr \right| .
\ee
This is equivalent to
\be \label{swirl-29}
\int_0^1 \left|  \frac{d v_n^\theta}{dr}     \right|^2 r \, dr + \int_0^1 \frac{|v_n^\theta|^2 }{r} \, dr + (1 + \alpha) |v_n^\theta(1)|^2 + n^2 \int_0^1 |v_n^\theta|^2 r \, dr
\leq C \int_0^1 |F_n^\theta| |v_n^\theta| r \, dr .
\ee

Multiplying \eqref{swirl-29} by $n^2$  and using Young's inequality give
\be \label{swirl-30}
\begin{aligned}
& \int_0^1  n^2 \left|  \frac{d v_n^\theta}{dr}     \right|^2 r  +  n^2  \frac{|v_n^\theta|^2 }{r} + n^4 |v_n^\theta|^2 r \, dr + (1 +  \alpha)n^2 |v_n^\theta(1)|^2
 \leq  C \int_0^1 |F_n^\theta|^2 r \, dr.
\end{aligned}
\ee
The estimates \eqref{swirl-19} and \eqref{swirl-29}-\eqref{swirl-30} imply that
\be \label{swirl-31}
\| \Bv^\theta \|_{H^1(\Omega)}+ \|\partial_z \Bv^\theta\|_{H^1(\Omega)}
\leq C \|\BF^\theta \|_{L^2(\Omega)}.
\ee

Multiplying the equation in \eqref{swirl-2} by $(\mathcal{L} - n^2) \overline{v_n^\theta} r $ and integrating over $[0, 1]$ give
\be \label{swirl-32}
- \int_0^1 |(\mathcal{L} - n^2) v_n^\theta  |^2 r \, dr = \Re \int_0^1 F_n^\theta (\mathcal{L } - n^2) \overline{v_n^\theta} r \, dr
- \frac{4\Phi}{\pi } \frac{\alpha }{\alpha + 4} n \Im \int_0^1 r v_n^\theta \frac{d}{dr}( r \overline{v_n^\theta}) \, dr .
\ee
Herein,
\be \label{swirl-33}
\left| \frac{4\Phi}{\pi } \frac{\alpha }{\alpha + 4} n \Im \int_0^1 r v_n^\theta \frac{d}{dr}( r \overline{v_n^\theta}) \, dr \right|
\leq C \Phi |n| \left(  \int_0^1 |v_n^\theta|^2 r \, dr \right)^{\frac12} \left( \int_0^1 \left| \frac{d}{dr}(r v_n^\theta) \right|^2 \frac1r \, dr  \right)^{\frac12} .
\ee
According to \eqref{swirl-23}, it holds that
\be \label{swirl-34}
\Phi |n| \int_0^1 (1 - r^2) |v_n^\theta|^2 r \, dr \leq C \int_0^1 |F_n^\theta| |v_n^\theta| r \, dr.
\ee
It follows from Lemmas \ref{lemma1} and \ref{weightinequality},   \eqref{swirl-28}, and \eqref{swirl-34} that
\be \label{swirl-35} \ba
\int_0^1 |v_n^\theta|^2 r \, dr & \leq C \left(  \int_0^1 (1  - r^2) |v_n^\theta|^2 r \, dr \right)^{\frac23}\left( \int_0^1 \left| \frac{d}{dr}(r v_n^\theta) \right|^2 \frac1r \, dr  \right)^{\frac13} \\
& \leq C (\Phi |n|)^{-\frac23} \int_0^1 |F_n^\theta| |v_n^\theta| r \, dr .
\ea
\ee
Therefore, one has
\be \label{swirl-36}
\int_0^1 |v_n^\theta|^2 r \, dr \leq C (\Phi |n|)^{-\frac43} \int_0^1 |F_n^\theta|^2 r \, dr.
\ee
This, together with \eqref{swirl-28}, yields
\be \label{swirl-37}
\int_0^1 \left| \frac{d}{dr}(r v_n^\theta)   \right|^2 \frac1r \, dr + n^2 \int_0^1 |v_n^\theta|^2 r \, dr
\leq C (\Phi |n|)^{-\frac23} \int_0^1 |F_n^\theta|^2 r \, dr.
\ee

Substituting \eqref{swirl-36}-\eqref{swirl-37} into \eqref{swirl-32}-\eqref{swirl-33} gives
\be \label{swirl-39}
\int_0^1 |(\mathcal{L} - n^2 ) v_n^\theta|^2 r \, dr \leq C \int_0^1 |F_n^\theta|^2 r \, dr.
\ee

{\it Step 3. Regularity of $\Bv^\theta$.} It follows from \eqref{swirl-21} and \eqref{swirl-39} that one has
\be \label{swirl-40}
\| (\mathcal{L} + \partial_z^2)v^\theta \|_{L^2(\Omega)} \leq C \|\BF^\theta\|_{L^2(\Omega)}.
\ee
Note that $\Bv^\theta$ satisfies
\be \nonumber
\Delta \Bv^\theta = ( \mathcal{L} + \partial_z^2) v^\theta \Be_\theta \ \ \ \mbox{in} \ \Omega.
\ee
It follows from the trace theorem and the axisymmetric property of $v^\theta$  that
\be \nonumber
\|\Bv^\theta\|_{H^{\frac32}(\partial \Omega)} \leq C  \|\Bv^\theta\|_{H^1(\Omega)} + C \|\partial_z \Bv^\theta \|_{H^1(\Omega)}  \leq C  \|\BF^\theta \|_{L^2(\Omega)}.
\ee
Applying the regularity theory for elliptic equations (\cite{ADN}) yields
\be \label{swirl-18}
\|\Bv^\theta\|_{H^2 (\Omega)}
\leq C \| ( \mathcal{L} + \partial_z^2) v^\theta\|_{L^2(\Omega)} + C \|\Bv^\theta\|_{H^{\frac32} (\partial \Omega)}
 \leq C \|\BF^\theta\|_{L^2(\Omega)},
\ee
where the constant $C$ is independent of $F^\theta$, $\Phi$, and $\alpha$. This finishes the proof of Proposition \ref{swirl}.
\end{proof}

Combining the regularity estimates in Propositions \ref{lemapri1}, \ref{Bpropcase1-1}, \ref{Bpropcase2-1}, \ref{medregularity1}-\ref{medregularity2}
and \ref{swirl},  one completes the proof of Theorem \ref{thm1}.


\section{Nonlinear Structural Stability}\label{secnonlinear}
In this section, the existence and local uniqueness of solutions for the nonlinear problem \eqref{SNS} is proved.  This gives the uniform nonlinear structural stability of Poiseuille flows.

 Let $\Bv=\Bu-\bBU$. Then $\Bv$ satisfies the perturbation  system
 \be  \label{perturb1}
\left\{ \ba
&\bBU \cdot \nabla \Bv + \Bv \cdot \nabla \bBU + (\Bv \cdot \nabla )\Bv - \Delta \Bv + \nabla P = \BF \ \ \ \mbox{in}\ \Omega, \\
& {\rm div}~\Bv = 0\ \ \ \mbox{in}\ \Omega, \\
\ea
\right.
\ee
supplemented with the boundary conditions and the flux constraint in \eqref{slipBC1}.

\subsection{Existence and uniqueness of solutions when $\BF$ is small}
 Set
\be \nonumber
Y = H^\frac32_{axis, \sigma} (\Omega) = \left\{ \Bv = \Bv(r, z) \in H^{\frac32}(\Omega):\ {\rm div}~\Bv =0\ \ \mbox{in}\ \Omega \  \right\},
\ee
\be \nonumber
L_{axis}^2(\Omega)= \left\{  \BF= \BF(r, z) \in L^2(\Omega):\ \int_{-\pi}^{\pi} \int_0^1 F^\theta (r, z) r^2 \, dr dz = 0              \right\}.
\ee
For any given $\BF \in L^2_{axis} (\Omega)$, as proved in Theorem \ref{thm1}, there exists  a strong solution $\Bv\in H^2(\Omega)\cap Y$ to the linear problem \eqref{2-0-1}-\eqref{slipBC1}. We denote this solution by $\mathcal{T} \BF$.
According to Theorem \ref{thm1}, one has
\be \label{perturb3}
\|\mathcal{T} \BF\|_{H^{\frac32} (\Omega) } \leq C \| \BF\|_{L^2(\Omega)}.
\ee

Now we develop an iteration scheme. Given $\BF\in L_{axis}^2(\Omega)$, denote $\Bv^0 = \mathcal{T}\BF$.
For every $j \geq 0$, let
\be \nonumber
\Bv^{j+1} = \mathcal{T} \BF - \mathcal{T} ( (\Bv^j \cdot \nabla ) \Bv^j ).
\ee
Since
\be \nonumber
\|- (\Bv^j \cdot \nabla ) \Bv^j  \|_{L^2(\Omega)} \leq C \|\Bv^j\|_{L^{12}(\Omega)} \|\nabla \Bv^j\|_{L^{\frac{12}{5}}(\Omega)}
\leq C \|\Bv^j \|_{ H^{\frac32} (\Omega) }^2
\ee
and
\be \label{verify-compatibility-1} \ba
- \int_{-\pi}^{\pi} \int_0^1 \left[ (\Bv^j \cdot \nabla ) \Bv^j   \right] \cdot \Be_\theta r^2  \, dr dz
& = - \int_{-\pi}^{\pi} \int_0^1 \left[ (v^{j, r} \partial_r + v^{j, z} \partial_z )v^{j, \theta} + \frac{v^{j, r}}{r}  v^{j, \theta}  \right] r^2 \, dr dz \\
& = \int_{-\pi}^{\pi} \int_0^1 \left[ \partial_r (r v^{j, r} ) + \partial_z (r v^{j, z} )            \right] v^{ j, \theta}  r \, dr dz = 0,
\ea \ee
$ \Bv^{j+1}$ is well-defined and $\Bv^{j+1} \in Y$.  Moreover, one has
\be \nonumber \ba
\|\Bv^{j+1} \|_{H^{\frac32}(\Omega)} & \leq C \|\BF\|_{L^2(\Omega)} + C \|(\Bv^j \cdot \nabla ) \Bv^j \|_{L^2(\Omega)}  \leq C \|\BF\|_{L^2(\Omega)} + C\|\Bv^j \|_{H^{\frac32}(\Omega)}^2 .
\ea
\ee
Hence it follows from the proof for contraction mapping theorem  that if $\BF$ satisfies \eqref{Fepsilon} with some suitably small $\varepsilon$,
 then
the problem
\be
\Bv = \mathcal{T} \BF - \mathcal{T}( \Bv \cdot \nabla )\Bv
\ee
has a unique solution $\Bv$ satisfying $\|\Bv\|_{H^{\frac32} (\Omega) } \leq C \|\BF\|_{L^2(\Omega)} \leq C \varepsilon$. In fact,  $\Bv$ is a strong solution to the problem \eqref{perturb1} and \eqref{slipBC1}. By virtue of Theorem \ref{thm1}, it holds that
\be \nonumber
\|\Bv\|_{H^2(\Omega)} \leq C (1 + \Phi^{\frac14}) \left(   \|\BF\|_{L^2(\Omega)} + \|(\Bv \cdot \nabla ) \Bv\|_{L^2(\Omega)}          \right) \leq C (1 + \Phi^{\frac14}) \|\BF\|_{L^2(\Omega)}.
\ee
Thus the proof for Part $(a)$ of  Theorem \ref{mainthm} is completed.


\subsection{Existence and uniqueness of large solutions when flux is large}
In this subsection, we prove the existence and uniqueness of strong axisymmetric solution to \eqref{SNS}-\eqref{fluxBC} when both $\BF$ and $\Phi$ are large. Before the proof, let us recall some  estimates for solutions obtained for the linearized problem.
\begin{pro}\label{lemma1-add}
Assume that $\psi_n$ is a smooth solution to the problem \eqref{2-0-8} and $v_n^\theta$ is a smooth solution to the problem \eqref{swirl-2}. It holds that
\be \label{9-1}
\int_0^1 \left| \frac{d}{dr}(r \psi_0) \right|^2 \frac1r \, dr \leq C \int_0^1 |\BF_0^*|^2 r \, dr,\ \ \ \ \ \int_0^1 |v_0^\theta|^2 r \, dr \leq C \int_0^1 |F_0^\theta|^2 r \, dr.
\ee
\be \label{9-2}
\int_0^1 \left| \frac{d}{dr}(r \psi_n) \right|^2 \frac1r \, dr + n^2 \int_0^1 |\psi_n|^2 r \, dr \leq C (\Phi |n|)^{-\frac43} \int_0^1 |\BF_n^*|^2 r \, dr ,\ \ \ \ n \neq 0.
\ee
\be \label{9-3}
\int_0^1 |v_n^\theta|^2 r \, dr \leq C (\Phi |n|)^{-\frac43} \int_0^1 |F_n^\theta|^2 r \, dr,\ \ \ \ n\neq 0.
\ee
\end{pro}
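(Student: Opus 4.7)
The plan is to obtain Proposition \ref{lemma1-add} by a collation of estimates already proved in Section \ref{Linear} and in the proof of Proposition \ref{swirl}; no new energy estimate is required. The purpose of this proposition is to package into one statement the bounds on $\frac{d}{dr}(r\psi_n)$, $n\psi_n$, and $v_n^\theta$ in exactly the form that will feed into the nonlinear fixed-point argument of Section \ref{secnonlinear}.

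For the zero-mode estimate \eqref{9-1}, I would start from Proposition \ref{Bpropcase1}, which gives $\int_0^1 |\mathcal{L}\psi_0|^2 r\,dr \leq C\int_0^1 |\BF_0^*|^2 r\,dr$, and then apply Lemma \ref{lemma1} to pass to the bound on $\int_0^1 |\frac{d}{dr}(r\psi_0)|^2 \frac{1}{r}\,dr$. The bound on $v_0^\theta$ is immediate from the intermediate estimate \eqref{swirl-19} established inside the proof of Proposition \ref{swirl}, together with the trivial inequality $\int_0^1 |v_0^\theta|^2 r\,dr \leq \int_0^1 |v_0^\theta|^2 \frac{1}{r}\,dr$ valid since $r\leq 1$.

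For the non-zero-mode estimate \eqref{9-2}, I would split $n\neq 0$ into four frequency/slip regimes, matching the partition used in Section \ref{Linear}. In the high-frequency regime $|n|\geq \epsilon_1\sqrt{\Phi}$, I would combine the Poiseuille-weighted estimate \eqref{highf2} with the interpolation Lemma \ref{weightinequality} and the strong-norm bound \eqref{highf1}, in the spirit of the computation \eqref{2-1-9}, in order to convert the $\bar U$-weighted control of $\int_0^1 \bar U(r)|\psi_n|^2 r\,dr$ and $\int_0^1 \frac{\bar U(r)}{r}|\frac{d}{dr}(r\psi_n)|^2\,dr$ into an unweighted control at the rate $(\Phi|n|)^{-4/3}$. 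In the intermediate-frequency regime $1\leq |n|\leq \epsilon_1\sqrt{\Phi}$, the desired bound is given directly by \eqref{case3-13} when $4+\alpha \leq \delta(\Phi|n|)^{1/3}$, by \eqref{case4-13} when $4+\alpha \geq \delta^{-1}(\Phi|n|)^{1/3}$, and by \eqref{6-1} in the remaining intermediate band.

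Finally, the swirl estimate \eqref{9-3} is nothing but the intermediate inequality \eqref{swirl-36} already derived inside the proof of Proposition \ref{swirl}, which follows from multiplying \eqref{swirl-2} by $r\overline{v_n^\theta}$, extracting the imaginary part via $\bar U(r)\geq \tfrac{\Phi}{\pi}(1-r^2)$, and then removing the $(1-r^2)$-weight by interpolation against the strong norm \eqref{swirl-28} using Lemmas \ref{lemma1} and \ref{weightinequality}. The only step requiring a little care is the high-frequency part of \eqref{9-2}: the plain bound \eqref{highf1} by $|n|^{-2}$ does not in itself imply the $(\Phi|n|)^{-4/3}$-rate across the full range $|n|\geq \epsilon_1\sqrt{\Phi}$, so the weighted inequality \eqref{highf2} must be invoked to upgrade the rate. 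Once that interpolation is in place, the rest of the proof is purely bookkeeping.
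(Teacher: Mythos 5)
Your proposal is correct and takes essentially the same route as the paper, whose proof is exactly this collation: the first half of \eqref{9-1} from \eqref{2-1-7-0} (i.e.\ Proposition \ref{Bpropcase1}) together with Lemma \ref{lemma1}, the second half from \eqref{swirl-19}, the estimate \eqref{9-2} from Propositions \ref{Bpropcase2}, \ref{Bpropcase3}, \ref{case4} and \ref{case5}, and \eqref{9-3} as exactly \eqref{swirl-36}. The one point of "care" you flag in the high-frequency regime is actually unnecessary: the left-hand side of \eqref{9-2} carries two fewer powers of $n$ than the left-hand side of \eqref{highf1}, so dividing the first inequality of \eqref{highf1} by $n^2$ gives the rate $|n|^{-4}$, and $|n|^{-4}\le C(\epsilon_1)(\Phi|n|)^{-\frac43}$ follows directly from $|n|\ge\epsilon_1\sqrt{\Phi}$ without invoking \eqref{highf2} or Lemma \ref{weightinequality} (though your interpolation route would also yield the claim).
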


\begin{proof}
The first inequality in \eqref{9-1} is implied by \eqref{2-1-7-0} and Lemma \ref{lemma1}, while the second inequality is implied by \eqref{swirl-19}.
The results in Propositions \ref{Bpropcase2}, \ref{Bpropcase3}, \ref{case4} and \ref{case5} give the inequality \eqref{9-2}.  The inequality \eqref{9-3} is exactly \eqref{swirl-36}.

\end{proof}

Combining the results in Proposition \ref{swirl}, Proposition \ref{lemma1-add} and Theorem \ref{thm1}, one has
\begin{pro}\label{lemma2-add}
Assume that $\BF= \BF(r, z) \in L^2 (\Omega)$ satisfies \eqref{compatibility1}. There exists a constant $C_1$, which is independent of $\BF$, $\alpha$, and $\Phi$, such that
the unique axisymmetric solution $\Bv$ to the linearized problem \eqref{2-0-1}-\eqref{slipBC1} satisfies
\be \label{9-5}
v_0^r = 0, \ \ \ \|\Bv_0\|_{H^2(\Omega)} \leq C_1 \|\BF_0\|_{L^2(\Omega)}, \ \ \ \|\Bv^\theta \|_{H^2(\Omega)} \leq C_1 \|\BF^\theta\|_{L^2(\Omega) },
\ee
\be \label{9-6}
\|\mathcal{Q} \Bv\|_{L^2(\Omega)} \leq C_1 \Phi^{-\frac23} \|\mathcal{Q} \BF\|_{L^2(\Omega)},\ \ \ \
\|\mathcal{Q} \Bv^* \|_{H^{\frac32}(\Omega)}\leq C_1 \|\mathcal{Q} \BF^*\|_{L^2(\Omega)},
\ee
where $\mathcal{Q}$ is defined by \eqref{projection}.
\end{pro}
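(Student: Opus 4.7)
The plan is to assemble the proposition by stitching together the frequency-by-frequency estimates that have already been established, rather than carrying out any new analysis. I would begin with the zero mode. Since $\Bv$ is axisymmetric and divergence-free, the $z$-Fourier coefficient at $n=0$ satisfies $\partial_r(rv_0^r)=0$, and regularity/boundedness at the axis forces $v_0^r\equiv 0$, proving the first assertion of \eqref{9-5}. Hence $\Bv_0 = v_0^z(r)\Be_z + v_0^\theta(r)\Be_\theta$, so that the $H^2$ bound on $\Bv_0$ decouples into a bound on $v_0^z$ and a bound on $v_0^\theta$. The former is exactly the statement of Proposition \ref{Bpropcase1-1}, while the latter comes from the $n=0$ case inside the proof of Proposition \ref{swirl} (estimates \eqref{swirl-19} and \eqref{swirl-21}). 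Adding these gives $\|\Bv_0\|_{H^2(\Omega)}\leq C\|\BF_0\|_{L^2(\Omega)}$, with $C$ independent of $\Phi$ and $\alpha$. The global swirl bound $\|\Bv^\theta\|_{H^2(\Omega)}\leq C\|\BF^\theta\|_{L^2(\Omega)}$ in \eqref{9-5} is then the full conclusion of Proposition \ref{swirl}.

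For the first bound in \eqref{9-6}, I would use Proposition \ref{lemma1-add} mode by mode. For every $n\neq 0$,
\[
\|v_n^r\|_{L^2}^2+\|v_n^z\|_{L^2}^2 \leq C\int_0^1\!\Big(n^2|\psi_n|^2 r + \tfrac{1}{r}\big|\tfrac{d}{dr}(r\psi_n)\big|^2\Big)dr \leq C(\Phi|n|)^{-\frac{4}{3}}\|\BF_n^*\|_{L^2}^2,
\]
and similarly $\|v_n^\theta\|_{L^2}^2\leq C(\Phi|n|)^{-\frac{4}{3}}\|F_n^\theta\|_{L^2}^2$. Since $|n|\geq 1$ on the range of $\mathcal{Q}$, the prefactor is dominated by $\Phi^{-\frac{4}{3}}$, and summing in $n$ (by Parseval's identity in $z$) yields $\|\mathcal{Q}\Bv\|_{L^2(\Omega)}\leq C\Phi^{-\frac{2}{3}}\|\mathcal{Q}\BF\|_{L^2(\Omega)}$.

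The $H^{\frac{3}{2}}$ estimate on $\mathcal{Q}\Bv^*$ is obtained by combining the four uniform regularity estimates from Section \ref{Linear}. Decompose $\mathcal{Q}\Bv^*=\Bv^*_{high}+\Bv^*_{med,1}+\Bv^*_{med,2}+\Bv^*_{med,3}$, which is orthogonal in every Sobolev norm because the underlying $z$-Fourier modes are disjoint. Proposition \ref{Bpropcase2-1} controls $\Bv^*_{high}$ in $H^{\frac{5}{3}}\supset H^{\frac{3}{2}}$; Proposition \ref{medregularity1} controls $\Bv^*_{med,1}$ and $\Bv^*_{med,2}$ in $H^{2}\supset H^{\frac{3}{2}}$; and Proposition \ref{medregularity2} controls $\Bv^*_{med,3}$ in $H^{\frac{3}{2}}$. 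Each constant is independent of $\Phi$ and $\alpha$, so adding the four contributions produces the second bound in \eqref{9-6} with a single uniform constant.

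There is no genuine obstacle here; the work consists entirely of bookkeeping. The only point to keep in mind is that one must verify that all prefactors used for the non-zero-mode $L^2$ decay really are of the form $(\Phi|n|)^{-\frac{4}{3}}$ across every regime (large/small/intermediate $\alpha$ and low/intermediate/high $|n|$), so that the uniform factor $\Phi^{-\frac{2}{3}}$ can be extracted in the projection estimate—this is precisely the content of Proposition \ref{lemma1-add}, which collects the relevant estimates from Propositions \ref{Bpropcase2}, \ref{Bpropcase3}, \ref{case4}, and \ref{case5}.
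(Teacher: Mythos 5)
Your proposal is correct and follows essentially the same route as the paper, which simply assembles Proposition \ref{lemma2-add} from Proposition \ref{swirl}, Proposition \ref{lemma1-add} (i.e.\ the $(\Phi|n|)^{-4/3}$ decay collected from Propositions \ref{Bpropcase2}, \ref{Bpropcase3}, \ref{case4}, \ref{case5}), and the frequency-decomposed regularity estimates behind Theorem \ref{thm1}. The only blemish is the inclusion written backwards ($H^{5/3}\subset H^{3/2}$, not $\supset$), which does not affect the argument since an $H^{5/3}$ bound of course controls the $H^{3/2}$ norm.
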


For every $n \in \mathbb{Z}$, let $\psi_n$ denote the stream function associated with the $n$-th mode of velocity $\Bv_n$, i.e.,
\be \nonumber
v_n^r = in \psi_n\ \ \ \  \mbox{and}\ \ \ \ v_n^z = - \frac{1}{r} \frac{d}{dr}(r \psi_n).
\ee
Then $\psi_n$ and $v_n^\theta$ satisfy the nonlinear system \eqref{stream-mode} and \eqref{swirl-mode}, respectively.
\begin{proof}[\bf{Proof for Part $(b)$ of Theorem \ref{mainthm}}]
We also use the iteration method to prove the existence. The proof is divided into 3 steps.

{\it Step 1. Iteration scheme.} Given $\BF \in L_{axis}^2(\Omega)$, for every $j \geq 0$,
the linearized problems \eqref{2-0-8} and \eqref{swirl-system} admit a unique solution $\psi_n^0$ and $v_n^{0, \theta} $ for each $n$. The corresponding velocity field is
\be \nonumber \ba
\Bv^0 & = \sum_{n \in \mathbb{Z}} v^{0, r}_n e^{inz} \Be_r + v^{0, z}_n e^{inz} \Be_z + v^{0, \theta}_n e^{inz}\Be_\theta \\
\\ & = \sum_{n \in \mathbb{Z}} in \psi_n^0 e^{in z} \Be_r - \frac1r \frac{d}{dr}(r \psi_n^0) e^{inz} \Be_z + v_n^{0, \theta} e^{inz} \Be_\theta.
\ea \ee
For every $j \geq 0$, let
$\psi_{n}^{j+1}$ be the solution to the iteration problem
\be \label{7-3}
\left\{
 \ba
& in \bar{U}(r) (\mathcal{L} - n^2) \psi_n^{j+1} - (\mathcal{L} - n^2)^2 \psi_n^{j+1}  \\
&=  in F_n^r - \frac{d}{dr} F_n^z - \frac{d}{dr} \sum_{m \in \mathbb{Z}} v_{m}^{j, r} \omega_{n-m}^{j, \theta} - in \sum_{m\in \mathbb{Z}} v_{n-m}^{j, z} \omega_m^{j, \theta} + in \sum_{m\in \mathbb{Z}} \frac{ v_{n-m}^{j, \theta} v_m^{j, \theta}}{r} ,\\
&\psi_n^{j+1} (0) = \psi_n^{j+1} (1) = \mathcal{L} \psi_n^{j+1} (0) = 0,\ \ \ \ \mathcal{L} \psi_n^{j+1} (1) = - \alpha \frac{d}{dr}\psi_n^{j+1} (1) .
\ea
\right.
\ee
Similarly, for $j\geq 0$, let $v_n^{j+1, \theta}$ be the solution to the iteration problem
\be \label{7-4}
\left\{
\begin{aligned}
&in \bar{U} v_{n}^{j+1, \theta}- (\mathcal{L} - n^2)v_{n}^{j+1, \theta}\\
&\quad = F_n^\theta - \sum_{m \in \mathbb{Z}} v_{m}^{j, r} \frac{d}{dr} v_{n-m}^{j, \theta} - \sum_{m \in \mathbb{Z}} im v_{n-m}^{j, z} v_m^{j, \theta} -
\sum_{m \in \mathbb{Z}} \frac{v_{n-m}^{j, \theta} }{r } v_m^{j, r},\\
&v_n^{j+1, \theta} (0) = 0, \ \ \ \ \ \frac{d}{dr} v_n^{j+1, \theta} (1) = (1- \alpha ) v_n^{j+1, \theta}(1).
\end{aligned}
\right.
\ee
Here $v_n^{j, r} = in \psi_n^j $, $v_n^{j, z}= -\frac1r \frac{d}{dr} (r \psi_n^j)$, and $\omega_n^{j, \theta} = (\mathcal{L} -n^2) \psi_n^j$.

{\it Step 2. Mathematical induction and the existence of solutions}. Let
\be \nonumber
\Bv^j = \sum_{n \in \mathbb{Z}} v_{n}^{j, r} e^{inz} \Be_r + v_n^{j,z}e^{inz} \Be_z + v_n^{j, \theta} \Be_\theta\ \  \ \text{and} \ \ \ \omega^{j, \theta} =
\sum_{n \in \mathbb{Z}} \omega_n^{j, \theta} e^{inz} .
\ee

Define
\be \nonumber
\mathcal{J} = \left\{ \Bv= \sum_{n \in \mathbb{Z}} \Bv_n \left|  \ba
& v_0^r = 0,\ \ \ \ \ \| \mathcal{Q}\Bv \|_{L^2(\Omega)} \leq \Phi^{-\frac{7}{12}}, \\
& \left( \|\Bv_0\|_{H^2(\Omega)}^2 + \|\mathcal{Q} \Bv^* \|_{H^{\frac32}(\Omega)}^2 + \|\mathcal{Q}\Bv^\theta \|_{H^2(\Omega)}^2       \right)^{\frac12} \leq 6 C_1 \|\BF\|_{L^2(\Omega)}      \ea         \right.   \right\},
\ee
where $C_1$ is the constant indicated in Proposition \ref{lemma2-add}.

Assume that $\|\BF\|_{L^2(\Omega)} \leq \Phi^{\frac{1}{32}}$. According to Proposition \ref{lemma2-add}, one has $\Bv^0 \in \mathcal{J}$. Assume that $\Bv^j \in \mathcal{J}$, we claim that $\Bv^{j+1} \in \mathcal{J}$.
Denote
\be \nonumber
F_n^{j, z} = \sum_{m \in \mathbb{Z}} v_{m}^{j, r} \omega_{n-m}^{j, \theta}, \ \ \ \ \ \ \ F_n^{j, r} = - \sum_{m \in \mathbb{Z}} v_{n-m}^{j, z} \omega_m^{j, \theta}
+   \sum_{m \in \mathbb{Z}} \frac{v_{n-m}^{j, \theta} v_m^{j, \theta}}{r},
\ee
and
\be \nonumber
F_n^{j, \theta }= - \sum_{m \in\mathbb{Z}} v_{m}^{j, r} \frac{d}{dr} v_{n-m}^{j, \theta} - \sum_{m \in \mathbb{Z}} im v_{n-m}^{j, z} v_m^{j, \theta} -
\sum_{m \in \mathbb{Z}} \frac{v_{n-m}^{j, \theta} }{r } v_m^{j, r}.
\ee

By Hausdorff-Young inequality and Sobolev embedding inequality, one has
\be \label{7-5}
\ba
 \left( \sum_{n \in \mathbb{Z}} \|F_{n}^{j, z} \|_{L^2(\Omega)}^2  \right)^{\frac12}
= & \left(  \sum_{n \in \mathbb{Z}} \left\|  \sum_{m \in \mathbb{Z}} v_{m}^{j, r} \omega_{n-m}^{j, \theta}    \right\|_{L^2(\Omega)}^2   \right)^{\frac12}
=  \left(     \sum_{n \in \mathbb{Z}} \left\|  \sum_{m \neq 0 } v_{m}^{j, r} \omega_{n-m}^{j, \theta}    \right\|_{L^2(\Omega)}^2        \right)^{\frac12} \\
\leq & C \|\mathcal{Q} \Bv^j \|_{L^6(\Omega)} \|\omega^{j, \theta} \|_{L^3(\Omega)}
\leq  C \|\mathcal{Q} \Bv^j \|_{H^1(\Omega)} \|\Bv^j \|_{H^{\frac32}(\Omega)} \\
\leq & C \|\mathcal{Q} \Bv^j \|_{L^2(\Omega)}^{\frac13} \|\mathcal{Q} \Bv^j \|_{H^{\frac32}(\Omega)}^{\frac23} \|\Bv^j \|_{H^{\frac32}(\Omega)} \\
\leq &  C  \Phi^{-\frac{25}{144}}\|\BF\|_{L^2(\Omega)}
\ea
\ee
and
\be \label{7-6}
\ba
  \left( \sum_{n \neq 0 } \|F_{n}^{j, r} \|_{L^2(\Omega)}^2  \right)^{\frac12}
\leq & \left( \sum_{n \neq 0 } \left\|\sum_{m \in \mathbb{Z}} \frac{v_{n-m}^{j, \theta} v_m^{j, \theta} }{r}      \right\|^2_{L^2(\Omega)}  \right)^{\frac12} + \left(    \sum_{n\neq 0} \left\|  \sum_{m \in \mathbb{Z}} v_{n-m}^{j, z} \omega_m^{j, \theta}    \right\|_{L^2(\Omega)}^2   \right)^{\frac12} \\
\leq & \left( \sum_{n \neq 0 } \left\|\sum_{m \neq n } \frac{v_{n-m}^{j, \theta} v_m^{j, \theta} }{r}      \right\|^2_{L^2(\Omega)}  \right)^{\frac12} + \left( \sum_{n \neq 0} \left\| \frac{v_0^{j, \theta} v_n^{j, \theta} }{r}  \right\|_{L^2(\Omega)}^2        \right)^{\frac12} \\
&\ \ \ \  + \left(    \sum_{n\neq 0} \left\|  \sum_{m \neq n } v_{n-m}^{j, z} \omega_m^{j, \theta}    \right\|_{L^2(\Omega)}^2   \right)^{\frac12} + \left(          \sum_{n \neq 0 } \| v_0^{j, z} \omega_{n}^{j, \theta} \|_{L^2(\Omega)}^2 \right)^{\frac12} \\
\ea
\ee
Thus one has
\be
\ba
&\left( \sum_{n \neq 0 } \|F_{n}^{j, r} \|_{L^2(\Omega)}^2  \right)^{\frac12}  \\
\leq & C \|\mathcal{Q} \Bv^{j, \theta} \|_{L^6(\Omega)} \left\| \frac{v^{j, \theta}}{r} \right\|_{L^3(\Omega)} +
C \left\| \frac{\Bv_0^{j, \theta} }{r} \right\|_{L^3(\Omega)} \|\mathcal{Q} \Bv^{j, \theta} \|_{L^6(\Omega) }  \\
&\ \ \ \ + C \| \mathcal{Q} \Bv^{j, *}\|_{L^6(\Omega)} \|\omega^{j, \theta} \|_{L^3(\Omega)} + C \|\Bv_0^{j} \|_{L^\infty(\Omega)} \|\mathcal{Q} \Bv^{j, *}\|_{H^1(\Omega)}\\
\leq & C \|\mathcal{Q} \Bv^{j, \theta}\|_{L^2(\Omega)}^{\frac12} \|\mathcal{Q}\Bv^{j, \theta}\|_{H^2(\Omega)}^{\frac12} \|\nabla \Bv^{j, \theta}\|_{L^3(\Omega)}  \\
& + C \|\mathcal{Q} \Bv^{j, *}\|_{H^{\frac32}(\Omega)}^{\frac23} (\|\mathcal{Q} \Bv^{j, *}\|_{L^2(\Omega)}^{\frac13}\| \Bv^{j, *}\|_{H^{\frac32}(\Omega)}+ \|\Bv_0^j \|_{H^2(\Omega)} \|\mathcal{Q} \Bv^{j, *} \|_{L^2(\Omega)}^{\frac13} )\\
\leq & C \Phi^{- \frac{25}{144}} \|\BF\|_{L^2(\Omega)}.
\ea
\ee
Furthermore, it holds that
\be \label{7-7} \ba
&  \left( \sum_{n \in \mathbb{Z}} \|F_{n}^{j, \theta} \|_{L^2(\Omega)}^2  \right)^{\frac12}  \\
\leq & \left( \sum_{n \in \mathbb{Z}} \left\|  \sum_{m \in \mathbb{Z}} v_m^{j, r} \frac{d}{dr} v_{n-m}^{j, \theta} \right\|_{L^2(\Omega)}^2 \right)^{\frac12} + \left(  \sum_{n \in \mathbb{Z}} \left\|  \sum_{m \in \mathbb{Z}} v_{n-m}^{j, z} i m v_m^{j, \theta}     \right\|_{L^2(\Omega)}^2 \right)^{\frac12} \\
&\ \ \ \ \ + \left( \sum_{n \in \mathbb{Z}} \left\|  \sum_{m \in \mathbb{Z}} v_m^{j, r} \frac{v_{n-m}^{j, \theta}}{r}      \right\|_{L^2(\Omega)}^2  \right)^{\frac12}\\
 = & \left( \sum_{n \in \mathbb{Z}} \left\|  \sum_{m \neq 0 } v_m^{j, r} \frac{d}{dr} v_{n-m}^{j, \theta} \right\|_{L^2(\Omega)}^2 \right)^{\frac12} + \left(  \sum_{n \in \mathbb{Z}} \left\|  \sum_{m \neq 0 } v_{n-m}^{j, z} i m v_m^{j, \theta}     \right\|_{L^2(\Omega)}^2 \right)^{\frac12} \\
&\ \ \ \ \ + \left( \sum_{n \in \mathbb{Z}} \left\|  \sum_{m \neq 0 } v_m^{j, r} \frac{v_{n-m}^{j, \theta}}{r}      \right\|_{L^2(\Omega)}^2  \right)^{\frac12}\\
\leq & C \|\mathcal{Q} \Bv^{j, *} \|_{L^6(\Omega)} \left(  \left\| \frac{d v^{j, \theta} }{dr} \right\|_{L^3(\Omega)}+ \left\|  \frac{v^{j, \theta}}{r}\right\|_{L^3(\Omega)} \right)  +
C \| \Bv^{j, * } \|_{L^3(\Omega)} \|\partial_z  \mathcal{Q} \Bv^{j, \theta} \|_{L^6(\Omega)} \\
\leq & C \|\mathcal{Q} \Bv^{j, *} \|_{L^6(\Omega)} \| \nabla \Bv^{j, \theta} \|_{L^3(\Omega)} +  C \| \Bv^{j, * } \|_{L^3(\Omega)} \|\partial_z  \mathcal{Q} \Bv^{j, \theta} \|_{L^6(\Omega)} \\
\leq & C \Phi^{- \frac{25}{144}} \|\BF\|_{L^2(\Omega)}.
\ea
\ee

By virtue of Proposition \ref{lemma2-add},
\be \nonumber
\|\Bv_0^{j+1}\|_{H^2(\Omega)} \leq C_1 \|\BF\|_{L^2(\Omega)} + C_1 C  \Phi^{-\frac{25}{144}}\|\BF\|_{L^2(\Omega)},
\ee
\be \nonumber
\| \mathcal{Q} \Bv^{j+1, * } \|_{H^{\frac32}(\Omega)}+ \|\mathcal{Q} \Bv^{j+1, \theta} \|_{H^2(\Omega)}  \leq C_1 \|\BF\|_{L^2(\Omega)} + C_1 C \Phi^{-\frac{25}{144}} \|\BF\|_{L^2(\Omega)},
\ee
and
\be \nonumber
\|\mathcal{Q} \Bv^{j+1} \|_{L^2(\Omega)} \leq C_1 \Phi^{-\frac23} \|\BF\|_{L^2(\Omega)} + C_1 C \Phi^{-\frac{121}{144}} \|\BF\|_{L^2(\Omega)}.
\ee
Hence there exists a constant $\Phi_0$ such that for every $\Phi \geq \Phi_0$, one has $\Bv^{j+1} \in \mathcal{J}$.

By mathematical induction, $\Bv^j \in \mathcal{J}$ for every $j \in \mathbb{N}$. Due to the uniform bound for $\Bv^j$, there exists a function $\Bv \in \mathcal{J}$, such that $\Bv^j \rightharpoonup \Bv$ in $H^{\frac32}(\Omega)$. And it holds that
 \be \nonumber
 \|\mathcal{Q} \Bv\|_{L^2(\Omega)} \leq \Phi^{-\frac{7}{12}} \ \ \ \ \text{and}\ \ \ \ \left(\|\Bv_0\|_{H^2(\Omega)}^2 + \|\mathcal{Q} \Bv^*\|_{H^{\frac32}(\Omega)}^2 + \|\mathcal{Q} \Bv^{\theta} \|_{H^2(\Omega)} \right)^\frac12 \leq 6 C_1 \|\BF\|_{L^2(\Omega)}.
 \ee
 By the way, for every $j \in \mathbb{N}$, it holds that
\be \label{7-11}
\|\Bv^{j+1} \|_{H^2(\Omega)}
\leq C (1 + \Phi^{\frac14} ) \|\BF\|_{L^2(\Omega)} + C (1 + \Phi^{\frac14} ) \Phi^{-\frac{25}{144}} \|\BF\|_{L^2(\Omega)}
\leq C (1 + \Phi^{\frac14} ) \|\BF\|_{L^2(\Omega)}.
\ee
Taking the limit for $j\to \infty$ yields
\be \label{7-12}
\|\Bv\|_{H^2(\Omega)} \leq C (1 + \Phi^{\frac14} ) \|\BF\|_{L^2(\Omega)}.
\ee

 On the other hand, since $(\psi_n^{j+1}, v_n^{j+1, \theta})$ is the solution to the problem \eqref{7-3}-\eqref{7-4}, one can prove that
\be \label{7-9}
{\rm curl}~\left(  (\bBU \cdot \nabla ) \Bv^{j+1} + (\Bv^{j+1} \cdot \nabla ) \bBU - \Delta \Bv^{j+1} + (\Bv^j \cdot \nabla )\Bv^j - \BF                        \right)=0\ \ \ \mbox{in}\ \Omega.
\ee
Taking the limit of the equation \eqref{7-9} yields
\be \nonumber
{\rm curl}~\left(  (\bBU \cdot \nabla ) \Bv + (\Bv \cdot \nabla ) \bBU - \Delta \Bv + (\Bv \cdot \nabla )\Bv - \BF                        \right)=0\ \ \ \mbox{in}\ \Omega.
\ee
Therefore, there exists a function $P$ with $\nabla P \in L^2(\Omega)$, such that
\be \label{7-10}
(\bBU \cdot \nabla ) \Bv + (\Bv \cdot \nabla ) \bBU - \Delta \Bv + (\Bv \cdot \nabla )\Bv  + \nabla P = \BF \ \ \ \mbox{in}\ \Omega.
\ee

{\it Step 3. Uniqueness.}
Suppose that $\Bv, \tilde{\Bv} \in \mathcal{J}$ are two solutions of the nonlinear problem.
Let $\psi$ and $\tilde{\psi}$ be the stream functions associated with $\Bv$ and $\tilde{\Bv}$. Then $\psi-\tilde{\psi}$ and $v^\theta-\tilde{v}^\theta$ satisfy
\be \label{7-21} \ba
 & \bar{U}(r) \partial_z (\mathcal{L} + \partial_z^2 ) (\psi - \tilde{\psi} ) - (\mathcal{L} + \partial_z^2)^2 (\psi- \tilde{\psi} ) \\
 = & - \partial_r (v^r \omega^\theta - \tilde{v}^r \tilde{\omega}^\theta) - \partial_z (v^z \omega^\theta - \tilde{v}^z \tilde{\omega}^\theta)
 +  \partial_z \left(  \frac{v^\theta}{r} v^\theta - \frac{\tilde{v}^\theta}{r} \tilde{v}^\theta      \right)
 \ea
\ee
and
\be \label{7-22}
\ba
&\bar{U}(r) \partial_z (v^\theta - \tilde{v}^\theta) - (\mathcal{L} +  \partial_z^2) (v^\theta - \tilde{v}^\theta) \\
= & - (v^r \partial_r v^\theta - \tilde{v}^r \partial_r \tilde{v}^\theta) - (v^z \partial_z v^\theta - \tilde{v}^z \partial_z v^\theta)
- \left( \frac{v^r}{r} v^\theta - \frac{\tilde{v}^r }{r } \tilde{v}^\theta        \right),
\ea
\ee
respectively.
By Proposition \ref{lemma2-add}, one has
\be \label{7-23}
\ba
& \|\Bv_0 - \tilde{\Bv}_0\|_{L^2(\Omega) } \\ \leq &
C_1 \left\|\sum_{m \neq 0} v_m^r \omega_{-m}^\theta - \tilde{v}_m^r \tilde{\omega}_{-m}^\theta \right\|_{L^2(\Omega)} + C_1 \left\|\sum_{m \neq 0}  v_m^r \frac{d}{dr}v_{-m}^\theta - \tilde{v}^r  \frac{d}{dr}\tilde{v}_{-m}^\theta  \right\|_{L^2(\Omega)}\\
&\ \ + C_1 \left\| \sum_{m \neq 0 } im v_m^z v_{-m }^\theta - im \tilde{v}_m^z \tilde{v}_{-m}^\theta \right\|_{L^2(\Omega)}
+ C_1 \left\| \sum_{m \neq 0} \frac{v_m^r v_{-m}^\theta }{r}- \frac{\tilde{v}_m^r \tilde{v}_{-m}^\theta}{r}  \right\|_{L^2(\Omega)}.
\ea
\ee
Herein,
\be \label{7-24}
\ba & \left\|\sum_{m \neq 0} v_m^r \omega_{-m}^\theta - \tilde{v}_m^r \tilde{\omega}_{-m}^\theta \right\|_{L^2(\Omega)} \\
\leq & \left\| \sum_{m \neq 0 } (v_m^r - \tilde{v}_m^r ) \omega_{-m}^\theta  \right\|_{L^2(\Omega)} + \left\| \sum_{m \neq 0} \tilde{v}^r_m
(\omega_{-m}^\theta - \tilde{\omega}_{-m}^\theta )        \right\|_{L^2(\Omega)} \\
\leq & C \|\mathcal{Q}\Bv - \mathcal{Q} \tilde{\Bv} \|_{L^9(\Omega)}  \|\mathcal{Q} \omega^\theta \|_{L^{\frac{18}{7}}(\Omega)}
+ C \|\mathcal{Q} \tilde{\Bv}^* \|_{L^{18}(\Omega)} \|\mathcal{Q}\omega^\theta  - \mathcal{Q} \tilde{\omega}^\theta  \|_{ L^{\frac94 }(\Omega)} \\
\leq & C  \|\mathcal{Q} \Bv - \mathcal{Q} \tilde{\Bv}  \|_{H^{\frac76}(\Omega)} \|\mathcal{Q} \Bv^* \|_{H^{\frac43}(\Omega)}  \\
\leq & C \Phi^{-\frac{7}{12} \cdot \frac19 + \frac{1}{32} \cdot \frac89}\|\mathcal{Q} \Bv - \mathcal{Q} \tilde{\Bv}  \|_{H^{\frac76}(\Omega)}
= C \Phi^{-\frac{1}{27}} \|\mathcal{Q} \Bv - \mathcal{Q} \tilde{\Bv}  \|_{H^{\frac76}(\Omega)}.
\ea
\ee
Similarly, one has
\be \label{7-25}
\ba
& \left\|\sum_{m \neq 0}  v_m^r \frac{d}{dr}v_{-m}^\theta - \tilde{v}^r_m \frac{d}{dr}\tilde{v}_{-m}^\theta  \right\|_{L^2(\Omega)}
+ \left\| \sum_{m \neq 0 } im v_m^z v_{-m }^\theta - im \tilde{v}_m^z \tilde{v}_{-m}^\theta \right\|_{L^2(\Omega)} \\
&\ \ \ \ \ \ \ + \left\| \sum_{m \neq 0} \frac{v_m^r v_{-m}^\theta }{r}- \frac{\tilde{v}_m^r \tilde{v}_{-m}^\theta}{r}  \right\|_{L^2(\Omega)} \\
\leq & C \Phi^{-\frac{1}{27}}  \|\mathcal{Q}\Bv - \mathcal{Q} \tilde{\Bv} \|_{H^{\frac76}(\Omega)}.
\ea
\ee
Meanwhile,
\be \label{7-26}
\ba & \left(   \sum_{n \neq 0} \left\|  \sum_{m\neq 0} v_m^r \omega_{n -m}^\theta - \tilde{v}^r_m  \tilde{\omega}_{n-m}^\theta         \right\|_{L^2(\Omega)}^2       \right)^{\frac12} \\
\leq & \left(   \sum_{n \neq 0} \left\|  \sum_{m\neq 0} (v_m^r -\tilde{v}_m^r)  \omega_{n -m}^\theta      \right\|_{L^2(\Omega)}^2       \right)^{\frac12} + \left(   \sum_{n \neq 0} \left\|  \sum_{m\neq 0} \tilde{v}_m^r ( \omega_{n -m}^\theta -  \tilde{\omega}_{n-m}^\theta  )       \right\|_{L^2(\Omega)}^2       \right)^{\frac12}\\
\leq & C \|\Bv- \tilde{\Bv} \|_{L^9(\Omega)}  \|\omega^\theta \|_{ L^{\frac{18}{7}}(\Omega)}
+ C \|\omega^\theta - \tilde{\omega}^\theta \|_{L^{\frac94}(\Omega)} \| \tilde{ \Bv}\|_{ L^{18} (\Omega)}\\
\leq & C \|\Bv - \tilde{\Bv}\|_{H^{\frac76}(\Omega)} \|\tilde{\Bv} \|_{H^{\frac43}(\Omega)} \\
\leq & C \Phi^{\frac{1}{32} } \|\Bv - \tilde{\Bv} \|_{H^{\frac76}(\Omega)}.
\ea
\ee
Similarly, one has
\be \label{7-27}
\ba &
\left(   \sum_{n \neq 0} \left\|  \sum_{m\in \mathbb{Z} } v_m^z \omega_{n -m}^\theta - \tilde{v}^z_m  \tilde{\omega}_{n-m}^\theta         \right\|_{L^2(\Omega)}^2       \right)^{\frac12}  +  \left(   \sum_{n \neq 0} \left\|  \sum_{m\in \mathbb{Z} }  \frac{v^\theta_m v^\theta_{n-m}  }{r} - \frac{\tilde{v}^\theta_m  \tilde{v}^\theta_{n-m}  }{r }        \right\|_{L^2(\Omega)}^2       \right)^{\frac12} \\
& + \left(   \sum_{n \neq 0} \left\|  \sum_{m \in \mathbb{Z} } v_m^r \frac{d}{dr}v^\theta_{n-m}  - \tilde{v}^r_m  \frac{d}{dr}\tilde{v}^\theta_{n-m}         \right\|_{L^2(\Omega)}^2       \right)^{\frac12}
+ \left(   \sum_{n \neq 0} \left\|  \sum_{m\in \mathbb{Z} } \frac{v_m^r v_{n-m}^\theta}{r} - \frac{\tilde{v}_m^r \tilde{v}_{n-m}^\theta }{r}       \right\|_{L^2(\Omega)}^2       \right)^{\frac12} \\
& + \left(   \sum_{n \neq 0} \left\|  \sum_{m\in \mathbb{Z} } i(n-m) v_m^z  v^\theta_{n-m}  - i(n-m) \tilde{v}^z_m  \tilde{v}_{n-m}^\theta         \right \|_{L^2(\Omega)}^2       \right)^{\frac12} \\
\leq & C \Phi^{\frac{1}{32}} \|\Bv - \tilde{\Bv} \|_{H^{\frac76}(\Omega)}.
\ea
\ee

Applying Proposition \ref{lemma2-add} again gives
\be \label{7-28} \ba
& \|\Bv_0 - \tilde{\Bv}_0 \|_{H^2(\Omega)} + \|\mathcal{Q} \Bv - \mathcal{Q} \tilde{\Bv} \|_{H^{\frac76}(\Omega)}  \\
\leq &  \|\Bv_0 - \tilde{\Bv}_0 \|_{H^2(\Omega)} + C \|\mathcal{Q} \Bv - \mathcal{Q} \tilde{\Bv} \|_{L^2 (\Omega)}^{\frac29}
 \|\mathcal{Q} \Bv - \mathcal{Q} \tilde{\Bv} \|_{H^{\frac32} (\Omega)}^{\frac79} \\
\leq & C \Phi^{-\frac{1}{27} }  \|\mathcal{Q}\Bv - \mathcal{Q} \tilde{\Bv} \|_{H^{\frac76}(\Omega)} + C \Phi^{-\frac{101}{864} } \|\Bv - \tilde{\Bv} \|_{H^{\frac76}(\Omega)} .
\ea \ee
For sufficiently large $\Phi \geq \Phi_0$, it holds that
\be \label{7-29}
\|\Bv_0 - \tilde{\Bv}_0 \|_{H^2(\Omega)} + \|\mathcal{Q} \Bv - \mathcal{Q} \tilde{\Bv} \|_{H^{\frac76}(\Omega)}  = 0.
\ee
This implies that $\Bv = \tilde{\Bv}$. Hence the uniqueness is proved.
This finishes the proof for Part $(b)$ of Theorem \ref{mainthm}.
\end{proof}

\begin{remark}
Assume that $\BF=0$ and $\Bu$ is a solution to the steady Navier-Stokes equations \eqref{SNS} supplemented with the Navier boundary condition \eqref{slipBC} and flux constraint \eqref{fluxBC}. If $\Phi$ is sufficently large and  $\Bv= \Bu - \bBU$ satisfies $\|\Bv \|_{H^{\frac23} (\Omega)} \leq \Phi^{\frac{1}{64}}$, then by Proposition \ref{lemma2-add} ,  it holds that
\be \nonumber
\|\mathcal{Q} \Bv\|_{L^2(\Omega)} \leq \Phi^{-\frac{7}{12}} , \ \ \ \ \ \|\Bv_0\|_{H^2(\Omega)} + \|\mathcal{Q}\Bv\|_{H^{\frac32} (\Omega)} +
\|\mathcal{Q} \Bv^\theta\|_{H^2(\Omega)} \leq C \Phi^{\frac{1}{32}}.
\ee
According to the above uniqueness proof, one can prove that $\Bv = 0$, which gives the uniqueness of Poiseuille flow.
\end{remark}



\appendix
\section{Some elementary lemmas}
In this appendix, we collect some basic lemmas which play important roles in the paper and might be useful elsewhere. The proof of most of lemmas can be found in \cite{WX1}.
We first give some Poincar\'e type inequalities.
\begin{lemma}\label{lemma1}
For a  function $g\in C^2([0,1])$,
it holds that
\be \label{2-1-11}
\int_0^1 |g |^2  r \, dr \leq  \int_0^1 \left|   \frac{d}{dr} (r g )     \right|^2 \frac1r \, dr.
\ee
If, in addition, $g(0)=g(1)=0$, then one has
\be
  \int_0^1 \left|   \frac{d}{dr} (r g )     \right|^2 \frac1r \, dr
  \leq \left( \int_0^1 | \mathcal{L}g|^2  r \, dr  \right)^{\frac12} \left( \int_0^1 |g|^2 r\, dr  \right)^{\frac12}
\leq  \int_0^1 | \mathcal{L}g|^2  r \, dr.
\ee
\end{lemma}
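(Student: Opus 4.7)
\medskip
\noindent\textbf{Proof proposal.} The plan is to treat the two inequalities separately, using elementary integration by parts in each case.

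For the first inequality, I would expand $\frac{d}{dr}(rg) = g + rg'$ directly and compute pointwise
\begin{equation*}
\left|\frac{d}{dr}(rg)\right|^2 \frac{1}{r} \;=\; \frac{|g|^2}{r} + \frac{d}{dr}|g|^2 + r|g'|^2.
\end{equation*}
Integrating over $[0,1]$ (we may assume $g(0)=0$, since otherwise $|(rg)'|^2/r \sim |g(0)|^2/r$ near $r=0$ makes the right-hand side infinite and the inequality is trivial), this gives
\begin{equation*}
\int_0^1 \left|\frac{d}{dr}(rg)\right|^2 \frac{1}{r}\,dr \;=\; \int_0^1 \frac{|g|^2}{r}\,dr + |g(1)|^2 + \int_0^1 r|g'|^2\,dr \;\geq\; \int_0^1 \frac{|g|^2}{r}\,dr.
\end{equation*}
Since $1/r \geq r$ on $(0,1)$, the right-hand side is at least $\int_0^1 |g|^2 r\,dr$, which is what we want.

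For the second inequality, the key observation is the factored form $\mathcal{L}g = \frac{d}{dr}\bigl(\frac{1}{r}\frac{d}{dr}(rg)\bigr)$, i.e.\ $\left(\frac{(rg)'}{r}\right)' = \mathcal{L}g$. Writing $u = (rg)'/r$ and $v = rg$, we have $u' = \mathcal{L}g$ and $v' = (rg)'$, so integration by parts yields
\begin{equation*}
\int_0^1 \left|\frac{d}{dr}(rg)\right|^2 \frac{1}{r}\,dr \;=\; \int_0^1 u\,\overline{v'}\,dr \;=\; \bigl[u\,\overline{v}\bigr]_0^1 - \int_0^1 \mathcal{L}g \cdot r\overline{g}\,dr.
\end{equation*}
The boundary terms vanish: at $r=1$ we use $g(1)=0$; at $r=0$ we use $g(0)=0$ together with $g \in C^2$, which gives $v = rg = O(r^2)$ while $u = (rg)'/r$ remains bounded, so $uv|_{r=0}=0$. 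Cauchy--Schwarz on the remaining term then gives
\begin{equation*}
\int_0^1 \left|\frac{d}{dr}(rg)\right|^2 \frac{1}{r}\,dr \;\leq\; \left(\int_0^1 |\mathcal{L}g|^2 r\,dr\right)^{1/2}\left(\int_0^1 |g|^2 r\,dr\right)^{1/2}.
\end{equation*}
To close the chain, I combine this estimate with the first inequality: letting $A=\int_0^1|(rg)'|^2/r\,dr$, $B=\int_0^1 |g|^2 r\,dr$, $C=\int_0^1 |\mathcal{L}g|^2 r\,dr$, we have $B \leq A$ from the first inequality and $A \leq \sqrt{CB}$ from the Cauchy--Schwarz step, so $\sqrt{CB} \leq \sqrt{CA}$, i.e.\ $A \leq C$ and in particular $\sqrt{CB} \leq C$. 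This gives the second displayed inequality in the lemma.

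There is no real obstacle here; the only point requiring care is the justification that the boundary terms at $r=0$ vanish in the integration by parts step, which uses the $C^2$ regularity and the condition $g(0)=0$ to control the behavior of $(rg)'/r$ near the axis.
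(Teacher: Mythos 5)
Your proof is correct and complete. The paper itself does not reproduce a proof of this lemma (it defers to \cite{WX1}), but your argument for the second chain is exactly the expected one: integration by parts gives $\int_0^1 |\frac{d}{dr}(rg)|^2 \frac{1}{r}\,dr = -\int_0^1 \mathcal{L}g\, r\overline{g}\,dr$ (with the boundary term at $r=0$ correctly killed by $g(0)=0$ and $g\in C^2$, which make $\frac{1}{r}\frac{d}{dr}(rg)$ bounded and $rg=O(r^2)$), followed by Cauchy--Schwarz and the bootstrap $A\le\sqrt{CB}$, $B\le A\Rightarrow A\le C$. Your treatment of the first inequality, via the exact expansion $|\frac{d}{dr}(rg)|^2\frac1r=\frac{|g|^2}{r}+\frac{d}{dr}|g|^2+r|g'|^2$ and the reduction to $g(0)=0$, is a mild variant of the route the paper uses for analogous statements elsewhere (bounding $|g(r)|^2$ pointwise by $\int_0^1|\frac{d}{ds}(sg)|^2\frac1s\,ds$ via the fundamental theorem of calculus and Cauchy--Schwarz); both are equally elementary and valid.
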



In the following lemma,  the pointwise estimates for the functions on the boundary are provided.
\begin{lemma}\label{lemmaA2}
For a function $g\in C^3([0,1])$, one has
\begin{equation}\label{estLinfty}
\begin{aligned}
\left|\frac{d}{dr}(rg)(1)\right|\leq  & 2  \left( \int_0^{1}\left|\frac{d}{dr}(rg)\right|^2 \frac{1}{r}\, dr \right)^{\frac14} \left( \int_0^{1}\left|\mathcal{L}g\right|^2r \, dr \right)^{\frac14} + 4 \left( \int_0^1 \left| \frac{d}{dr}(r g) \right|^2 \frac1r \, dr \right)^{\frac12}
\end{aligned}
\end{equation}
and
\begin{equation}\label{3-3-1-16}
| \mathcal{L} g (1) |  \leq 2 \left( \int_{\frac12}^1 |\mathcal{L } g |^2  r \, dr \right)^{\frac12}
+ 2 \left(   \int_{\frac12}^1 \left| \frac{d}{dr} ( r \mathcal{L} g)   \right|^2  \frac{1}{r} \, dr \right)^{\frac14}  \left( \int_{\frac12}^1 \left| \mathcal{L} g \right|^2 r \, dr       \right)^{\frac14}.
\end{equation}
Furthermore, if $g\in C^4([0,1])$, with $\mathcal{L} g (0) = 0$ and $\frac{d}{dr}( r \mathcal{L}g) \in C[0, 1]$, then one has
\begin{equation}\label{3-3-1-20}
\int_0^1 \left| \frac{d}{dr} ( r \mathcal{L} g )   \right|^2 \frac{1}{r} \, dr  \leq C \left(\int_0^1 |\mathcal{L} g|^2 r \, dr \right)^{\frac12} \left( \int_0^1 |\mathcal{L}^2 g |^2 r \, dr   \right)^{\frac12} + C \int_0^1 |\mathcal{L}g|^2 r\, dr ,
\end{equation}
\begin{equation}\label{3-3-1-20-1}
\ba
 \left| \frac{d}{dr} (r \mathcal{L}g)(1)  \right|^2
 \leq & 4 \int_{\frac12}^1 \left|  \frac{d}{dr} ( r \mathcal{L} g)   \right|^2  \frac{1}{r}\, dr\\
  &\quad    + 8 \left(  \int_{\frac12}^1 | \mathcal{L}^2 g |^2  r\, dr   \right)^{\frac12}  \left(
\int_{\frac12}^1 \left| \frac{d}{dr} ( r \mathcal{L} g) \right|^2 \frac{1}{r} \, dr  \right)^{\frac12},
\ea
\end{equation}
and
\be \label{estlinfty}
|\mathcal{L} g (1)  |  + \left| \frac{d}{dr} ( r \mathcal{L} g) (1)  \right| \leq C \left( \int_0^1| \mathcal{L} g|^2 r \, dr +  \int_0^1 | \mathcal{L}^2 g |^2 r \, dr   \right)^{\frac12}.
\ee

If, in addition, $g(0)=g(1)=0$, then one has
\begin{equation}\label{estLinfty1}
\left|\frac{d}{dr}(rg)(1)\right|\leq  2\sqrt{3} \left( \int_0^{1}\left|g\right|^2 r \, dr \right)^{\frac18} \left( \int_0^{1}\left|\mathcal{L}g\right|^2 r \, dr \right)^{\frac38}.
\end{equation}
\end{lemma}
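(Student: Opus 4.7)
All six estimates in the lemma are trace-type inequalities, and the plan is to derive them from one common identity. For any smooth $h$ on $[0,1]$ I will write $h(1)^2=h(\rho)^2+2\int_\rho^1 hh'$ and average $\rho$ over $[0,1]$ (or over $[\tfrac12,1]$ when integrals near the origin are unavailable), which via Fubini gives
\[
h(1)^2=\int_0^1 h^2\,dr+2\int_0^1 r\,h(r)h'(r)\,dr.
\]
A second ingredient used throughout is the algebraic identity $\mathcal{L}g=\tfrac{d}{dr}(h/r)$ whenever $h=\tfrac{d}{dr}(rg)$, equivalently $h'=r\,\mathcal{L}g+h/r$.

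For \eqref{estLinfty} I take $h=\tfrac{d}{dr}(rg)$, substitute $h'=r\mathcal{L}g+h/r$, and exploit $r^k\le r$ on $[0,1]$ for every $k\ge1$. Writing $A=\int_0^1 h^2/r\,dr$ and $B=\int_0^1 r|\mathcal{L}g|^2\,dr$, Cauchy--Schwarz yields the sharper intermediate bound $h(1)^2\le 3A+2A^{1/2}B^{1/2}$, after which $\sqrt{a+b}\le\sqrt a+\sqrt b$ produces the claim (with better constants than those stated). Estimate \eqref{3-3-1-16} follows by the same device applied to $h=\mathcal{L}g$, except that the $\rho$-average must be restricted to $[\tfrac12,1]$ because nothing is assumed about $\mathcal{L}g$ near $r=0$; the cross term is rewritten using $(\mathcal{L}g)'=\tfrac1r(r\mathcal{L}g)'-\mathcal{L}g/r$.

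For \eqref{3-3-1-20}--\eqref{estlinfty} the key object is $\tilde h:=\tfrac{d}{dr}(r\mathcal{L}g)$. The hypothesis $\mathcal{L}g(0)=0$ forces $\tilde h(0)=0$ and keeps $\tilde h/r$ continuous at the origin, which legitimates the integration by parts
\[
\int_0^1\frac{|\tilde h|^2}{r}\,dr=\mathcal{L}g(1)\,\tilde h(1)-\int_0^1 r\,\mathcal{L}g\cdot\mathcal{L}^2 g\,dr
\]
(the boundary contribution at $0$ vanishes because $r\mathcal{L}g\to 0$ while $\tilde h/r$ stays bounded). Controlling the two boundary factors $\mathcal{L}g(1)$ and $\tilde h(1)$ by \eqref{3-3-1-16} and by \eqref{3-3-1-20-1} (which I obtain by applying the same averaging identity to $\tilde h$, now over all of $[0,1]$ since $\tilde h(0)=0$) produces a self-improving inequality $X\le c_1 X^{1/2}+c_2 X^{1/4}+c_3$ with $X=\int_0^1|\tilde h|^2/r$, where the $c_i$ depend polynomially on $B^{1/2}$ and on $Z^{1/2}:=\bigl(\int r|\mathcal{L}^2g|^2\bigr)^{1/2}$; Young's inequality absorbs the $X^{1/2}$ and $X^{1/4}$ contributions into $\tfrac12 X$ and delivers \eqref{3-3-1-20}. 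Then \eqref{estlinfty} follows by combining \eqref{3-3-1-16}, \eqref{3-3-1-20} and \eqref{3-3-1-20-1}.

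Finally, for \eqref{estLinfty1} the extra hypothesis $g(0)=g(1)=0$ lets me invoke Lemma \ref{lemma1} to get $A\le (BC)^{1/2}$ and $C\le A$, where $C=\int_0^1 g^2 r\,dr$; in particular $A\le B$ and $A\le B^{1/2}C^{1/2}$. Substituting these into the sharper form $|h(1)|\le\sqrt3\,A^{1/2}+\sqrt2\,A^{1/4}B^{1/4}$ from paragraph two gives $|h(1)|\le\sqrt3\,B^{1/4}C^{1/4}+\sqrt2\,B^{3/8}C^{1/8}$, and one further use of $C\le B$ converts the first term into $\sqrt3\,B^{3/8}C^{1/8}$, yielding \eqref{estLinfty1} with constant at most $\sqrt3+\sqrt2\le 2\sqrt3$. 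The main obstacle of the whole proof is the bootstrap step in paragraph three: because $\mathcal{L}g(1)$ is unconstrained one cannot directly apply a Hardy--type bound to $\mathcal{L}g$, and the boundary-term estimates from \eqref{3-3-1-16}/\eqref{3-3-1-20-1} have to be fed back into the identity for $\int|\tilde h|^2/r$ in a quantitatively tight way so that Young absorption closes.
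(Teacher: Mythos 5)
The paper itself does not prove this lemma --- the appendix explicitly defers the proofs of these auxiliary lemmas to \cite{WX1} --- so your argument stands as a self-contained replacement, and it is correct in substance. The single averaged trace identity $h(1)^2=\int_0^1 h^2\,dr+2\int_0^1 r\,h h'\,dr$ (or its $[\tfrac12,1]$ analogue), combined with the structural relations $h'=r\mathcal{L}g+h/r$ for $h=\tfrac{d}{dr}(rg)$ and $\tilde h'=r\mathcal{L}^2g+\tilde h/r$ for $\tilde h=\tfrac{d}{dr}(r\mathcal{L}g)$, does deliver all six estimates with the stated or better constants. In particular I checked that the chain $C\le A\le B^{1/2}C^{1/2}\le B$ supplied by Lemma \ref{lemma1} converts your sharpened form $|h(1)|\le\sqrt3\,A^{1/2}+\sqrt2\,A^{1/4}B^{1/4}$ of \eqref{estLinfty} into \eqref{estLinfty1} with constant $\sqrt3+\sqrt2<2\sqrt3$, and that the integration by parts behind \eqref{3-3-1-20} has vanishing boundary contribution at the origin because $\tilde h(r)\,\mathcal{L}g(r)\to 0$ there.

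Two small imprecisions, each repairable in one line. First, in the bootstrap for \eqref{3-3-1-20} the product of the two boundary bounds also produces a term of order $B^{1/4}X^{3/4}$, so the self-improving inequality has the form $X\le c_1X^{3/4}+c_2X^{1/2}+c_3X^{1/4}+c_4$ rather than the one you wrote; Young's inequality absorbs the $X^{3/4}$ term just as well (namely $4B^{1/4}X^{3/4}\le\tfrac14X+CB$), so nothing breaks, but the exponent should be recorded. Second, \eqref{3-3-1-20-1} as stated has all integrals over $[\tfrac12,1]$, whereas averaging $\tilde h$ over all of $[0,1]$ as you propose yields the incomparable full-interval bound $|\tilde h(1)|^2\le 3\int_0^1|\tilde h|^2r^{-1}\,dr+2X^{1/2}Z^{1/2}$; that version is all you need downstream for \eqref{3-3-1-20} and \eqref{estlinfty}, but to obtain \eqref{3-3-1-20-1} itself you should restrict the average to $[\tfrac12,1]$ exactly as you already did for \eqref{3-3-1-16}, which reproduces the stated constants $4$ and $8$ with room to spare.
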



The following lemma is a variant of Hardy-Littlewood-P\'olya type inequality.
\begin{lemma}\label{lemmaHLP}
Suppose that $g\in C^1([0,1])$ satisfies $g(0)=0$. It holds that
\be \label{HLP-2}
\int_0^1 |g|^2 r \, dr \leq C \int_0^1 \left| \frac{d(r g) }{dr}   \right|^2 \frac{1-r^2}{r} \, dr .
\ee
\end{lemma}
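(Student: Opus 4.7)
The natural change of variable is $h(r) = rg(r)$, which satisfies $h(0)=0$ automatically since $g$ is continuous at $0$. Under this substitution the desired inequality becomes the weighted Hardy-type estimate
\[
\int_0^1 \frac{|h(r)|^2}{r}\,dr \;\le\; C \int_0^1 |h'(r)|^2\,\frac{1-r^2}{r}\,dr,
\]
since $|g|^2 r = |h|^2/r$ and $h' = (rg)'$. I would establish this via a pointwise representation of $h$ followed by a single application of Cauchy--Schwarz with the dual weights $w(s)=(1-s^2)/s$ and $w(s)^{-1} = s/(1-s^2)$.

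Concretely, for each $r \in (0,1)$, I write $h(r) = \int_0^r h'(s)\,ds$ and apply Cauchy--Schwarz to get
\[
|h(r)|^2 \;\le\; \left(\int_0^r |h'(s)|^2\,\frac{1-s^2}{s}\,ds\right)\!\left(\int_0^r \frac{s}{1-s^2}\,ds\right)
\;=\; -\tfrac{1}{2}\ln(1-r^2)\cdot \int_0^r |h'(s)|^2\,\frac{1-s^2}{s}\,ds.
\]
Dividing by $r$ and integrating on $(0,1)$, and then bounding the inner integral by the full integral over $(0,1)$, gives
\[
\int_0^1 \frac{|h(r)|^2}{r}\,dr \;\le\; \left(\int_0^1 |h'(s)|^2\,\frac{1-s^2}{s}\,ds\right)\cdot \int_0^1 \frac{-\ln(1-r^2)}{2r}\,dr.
\]
It remains only to observe that the logarithmic integral converges: near $r=0$ the Taylor expansion yields $-\ln(1-r^2)/r \sim r$, while near $r=1$ we have $-\ln(1-r^2) \sim -\ln(1-r)$, both locally integrable. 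Calling this finite constant $C$ and unwinding the substitution $h=rg$ yields \eqref{HLP-2}.

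\textbf{Main obstacle.} There is no serious difficulty; this is essentially a one-dimensional weighted Hardy inequality whose weight $w(r)=(1-r^2)/r$ is admissible in the Muckenhoupt sense. The only point to watch is that the hypothesis $g(0)=0$ (not just continuity) is what makes the right-hand side finite in the first place---otherwise $h'(0)=g(0)\ne 0$ would make $|h'|^2/r$ non-integrable near $0$ and the estimate would be vacuous. The inequality itself, once the RHS is finite, is a direct consequence of the Cauchy--Schwarz step above.
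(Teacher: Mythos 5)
Your argument is correct: the substitution $h=rg$, the pointwise bound $|h(r)|^2\le\bigl(\int_0^r|h'|^2\frac{1-s^2}{s}\,ds\bigr)\bigl(\int_0^r\frac{s}{1-s^2}\,ds\bigr)$ from Cauchy--Schwarz, and the convergence of $\int_0^1\frac{-\ln(1-r^2)}{2r}\,dr$ (which equals $\pi^2/24$) give \eqref{HLP-2} with an explicit constant. The paper itself does not prove this lemma but defers it to \cite{WX1}; your weighted Cauchy--Schwarz argument is the standard route for such Hardy--Littlewood--P\'olya type estimates, and your observation that the hypothesis $g(0)=0$ is exactly what makes the right-hand side finite (so the inequality is non-vacuous) is also accurate.
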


The following lemma is about two weighted interpolation inequalities, which are quite similar to \cite[(3.28)]{M}.
\begin{lemma}\label{weightinequality} Let $g \in C^2[0, 1]$, then one has
\begin{equation} \label{weight1} \ba
\int_0^1 |g|^2r \, dr  \leq & C \left(\int_0^1 (1-r^2)|g|^2 r \, dr\right)^{\frac23} \left(\int_0^1  \left|\frac{d}{dr}(rg)\right|^2 \frac{1}{r} \, dr\right)^{\frac13} \\
&\ \ \ \  + C \int_0^1 (1-r^2)|g|^2 r\, dr
\ea
\end{equation}
and
\be \label{weight2} \ba
\int_0^1 \left| \frac{d}{dr}(rg) \right|^2 \frac1r \, dr &  \leq C \left( \int_0^1 \frac{1-r^2}{r} \left| \frac{d}{dr} ( rg)  \right|^2 \, dr \right)^{\frac23} \left( \int_0^1 |\mathcal{L} g|^2 r \, dr  \right)^{\frac13} \\
&\ \ \ \ \ \ \ \ \ \ \ + C \int_0^1 \frac{1-r^2}{r} \left| \frac{d}{dr} ( rg)  \right|^2 \, dr .
\ea
\ee
\end{lemma}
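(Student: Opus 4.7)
\textbf{Proof proposal for Lemma \ref{weightinequality}.} The plan is to prove both \eqref{weight1} and \eqref{weight2} by the same split-interpolation strategy, exploiting the fact that the weight $(1-r^2)$ is bounded below away from $r=1$ and degenerates only at the endpoint.

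For \eqref{weight1}, abbreviate $A := \int_0^1 (1-r^2)|g|^2 r\,dr$ and $B := \int_0^1 |(rg)'|^2 r^{-1}\,dr$. For a parameter $\delta \in (0,\tfrac12]$ I would split
\begin{equation*}
\int_0^1 |g|^2 r\,dr = \int_0^{1-\delta}|g|^2 r\,dr + \int_{1-\delta}^1 |g|^2 r\,dr.
\end{equation*}
On $[0,1-\delta]$ one has $1-r^2 \geq \delta$, so the bulk piece is controlled by $A/\delta$. On the boundary strip $[1-\delta,1]$, for any anchor $r_0 \in [1-2\delta, 1-\delta]$ I would write $rg(r) = r_0 g(r_0) + \int_{r_0}^r (sg)'(s)\,ds$ and estimate the integral by Cauchy--Schwarz with the weighted split $s\,ds\cdot s^{-1}ds$, yielding $|rg(r)|^2 \leq 2r_0^2|g(r_0)|^2 + C\delta B$. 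The anchor is then chosen by pigeonhole in $[1-2\delta,1-\delta]$, where the weight is again $\geq \delta$, giving $r_0^2|g(r_0)|^2 \leq CA/\delta^2$. Combining, the boundary-strip contribution is at most $CA/\delta + C\delta^2 B$, and the total is $\int_0^1|g|^2 r\,dr \leq CA/\delta + C\delta^2 B$. Optimizing in $\delta$ by taking $\delta = (A/B)^{1/3}$ (whenever this is $\leq\tfrac12$) gives the bound $CA^{2/3}B^{1/3}$; the regime $(A/B)^{1/3}>\tfrac12$ is handled by setting $\delta=\tfrac12$ and absorbing the resulting $B$-term into a multiple of $A$, which accounts for the additive tail present in \eqref{weight1}.

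For \eqref{weight2}, the same machinery applies to the auxiliary function $u := (rg)'/r$, because a direct computation gives $\mathcal{L}g = u'$, so that all three integrals in \eqref{weight2} can be rewritten as
\begin{equation*}
\int_0^1 \tfrac{|(rg)'|^2}{r}dr = \int_0^1 |u|^2 r\,dr,\quad \int_0^1 \tfrac{1-r^2}{r}|(rg)'|^2 dr = \int_0^1 (1-r^2)|u|^2 r\,dr,\quad \int_0^1 |\mathcal{L}g|^2 r\,dr = \int_0^1 |u'|^2 r\,dr.
\end{equation*}
Thus \eqref{weight2} is the structurally identical inequality
\begin{equation*}
\int_0^1 |u|^2 r\,dr \leq C\Big(\int_0^1 (1-r^2)|u|^2 r\,dr\Big)^{\!2/3}\!\Big(\int_0^1 |u'|^2 r\,dr\Big)^{\!1/3}\! + C\int_0^1(1-r^2)|u|^2 r\,dr,
\end{equation*}
which I would prove by the same splitting-plus-pigeonhole-plus-optimization scheme, now using $u(r) = u(r_0) + \int_{r_0}^r u'(s)\,ds$ and Cauchy--Schwarz with weights $s\,ds$ and $s^{-1}ds$ to obtain $|u(r)|^2 \leq 2|u(r_0)|^2 + C\delta \int_0^1 |u'|^2 s\,ds$ on the boundary strip.

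The main obstacle I anticipate is bookkeeping rather than any conceptual step: one must place the pigeonhole interval one layer inside the boundary strip (so on $[1-2\delta,1-\delta]$ rather than $[1-\delta,1]$) to guarantee that the weight $(1-r^2)$ is still comparable to $\delta$ at the anchor, and the lengths of both strips together with the logarithmic factor coming from $\int_{r_0}^r s^{-1}ds$ must balance correctly when $\delta$ is optimized. A backup, should the direct argument produce inconvenient constants at the threshold $\delta = \tfrac12$, is to change variables $t = 1-r$ to convert both inequalities into one-dimensional Gagliardo--Nirenberg inequalities on a half-line with the degenerate weight $t$, where the interpolation exponent $2/3,1/3$ and the additive tail follow from the standard one-dimensional interpolation between $L^2(t\,dt)$ and $\dot H^1(t\,dt)$.
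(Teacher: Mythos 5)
Your proposal is correct and follows what is essentially the standard route for this lemma (the paper itself defers the proof to \cite{WX1} and models the inequalities on \cite[(3.28)]{M}, where exactly this splitting of $[0,1]$ at $1-\delta$, pointwise control of the boundary strip by the fundamental theorem of calculus plus weighted Cauchy--Schwarz, and optimization in $\delta$ is used). Your reduction of \eqref{weight2} to the form of \eqref{weight1} via $u=\frac{1}{r}\frac{d}{dr}(rg)$ with $u'=\mathcal{L}g$ is precisely the factorization $\mathcal{L}=\frac{d}{dr}\bigl(\frac1r\frac{d}{dr}(r\,\cdot)\bigr)$ used throughout the paper, and the only bookkeeping point you already flag --- capping $\delta$ at, say, $1/4$ so that the anchor $r_0\geq 1-2\delta$ stays bounded away from $0$ and $\int_{r_0}^r s^{-1}\,ds=O(\delta)$ --- is indeed all that remains to check.
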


\begin{lemma}\label{sobolev}
Assume that $\alpha > 0$. There exists a constant $C$, such that for every $g\in H^1(B_1^4(0))$, one has
\be \nonumber
\int_{B_1^4(0)} |g|^2 \, dx \leq C \left[ \int_{B_1^4(0) } |\nabla g|^2 \, dx + \alpha \int_{\partial B_1^4(0)} |g|^2 \, dS   \right].
\ee
\end{lemma}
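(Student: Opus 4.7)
The plan is to prove Lemma \ref{sobolev} by a standard contradiction-compactness argument, viewing the right-hand side as defining (together with the $L^2$ norm) a quadratic form equivalent to the $H^1$ norm whenever $\alpha>0$.

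First, I would suppose the inequality fails. Then there is a sequence $\{g_k\}\subset H^1(B_1^4(0))$ with
\[
\|g_k\|_{L^2(B_1^4(0))}^2=1,\qquad \|\nabla g_k\|_{L^2(B_1^4(0))}^2+\alpha\|g_k\|_{L^2(\partial B_1^4(0))}^2\longrightarrow 0.
\]
In particular $\{g_k\}$ is bounded in $H^1(B_1^4(0))$. By the Rellich--Kondrachov compactness theorem, a subsequence (still denoted $\{g_k\}$) converges strongly in $L^2(B_1^4(0))$ to some $g\in L^2(B_1^4(0))$ with $\|g\|_{L^2}=1$, and weakly in $H^1(B_1^4(0))$ to the same $g$.

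Next, I would identify $g$. By weak lower semicontinuity of the $L^2$ norm of the gradient,
\[
\|\nabla g\|_{L^2(B_1^4(0))}\le \liminf_{k\to\infty}\|\nabla g_k\|_{L^2(B_1^4(0))}=0,
\]
so $g$ is constant a.e. on the connected set $B_1^4(0)$. Since the trace operator $H^1(B_1^4(0))\to L^2(\partial B_1^4(0))$ is continuous and hence weakly continuous, and since $\alpha>0$ gives $\|g_k\|_{L^2(\partial B_1^4(0))}\to 0$, the trace of $g$ vanishes on $\partial B_1^4(0)$. A constant whose trace is zero must itself be zero, contradicting $\|g\|_{L^2}=1$. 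This establishes the existence of a constant $C$ (possibly depending on $\alpha$) for which the inequality holds.

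There is no genuine obstacle here; the only points that need to be stated clearly are (i) the Rellich compactness on the bounded Lipschitz domain $B_1^4(0)$, and (ii) the continuity of the trace map, both of which are standard. It is worth remarking that the argument only produces an abstract constant $C=C(\alpha)$, which is exactly the form needed in the application to Case 2 of the proof of Proposition \ref{swirl}, where the Lax--Milgram step only requires coercivity of $\mathcal{N}(\cdot,\cdot)$ on $H^1(B_1^4(0))$ for a fixed positive $\alpha$; the subsequent argument via $V_0^\theta(1)=0$ and elliptic regularity is what produces estimates independent of $\alpha$.
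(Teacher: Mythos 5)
Your proof is correct. Note that the paper itself does not include an argument for Lemma \ref{sobolev} (the appendix only refers the reader to \cite{WX1} for ``most'' of the lemmas), so there is no in-paper proof to compare against; your compactness--contradiction argument is the standard route and every step is sound: boundedness in $H^1$, Rellich, weak lower semicontinuity forcing $\nabla g=0$, and weak continuity of the trace together with $\alpha>0$ forcing the trace of the constant limit to vanish. Your closing remark is also the right reading of the statement: the constant must depend on $\alpha$ (testing with $g\equiv 1$ shows $C\gtrsim \alpha^{-1}$ as $\alpha\to 0^{+}$), and this is exactly what the application in Case 2 of Proposition \ref{swirl} tolerates, since the $\alpha$-independent bounds there come later from $V_0^\theta(1)=0$ and Dirichlet elliptic regularity. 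For completeness, a non-compactness alternative that yields an explicit constant is to integrate the identity $\div(x|g|^2)=4|g|^2+2g\,x\cdot\nabla g$ over $B_1^4(0)$, which gives $4\int_{B}|g|^2\,dx\le \int_{\partial B}|g|^2\,dS+2\bigl(\int_B|g|^2\,dx\bigr)^{1/2}\bigl(\int_B|\nabla g|^2\,dx\bigr)^{1/2}$ and hence the desired bound with $C=C(\alpha)\sim \max\{1,\alpha^{-1}\}$ after Young's inequality; but your argument is perfectly adequate.
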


We collect some basic properties of the modified Bessel functions of the first kind in the following lemma. The proof can be found in \cite{Baricz, Ifantis, Watson}.
\begin{lemma}\label{lemBessel}
Let $I_1(z)$ be the modified Bessel function of the first kind, i.e. it is the solution of the problem
\eqref{eqBessel1}.
Assume that $0 < x <y$, it holds that
\be \label{Bessel1}
e^{x - y} \frac{x}{y} < \frac{I_1(x)}{I_1 (y)} < e^{x - y} \left(\frac{y}{x}\right)^{1/2}.
\ee
Furthermore, for every $x>0$, it holds that
\be\label{Bessel1-5}
\frac{x}{2}\leq  I_1(x)  \leq \frac{x}{2} \cosh x
\ee
and
\be \label{Bessel2}
 0\leq I_1^{\prime}(x)  \leq I_1(x)+\frac{I_1(x)}{x}.
\ee
\end{lemma}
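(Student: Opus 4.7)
The plan is to establish the three groups of inequalities independently, working directly from the power series
\[
I_1(x) \;=\; \sum_{k=0}^\infty \frac{(x/2)^{2k+1}}{k!\,(k+1)!},
\]
the integral representation $I_\nu(x) = \frac{1}{\pi}\int_0^\pi e^{x\cos\theta}\cos(\nu\theta)\,d\theta$ valid for integer $\nu\geq 0$, and the standard recurrence
\[
I_1'(x) \;=\; I_0(x) - \frac{I_1(x)}{x} \;=\; I_2(x) + \frac{I_1(x)}{x},
\]
which follows from term-by-term differentiation of the series.

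For \eqref{Bessel1-5}, the lower bound $I_1(x)\geq x/2$ is immediate because every term in the defining series is nonnegative and the $k=0$ term is precisely $x/2$. For the upper bound I would compare series term by term: after dividing through by $x/2$, the desired inequality is $\sum_k (x/2)^{2k}/[k!(k+1)!] \leq \sum_k x^{2k}/(2k)!$, which reduces to the combinatorial inequality $(2k)!\leq 4^k k!(k+1)!$, i.e.\ $\binom{2k}{k}\leq 4^k(k+1)$; this is clear from $\binom{2k}{k}\leq \sum_j\binom{2k}{j}=4^k$.

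For \eqref{Bessel2}, the nonnegativity $I_1'(x)\geq 0$ follows at once from the second form of the recurrence together with the elementary observation $I_2(x)\geq 0$ (again by the series). The upper bound $I_1'(x)\leq I_1(x)+I_1(x)/x$ is then equivalent to $I_2(x)\leq I_1(x)$, which I would derive from the integral representation by writing
\[
I_1(x)-I_2(x) \;=\; \frac{1}{\pi}\int_0^\pi e^{x\cos\theta}(\cos\theta - \cos 2\theta)\,d\theta,
\]
expanding $\cos 2\theta = 2\cos^2\theta - 1$, and exhibiting the integrand as a nonnegative combination after splitting the interval around $\theta=\pi/2$ and exploiting the symmetry $\theta\mapsto \pi-\theta$ together with the convexity of $e^{x\cos\theta}$ as a function of $\cos\theta$.

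The ratio bounds in \eqref{Bessel1} — which form the main obstacle — I would prove by reducing each to monotonicity of a well-chosen auxiliary function. Specifically, the lower bound is equivalent to the statement that $\psi(x) = e^{-x}I_1(x)/x$ is decreasing on $(0,\infty)$, and the upper bound is equivalent to $\rho(x) = e^{-x}\sqrt{x}\,I_1(x)$ being increasing. Logarithmic differentiation and the recurrence $I_1'/I_1 = I_0/I_1 - 1/x$ convert these into the Turán-type inequalities
\[
1 + \tfrac{1}{2x} \;\leq\; \frac{I_0(x)}{I_1(x)} \;\leq\; 1 + \tfrac{2}{x}, \qquad x>0.
\]
To establish these, I would introduce $R(x) = I_0(x)/I_1(x)$ and use the Riccati-type equation $R'(x) = 1 - R(x)/x - R(x)^2 + \cdots$ derived from the Bessel ODE; bounding $R$ between the two explicit solutions of the comparison Riccati equations $R_\pm' = 1 - R_\pm/x - R_\pm^2$ of Ismail–Muldoon type yields the two-sided bound. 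Alternatively, the same inequalities can be read off from the Rayleigh-type series for $R(x)$ in \cite{Ifantis}, which will be the shortest route and is what I would ultimately cite, since the remainder of the paper only needs the inequalities in the stated form.
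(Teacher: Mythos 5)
The paper offers no proof of this lemma at all: it simply points to \cite{Baricz, Ifantis, Watson}, so your attempt to argue most of it from the series and the recurrences is by construction a different (and more self-contained) route. Your treatment of \eqref{Bessel1-5} is correct: the lower bound is the $k=0$ term, and the termwise comparison reduces to $(2k)!\le 4^k\,k!\,(k+1)!$, i.e. to the Catalan number bound $\frac{1}{k+1}\binom{2k}{k}\le 4^k$, which holds. Your reduction of \eqref{Bessel1} is also correct: the lower bound is equivalent to $e^{-x}I_1(x)/x$ being decreasing and the upper bound to $e^{-x}\sqrt{x}\,I_1(x)$ being increasing, and logarithmic differentiation with $I_1'/I_1=I_0/I_1-1/x$ turns these into $1+\frac{1}{2x}\le I_0(x)/I_1(x)\le 1+\frac{2}{x}$; both of these are true (they follow, e.g., from the Amos-type bounds $\frac{x}{1+\sqrt{1+x^2}}<\frac{I_1(x)}{I_0(x)}<\frac{x}{\frac12+\sqrt{\frac94+x^2}}$), so falling back on \cite{Ifantis} for them, as you propose, is exactly what the authors do for the whole lemma. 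The Riccati comparison you sketch is plausible but not carried out, so as written the ratio bounds still rest on a citation rather than a proof.

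The one step that genuinely fails as described is your proof of $I_2(x)\le I_1(x)$ in \eqref{Bessel2}. Pairing $\theta$ with $\pi-\theta$ in $\frac{1}{\pi}\int_0^\pi e^{x\cos\theta}(\cos\theta-\cos 2\theta)\,d\theta$ gives, on $[0,\pi/2]$, the integrand $2\cos\theta\,\sinh(x\cos\theta)-2\cos 2\theta\,\cosh(x\cos\theta)$, and for $\theta$ near $0$ and $x$ small (say $x=0.1$, $\theta=0$) this equals $2\sinh x-2\cosh x=-2e^{-x}<0$, so the paired integrand is not pointwise nonnegative and no convexity remark rescues a pointwise argument. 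The fact itself is true and the fix is cheap: write $\cos\theta-\cos2\theta=2\sin\frac{3\theta}{2}\sin\frac{\theta}{2}$, which is nonnegative on $[0,2\pi/3]$, nonpositive on $[2\pi/3,\pi]$, and has zero integral over $[0,\pi]$; since $w(\theta)=e^{x\cos\theta}$ is decreasing, $\int_0^\pi w(\theta)g(\theta)\,d\theta=\int_0^\pi\bigl(w(\theta)-w(2\pi/3)\bigr)g(\theta)\,d\theta\ge 0$ because the two factors have the same sign throughout. With that replacement (or simply citing \cite{Watson} for $I_2\le I_1$), your argument for \eqref{Bessel2} is complete.
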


The integrals of the modified Bessel functions of the first kind are estimated in the following lemma. The proof can be found in \cite{WX1}.
\begin{lemma}\label{AlemBessel2}
It holds that
\be \label{A-96}
\int_0^1 | I_1(|\xi| r)|^2 r\, dr
\leq  C \min\{1, |\xi|^{-1}\} (I_1(|\xi|))^2
\ee
and
\be \label{A-97}
 \int_0^1 \left| \frac{d}{dr} \left(r I_1 (|\xi| r) \right) \right|^2 \frac1r \, dr
\leq  C \max\{1, |\xi|\} (I_1(|\xi|))^2.
\ee
Furthermore, one has
\be\label{A-98}
\int_0^1 | \mathcal{L}I_1(|\xi| r)|^2 r\, dr
\leq  C  \min\{1, |\xi|^{-1}\} \xi^4 (I_1(|\xi|))^2
\ee
and
\be\label{A-99}
\int_0^1 \left| \frac{d}{dr} \left( r \mathcal{L} I_1(|\xi|r)\right)  \right|^2 \frac1r \, dr
\leq  C  \max\{1, |\xi|\} \xi^4 (I_1(|\xi|))^2.
\ee
\end{lemma}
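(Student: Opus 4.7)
\textbf{Proof proposal for Lemma \ref{AlemBessel2}.}

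The plan is to exploit the scaling properties of the modified Bessel equation \eqref{eqBessel1} together with the monotonicity and Turán--type bounds collected in Lemma \ref{lemBessel}. The central observation that organizes the proof is the identity
\be \label{keybessel}
\mathcal{L}\, I_1(|\xi|r) \;=\; \xi^2\, I_1(|\xi|r),
\ee
which follows by setting $\rho = |\xi| r$ in \eqref{eqBessel1} and dividing by $r^2$. Substituting \eqref{keybessel} immediately reduces \eqref{A-98} and \eqref{A-99} to \eqref{A-96} and \eqref{A-97}, respectively, so the real content lies in proving the latter two inequalities.

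For \eqref{A-96} I would split into $|\xi|\le 1$ and $|\xi|\ge 1$. In the former regime $\min\{1,|\xi|^{-1}\}=1$, and monotonicity of $I_1$ gives $I_1(|\xi|r)\le I_1(|\xi|)$, so the bound is trivial after integrating $r$ over $[0,1]$. In the latter regime $\min\{1,|\xi|^{-1}\}=|\xi|^{-1}$, and applying \eqref{Bessel1} with $x=|\xi|r$, $y=|\xi|$ yields $I_1(|\xi|r)\le e^{|\xi|(r-1)}r^{-1/2} I_1(|\xi|)$; squaring, multiplying by $r$ cancels the $r^{-1}$ weight, and
\[
\int_0^1 e^{2|\xi|(r-1)}\,dr \;=\; \frac{1-e^{-2|\xi|}}{2|\xi|}\;\le\;\frac{1}{2|\xi|},
\]
which delivers exactly the $|\xi|^{-1}$ factor.

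For \eqref{A-97} the key simplification is the identity
\be \label{derivativebessel}
\frac{d}{dr}\bigl(r\, I_1(|\xi|r)\bigr) \;=\; |\xi|\, r\, I_0(|\xi|r),
\ee
which follows from the standard recurrence $I_1'(\rho)=I_0(\rho)-I_1(\rho)/\rho$ (equivalently $\frac{d}{d\rho}(\rho I_1)=\rho I_0$). Thus the left side of \eqref{A-97} equals $\xi^2\int_0^1 r\, I_0^2(|\xi|r)\,dr$. For $|\xi|\le 1$ I would bound $I_0(|\xi|r)\le I_0(1)$, and then convert the leading $\xi^2$ into $(I_1(|\xi|))^2$ via the lower bound $I_1(|\xi|)\ge |\xi|/2$ from \eqref{Bessel1-5}. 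For $|\xi|\ge 1$ I would note that $I_0'=I_1\le I_0$, so $e^{-x}I_0(x)$ is decreasing; this gives $I_0(|\xi|r)\le e^{|\xi|(r-1)} I_0(|\xi|)$, and the same exponential integral as in Step 2 produces a $|\xi|^{-1}$ factor, yielding $\xi^2 \int_0^1 r I_0^2(|\xi|r)\,dr \le \tfrac{|\xi|}{2}(I_0(|\xi|))^2$. Finally, since $I_0(x)/I_1(x)$ tends to $1$ as $x\to\infty$ and is continuous on $[1,\infty)$, it is bounded there by a universal constant, which lets me replace $I_0(|\xi|)$ by $I_1(|\xi|)$.

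No step is particularly hard; the only care needed is the bookkeeping of the two regimes $|\xi|\le 1$ and $|\xi|\ge 1$ and the comparison between $I_0$ and $I_1$ in the large-argument regime. The derivative identity \eqref{derivativebessel} is essential because a direct estimate of $I_1+r|\xi|I_1'$ would be considerably more cumbersome than estimating $r I_0(|\xi|r)$ via the monotonicity of $e^{-x}I_0(x)$.
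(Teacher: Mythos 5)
Your proof is correct. The paper itself does not prove Lemma \ref{AlemBessel2} here --- it defers to \cite{WX1} --- so a line-by-line comparison is not possible, but your argument is self-contained and clean. The two reductions you use, namely the eigenfunction identity $\mathcal{L}I_1(|\xi|r)=\xi^2 I_1(|\xi|r)$ (which is exactly the statement $\mathcal{H}I_1(|n|r)=0$ already used in Section \ref{sec-intermediate}) and the recurrence $\frac{d}{d\rho}(\rho I_1(\rho))=\rho I_0(\rho)$, immediately collapse \eqref{A-98}--\eqref{A-99} onto \eqref{A-96}--\eqref{A-97} and turn the derivative integral into $\xi^2\int_0^1 r\,I_0^2(|\xi|r)\,dr$, which is easier to handle than estimating $I_1+|\xi|rI_1'$ directly via \eqref{Bessel2} (the latter route runs into a non-integrable $r^{-2}$ weight near $r=0$ if one applies \eqref{Bessel1} carelessly). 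The regime splitting at $|\xi|=1$, the exponential integral $\int_0^1 e^{2|\xi|(r-1)}dr\le (2|\xi|)^{-1}$, and the use of $I_1(|\xi|)\ge|\xi|/2$ from \eqref{Bessel1-5} to absorb the $\xi^2$ factor when $|\xi|\le1$ are all sound. The only caveat is that you invoke properties of $I_0$ ($I_0'=I_1$, $I_1\le I_0$, and boundedness of $I_0/I_1$ on $[1,\infty)$) that are standard but not recorded in Lemma \ref{lemBessel}; if you want to stay strictly within the toolbox of the appendix, these three facts should be stated and justified (the first two from the series or integral representations, the third from the large-argument asymptotics), but none of them presents any difficulty.
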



\section{Existence of solutions for the linearized problem}\label{sec-ex}
In this section, we use  Galerkin method to  prove the existence of solutions to the problem \eqref{2-0-8} for each fixed $n$. The proof here is quite similar to that for the problem with no slip boundary conditions in \cite{WX1}.

\subsection{A priori estimates of $\psi_n$ in terms of $f_n$}
First, we give some a priori estimates of $\psi_n$ in terms of $f_n$.
\begin{lemma}\label{lemma4-1}
Let $\psi_n$ be a smooth solution of the problem \eqref{2-0-8}, then it holds that
\be \label{4-1}
(\Phi |n| )^2 \int_0^1 |\psi_n|^2 r \, dr \leq C \int_0^1 |f_n|^2 r \, dr
\ee
and
\be \label{4-2}
\int_0^1 |\mathcal{L} \psi_n|^2 r \, dr + n^2 \int_0^1 \left|\frac{d}{dr}(r \psi_n) \right|^2 \frac1r \, dr + n^4 \int_0^1 |\psi_n|^2 r \, dr
\leq C \int_0^1 |f_n|^2 r \, dr.
\ee
\end{lemma}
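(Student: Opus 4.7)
\textbf{Proof plan for Lemma \ref{lemma4-1}.}

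The plan is to pair the equation in \eqref{2-0-8} with the multiplier $r\overline{\psi_n}$ and exploit the two identities that come from taking real and imaginary parts separately. These identities are essentially \eqref{2-1-5} and \eqref{2-1-7} of Section \ref{Linear}, whose derivation uses only integration by parts and the boundary conditions $\psi_n(0)=\psi_n(1)=\mathcal{L}\psi_n(0)=0$ together with $\mathcal{L}\psi_n(1)=-\alpha\,\frac{d}{dr}\psi_n(1)$. The imaginary part reads
\[
n\int_0^1\frac{\bar U(r)}{r}\Big|\tfrac{d}{dr}(r\psi_n)\Big|^2\,dr+n^3\int_0^1\bar U(r)|\psi_n|^2 r\,dr=-\Im\int_0^1 f_n\overline{\psi_n}\,r\,dr,
\]
while the real part gives the full squared $H^2$-type quantity together with the boundary contribution $\alpha|\tfrac{d}{dr}(r\psi_n)(1)|^2$ and a cross term of size $\Phi|n|\cdot \|\tfrac{d}{dr}(r\psi_n)/\sqrt r\|_{L^2}\cdot\|\psi_n\sqrt r\|_{L^2}$.

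For \eqref{4-1}, I would first dispose of the trivial case $n=0$ (the inequality is vacuous) and then assume $|n|\ge 1$. Since $\bar U(r)\ge \frac{\Phi}{\pi}\frac{4}{4+\alpha}>0$, the imaginary-part identity and Cauchy--Schwarz yield
\[
\frac{4\Phi}{\pi(4+\alpha)}|n|^3\int_0^1|\psi_n|^2 r\,dr\le \Big(\int_0^1|f_n|^2 r\,dr\Big)^{1/2}\Big(\int_0^1|\psi_n|^2 r\,dr\Big)^{1/2},
\]
so that $(\Phi|n|)^2\int_0^1|\psi_n|^2 r\,dr\le C\int_0^1|f_n|^2 r\,dr$ with $C=C(n,\alpha)$ (independent of $\Phi$ and $f_n$).

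For \eqref{4-2}, I would plug the bound from the imaginary part back into the real-part identity. The two terms on its right-hand side are controlled as follows: the main term by $|\int f_n\overline{\psi_n}\,r\,dr|\le C\int|f_n|^2 r\,dr+\epsilon\int|\psi_n|^2 r\,dr$ (using \eqref{4-1} to absorb the second piece), and the cross term by Young's inequality,
\[
\Phi|n|\,\Big\|\tfrac{d}{dr}(r\psi_n)/\sqrt r\Big\|_{L^2}\Big\|\sqrt r\,\psi_n\Big\|_{L^2}\le \tfrac{1}{2}n^2\int_0^1\Big|\tfrac{d}{dr}(r\psi_n)\Big|^2\tfrac1r\,dr+C\Phi^2\int_0^1|\psi_n|^2 r\,dr,
\]
and the last term is then bounded by $C|n|^{-2}\int|f_n|^2 r\,dr\le C\int|f_n|^2 r\,dr$ thanks to \eqref{4-1} and $|n|\ge 1$. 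Absorbing the $\frac12 n^2$ piece into the left-hand side of the real-part identity yields \eqref{4-2} for $|n|\ge 1$. The case $n=0$ reduces to $-\mathcal{L}^2\psi_0=f_0$ with Navier-type boundary data; the real-part identity then gives $\int|\mathcal{L}\psi_0|^2 r\,dr+\alpha|\frac{d}{dr}(r\psi_0)(1)|^2\le \int|f_0||\psi_0|r\,dr$, which closes after applying Lemma \ref{lemma1} twice.

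The main (minor) obstacle is the cross term produced by $\bar U(r)-\frac{4\Phi}{\pi(\alpha+4)}=\frac{2\Phi}{\pi}\cdot\frac{\alpha}{\alpha+4}(1-r^2)$ when one integrates by parts: it carries a $\Phi|n|$ factor and forces us to use \eqref{4-1} to close \eqref{4-2}, which is why \eqref{4-1} must be established first. Since uniformity in $\Phi$ and $\alpha$ is not claimed in this lemma (uniform estimates are the content of Theorem \ref{thm1} and are treated mode by mode in Section \ref{Linear}), the Young-type splittings above are harmless, and the resulting bounds are exactly what is needed to run the Galerkin scheme for existence in the appendix.
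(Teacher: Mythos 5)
Your overall architecture matches the paper's: both proofs start from the real and imaginary parts of the identity obtained by testing \eqref{2-0-8} against $r\overline{\psi_n}$ (i.e.\ \eqref{2-1-5} and \eqref{2-1-7}), prove \eqref{4-1} first from the imaginary part, and then feed it back into the real part, controlling the cross term generated by $\bar U(r)-\frac{4\Phi}{\pi(\alpha+4)}$ via Young's inequality. The treatment of \eqref{4-2} and of the $n=0$ case is essentially the paper's argument.

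The genuine gap is in your derivation of \eqref{4-1}. You discard the gradient term in \eqref{2-1-7} and use the pointwise lower bound $\bar U(r)\ge \frac{\Phi}{\pi}\frac{4}{4+\alpha}$ on the term $n^3\int_0^1\bar U|\psi_n|^2r\,dr$, which yields a constant of order $(4+\alpha)^2$. This constant blows up as $\alpha\to\infty$ and the method degenerates completely in the no-slip limit, where $\inf_{[0,1]}\bar U=\bar U(1)=0$. Your remark that ``uniformity in $\Phi$ and $\alpha$ is not claimed in this lemma'' does not hold up: Lemma \ref{lemma4-1} feeds into Lemma \ref{lemma4-2} and then Proposition \ref{existence-stream}, which explicitly asserts a constant independent of $\Phi$, $\alpha$ and $n$ (with the $\Phi$-dependence displayed as $1+\Phi$), and uniformity in $\alpha$ is the central point of the whole paper. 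The paper avoids this by keeping the \emph{gradient} term of \eqref{2-1-7}, using the $\alpha$-uniform lower bound $\bar U(r)\ge\frac{\Phi}{\pi}(1-r^2)$, and then invoking the weighted Hardy--Littlewood--P\'olya inequality of Lemma \ref{lemmaHLP},
\[
\int_0^1|\psi_n|^2r\,dr\le C\int_0^1\Big|\tfrac{d}{dr}(r\psi_n)\Big|^2\tfrac{1-r^2}{r}\,dr,
\]
which converts $n\int_0^1\frac{\bar U}{r}|\frac{d}{dr}(r\psi_n)|^2dr$ into $c\,\Phi|n|\int_0^1|\psi_n|^2r\,dr$ with $c$ independent of $\alpha$ and $n$; Cauchy--Schwarz on the right-hand side then gives \eqref{4-1} with a universal constant. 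Your proof of \eqref{4-2} is fine once \eqref{4-1} is repaired in this way (with the small caveat that the leftover $\epsilon\int_0^1|\psi_n|^2r\,dr$ from the source term should be absorbed via Lemma \ref{lemma1} into $\int_0^1|\mathcal{L}\psi_n|^2r\,dr$ rather than via \eqref{4-1}, whose right-hand side carries a factor $(\Phi|n|)^{-2}$ that is not harmless for small $\Phi$).
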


\begin{proof}
The equation \eqref{2-1-7} gives that
\be \nonumber
n \int_0^1 \frac{\bar{U}(r)}{r} \left| \frac{d}{dr} (r \psi_n)   \right|^2 \, dr + n^3 \int_0^1 \bar{U}(r) |\psi_n|^2 r \, dr = - \Im \int_0^1 f_n \overline{\psi_n} r\, dr.
\ee
By Lemma \ref{lemmaHLP} and the fact that $\bar{U}(r) \geq \frac{\Phi}{\pi}(1 - r^2)$, one has
\be \label{4-5} \ba
& \Phi |n|^2 \int_0^1\Phi |\psi_n|^2 r  +  \frac{\bar{U}(r)}{r} \left| \frac{d}{dr}(r \psi_n) \right|^2 +  |n|^2 \bar{U}(r) |\psi_n|^2 r \, dr
 \leq   C \int_0^1 |f_n|^2 r \, dr.
\ea
\ee
Taking the estimate \eqref{4-5} into \eqref{2-1-5} and applying Lemma \ref{lemma1} yield
\be \label{4-6} \ba
& \int_0^1 |\mathcal{L} \psi_n|^2 r  + 2n^2  \left|  \frac{d}{dr} (r \psi_n) \right|^2 \frac1r  + n^4  |\psi_n|^2 r \, dr +  \alpha \left| \frac{d}{dr}(r \psi_n)(1) \right|^2 \\
\leq & \int_0^1 |f_n| |\psi_n| r\, dr + C \Phi |n| \int_0^1 \left|  \frac{d}{dr}(r \psi_n) \right| |r\psi_n| \, dr \\
\leq & \int_0^1 |f_n| |\psi_n| r\, dr + \frac12 \int_0^1 |\mathcal{L} \psi_n|^2 r \, dr + C (\Phi |n|)^2 \int_0^1 |\psi_n|^2 r \, dr .
\ea
\ee
Then the estimate \eqref{4-2} follows from \eqref{4-5} and Lemma \ref{lemma1}. Hence the proof of Lemma \ref{lemma4-1} is completed.
\end{proof}

Using the similar idea as in the proof of Lemma \ref{lemma4-1}, one can prove the following high order estimates for $\psi_n$.
\begin{lemma}\label{lemma4-2} Assume that $\psi_n$ is a smooth solution to the problem \eqref{2-0-8}, then it holds that
\be \label{4-11}
n^4 \int_0^1 |\mathcal{L} \psi_n|^2 r  + n^2  \left|\frac{d}{dr}(r \psi_n) \right|^2 \frac1r + n^4 |\psi_n|^2 r \, dr
\leq C (1 + \Phi) \int_0^1 |f_n|^2 r \, dr
\ee
and
\be \label{4-12}
\int_0^1 |\mathcal{L}^2 \psi_n|^2 r \, dr \leq C (1 + \Phi^2) \int_0^1 |f_n|^2 r \, dr .
\ee
\end{lemma}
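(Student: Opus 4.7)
\medskip

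\noindent\textbf{Proposal for the proof of Lemma \ref{lemma4-2}.}
The plan is to view the system \eqref{2-0-8} as a second-order inhomogeneous problem for the vorticity
\be \nonumber
\omega_n := (\mathcal{L} - n^2)\psi_n,
\ee
for which the boundary conditions in \eqref{2-0-8} translate into $\omega_n(0)=0$ (since $\psi_n(0)=\mathcal{L}\psi_n(0)=0$) and $\omega_n(1) = -\alpha\frac{d}{dr}(r\psi_n)(1)$ (since $\psi_n(1)=0$ and $\mathcal{L}\psi_n(1) = -\alpha \psi_n'(1)$). The equation itself reduces to
\be \nonumber
-(\mathcal{L}-n^2)\omega_n + in\bar U(r)\omega_n = f_n \qquad \text{in }(0,1).
\ee
This mirrors the way one tests \eqref{2-0-8} against $r\overline{\psi_n}$ in Lemma \ref{lemma4-1}, but now the natural test function is $r\overline{\omega_n}$, so one gains two derivatives on $\psi_n$.

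\medskip

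\noindent For \eqref{4-11}, I would multiply the vorticity equation by $r\overline{\omega_n}$, integrate by parts, and split into real and imaginary parts. The real part produces
\be \nonumber
\int_0^1 \Bigl|\tfrac{d}{dr}(r\omega_n)\Bigr|^2\tfrac{1}{r}\,dr + n^2\!\int_0^1 |\omega_n|^2 r\,dr
= \Re\!\left[\tfrac{d}{dr}(r\omega_n)(1)\,\overline{\omega_n(1)}\right] + \Re\!\int_0^1 f_n\overline{\omega_n}r\,dr,
\ee
while the imaginary part yields $n\int_0^1 \bar U(r)|\omega_n|^2 r\,dr = \Im[\cdot]+\Im\!\int f_n\overline{\omega_n}r\,dr$. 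The boundary contribution equals $-\alpha\frac{d}{dr}(r\omega_n)(1)\overline{\frac{d}{dr}(r\psi_n)(1)}$, and can be dominated by Lemma \ref{lemmaA2} applied to both $\psi_n$ and $\omega_n$, using \eqref{4-2} from Lemma \ref{lemma4-1} to control $|(r\psi_n)'(1)|$ uniformly in $\alpha$. Absorbing the derivative of $r\omega_n$ at the endpoint into the left-hand side with a universal constant, and then using Lemma \ref{lemma1} to pass back from bounds on $\omega_n$ and its weighted derivative to bounds on $\mathcal{L}\psi_n$, $(r\psi_n)'/\sqrt r$ and $\psi_n$ with an additional factor of $n^k$, produces the desired estimate with constant $C(1+\Phi)$.

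\medskip

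\noindent For \eqref{4-12}, I would exploit the equation algebraically: it gives
\be \nonumber
\mathcal{L}\omega_n = n^2\omega_n + in\bar U(r)\omega_n - f_n,
\ee
so that $\mathcal{L}^2\psi_n = \mathcal{L}\omega_n + n^2\mathcal{L}\psi_n = n^2\omega_n + in\bar U\omega_n - f_n + n^2\mathcal{L}\psi_n$. Squaring, integrating with weight $r$, and using the weighted $L^2$-bounds on $\omega_n$ from Step 1 together with the $\mathcal{L}\psi_n$ bound from Lemma \ref{lemma4-1} yields
\be \nonumber
\int_0^1|\mathcal{L}^2\psi_n|^2 r\,dr \;\lesssim\; (n^4+\Phi^2)\!\int_0^1|\omega_n|^2 r\,dr + \int_0^1|f_n|^2 r\,dr + n^4\!\int_0^1|\mathcal{L}\psi_n|^2 r\,dr,
\ee
which is bounded by $C(1+\Phi^2)\int_0^1|f_n|^2 r\,dr$.

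\medskip

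\noindent The main technical obstacle is the boundary term at $r=1$ in Step 1: since $\omega_n(1)=-\alpha(r\psi_n)'(1)$ is proportional to $\alpha$, a naive estimate would produce an $\alpha$-dependent constant. The key is to combine the pointwise trace bound \eqref{estLinfty1} (for $\psi_n$) and \eqref{3-3-1-16} (for $\omega_n$), and to apportion the resulting products so that both $\omega_n$ factors are absorbed into the gradient norm on the left, using only the $\alpha$-independent information already packaged in Lemma \ref{lemma4-1}. Once this absorption is carried out, all remaining manipulations are routine Poincar\'e/Schwarz estimates.
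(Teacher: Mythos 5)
Your reduction to a second-order problem for $\omega_n=(\mathcal{L}-n^2)\psi_n$ is natural, and your algebraic identity $\mathcal{L}^2\psi_n = n^2\omega_n + in\bar U\omega_n - f_n + n^2\mathcal{L}\psi_n$ for \eqref{4-12} is essentially the computation the paper performs (it tests \eqref{2-0-8} against $r\mathcal{L}^2\overline{\psi_n}$ and uses only Cauchy--Schwarz, so no boundary terms ever appear). The gap is in your Step 1. When you test the vorticity equation against $r\overline{\omega_n}$, the boundary term $\Re\bigl[\tfrac{d}{dr}(r\omega_n)(1)\,\overline{\omega_n(1)}\bigr] = -\alpha\,\Re\bigl[\tfrac{d}{dr}(r\omega_n)(1)\,\overline{\tfrac{d}{dr}(r\psi_n)(1)}\bigr]$ cannot be absorbed with a universal constant. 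The only $\alpha$-weighted information available at this stage is $\alpha\,\bigl|\tfrac{d}{dr}(r\psi_n)(1)\bigr|^2\le C\int_0^1|f_n|^2r\,dr$ (from \eqref{4-6}), which converts one factor of $\alpha$ into $\alpha^{1/2}$ and leaves $\alpha^{1/2}\,\bigl|\tfrac{d}{dr}(r\omega_n)(1)\bigr|$ uncontrolled; any Young-type absorption of $\bigl|\tfrac{d}{dr}(r\omega_n)(1)\bigr|$ into $\int_0^1\bigl|\tfrac{d}{dr}(r\omega_n)\bigr|^2\tfrac1r\,dr$ then yields a constant growing like $\alpha$, destroying the uniformity that the lemma must provide. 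If instead you estimate $|\omega_n(1)|$ and $\bigl|\tfrac{d}{dr}(r\omega_n)(1)\bigr|$ purely by the trace inequalities of Lemma \ref{lemmaA2} (so that no explicit $\alpha$ appears), you are forced to bring in $\int_0^1|\mathcal{L}\omega_n|^2r\,dr$, a fourth-order quantity that the equation controls only at the price of factors $n^2+\Phi|n|$, and the resulting bound is of order $(n^2+\Phi|n|)\int_0^1|f_n|^2r\,dr$ rather than $(1+\Phi)\int_0^1|f_n|^2r\,dr$. Note that the paper does test the vorticity equation against $r\overline{\omega_n}$, but only for the auxiliary problem \eqref{slip}, where $\mathcal{L}\psi_{s,n}(1)=0$ makes this boundary term vanish identically; for the genuine Navier condition it deliberately avoids this pairing.

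The paper's route to \eqref{4-11} bypasses the vorticity formulation: it returns to the energy inequality \eqref{4-6} obtained by testing with $r\overline{\psi_n}$ --- there the boundary contribution is $\alpha\bigl|\tfrac{d}{dr}(r\psi_n)(1)\bigr|^2$, which has a favorable sign and sits on the left --- multiplies it first by $n^2$ and then by $n^4$, and closes using $(\Phi|n|)^2\int_0^1|\psi_n|^2r\,dr\le C\int_0^1|f_n|^2r\,dr$ from \eqref{4-1}. You would need to replace your Step 1 by this (or an equivalent $\alpha$-uniform) argument. Once \eqref{4-11} and the intermediate bound $n^2\int_0^1|\mathcal{L}\psi_n|^2r\,dr+n^4\int_0^1\bigl|\tfrac{d}{dr}(r\psi_n)\bigr|^2\tfrac1r\,dr+n^6\int_0^1|\psi_n|^2r\,dr\le C\int_0^1|f_n|^2r\,dr$ are in hand, your derivation of \eqref{4-12} does go through, since the required bounds on $\int_0^1|\omega_n|^2r\,dr$ weighted by $n^4$ and by $n^2\Phi^2$ follow from these rather than from your Step 1.
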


\begin{proof}
Multiplying the equation \eqref{4-6} by $n^2$ gives
\be \label{4-13} \ba
& n^2 \int_0^1 |\mathcal{L} \psi_n|^2 r \, dr + 2n^4 \int_0^1 \left|\frac{d}{dr}(r \psi_n)\right|^2 \frac1r \, dr + n^6 \int_0^1 |\psi_n|^2 r \, dr \\
\leq & n^2 \int_0^1 |f_n| |\psi_n| r \, dr + C \Phi |n|^3 \int_0^1 \left| \frac{d}{dr}(r \psi_n)  \right| |r \psi_n|\, dr \\
\leq & n^2 \int_0^1 |f_n| |\psi_n| r \, dr + n^4 \int_0^1 \left| \frac{d}{dr}(r \psi_n) \right|^2 \frac1r \, dr + C \Phi^2 n^2 \int_0^1 |\psi_n|^2 r \, dr .
\ea
\ee
According to Lemma \ref{lemma4-1}, one has
\be \label{4-14}
n^2 \int_0^1 |\mathcal{L} \psi_n|^2 r \, dr + n^4 \int_0^1 \left|\frac{d}{dr}(r \psi_n)\right|^2 \frac1r \, dr + n^6 \int_0^1 |\psi_n|^2 r \, dr
\leq C \int_0^1 |f_n|^2 r \, dr.
\ee

Similarly, multiplying \eqref{4-6} by $n^4$ yields
\be \label{4-15}
\ba
& n^4 \int_0^1 |\mathcal{L} \psi_n|^2 r \, dr + 2n^6 \int_0^1 \left|\frac{d}{dr}(r \psi_n)\right|^2 \frac1r \, dr + n^8 \int_0^1 |\psi_n|^2 r \, dr \\
\leq & n^4 \int_0^1 |f_n| |\psi_n| r \, dr + C \Phi |n|^5 \int_0^1 \left| \frac{d}{dr}(r \psi_n)  \right| |r \psi_n|\, dr \\
\leq & n^4 \int_0^1 |f_n| |\psi_n| r \, dr + C \Phi n^4 \int_0^1 \left| \frac{d}{dr}(r \psi_n) \right|^2 \frac1r \, dr + C \Phi  n^6 \int_0^1 |\psi_n|^2 r \, dr .
\ea
\ee
It follows from  \eqref{4-14} that  one has \eqref{4-11}.

Multiplying \eqref{2-0-8} by $\mathcal{L}^2 \overline{\psi_n}$ and integrating over $[0, 1]$ yield
\be \label{4-16} \ba
& in \int_0^1 \bar{U}(r) \mathcal{L} \psi_n \mathcal{L}^2 \overline{\psi_n} r\, dr -  in^3 \int_0^1 \bar{U}(r) \psi_n \mathcal{L}^2 \overline{\psi_n} r \, dr \\
& \ \ \ - \int_0^1 |\mathcal{L}^2 \psi_n|^2 r - 2n^2  \mathcal{L}\psi_n \mathcal{L}^2 \overline{\psi_n} r+  n^4  \psi_n \mathcal{L}^2 \overline{\psi_n} r\, dr = \int_0^1 f_n \mathcal{L}^2 \overline{\psi_n} r\, dr .
\ea \ee
It follows from the estimate \eqref{4-14}  and Cauchy-Schwarz inequality that
\be \label{4-17} \ba
\left|  n \int_0^1 \bar{U}(r)  \mathcal{L} \psi_n \mathcal{L}^2 \overline{\psi_n} r \, dr \right|
\leq &  C \Phi^2 n^2 \int_0^1 |\mathcal{L} \psi_n|^2 r \, dr + \frac18 \int_0^1 |\mathcal{L}^2 \psi_n|^2 r \, dr \\
\leq & C \Phi^2 \int_0^1 |f_n|^2 r \, dr + \frac18 \int_0^1 |\mathcal{L}^2 \psi_n|^2 r \, dr
\ea
\ee
and
\be \label{4-18} \ba
& \left|  n^3 \int_0^1 \bar{U}(r) \psi_n \mathcal{L}^2 \overline{\psi_n} r \, dr  \right| \leq C \Phi^2 n^6 \int_0^1 |\psi_n|^2 r \, dr + \frac18 \int_0^1 |\mathcal{L}^2 \psi_n|^2 r \, dr \\
\leq & C\Phi^2 \int_0^1 |f_n|^2 r \, dr + \frac18 \int_0^1 |\mathcal{L}^2 \psi_n|^2 r \, dr.
\ea \ee
Similarly, it holds that
\be  \label{4-19} \ba
& 2n^2 \int_0^1 \mathcal{L} \psi_n \mathcal{L}^2 \overline{\psi_n} r  + n^4 \psi_n \mathcal{L}^2 \overline{\psi_n} r\, dr
\leq  C(1 + \Phi) \int_0^1 |f_n|^2 r \, dr + \frac14 \int_0^1 |\mathcal{L}^2 \psi_n|^2 r \, dr .
\ea
\ee
Therefore, combining all the estimates \eqref{4-16}-\eqref{4-19} gives \eqref{4-12}. This completes the proof of Lemma \ref{lemma4-2}.
\end{proof}

\subsection{Existence of solutions to \eqref{2-0-8}}
 First, let us  introduce two function spaces.
\begin{definition}\label{def0}
  Denote
$$X= \left\{ \varphi \in C^\infty([0, 1])\ \left|  \ba  & \  \varphi(1) = 0, \ \ \mathcal{L}\varphi (1) = - \alpha \varphi^{\prime}(1),\ \ \lim_{r \rightarrow 0+} \mathcal{L}^k \varphi(r) = 0, \\
& \ \lim_{r \rightarrow 0+} \, \frac{d}{dr} ( r \mathcal{L}^k \varphi)(r) = 0,\  k\in \mathbb{N} \ea  \right. \right\}.$$
Let $X_0$ be the completion of $C^\infty([0,1])$ under the norm
$$\|\varphi\|_{X_0} := \left( \int_0^1 |\varphi|^2  r \, dr \right)^{\frac12 }. $$
Let $X_4$ be the closure of $X$ with respect to the following $X_4$-norm,
$$\|\varphi\|_{X_4} : = \displaystyle \left[
\int_0^1  \left(  |\varphi|^2 + |\mathcal{L} \varphi |^2 + |\mathcal{L}^2 \varphi|^2\right) r \, dr \right]^{\frac12} .$$
\end{definition}

In order to apply Galerkin method, one needs to construct an orthonormal basis for $X_0$. Our basic strategy is  to seek
a basis in the function space $X_4$. First, let us give some properties of functions in  $X_4$.

\begin{lemma}\la{lemma3-3-1}
 Assume that $\varphi$ is a function in $X_4$. Then $\varphi,$ $\frac{d}{dr}(r\varphi ) ,$  $\mathcal{L}\varphi$,
$\frac{d}{dr} ( r \mathcal{L} \varphi) $ $\in C([0, 1])$,
\begin{equation}\label{3-3-1-1}
\varphi (0) = \varphi (1) = \mathcal{L} \varphi (0) = 0, \ \ \ \mathcal{L}\varphi(1) = - \alpha \frac{d}{dr}(r \varphi)(1),
\end{equation}
and
\begin{equation} \label{3-3-1-2}
\lim_{r \rightarrow 0 +} \frac{1}{\sqrt{r}} \frac{d}{dr} ( r\varphi ) = \lim_{r \rightarrow 0 + }
\frac{1}{\sqrt{r}} \frac{d}{dr} ( r \mathcal{L} \varphi ) = 0.
\end{equation}
Moreover, there exists a positive constant $C$ independent of $\varphi$ such that
\be \la{3-3-1-3}
\int_0^1 \frac{|\varphi|^2}{r} \, dr + \int_0^1 \left|  \frac{d\varphi}{dr} \right|^2  r \, dr \leq C \int_0^1 | \mathcal{L} \varphi|^2  r \, dr
\ee
and
\be \la{3-3-1-4}
 \int_0^1 \left| \frac{d}{dr}( r \mathcal{L} \varphi) \right|^2 \frac1r \, dr + \int_0^1 \frac{|\mathcal{L}\varphi |^2}{r} \, dr
+ \int_0^1 \left| \frac{d}{dr} \mathcal{L} \varphi \right|^2  r \, dr \leq C \|\varphi \|_{X_4}^2.
\ee
\end{lemma}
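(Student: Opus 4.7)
\textbf{Proof proposal for Lemma \ref{lemma3-3-1}.}

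The plan is to first establish the weighted estimates \eqref{3-3-1-3}--\eqref{3-3-1-4} for smooth $\varphi \in X$, then extend to all of $X_4$ by density, and finally deduce the continuity and boundary assertions as corollaries. The core tools will be the Poincaré-type inequality of Lemma \ref{lemma1} applied to $\varphi$ itself, and the trace estimates of Lemma \ref{lemmaA2} applied to $\mathcal{L}\varphi$.

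\emph{Step 1: Proof of \eqref{3-3-1-3} for $\varphi\in X$.} Since $\varphi(0)=\varphi(1)=0$, Lemma \ref{lemma1} yields directly $\int_0^1|\tfrac{d}{dr}(r\varphi)|^2\tfrac{1}{r}\,dr\le \int_0^1|\mathcal{L}\varphi|^2 r\,dr$. Expanding $|\tfrac{d}{dr}(r\varphi)|^2=|\varphi|^2+r(\varphi\overline{\varphi'}+\overline{\varphi}\varphi')+r^2|\varphi'|^2$ and dividing by $r$ gives, after integration and recognizing the middle term as $(|\varphi|^2)'$ with vanishing boundary values, the identity
\begin{equation*}
\int_0^1\Big|\tfrac{d}{dr}(r\varphi)\Big|^2\tfrac{1}{r}\,dr=\int_0^1\tfrac{|\varphi|^2}{r}\,dr+\int_0^1 r|\varphi'|^2\,dr.
\end{equation*}
Combining these two gives \eqref{3-3-1-3}.

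\emph{Step 2: Proof of \eqref{3-3-1-4} for $\varphi\in X$.} Setting $u=\mathcal{L}\varphi$, the same algebraic manipulation produces
\begin{equation*}
\int_0^1\Big|\tfrac{d}{dr}(r\mathcal{L}\varphi)\Big|^2\tfrac{1}{r}\,dr=\int_0^1\tfrac{|\mathcal{L}\varphi|^2}{r}\,dr+\int_0^1 r|(\mathcal{L}\varphi)'|^2\,dr+|\mathcal{L}\varphi(1)|^2,
\end{equation*}
where the limit at $r=0$ vanishes because $\mathcal{L}\varphi(0)=0$ for $\varphi\in X$. To close the estimate, I will integrate the identity $\mathcal{L}u=\tfrac{d}{dr}(\tfrac{1}{r}\tfrac{d(ru)}{dr})$ against $r\overline{u}$: integration by parts, using $u(0)=0$ and $\lim_{r\to 0^+}\tfrac{d}{dr}(ru)=0$ from the definition of $X$, yields
\begin{equation*}
\int_0^1\tfrac{1}{r}\Big|\tfrac{d(r\mathcal{L}\varphi)}{dr}\Big|^2\,dr=\tfrac{d}{dr}(r\mathcal{L}\varphi)(1)\,\overline{\mathcal{L}\varphi(1)}-\int_0^1\mathcal{L}^2\varphi\cdot r\overline{\mathcal{L}\varphi}\,dr.
\end{equation*}
The volume term is controlled by $\|\varphi\|_{X_4}^2$ through Cauchy--Schwarz. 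The boundary product is the delicate part; I will bound it using \eqref{3-3-1-16} and \eqref{3-3-1-20-1} from Lemma \ref{lemmaA2}, which estimate $|\mathcal{L}\varphi(1)|$ and $|\tfrac{d}{dr}(r\mathcal{L}\varphi)(1)|$ in terms of interior $L_r^2$ norms of $\mathcal{L}\varphi$, $\tfrac{d}{dr}(r\mathcal{L}\varphi)$, and $\mathcal{L}^2\varphi$. Young's inequality then absorbs the $\int_0^1\tfrac{1}{r}|\tfrac{d}{dr}(r\mathcal{L}\varphi)|^2\,dr$ factor back to the left, yielding \eqref{3-3-1-4}. I expect this absorption argument to be the main obstacle, since the Robin-type boundary condition $\mathcal{L}\varphi(1)=-\alpha\tfrac{d}{dr}(r\varphi)(1)$ forbids the use of Lemma \ref{lemma1} on $\mathcal{L}\varphi$ directly, and one must track the $\alpha$-dependence carefully to obtain a constant $C$ independent of $\alpha$ (which is crucial for the uniform structural stability). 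Observe however that in the boundary product the value of $\mathcal{L}\varphi(1)$ enters only through interior $L_r^2$ norms via \eqref{3-3-1-16}, so no explicit $\alpha$ appears.

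\emph{Step 3: Density and continuity.} By definition $X_4$ is the closure of $X$ under $\|\cdot\|_{X_4}$, and the estimates of Step 1--2 are continuous on the $X_4$-norm, so they extend to every $\varphi\in X_4$. Combined with \eqref{3-3-1-3}--\eqref{3-3-1-4}, we obtain uniform $H^1$-type control of $r\varphi$, $r\mathcal{L}\varphi$ and $\mathcal{L}\varphi$ on $[0,1]$, which implies continuity of $\varphi,\tfrac{d}{dr}(r\varphi),\mathcal{L}\varphi,\tfrac{d}{dr}(r\mathcal{L}\varphi)$ on $[0,1]$ (away from $r=0$ this is the one-dimensional Sobolev embedding; near $r=0$ one uses that the weighted integrals $\int_0^1\tfrac{|\varphi|^2}{r}\,dr$ and $\int_0^1\tfrac{|\mathcal{L}\varphi|^2}{r}\,dr$ are finite, together with an absolute continuity argument). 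The boundary conditions in \eqref{3-3-1-1} pass to the limit because they hold for the approximating sequence in $X$ and the relevant traces converge. Finally, the vanishing limits in \eqref{3-3-1-2} follow from the finiteness of $\int_0^1\tfrac{1}{r}|\tfrac{d}{dr}(r\varphi)|^2\,dr$ and $\int_0^1\tfrac{1}{r}|\tfrac{d}{dr}(r\mathcal{L}\varphi)|^2\,dr$ combined with continuity of $\tfrac{d}{dr}(r\varphi)$ and $\tfrac{d}{dr}(r\mathcal{L}\varphi)$ at $0$: indeed if $g\in C([0,1])$ with $\int_0^1\tfrac{|g|^2}{r}\,dr<\infty$ then $g(0)=0$ and $\tfrac{g(r)}{\sqrt{r}}\to 0$ as $r\to 0^+$.
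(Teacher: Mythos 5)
Your Steps 1 and 2 are essentially sound and close to the paper's route: \eqref{3-3-1-3} comes from Lemma \ref{lemma1} plus the algebraic expansion of $\bigl|\frac{d}{dr}(r\varphi)\bigr|^2/r$, exactly as in the paper; for \eqref{3-3-1-4} your integration by parts of $\mathcal{L}^2\varphi$ against $r\overline{\mathcal{L}\varphi}$, with the boundary product absorbed via \eqref{3-3-1-16}, \eqref{3-3-1-20-1} and Young's inequality, does work and is in effect a re-derivation of inequality \eqref{3-3-1-20} of Lemma \ref{lemmaA2}, which you could simply have cited. Your identity $\int_0^1\frac1r\bigl|\frac{d}{dr}(r\mathcal{L}\varphi)\bigr|^2dr=\int_0^1\frac{|\mathcal{L}\varphi|^2}{r}dr+\int_0^1 r|(\mathcal{L}\varphi)'|^2dr+|\mathcal{L}\varphi(1)|^2$ then delivers the remaining two terms of \eqref{3-3-1-4} at once, which is arguably cleaner than the paper's route (the paper instead derives pointwise bounds $|\mathcal{L}\varphi(r)|\le Cr^{3/4}\|\varphi\|_{X_4}$ and $|\frac{d}{dr}(\mathcal{L}\varphi)(r)|\le C(r^{-1/4}+|\ln r|^{1/2})\|\varphi\|_{X_4}$ and integrates them). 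No explicit $\alpha$ enters anywhere, so the uniformity in $\alpha$ is automatic.

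The genuine gap is in Step 3. The principle you invoke to prove \eqref{3-3-1-2} --- that $g\in C([0,1])$ with $\int_0^1|g|^2r^{-1}\,dr<\infty$ forces $g(r)/\sqrt{r}\to 0$ --- is false: take $g(r)=\sqrt{r}\sin(1/r)$ (extended by $0$ at the origin); then $g\in C([0,1])$, $\int_0^1|g|^2r^{-1}\,dr=\int_0^1\sin^2(1/r)\,dr<\infty$, yet $g(r)/\sqrt{r}=\sin(1/r)$ has no limit as $r\to 0^+$. Moreover, finiteness of $\int_0^1|g|^2r^{-1}\,dr$ together with continuity on $(0,1]$ does not even yield continuity at $r=0$ (continuous bumps of height $1$ centered at $1/k$ with widths $4^{-k}$ give a counterexample), so the ``absolute continuity argument'' near the origin is not actually supplied. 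Both gaps are closed by the pointwise representation the paper uses: since $\mathcal{L}\varphi_j=\frac{d}{dr}\bigl(\frac1r\frac{d}{dr}(r\varphi_j)\bigr)$, one has $\frac1r\frac{d}{dr}(r\varphi_j)(r)=\frac{d}{dr}(r\varphi_j)(1)-\int_r^1\mathcal{L}\varphi_j\,ds$, and Cauchy--Schwarz against the weight $s$ gives $\bigl|\frac{d}{dr}(r\varphi_j)(r)\bigr|\le Cr\bigl(1+|\ln r|^{1/2}\bigr)\bigl(\int_0^1|\mathcal{L}\varphi_j|^2s\,ds\bigr)^{1/2}$, uniformly in $j$; this simultaneously yields the continuity of $\frac{d}{dr}(r\varphi)$ at the origin and the first limit in \eqref{3-3-1-2}. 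The analogous formula with $\varphi$ replaced by $\mathcal{L}\varphi$, using \eqref{estlinfty} to control the boundary value $\frac{d}{dr}(r\mathcal{L}\varphi_j)(1)$ by $\|\varphi_j\|_{X_4}$, gives the second limit. You should replace the last sentence of your Step 3 by this argument.
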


\begin{proof}
Suppose that $\{ \varphi_j  \} \subseteq X $ is a sequence which converges to $\varphi$ in $X_4$. It follows from the proof of
Lemma \ref{lemma1} that
\be  \label{3-3-1-5}
\int_0^1 \left| \frac{d}{dr} ( r \varphi_j)  \right|^2  \frac{1}{r} \, dr
\leq  \int_0^1 \left| \mathcal{L} \varphi_j  \right|^2  r \, dr
\leq  \|\varphi_j \|_{X_4}^2.
\ee
With the aid of homogeneous boundary conditions $\varphi_j(0) = \varphi_j (1) =0$, it holds that
\be \label{3-3-1-6} \ba
\int_0^1 \left| \frac{d}{dr} (r \varphi_j)  \right|^2 \frac1r \, dr &=
\int_0^1 \left( \frac{d\varphi_j}{dr} + \frac{\varphi_j}{r} \right) \left( \frac{d \overline{\varphi_j} }{dr} + \frac{\overline{\varphi_j}}{r}  \right) r \, dr \\
& = \int_0^1 \left( \left| \frac{d \varphi_j }{dr}  \right|^2 r  + \frac{|\varphi_j |^2}{r} \right) \, dr .
\ea \ee
This, together with \eqref{3-3-1-5}, gives
\be \nonumber
\int_0^1   \left( \left| \frac{d\varphi}{dr} \right|^2 r + \frac{|\varphi|^2 }{r} \right) \, dr  = \int_0^1 \left|  \frac{d}{dr}(r \varphi) \right|^2 \frac1r \, dr
\leq \|\varphi\|_{X_4}^2.
\ee

For every $r \in [0, 1]$, one has
\be \nonumber
| r \varphi_j (r) | = \left|  \int_0^r \frac{d}{ds} [s \varphi_j (s)]  ds       \right|
\leq   r\left(  \int_0^1  \left|  \frac{d}{ds}[s \varphi_j (s)] \right|^2 \frac{1}{s} \, ds \right)^{\frac12}  .
\ee
This yields
\begin{equation}\label{3-3-1-7}
\sup_{r \in [0, 1] } | \varphi_j (r) | \leq  \left(  \int_0^1  \left|  \frac{d}{ds}[s \varphi_j (s)] \right|^2 \frac{1}{s} \, ds \right)^{\frac12}  \leq  \|\varphi_j \|_{X_4}.
\end{equation}
The inequality \eqref{3-3-1-7} also holds for $\varphi_j - \varphi_k$. Hence $\varphi \in C([0, 1])$, and
\begin{equation}\nonumber
\varphi(0) = \lim_{n \rightarrow + \infty} \varphi_j (0) = 0,
\ \ \ \ \ \varphi(1) = \lim_{j \rightarrow + \infty}  \varphi_j (1) = 0.
\end{equation}

It follows from Lemma \ref{lemmaA2} that
\be \label{3-3-1-8}
\left| \frac{d(r \varphi_j )}{dr} (1)    \right| \leq C \left( \int_0^1 |\mathcal{L} \varphi_j|^2 r \, dr\right)^{\frac12}.
\ee
For every $r \in ( 0 , 1]$,
\begin{equation}\nonumber \ba
\left|  \frac{d(r \varphi_j)}{dr}  \frac{1}{r}     \right|
& = \left| \frac{d(r \varphi_j) }{dr}(1) - \int_r^1 \mathcal{L} \varphi_j \, ds             \right| \\
& \leq C  \left( \int_0^1 |\mathcal{L} \varphi_j|^2 r \, dr\right)^{\frac12} + \left[ \int_0^1 | \mathcal{L} \varphi_j |^2  s \, ds     \right]^{\frac12}  \left[ \int_r^1 \frac{1}{s} \, ds   \right]^{\frac12}.
\ea
\end{equation}
This implies
\begin{equation}\label{3-3-1-10}
\left| \frac{d (r \varphi_j )}{dr} \right|  \leq C r (1 +  |\ln r |^{\frac12} ) \left[ \int_0^1 | \mathcal{L} \varphi_j|^2  s \, ds \right]^{\frac12}.
\end{equation}
Hence taking the limit for $j$ gives
\begin{equation} \label{3-3-1-11}
 \frac{d (r \varphi) }{dr} \in C([0, 1]) \ \ \ \mbox{ and}  \  \ \ \lim_{r \rightarrow 0 +} \frac{1}{\sqrt{r}}
\frac{d(r \varphi)}{dr} = 0.
\end{equation}

For every $0 \leq r \leq 1$, one has
\be\nonumber
 |  r \mathcal{L} \varphi_j (r) | = \left|  \int_0^r \frac{d}{ds} ( s \mathcal{L} \varphi_j ) \, ds        \right|
\leq C \left[  \int_0^r  \left|   \frac{d}{ds} ( s \mathcal{L} \varphi_j ) \right|^2  \frac{1}{s} \, ds   \right]^{\frac12} r .
\ee
This, together with  \eqref{3-3-1-20} in Lemma \ref{lemmaA2}, implies
\begin{equation}\label{3-3-1-25}
|\mathcal{L} \varphi_j (r) | \leq C \left[ \int_0^1 \left| \frac{d}{dr}(s \mathcal{L} \varphi_j )  \right|^2 \frac1s \, ds  \right]^{\frac12} \leq C \|\varphi_j \|_{X_4}.
\end{equation}
Hence taking the limit for $\varphi_j $ gives
\be \nonumber
\mathcal{L}\varphi \in C([0, 1])\ \ \ \ \lim_{r\rightarrow 0+} \mathcal{L} \varphi(r) = 0,\ \ \ \ \mbox{and}\ \ \ \mathcal{L} \varphi(1) =-
\alpha \frac{d}{dr} (r\varphi)(1).
\ee

Similarly, one has
\be \label{3-3-1-28} \ba
\left| \frac{1}{r} \frac{d}{dr} ( r \mathcal{L} \varphi_j )  \right|   = &\,\, \left|
\frac{d}{dr}( r \mathcal{L} \varphi_j ) (1)  - \int_r^1 \frac{d}{ds} \left[
\frac{1}{s} \frac{d}{ds} ( s \mathcal{L} \varphi_j ) \right] \, ds   \right| \\
 \leq &\,\, \left| \frac{d}{dr} ( r \mathcal{L} \varphi_j )(1)  \right|  +  \left( \int_0^1 |\mathcal{L}^2 \varphi_j |^2  s \, ds \right)^{\frac12}
 | \ln r|^{\frac12} \\
 \leq &\,\, C \|\varphi_j \|_{X_4}  \left( 1 + |\ln r|^{\frac12} \right),
\ea
\ee
where  the inequality \eqref{estlinfty} in Lemma \ref{lemmaA2} has been used to get the last inequality.
Therefore, taking the limit for $\varphi_j $ as $j\to \infty$ yields
\begin{equation}\nonumber
\frac{d}{dr}( r \mathcal{L} \varphi ) \in C([0, 1])\ \ \ \ \ \mbox{and}\ \ \ \ \lim_{r\rightarrow 0+} \frac{1}{\sqrt{r}} \frac{d}{dr}
( r \mathcal{L} \varphi ) = 0.
\end{equation}

By virtue of \eqref{3-3-1-25}-\eqref{3-3-1-28}, one can get that
\be \la{3-3-1-30}
\left| r \mathcal{L} \varphi(r) \right|
\leq \int_0^r \left|  \frac{d}{ds} \left[ s \mathcal{L} \varphi(s)   \right]    \right| \, ds
\leq C \int_0^r \left( s + s |\ln s|^{\frac12}  \right) \, ds  \|\varphi\|_{X_4}
\leq C r^{\frac74} \|\varphi\|_{X_4}
\ee
and
\be \la{3-3-1-31}
\left|  \frac{d}{dr} ( \mathcal{L} \varphi) (r) \right| = \left| \frac{1}{r} \frac{d}{dr}( r \mathcal{L} \varphi) - \frac{\mathcal{L} \varphi}{r}  \right|
\leq C \left(  r^{-\frac14} + |\ln r|^{\frac12} \right) \|\varphi \|_{X_4}.
\ee
Hence it holds that
\be \la{3-3-1-32}
\int_0^1 \left|  \frac{d}{dr} (  r \mathcal{L} \varphi) \right|^2 \frac1r + \frac{|\mathcal{L}\varphi|^2 }{r}  +
\left| \frac{d}{dr} (\mathcal{L} \varphi)  \right|^2  r \, dr \leq C \|\varphi \|_{X_4}.
\ee
This finishes the proof of Lemma \ref{lemma3-3-1}.
\end{proof}

Based on Lemma \ref{lemma3-3-1}, we have the following compactness result.


\begin{lemma}\la{lemma3-3-3}
$X_4$ is compactly embedded into $X_0$.
\end{lemma}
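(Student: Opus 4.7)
The plan is to reduce the claim to a classical Rellich-Kondrachov compactness statement in two dimensions via the natural identification of radial functions on $[0,1]$ with axisymmetric functions on the unit disc. Let $\{\varphi_j\} \subset X_4$ be a bounded sequence, say $\|\varphi_j\|_{X_4} \leq M$. By inequality \eqref{3-3-1-3} of Lemma \ref{lemma3-3-1},
\begin{equation}\nonumber
\int_0^1 \left|\frac{d\varphi_j}{dr}\right|^2 r\, dr + \int_0^1 \frac{|\varphi_j|^2}{r}\, dr \leq C \int_0^1 |\mathcal{L}\varphi_j|^2 r\, dr \leq C M^2 .
\end{equation}

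Next I would identify each $\varphi_j$ with the axisymmetric function $\Psi_j(x,y) := \varphi_j(\sqrt{x^2+y^2})$ on the two-dimensional disc $B_1^2(0)$. A direct computation gives $\|\Psi_j\|_{L^2(B_1^2(0))}^2 = 2\pi \|\varphi_j\|_{X_0}^2$ and $\|\nabla \Psi_j\|_{L^2(B_1^2(0))}^2 = 2\pi \int_0^1 |\varphi_j'|^2 r\, dr$, so that $\{\Psi_j\}$ is uniformly bounded in $H^1(B_1^2(0))$. By the classical Rellich-Kondrachov theorem on the bounded smooth domain $B_1^2(0)$, a subsequence $\{\Psi_{j_k}\}$ converges strongly in $L^2(B_1^2(0))$. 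Translating back through the identity above, $\{\varphi_{j_k}\}$ is Cauchy in $X_0$ and converges to some $\varphi \in X_0$, which gives the desired compactness.

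The main technical point to verify is that the 2D radial identification is legitimate for elements of $X_4$, i.e.\ that each $\Psi_j$ genuinely belongs to $H^1(B_1^2(0))$. This follows from Lemma \ref{lemma3-3-1}: the continuity of $\varphi_j$ and $\frac{d}{dr}(r\varphi_j)$ on $[0,1]$ together with the integrability $\int_0^1(|\varphi_j|^2 + |\varphi_j'|^2)r\, dr < \infty$ ensure that $\Psi_j$ is a well-defined $H^1$ function, with the axisymmetric Sobolev norms matching the weighted norms on $(0,1)$ up to the factor $2\pi$. If one prefers to avoid passing to 2D, an equivalent purely one-dimensional argument is available: the uniform bound \eqref{3-3-1-7} together with the weighted $H^1$ control gives equicontinuity of $\{\varphi_j\}$ on each interval $[\varepsilon, 1]$, so Arzel\`a-Ascoli plus the estimate $|\varphi_j(r)|^2 \leq C r$ near $r=0$ (obtained by integrating $\varphi_j'$ from $0$ and using Cauchy-Schwarz against $\int_0^r s^{-1}\, ds$) yields uniformly small weighted $L^2$ tails at the axis and a diagonal subsequence that converges in $X_0$. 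Either route finishes the proof.
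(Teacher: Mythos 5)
Your proposal is correct and follows essentially the same route as the paper: both invoke the weighted $H^1$ bound from Lemma \ref{lemma3-3-1}, identify the radial functions with axisymmetric elements of $H^1(B_1(0))\subset\mathbb{R}^2$, and conclude via the Rellich--Kondrachov theorem. The extra one-dimensional Arzel\`a--Ascoli argument you sketch is a fine alternative but is not needed.
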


\begin{proof} Assume that $\{\varphi_j \}$ is a bounded sequence in $X_4$. Owing to Lemma \ref{lemma3-3-1}, it holds that
\be \nonumber
 \int_0^1 \left|  \frac{d \varphi_j }{dr}    \right|^2  r \, dr + \int_0^1  |\varphi_j  |^2  \frac1r \, dr
\leq C \|\varphi_j \|_{X_4}^2 .
\ee
Therefore, if  $\varphi_j $ is regarded as a radially symmetric function defined on $B_1(0) \subset \mathbb{R}^2$,
then $ \{ \varphi_j \}$ is a bounded sequence in $H^1(B_1(0))$.  It is well-known that $H^1(B_1(0))$ is compact in $L^2(B_1(0))$. Hence there is a subsequence of $\{\varphi_j \}$ (still labelled by $\{ \varphi_j \}$) and a radially symmetric function
$\varphi$, such that $\{ \varphi_j \}$ converges to $\varphi$ in $L^2(B_1(0))$.  Hence, $\{\varphi_j \} $ converges to $\varphi$ in $X_0$, which completes the proof of Lemma \ref{lemma3-3-3}.
\end{proof}

Before investigating the eigenfunction of the differential operator $\mathcal{L}^2$, we first give the existence of solutions for the associated PDE.
\begin{lemma}\la{lemma3-3-2}
Given $g\in X_0$, the following problem
\begin{equation}\label{3-3-1}
\left\{  \ba
&\mathcal{L}^2 \varphi = g \ \ \ \mbox{in}\ (0, 1),\\
&\varphi (0) = \mathcal{L} \varphi (0) =\varphi(1) = 0, \\
& \mathcal{L}\varphi (1) = - \alpha \varphi^{\prime}(1),
\ea
\right.
\end{equation}
has a unique solution $\varphi \in X_4$, and it holds that
\begin{equation}\nonumber
\|\varphi \|_{X_4}   \leq C \|g\|_{X_0},
\end{equation}
where the constant $C$ is independent of $g$.
\end{lemma}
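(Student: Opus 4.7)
The natural approach is variational. The key identity, obtained by two integrations by parts on smooth $\varphi,\phi$ with $\varphi(0)=\varphi(1)=\phi(0)=\phi(1)=0$, with $\varphi$ satisfying the compatibility of $X$ at $r=0$ and the Navier condition $\mathcal{L}\varphi(1)=-\alpha\varphi'(1)$ at $r=1$, is
\begin{equation*}
\int_0^1 \mathcal{L}^2\varphi\cdot\overline{\phi}\, r\, dr \;=\; \int_0^1 \mathcal{L}\varphi\cdot\overline{\mathcal{L}\phi}\, r\, dr \;+\; \alpha\, \varphi'(1)\overline{\phi'(1)},
\end{equation*}
in which the Navier condition has been used to absorb the only surviving endpoint contribution. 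This motivates the bilinear form $B(\varphi,\phi) := \int_0^1 \mathcal{L}\varphi\cdot\overline{\mathcal{L}\phi}\, r\, dr + \alpha\, \varphi'(1)\overline{\phi'(1)}$ and a Hilbert space $H$, obtained by completing, under $\sqrt{B(\cdot,\cdot)}$, the space of smooth functions vanishing at both endpoints with the compatibility of $X$ at the origin. Crucially I would impose \emph{only} $\varphi(0)=\varphi(1)=0$ as essential conditions, so that $\mathcal{L}\varphi(0)=0$ and the Navier condition at $r=1$ emerge as natural conditions of the variational problem.

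Continuity of $B$ on $H$ is immediate, and coercivity follows from Lemma \ref{lemma1}: $\|\varphi\|_{X_0}^2 \le \int_0^1 |\mathcal{L}\varphi|^2 r\, dr \le B(\varphi,\varphi)$ since $\alpha\ge 0$. The linear functional $\phi\mapsto \int_0^1 g\overline{\phi}\, r\, dr$ is bounded on $H$ by $\|g\|_{X_0}\|\varphi\|_{X_0}\le \|g\|_{X_0}\|\varphi\|_H$, so Lax--Milgram yields a unique $\varphi\in H$ satisfying $B(\varphi,\phi)=\int_0^1 g\overline{\phi}\, r\, dr$ for every $\phi\in H$, together with $\|\varphi\|_H\le C\|g\|_{X_0}$.

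To upgrade to $X_4$, I would first test against $\phi\in C_c^\infty(0,1)$, obtaining $\mathcal{L}^2\varphi=g$ as a distribution; since $g\in X_0$ this gives $\|\mathcal{L}^2\varphi\|_{X_0}=\|g\|_{X_0}$, and combined with the $H$-bound this produces $\|\varphi\|_{X_4}\le C\|g\|_{X_0}$. Once $\varphi\in X_4$, Lemma \ref{lemma3-3-1} grants pointwise meaning to the traces $\mathcal{L}\varphi(0),\mathcal{L}\varphi(1),\varphi'(1)$ and forces $\mathcal{L}\varphi(0)=0$. Running the integration-by-parts identity (now valid by density for $\varphi\in X_4$) in reverse against a general $\phi\in H$ reduces the weak equation to $[\mathcal{L}\varphi(1)+\alpha\varphi'(1)]\overline{\phi'(1)}=0$; an explicit test like $\phi(r)=r(1-r)$, which lies in $H$ with $\phi'(1)\neq 0$, then forces the Navier condition $\mathcal{L}\varphi(1)=-\alpha\varphi'(1)$. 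Uniqueness is immediate from coercivity applied to the difference of two solutions with $g=0$.

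The one delicate point to monitor throughout is the $\alpha$-degeneracy in $B$: the norm on $H$ controls $\varphi'(1)$ only through $\alpha|\varphi'(1)|^2$, so as $\alpha\to 0$ the bilinear form alone cannot pin down the trace $\varphi'(1)$. The remedy is to postpone the pointwise identification of that trace until $\varphi$ is known to lie in $X_4$, after which Lemma \ref{lemmaA2} controls $|\varphi'(1)|$ by the integrals $\int_0^1|\mathcal{L}\varphi|^2 r\, dr$ and $\int_0^1 |(r\varphi)'|^2/r\, dr$ with constants independent of $\alpha$; this keeps the final estimate $\|\varphi\|_{X_4}\le C\|g\|_{X_0}$ uniform in $\alpha\ge 0$, and is the main technical step of the proof.
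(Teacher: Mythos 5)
Your proposal is correct in outline but takes a genuinely different route from the paper. The paper does not set up a one-dimensional variational problem at all: it lifts \eqref{3-3-1} to the fourth-order problem \eqref{3-3-2}, namely $\Delta_4^2\phi=G$ on the unit ball $B_1^4(0)\subset\mathbb{R}^4$ with $\phi=0$ and $\Delta_4\phi=-\alpha\,\partial\phi/\partial\Bn$ on the boundary and $G=g(r)/r$, solves that by Lax--Milgram, invokes Agmon--Douglis--Nirenberg regularity to get $\phi\in H^4(B_1^4(0))$ with $\|\phi\|_{H^4}\leq C\|g\|_{X_0}$, uses rotational invariance and uniqueness to conclude $\phi$ is radial, and then sets $\varphi=r\phi$, exploiting the identity $\mathcal{L}(r\phi)=r\Delta_4\phi$. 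Your direct weighted variational formulation with $B(\varphi,\phi)=\int_0^1\mathcal{L}\varphi\,\overline{\mathcal{L}\phi}\,r\,dr+\alpha\varphi'(1)\overline{\phi'(1)}$ is more elementary and self-contained (it rests only on Lemma \ref{lemma1} and Lemma \ref{lemmaA2}, already in the appendix), your integration-by-parts identity and the recovery of the natural conditions are correct, and your remark that uniformity in $\alpha$ must come from the trace estimate rather than from the bilinear form is exactly the right place to worry. What the paper's detour through $\mathbb{R}^4$ buys is precisely the step your sketch treats most lightly: membership of the solution in $X_4$, which by Definition \ref{def0} is the \emph{closure of the smooth class} $X$ under the $X_4$-norm, not the set of functions with finite $X_4$-norm. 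The paper gets this by approximating $G$ with smooth radial $G_n$, so that $\varphi_n=r\phi_n\in X$ converges to $\varphi$ in $X_4$. In your setting, quoting Lemma \ref{lemma3-3-1} to give meaning to the traces presupposes $\varphi\in X_4$, while proving $\varphi\in X_4$ presupposes control of exactly those traces, so as written the argument is circular at that point. The fix is standard but must be carried out: interior ODE regularity gives $\varphi\in H^4_{\rm loc}(0,1)$, the trace and weighted estimates \eqref{3-3-1-3}--\eqref{3-3-1-4} should be rederived on the maximal domain $\{\varphi\in H:\ \mathcal{L}^2\varphi\in X_0\}$, and a density argument then identifies this domain with $X_4$. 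With that supplement your proof is a valid, and arguably more intrinsic, alternative to the paper's.
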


\begin{proof}
{\it Step 1. Existence.}
We do not construct the solution to the problem \eqref{3-3-1} directly. Instead, we consider the following boundary value problem for a fourth order equation on $\overline{B_1^4}(0)\subset \mathbb{R}^4$,
\be \la{3-3-2}  \left\{ \ba
& \Delta^2_4 \phi = G\ \ \ \ \ \mbox{in}\ B_1^4(0),\\
& \phi = 0\ \ \ \ \ \ \ \mbox{on}\ \partial B_1^4(0), \\
& \Delta_4 \phi = - \alpha \frac{\partial \phi}{\partial \Bn}\ \ \ \ \mbox{on}\ \partial B_1^4(0),
\ea \right. \ee
where $B_1^4(0)$ is the unit ball centered at the origin in $\mathbb{R}^4$,
\be \nonumber
\Delta_4= \sum_{i=1}^4 \partial_{x_i}^2 \quad \text{and}\quad G(x_1, x_2, x_3, x_4) = g(r)/r\ \ \ \ \mbox{with}\ r = \left(\sum_{i=1}^4 x_i^2 \right)^{ \frac12 }.
\ee

By Lax-Milgram theorem, there exists a unique solution $\phi \in H^2(B_1^4(0))$ to the problem \eqref{3-3-2}. According to the regularity theory for elliptic equations (\cite{ADN}), the unique solution $\phi \in H^4(B_1^4(0) )$ satisfies
\be \label{3-3-3}
\|\phi\|_{H^4(B_1^4(0) )} \leq C \| G\|_{L^2(B_1^4(0))} = C \|g\|_{X_0}.
\ee
 Since $G$ is a radially symmetric function, due to the rotational invariance  of $\Delta_4$ and the uniqueness of solution to the problem \eqref{3-3-2},
$\phi$ is also a radially symmetric function, i.e.,
$\phi(x_1, x_2, x_3, x_4) = \phi(r)$.

Let $\varphi(r) = r \phi(r)$. It can be verified that
\be \nonumber \ba
\mathcal{L} \varphi & = \left(  \frac{d^2}{dr^2} + \frac{1}{r} \frac{d}{dr} - \frac{1}{r^2}       \right) ( r\phi)   = r \left(  \frac{d^2}{dr^2} + \frac{3}{r} \frac{d}{dr}             \right) \phi
 = r \Delta_4 \phi.
\ea\ee
Similarly,
\be \nonumber
\mathcal{L}^2 \varphi = \mathcal{L} ( \mathcal{L} \varphi) = \mathcal{L} (r \Delta_4 \phi) = r \Delta_4^2 \phi =g.
\ee
Clearly, at $r=1$, one has
\be \nonumber
\varphi(1) = \phi(1) =0,\ \ \ \ \ \ \mathcal{L} \varphi (1) = \Delta_4 \phi (1) =- \alpha \left[ r \frac{\partial \phi}{\partial r } \right](1) = -  \alpha \frac{d \varphi }{d r }(1).
\ee
Moreover,
\begin{equation}\label{3-3-9} \ba
\|\varphi\|_{X_4}^2  = &\,\, \int_0^1 \left(  |\mathcal{L}^2 \varphi |^2  + |\mathcal{L} \varphi |^2  +
|\varphi|^2 \right)  r \, dr
 =  \int_0^1 \left(     | \Delta_4^2 \phi |^2 + |\Delta_4 \phi |^2 + |\phi|^2     \right)  r^3 \, dr \\
 \leq &\,\, C \|\phi \|_{H^4(B_1^4(0))}^2
 \leq  C \|g\|_{X_0}^2.
\ea
\end{equation}

In fact, the function $G$ can be approximated by a sequence of smooth radially symmetric functions $\{ G_n \}$ in $L^2(B_1^4(0))$. The corresponding solutions $\{ \phi_n \}$ belong to $C^\infty(\overline{B_1^4(0)})$. This implies
$\{ \varphi_n = r \phi_n \}\subseteq X$. Hence, the solution $\varphi$ can be approximated by $\{ \varphi_n \}\subseteq X$ under
$X_4$-norm, thus $\varphi \in X_4$ and  $\varphi$ is a solution to the problem \eqref{3-3-1}.

{\it Step 2. Uniqueness.} Suppose $\varphi_1$ and $\varphi_2$ are both solutions to \eqref{3-3-1} in $X_4$. Taking
$\varphi_1 - \varphi_2$ as a test function,  by virtue of Lemma \ref{lemma3-3-1},
\be\nonumber
\ba
0= &\,\,\int_0^1 \mathcal{L}^2 ( \varphi_1 - \varphi_2) \overline{(\varphi_1 - \varphi_2)}  r \, dr \\
=&\,\, \displaystyle -  \int_0^1 \frac{1}{r} \frac{d}{dr} ( r \mathcal{L} \varphi_1  - r \mathcal{L}\varphi_2)
 \frac{d}{dr} \left[   \overline{ r (\varphi_1 - \varphi_2) }      \right] \, dr
+ \frac{d}{dr}( r \mathcal{L} \varphi_1 - r\mathcal{L} \varphi_2 )(1)  (\overline{\varphi_1 - \varphi_2})(1)\\
&\ \ \ \ \ \
- \lim_{r \rightarrow 0+ } \frac{d}{dr} ( r \mathcal{L} \varphi_1 - r \mathcal{L} \varphi_2)(r)
\lim_{r \rightarrow 0+} (\overline{ \varphi_1 - \varphi_2} )(r)\\
 =&\,\, - \displaystyle  \int_0^1 \frac{1}{r} \frac{d}{dr} ( r \mathcal{L} \varphi_1  - r \mathcal{L}\varphi_2)
 \frac{d}{dr} \left[   \overline{ r (\varphi_1 - \varphi_2) }      \right] \, dr \\
=&\,\, \int_0^1 | \mathcal{L} (\varphi_1 - \varphi_2) |^2  r \, dr
- ( \mathcal{L} \varphi_1 - \mathcal{L} \varphi_2) (1)  \frac{d}{dr}[ r (\overline{\varphi_1 - \varphi_2} ) ](1) \\
&\ \ \ \ \
+  \lim_{r \rightarrow 0+ } ( \mathcal{L} \varphi_1 - \mathcal{L} \varphi_2)(r)  \lim_{r\rightarrow 0+}
\frac{d}{dr}[r( \overline{\varphi_1 - \varphi_2} )]\\
 =&\,\,  \int_0^1 | \mathcal{L} (\varphi_1 - \varphi_2) |^2  r \, dr + \alpha \left| \frac{d}{dr}[r(\varphi_1 - \varphi_2)](1) \right|^2.
\ea
\end{equation}
Hence one has
\begin{equation}\nonumber
\int_0^1  |\varphi_1 - \varphi_2|^2  r \, dr  \leq C \int_0^1  |\mathcal{L} (\varphi_1 - \varphi_2)|^2  r \, dr = 0.
\end{equation}
This implies the uniqueness of solutions to \eqref{3-3-1} and thus completes the proof of Lemma \ref{lemma3-3-2}.
\end{proof}

Now we are ready to show the existence of orthonormal basis for $X_0$.
\begin{pro}\label{theorem3-3-4}
There exists an orthonormal basis $\{\varphi_n\}\subset X_4$ for $X_0$.
\end{pro}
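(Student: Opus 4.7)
The plan is to realize an orthonormal basis as eigenfunctions of a compact self-adjoint solution operator. Concretely, I would define
\begin{equation*}
T:X_0\longrightarrow X_0,\qquad Tg=\varphi,
\end{equation*}
where $\varphi\in X_4$ is the unique solution of \eqref{3-3-1} furnished by Lemma \ref{lemma3-3-2}. By that lemma $T$ is a bounded linear map $X_0\to X_4$; composing with the compact embedding $X_4\hookrightarrow X_0$ from Lemma \ref{lemma3-3-3} shows that $T$ is a compact linear operator on the separable Hilbert space $X_0$.

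The key analytic step is a bilinear-form identity. For $g_1,g_2\in X_0$ with $\varphi_i=Tg_i\in X_4$, I would integrate by parts twice in $\int_0^1\varphi_1\overline{\mathcal{L}^2\varphi_2}\,r\,dr$. The boundary contributions at $r=0$ vanish thanks to the endpoint conditions $\varphi_i(0)=\mathcal{L}\varphi_i(0)=0$ together with the decay estimates \eqref{3-3-1-2} from Lemma \ref{lemma3-3-1}; the contribution at $r=1$ uses $\varphi_i(1)=0$ and the Navier-type condition $\mathcal{L}\varphi_i(1)=-\alpha\,\varphi_i'(1)$ to produce the symmetric boundary term $\alpha\,\varphi_1'(1)\overline{\varphi_2'(1)}$. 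The outcome is
\begin{equation*}
\langle Tg_1,g_2\rangle_{X_0}=\int_0^1\mathcal{L}\varphi_1\,\overline{\mathcal{L}\varphi_2}\,r\,dr+\alpha\,\varphi_1'(1)\overline{\varphi_2'(1)},
\end{equation*}
which is manifestly Hermitian symmetric in $(g_1,g_2)$, so $T=T^*$. Setting $g_1=g_2=g$ gives moreover $\langle Tg,g\rangle_{X_0}\ge 0$, with equality forcing $\mathcal{L}\varphi\equiv 0$. Combined with $\varphi(0)=\varphi(1)=0$ from Lemma \ref{lemma3-3-1}, this yields $\varphi\equiv 0$, hence $g=\mathcal{L}^2\varphi\equiv 0$. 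Therefore $T$ is positive and injective.

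Finally I would apply the spectral theorem for compact self-adjoint operators: there exist eigenvalues $\lambda_n>0$ accumulating only at $0$ and an associated orthonormal family $\{\varphi_n\}\subset X_0$ that is complete because $\ker T=\{0\}$. Since $T\varphi_n=\lambda_n\varphi_n$ with $\lambda_n\neq 0$ and $T$ maps into $X_4$, each eigenfunction satisfies $\varphi_n=\lambda_n^{-1}T\varphi_n\in X_4$, which is the desired basis.

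The main obstacle I anticipate is purely technical: rigorously justifying the twofold integration by parts for general $\varphi_i\in X_4$, since the operator $\mathcal{L}^2$ is singular at $r=0$ and the elements of $X_4$ are only limits of smooth functions. The remedy is to approximate $\varphi_i$ in $X_4$-norm by members of $X$, carry out the computation there, and then pass to the limit using the regularity bounds \eqref{3-3-1-3}--\eqref{3-3-1-4} together with the pointwise estimates \eqref{3-3-1-30}--\eqref{3-3-1-31} from Lemma \ref{lemma3-3-1} to control both the interior integrals and the boundary values $\varphi_i'(1)$, $\mathcal{L}\varphi_i(1)$.
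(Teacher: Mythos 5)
Your proposal is correct and follows essentially the same route as the paper: the paper defines the solution operator $\mathcal{M}g=\varphi$, verifies the identity $\int_0^1 \mathcal{M}g\,\bar g\,r\,dr=\int_0^1|\mathcal{L}\varphi|^2 r\,dr+\alpha\bigl|\frac{d}{dr}(r\varphi)(1)\bigr|^2\geq 0$ to get symmetry and positivity, and invokes Hilbert--Schmidt theory together with the compact embedding of Lemma \ref{lemma3-3-3}. Your explicit verification that $\ker T=\{0\}$ (needed for completeness of the eigenfunctions) is a detail the paper leaves implicit, but it is not a different approach.
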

\begin{proof}
Suppose that $\varphi$ is the unique solution to \eqref{3-3-1}. Define the solution operator as $\varphi = \mathcal{M} g$. Since
\be \nonumber
\int_0^1 \mathcal{M}g  \bar{g}  r  \, dr    =  \int_0^1 \varphi  \mathcal{L}^2 \overline{\varphi}  r \, dr
= \int_0^1 |\mathcal{L} \varphi|^2  r \, dr + \alpha \left| \frac{d}{dr} (r \varphi)(1) \right|^2 \geq 0 ,
\ee
$\mathcal{M}$ is a symmetric operator on $X_0$.
It follows from Hilbert-Schmidt theory for the symmetric operators (\cite{Lax}) and Lemma \ref{lemma3-3-3} that there exists an orthonormal basis of $X_0$. In fact,
the basis consists of the eigenfunctions of the operator $\mathcal{M}$.
\end{proof}

Now for every $n \in \mathbb{Z}$, the existence of a solution $\psi_n $ can be obtained by the  standard Galerkin approximation method together with the a priori estimates. Since all the a priori estimates hold for the approximate solutions, they also hold for the solution $\psi_n $. The uniqueness of the solution to \eqref{2-0-8} can also be established by a priori estimates.

Let $H_{r, n}^4(0, 1)$ denote the closure of $X$ under the $H_{r, n}^4$-norm which is defined as follows,
\be \nonumber
\|\varphi\|_{H_{r, n}^4(0, 1)} := \int_0^1 ( |\mathcal{\varphi}|^2 + n^4 |\varphi|^2 + |\mathcal{L} \varphi |^2)r\, dr + \int_0^1 (|\mathcal{L}^2\varphi |^2 + n^4 |\mathcal{L}\varphi|^2 + n^8 |\varphi|^2  ) r\, dr.
\ee

\begin{pro} \label{existence-stream} Assume that  $f_n(r) \in L_r^2(0, 1)$. There exists a unique solution $\psi_n \in H_{r, n}^4(0, 1)$ to the linear system \eqref{2-0-8},  and a positive constant $C$, which is independent of $f_n$,  $\Phi$ , $\alpha$ and $n$, such that
\begin{equation}\nonumber
\|\psi_n \|_{H^4_{r, n}(0, 1 )} \leq C (1 + \Phi) \|f_n \|_{L^2_r(0, 1)}.
\end{equation}
\end{pro}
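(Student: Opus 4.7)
The plan is to prove Proposition \ref{existence-stream} by the Galerkin method, using the orthonormal basis $\{\varphi_k\}\subset X_4$ for $X_0$ constructed in Proposition \ref{theorem3-3-4}. First I would seek an approximate solution of the form
\be \nonumber
\psi_n^{(N)}(r) = \sum_{k=1}^N c_k^{(N)} \varphi_k(r),
\ee
where the coefficients $c_k^{(N)}\in \mathbb{C}$ are determined by the finite-dimensional system obtained from testing the equation \eqref{2-0-8} against $r\overline{\varphi_j}$ for $j=1,\dots,N$, namely
\be \nonumber
\int_0^1 \Bigl[ in\bar U(r)(\mathcal{L}-n^2)\psi_n^{(N)} - (\mathcal{L}-n^2)^2 \psi_n^{(N)}\Bigr]\overline{\varphi_j}\, r\, dr = \int_0^1 f_n \overline{\varphi_j}\, r\, dr.
\ee
Since each $\varphi_k$ already encodes the boundary conditions $\varphi_k(0)=\varphi_k(1)=\mathcal{L}\varphi_k(0)=0$ and $\mathcal{L}\varphi_k(1)=-\alpha\varphi_k^\prime(1)$ (by Lemma \ref{lemma3-3-1}), the usual boundary terms that arise from integration by parts vanish, so the resulting $N\times N$ linear system has a solution whenever the associated homogeneous system has only the trivial one, which is exactly the uniqueness statement at the Galerkin level.

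Next I would establish uniform (in $N$) estimates for $\psi_n^{(N)}$. The crucial point is that the identities underlying Lemmas \ref{lemma4-1} and \ref{lemma4-2} are obtained by testing the equation against $r\overline{\psi_n}$, $rn^2\overline{\psi_n}$, and $r\mathcal{L}^2\overline{\psi_n}$. Because $\psi_n^{(N)}$ lies in $\mathrm{span}\{\varphi_1,\dots,\varphi_N\}$, each of these test functions lies in $X_4$ and can be written as a finite combination of the basis elements, so the Galerkin orthogonality allows us to carry the estimates of Lemmas \ref{lemma4-1} and \ref{lemma4-2} through verbatim for $\psi_n^{(N)}$. This yields
\be \nonumber
\|\psi_n^{(N)}\|_{H^4_{r,n}(0,1)} \leq C(1+\Phi)\|f_n\|_{L^2_r(0,1)},
\ee
with $C$ independent of $N$, $\Phi$, $\alpha$, and $n$.

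With these uniform bounds, I would extract a subsequence $\psi_n^{(N_k)}$ converging weakly in $H^4_{r,n}(0,1)$ to a limit $\psi_n$. Passing to the limit in the Galerkin formulation (test functions from $\bigcup_N\mathrm{span}\{\varphi_1,\dots,\varphi_N\}$ are dense in $X_4$, hence in the natural test space), one obtains a weak solution of \eqref{2-0-8}; the bound above is preserved by weak lower semicontinuity. The boundary conditions $\psi_n(0)=\psi_n(1)=\mathcal{L}\psi_n(0)=0$ and $\mathcal{L}\psi_n(1)=-\alpha\psi_n^\prime(1)$ are inherited from membership in $X_4$ via Lemma \ref{lemma3-3-1}. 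Finally, uniqueness is immediate: the difference $\tilde\psi$ of two solutions satisfies \eqref{2-0-8} with $f_n=0$ together with the same boundary conditions, and Lemma \ref{lemma4-1} applied to $\tilde\psi$ gives $(\Phi|n|)^2\int_0^1|\tilde\psi|^2 r\,dr=0$ when $\Phi|n|\neq 0$, while for $n=0$ or $\Phi=0$ the vanishing follows directly from the coercive terms in \eqref{2-1-5} (which do not require the drift) combined with Lemma \ref{lemma1}.

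I expect the main technical obstacle to be the rigorous verification that the test-function manipulations used to derive Lemmas \ref{lemma4-1}--\ref{lemma4-2} are admissible for the Galerkin truncations, specifically that the boundary contributions at $r=0$ (arising from the singularity of $\mathcal{L}$) genuinely vanish. This is where the careful pointwise decay $\lim_{r\to 0+}r^{-1/2}\frac{d}{dr}(r\mathcal{L}\varphi)=0$ from Lemma \ref{lemma3-3-1} is essential: it guarantees that all endpoint terms produced by repeated integration by parts drop out, so that the clean identities \eqref{2-1-5} and \eqref{2-1-7} hold for every $\psi_n^{(N)}$, not merely for smooth solutions. Once this point is secured, the rest of the proof is a routine assembly of the estimates already collected in Lemmas \ref{lemma4-1} and \ref{lemma4-2}.
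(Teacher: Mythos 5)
Your proposal is correct and follows essentially the same route as the paper, which also obtains existence by the standard Galerkin method in the eigenfunction basis of Proposition \ref{theorem3-3-4}, transfers the a priori estimates of Lemmas \ref{lemma4-1}--\ref{lemma4-2} to the approximate solutions (justified by the boundary behavior in Lemma \ref{lemma3-3-1}), and deduces uniqueness from the same estimates. The only point worth making explicit is that $\mathcal{L}^2\psi_n^{(N)}$ stays in $\mathrm{span}\{\varphi_1,\dots,\varphi_N\}$ precisely because the $\varphi_k$ are eigenfunctions of the solution operator $\mathcal{M}$ (hence of $\mathcal{L}^2$), which is what makes the test function $r\mathcal{L}^2\overline{\psi_n^{(N)}}$ admissible at the Galerkin level.
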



\subsection{Regularity of the velocity field} \label{sec-reg}
In this section,   the regularity of the velocity field is obtained.

Let
\be \nonumber
\Bv^*_n = v^r_n e^{inz} \Be_r  + v^z_n e^{inz} \Be_z= in \psi_n e^{inz} \Be_r - \frac1r \frac{d}{dr}(r \psi_n) e^{inz} \Be_z,
\ \ \text{and}\ \  \Bo_n^\theta = (\mathcal{L} - n^2) \psi_n e^{inz} \Be_\theta .
\ee
 First,  the $L^2(\Omega)$-bound of $\Bv^*_n$ and $\Bo^\theta_n $ can be estimated as follows.
\begin{lemma}\la{lemma3-4-2}There exists a constant $C$, independent of $n$, such that
\begin{equation}\nonumber
\| \Bv^*_n \|_{L^2(\Omega)} + \|\Bo^\theta_n \|_{L^2(\Omega)} \leq C \|\psi_n \|_{H_{r, n}^4(0, 1 )}.
\end{equation}
\end{lemma}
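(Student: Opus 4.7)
The plan is to expand the $L^2(\Omega)$ norms in cylindrical coordinates, exploit the orthogonality of $e^{inz}$ on the periodic interval, and then reduce everything to weighted $L^2_r(0,1)$ bounds on $\psi_n$, $\frac{d}{dr}(r\psi_n)$, and $\mathcal{L}\psi_n$ that are controlled by the $H^4_{r,n}$ norm.

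First I would compute directly that
\[
\|\Bv^*_n\|_{L^2(\Omega)}^2 = 4\pi^2\int_0^1 \Bigl(n^2|\psi_n|^2\, r + \Bigl|\tfrac{d}{dr}(r\psi_n)\Bigr|^2\tfrac{1}{r}\Bigr)\,dr,
\]
and
\[
\|\Bo^\theta_n\|_{L^2(\Omega)}^2 = 4\pi^2\int_0^1 |(\mathcal{L}-n^2)\psi_n|^2\, r\, dr.
\]
The mixed radial/axial factor in the velocity term is the only nontrivial piece, and I would handle it by invoking Lemma \ref{lemma1}: since any $\psi_n\in H^4_{r,n}(0,1)$ is the $H^4_{r,n}$-limit of functions in $X$, in particular it satisfies the homogeneous boundary conditions $\psi_n(0)=\psi_n(1)=0$ (these pass to the limit just as in Lemma \ref{lemma3-3-1}), so
\[
\int_0^1 \Bigl|\tfrac{d}{dr}(r\psi_n)\Bigr|^2\tfrac{1}{r}\,dr \leq \int_0^1 |\mathcal{L}\psi_n|^2\, r\, dr.
\]

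Second, I would absorb the $n^2$ factor into the $H^4_{r,n}$ norm via the elementary inequality $n^2 \leq \tfrac12(1+n^4)$, which gives
\[
n^2\int_0^1 |\psi_n|^2 r\, dr \leq \tfrac12\int_0^1 (|\psi_n|^2 + n^4|\psi_n|^2)\, r\, dr \leq \tfrac12 \|\psi_n\|_{H^4_{r,n}(0,1)}^2.
\]
Combining these two bounds yields $\|\Bv^*_n\|_{L^2(\Omega)}\leq C\|\psi_n\|_{H^4_{r,n}(0,1)}$. For the vorticity, the triangle inequality $|(\mathcal{L}-n^2)\psi_n|^2 \leq 2|\mathcal{L}\psi_n|^2 + 2 n^4|\psi_n|^2$ together with the definition of the norm immediately gives $\|\Bo^\theta_n\|_{L^2(\Omega)}\leq C\|\psi_n\|_{H^4_{r,n}(0,1)}$. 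Summing the two estimates finishes the proof.

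The argument is mostly bookkeeping; the only mild subtlety is verifying that the boundary condition $\psi_n(1)=0$ (needed to apply Lemma \ref{lemma1}) is preserved under $H^4_{r,n}$-limits from the dense subspace $X$, but this follows from the pointwise trace estimate already established in the proof of Lemma \ref{lemma3-3-1}. Thus there is no real obstacle and the constant $C$ is manifestly independent of $n$.
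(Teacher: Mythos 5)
Your proposal is correct and follows essentially the same route as the paper: expand the $L^2(\Omega)$ norms in cylindrical coordinates, control $\int_0^1 |\tfrac{d}{dr}(r\psi_n)|^2\tfrac1r\,dr$ by $\int_0^1|\mathcal{L}\psi_n|^2 r\,dr$ via Lemma \ref{lemma1} (using the boundary values inherited from $X$), and absorb the remaining terms directly into the definition of $\|\cdot\|_{H^4_{r,n}(0,1)}$. The only difference is the harmless normalization constant in front of the radial integrals, which does not affect the estimate.
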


\begin{proof}
Note that $ v^r_n =  i n   \psi_n $ and $ v^z_n = - \frac1r \frac{d}{dr}(r \psi_n  ) $. One has
\be \la{3-4-21}
\|v^r_n \|_{L^2(\Omega)}^2 =  2\pi  \int_0^1 n^2 | \psi_n |^2  r  \, dr
\leq C \|\psi_n\|_{H_{r, n}^4(0, 1) }^2
\ee
and
\be \la{3-4-22}
\|v^z_n\|_{L^2(\Omega)}^2 =  2\pi \int_0^1 \left| \frac{d}{d r} ( r \psi_n )       \right|^2  \frac1r  \, dr
\leq C \|\psi_n \|_{H_{r, n}^4(0, 1 )}^2.
\ee
It follows from \eqref{3-4-21} and \eqref{3-4-22} that
\be \la{3-4-23}
\|\Bv^*_n\|_{L^2(\Omega)} \leq C \|\psi_n \|_{H_{r, n}^4(0, 1)}.
\ee

Since $\omega^\theta_n = ( \mathcal{L} - n^2) \psi_n $,  it holds that
\begin{equation}\nonumber
  \int_0^1 |\omega_n^\theta|^2 r \, dr \leq \int_0^1 |(\mathcal{L} - n^2)\psi_n|^2 r \, dr
\leq C \|\psi_n \|_{H_{r, n}^4(0, 1)}^2.
\end{equation}
Therefore, one has
\begin{equation}\la{3-4-25}
\|\Bo^\theta_n \|_{L^2(\Omega)} \leq C \|\psi_n \|_{H_{r, n}^4(0, 1)}.
\end{equation}
This, together with \eqref{3-4-23}, proves Lemma \ref{lemma3-4-2}.
\end{proof}

Next,  as same as \cite[Lemma 5.4]{WX1}, one has $H^2(\Omega)$-estimate for $\Bo^\theta_n$.
\begin{lemma}\label{lemma3-4-4}
There exists a positive constant $C$ independent of $n$ such that
\begin{equation}\label{globalomega}
\|\Bo^\theta_n \|_{H^2(\Omega)} \leq C \|\psi_n \|_{H_{r, n}^4(0, 1)}.
\end{equation}
\end{lemma}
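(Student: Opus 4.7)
The plan is to reduce the bound on $\|\Bo^\theta_n\|_{H^2(\Omega)}$ to bounds on weighted integrals of $\omega_n^\theta = (\mathcal{L}-n^2)\psi_n$ and its derivatives, and then to recognize each such integral as a quantity controlled by $\|\psi_n\|_{H^4_{r,n}(0,1)}$. The first step is to write the swirl-type vector field in Cartesian components via $\Be_\theta = r^{-1}(-y,x,0)$, so that $\Bo^\theta_n = (\omega_n^\theta(r) e^{inz}/r)(-y,x,0)$. Each Cartesian component then has the form (smooth polynomial in $x,y$)$\times h_n(r,z)$ with $h_n = \omega_n^\theta(r) e^{inz}/r$. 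Taking partial derivatives of order $\leq 2$ in $(x,y,z)$ produces a finite list of terms involving the quantities $\omega_n^\theta/r$, $\partial_r \omega_n^\theta$, $\partial_r(\omega_n^\theta/r)$, $\partial_r^2\omega_n^\theta$, $\omega_n^\theta/r^2$, $n\omega_n^\theta$, $n\partial_r\omega_n^\theta$, $n\omega_n^\theta/r$, and $n^2\omega_n^\theta$.

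Next, for each of these terms I would estimate its $L^2(\Omega)$-norm by the weighted one-dimensional integrals entering the definition of $\|\psi_n\|_{H^4_{r,n}(0,1)}$. The ``easy'' terms $\partial_r^2 \omega_n^\theta$, $n \partial_r \omega_n^\theta$, $n^2 \omega_n^\theta$ are bounded directly after writing $\omega_n^\theta = \mathcal{L}\psi_n - n^2\psi_n$ and noting that $|\mathcal{L}^2\psi_n|^2 r$, $n^4|\mathcal{L}\psi_n|^2 r$, $n^8|\psi_n|^2 r$ appear in the $H^4_{r,n}$-norm. For the ``hard'' terms carrying a $1/r$ or $1/r^2$ factor, I would invoke the compatibility conditions built into $X_4$ (and therefore into $H^4_{r,n}$): $\psi_n(0)=\psi_n(1)=\mathcal{L}\psi_n(0)=0$ together with the regularity $\frac{d}{dr}(r\mathcal{L}\psi_n)/\sqrt{r}\to 0$ at the origin, as established in Lemma~\ref{lemma3-3-1}. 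Applying the Hardy-type inequality of Lemma~\ref{lemma1} to the function $\mathcal{L}\psi_n$ (allowed since $\mathcal{L}\psi_n$ vanishes at $r=0$) gives $\int_0^1 |\mathcal{L}\psi_n/r|^2 r\,dr \leq C\int_0^1 |\mathcal{L}^2\psi_n|^2 r\,dr$, so $\omega_n^\theta/r$ and $\omega_n^\theta/r^2$, and their needed $r$-derivatives, are controllable. An analogous application to $\psi_n$ (plus a second application to $\mathcal{L}\psi_n$) controls the mixed terms $n\omega_n^\theta/r$ and $\partial_r(\omega_n^\theta/r)$.

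I expect the main obstacle to be the handling of $\partial_r(\omega_n^\theta/r)$ and $\omega_n^\theta/r^2$ uniformly up to the axis $r=0$, since naively these appear to be singular. The resolution is exactly the chain Lemma~\ref{lemma3-3-1} $\Rightarrow$ Lemma~\ref{lemma1} applied to $\mathcal{L}\psi_n$: the compatibility conditions force $\mathcal{L}\psi_n$ to vanish at $r=0$ with enough rate that $\mathcal{L}\psi_n/r$ lies in $L^2_r(0,1)$ and is bounded in terms of $\|\mathcal{L}^2\psi_n\|_{L^2_r}$, and the same mechanism controls $\partial_r(\mathcal{L}\psi_n/r)$ after rewriting $\partial_r(g/r) = (rg)'/r^2 - 2g/r^2$ and using Lemma~\ref{lemma1} once more. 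A conceptually cleaner route is to identify axisymmetric fields of the form $g(r)\Be_\theta$ with functions on the four-dimensional ball $B_1^4(0)\subset \mathbb{R}^4$ via the lift $G(x_1,x_2,x_3,x_4) = g(r)/r$, as in Lemma~\ref{lemma3-3-2}; under this identification $\mathcal{L}$ corresponds to the radial part of $\Delta_4$ and the estimate $\|\Bo^\theta_n\|_{H^2(\Omega)} \lesssim \|\psi_n\|_{H^4_{r,n}}$ follows from the standard Sobolev embedding on the four-dimensional lift. Either route gives \eqref{globalomega}, which closes the proof after summing up the individual term bounds.
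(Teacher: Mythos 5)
Your high-level plan (reduce $\|\Bo^\theta_n\|_{H^2(\Omega)}$ to weighted one--dimensional integrals of $\omega^\theta_n=(\mathcal{L}-n^2)\psi_n$, and use the compatibility conditions at the axis to tame the $1/r$ factors) is the right one, and the paper itself only cites \cite[Lemma 5.4]{WX1} for exactly this kind of computation. But the concrete execution you describe has a genuine gap in the treatment of the singular second--order terms. First, your list of quantities to be bounded is not what the Cartesian expansion actually produces: writing the component as $x\,w(r)e^{inz}$ with $w=\omega^\theta_n/r$, the second derivatives involve only the combinations $w'=\partial_r(\omega^\theta_n/r)$ and $rw''=3\,\partial_r^2\omega^\theta_n-2\mathcal{L}\omega^\theta_n$ with bounded coefficients; the standalone term $\omega^\theta_n/r^2$ that you propose to estimate separately satisfies $\int_0^1|\omega^\theta_n/r^2|^2r\,dr=\int_0^1|\omega^\theta_n|^2r^{-3}\,dr$, which is generically \emph{infinite} (already for the smooth field $\omega^\theta_n\sim c\,r$ near the axis), so bounding ``each term on the list'' is impossible as stated. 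Second, the mechanism you invoke for the hard terms does not deliver the needed weights: Lemma \ref{lemma1} controls $\int|\tfrac{d}{dr}(rg)|^2r^{-1}dr$ and $\int|g|^2r\,dr$, not the $r^{-3}$--weighted integrals arising from $\partial_r(g/r)=(rg)'/r^2-2g/r^2$; and the decay $\mathcal{L}\psi_n=O(r^{3/4})$ furnished by Lemma \ref{lemma3-3-1} is far from enough to make $\int|\mathcal{L}\psi_n|^2r^{-3}dr$ finite. Relatedly, $\partial_r^2\omega^\theta_n$ is not an ``easy'' term: $(\mathcal{L}\psi_n)''$ is not $\mathcal{L}^2\psi_n$ up to trivially controlled pieces, and $\int|\partial_r^2\omega^\theta_n|^2r\,dr$ requires its own argument.

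The missing idea is to work with the correct combinations and close the circle between $\partial_r^2\omega^\theta_n$ and $\partial_r(\omega^\theta_n/r)$: since $\partial_r(g/r)=\mathcal{L}g-g''$, everything reduces to bounding $\int_0^1|g''|^2r\,dr$ by $\int_0^1|\mathcal{L}g|^2r\,dr$ plus controlled boundary contributions, which is exactly the content of the four--dimensional lift $g=rG$, $\mathcal{L}g=r\Delta_4G$ of Lemma \ref{lemma3-3-2}: $L^2$ elliptic regularity for $\Delta_4$ on $B_1^4(0)$ (not Sobolev embedding, as you write) gives $\|\nabla_4^2G\|_{L^2(B_1^4(0))}\lesssim\|\Delta_4G\|_{L^2}+\|G\|_{H^{3/2}(\partial B_1^4(0))}$, and $g''=2G'+rG''$, $\partial_r(g/r)=G'$ are then controlled. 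You also never address the boundary contribution at $r=1$, which is nontrivial here because $\omega^\theta_n(1)=-\alpha\frac{d}{dr}(r\psi_n)(1)\neq0$ under the Navier condition (in particular the second inequality of Lemma \ref{lemma1} is not applicable to $\mathcal{L}\psi_n$, contrary to your first use of it); this trace must be handled via Lemma \ref{lemmaA2} and the trace theorem for axisymmetric functions, as is done elsewhere in the paper (e.g.\ in the proofs of Lemma \ref{theorem3-4-5} and Proposition \ref{swirl}). Your closing remark about the four--dimensional lift points at the right repair, but as written the proof would fail at the step where the individual singular terms are estimated.
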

The  proof of Lemma \ref{lemma3-4-4} is exactly the same as that of   \cite[Lemma 5.4]{WX1}, so we omit the details here.

Now  the regularity of $\Bv^*_n$ can be improved to $H^3(\Omega)$.

\begin{lemma}\label{theorem3-4-5} There exists a positive constant $C$ independent of $n$ such that
\be \label{H3est}
\|\Bv^*_n \|_{H^3(\Omega) } \leq C \|\psi_n \|_{H_{r, n}^4(0, 1)} .
\ee
\end{lemma}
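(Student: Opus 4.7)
\textbf{Proof plan for Lemma \ref{theorem3-4-5}.}

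The plan is to view $\Bv_n^*$ as a solution to a Stokes system with known right hand side and trace, and then apply the standard Stokes regularity theory (\cite[Lemma VI.1.2]{Galdi}) together with the interpolation estimates of Lemma \ref{lemmaA2}. More precisely, since $\Bv_n^*$ is divergence free and has no swirl, the identity $\mathrm{curl}\,\mathrm{curl}\, \Bv_n^* = -\Delta \Bv_n^* + \nabla\,\mathrm{div}\,\Bv_n^* = -\Delta \Bv_n^*$ combined with $\mathrm{curl}\,\Bv_n^* = \Bo_n^\theta$ shows that $\Bv_n^*$ solves
\begin{equation*}
-\Delta \Bv_n^* + \nabla P = \mathrm{curl}\,\Bo_n^\theta,\quad \mathrm{div}\,\Bv_n^* = 0\ \text{in}\ \Omega,\quad \Bv_n^*\big|_{\partial\Omega} = -\frac{d(r\psi_n)}{dr}(1)\,e^{inz}\Be_z,
\end{equation*}
with $P\equiv 0$, where on the boundary $v_n^r(1) = in\psi_n(1)e^{inz} = 0$ since $\psi_n(1)=0$.

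First I would estimate the interior datum. By Lemma \ref{lemma3-4-4}, $\|\Bo_n^\theta\|_{H^2(\Omega)} \leq C\|\psi_n\|_{H^4_{r,n}(0,1)}$, hence $\|\mathrm{curl}\,\Bo_n^\theta\|_{H^1(\Omega)} \leq C\|\psi_n\|_{H^4_{r,n}(0,1)}$. Next I would estimate the boundary trace in $H^{5/2}(\partial\Omega)$. Because $\partial\Omega$ is a flat torus in $z$ (for axisymmetric data), the trace norm essentially amounts to
\begin{equation*}
\|\Bv_n^*\|_{H^{5/2}(\partial\Omega)}^2 \leq C(1+|n|^5)\left|\frac{d(r\psi_n)}{dr}(1)\right|^2.
\end{equation*}
Using the pointwise boundary estimate \eqref{estLinfty1} of Lemma \ref{lemmaA2}, which applies because $\psi_n(0)=\psi_n(1)=0$, we have
\begin{equation*}
\left|\frac{d(r\psi_n)}{dr}(1)\right|^2 \leq C\left(\int_0^1 |\psi_n|^2 r\,dr\right)^{1/4}\left(\int_0^1 |\mathcal{L}\psi_n|^2 r\,dr\right)^{3/4},
\end{equation*}
so that Young's inequality in the form $|n|^5 a^{1/4} b^{3/4} = (|n|^8 a)^{1/4}(|n|^4 b)^{3/4} \leq \tfrac{1}{4}|n|^8 a + \tfrac{3}{4}|n|^4 b$ gives the clean bound $|n|^5\bigl|\tfrac{d(r\psi_n)}{dr}(1)\bigr|^2 \leq C\|\psi_n\|_{H^4_{r,n}(0,1)}^2$, and similarly for the lower powers of $|n|$.

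Finally, applying the Stokes regularity estimate at the $H^3$ level,
\begin{equation*}
\|\Bv_n^*\|_{H^3(\Omega)} \leq C\Bigl(\|\mathrm{curl}\,\Bo_n^\theta\|_{H^1(\Omega)} + \|\Bv_n^*\|_{H^{5/2}(\partial\Omega)} + \|\Bv_n^*\|_{L^2(\Omega)}\Bigr),
\end{equation*}
together with Lemma \ref{lemma3-4-2} to absorb the lower-order term, yields \eqref{H3est}. The main technical obstacle I anticipate is not the Stokes regularity itself, which is standard, but rather the tracking of the $n$-dependence in the trace estimate: the high-frequency boundary datum has tangential Sobolev norms that blow up polynomially in $n$, and offsetting this growth requires the sharp interpolation \eqref{estLinfty1} of Lemma \ref{lemmaA2} together with the weighted structure of the $H^4_{r,n}$ norm, which is precisely why the latter norm contains the high-order factors $n^4\|\mathcal{L}\psi_n\|^2$ and $n^8\|\psi_n\|^2$.
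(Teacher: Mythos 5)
Your overall strategy coincides with the paper's: both proofs reduce the $H^3$ bound to elliptic regularity for $\Delta \Bv^*_n = -\,{\rm curl}~\Bo^\theta_n$ with a boundary trace controlled through the stream function, and your direct trace computation (writing $\|\Bv^*_n\|_{H^{5/2}(\partial\Omega)}^2 \le C(1+|n|^5)\bigl|\tfrac{d}{dr}(r\psi_n)(1)\bigr|^2$ and absorbing the $|n|^5$ via \eqref{estLinfty1} and Young's inequality into the $n^4\|\mathcal{L}\psi_n\|^2_{L^2_r}$ and $n^8\|\psi_n\|^2_{L^2_r}$ terms of the $H^4_{r,n}$ norm) is a correct and slightly more explicit variant of the paper's route, which instead invokes a trace theorem for axisymmetric functions in the form $\|\Bv^*_n\|_{H^{5/2}(\partial\Omega)} \le C\sum_{k=0}^2\|\partial_z^k \Bv^*_n\|_{H^1(\Omega)}$ and bounds each term by $\|\psi_n\|_{H^4_{r,n}}$. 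Whether one phrases the interior problem as a vector Poisson equation (as the paper does, citing \cite{GT}) or as a Stokes system with $P\equiv 0$ is immaterial.

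There is, however, one genuine gap: you assert the identity $-\Delta\Bv^*_n = {\rm curl}\,{\rm curl}\,\Bv^*_n = {\rm curl}~\Bo^\theta_n$ on all of $\Omega$ without justification, but this identity is only immediate on $\Omega_r = (B_1(0)\setminus B_r(0))\times\mathbb{T}_{2\pi}$, away from the symmetry axis, where the cylindrical-coordinate expressions for $\Bv^*_n$ and $\Bo^\theta_n$ are manifestly smooth. Since $\Bv^*_n$ is built from the stream function, one must rule out a singular contribution to the distributional Laplacian supported on $\{r=0\}$. The paper devotes the bulk of its proof to exactly this point: it integrates against a test function $\Bp\in C_0^\infty(\Omega)$ over $\Omega_r$, and uses the pointwise decay rates near the axis established in the proof of Lemma \ref{lemma3-3-1}, namely $\bigl|r\tfrac{d\psi_n}{dr}\bigr| \le C(r|\ln r|^{1/2}+r^{3/4})\|\psi_n\|_{H^4_{r,n}}$ and $|r\mathcal{L}\psi_n(r)|\le Cr^{7/4}\|\psi_n\|_{H^4_{r,n}}$, to show that the surface integrals over $\partial B_r(0)\times\mathbb{T}_{2\pi}$ involving $\tfrac{\partial\Bv^*_n}{\partial\Bn}$ and $\Bv^*_n$ vanish as $r\to 0+$. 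Without this step (or an equivalent argument showing $\Bv^*_n$ is a genuine $H^1$ vector field on the solid cylinder whose distributional Laplacian has no mass on the axis), the elliptic estimate cannot be applied on $\Omega$; everything else in your outline then goes through as written.
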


\begin{proof}
For every $0< r<1$, define $\Omega_r = (B_1(0) \setminus B_r(0) ) \times \mathbb{R}$.
Straightforward computations give
\be \label{vorteq2}
\Delta \Bv^*_n = -{\rm curl}~\Bo^\theta_n \ \ \ \ \mbox{in}\ \Omega_r.
\ee
 In fact,  the equation \eqref{vorteq2} holds on the whole domain $\Omega$. Suppose that $\Bp \in C^\infty_0(\Omega)$ is a  vector-valued function defined on $\Omega$,
\be \la{3-4-61} \ba
&\,\,\int_{-\pi}^{+\pi} \int_{B_1(0)\setminus B_r(0)} \Bv^*_n \cdot \Delta \Bp \, dx dy dz \\
  = &\,\, \int_{-\pi }^{+ \pi } \int_{B_1(0) \setminus B_r(0)} \Delta \Bv^*_n \cdot \Bp \, dxdydz
+  \int_{-\pi }^{+\pi } \int_{\partial B_r (0) } \frac{\partial \Bv^*_n }{\partial \Bn } \cdot \Bp \, dSdz \\
&\ \ \ \ \ \ \ \ - \int_{-\pi }^{+\pi } \int_{\partial B_r (0) } \Bv^*_n \cdot \frac{\partial \Bp }{\partial \Bn} \, dSdz.
\ea \ee
On $\partial B_r(0) \times \mathbb{R}$,  it holds that
\be \nonumber
\frac{\partial \Bv^*_n}{\partial \Bn } = \frac{d v^r}{d r}e^{inz} \Be_r
+ \frac{d v^z}{d r}e^{inz} \Be_z = in \frac{d\psi_n }{dr}   e^{inz} \Be_r - \mathcal{L}\psi_n e^{inz}
\Be_z.
\ee
Therefore, one has
\be \nonumber \ba
&\,\, \left|   \int_{-\pi }^{+ \pi } \int_{\partial B_r(0)}  \frac{\partial \Bv^*_n }{\partial \Bn} \cdot \Bp \, dSdz       \right| \\
 \leq &\,\,   \int_{-\pi }^{+ \pi } \int_{\partial B_r(0)}  |n|  \left|  \frac{d \psi_n }{d r} \right|   \left| \frac{\partial \Bp }{\partial z}  \right| \, dSdz +
 \int_{-\pi }^{+ \pi } \int_{\partial B_r(0)} | \mathcal{L}\psi_n |  |\Bp | \, dSdz\\
 \leq &\,\, C(n)  \sup_{\Omega} | \partial_z \Bp |  \cdot r \left| \frac{d  \psi_n }{d r}(r) \right|    + C \sup_{\Omega} | \Bp | \cdot r | \mathcal{L} \psi_n (r) | .
\ea
\ee
It follows from the proof of Lemma \ref{lemma3-3-1} that one has
\be \nonumber
 \left|  r \frac{d \psi_n}{d r} \right|
= \left|  \frac{ d}{d r}(r \psi_n) - \psi_n  \right|
\leq C (r |\ln r|^{\frac12} + r^{\frac34} )   \|\psi_n \|_{H_{r, n}^4 (0, 1) }
\ee
and
\be \nonumber
| r \mathcal{L} \psi_n (r) | \leq C r^{\frac74} \|\psi_n  \|_{H_{r, n}^4(0, 1)}.
\ee
Hence it holds that
\be \la{3-4-65}
\lim_{r \rightarrow 0+ } \left|   \int_{-\pi }^{+ \pi } \int_{\partial B_r(0)}  \frac{\partial \Bv^*_n  }{\partial \Bn} \cdot \Bp \, dSdz       \right| = 0.
\ee
Similarly, one has
\be \la{3-4-66}
\lim_{r \rightarrow 0+ } \left| \int_{-\pi }^{+\pi } \int_{\partial B_r (0) } \Bv^*_n \cdot \frac{\partial \Bp }{\partial \Bn} \, dSdz \right| = 0.
\ee
Taking the limit of \eqref{3-4-61} as $r\rightarrow 0+ $ yields
\be \nonumber
\Delta \Bv^*_n = - {\rm curl}~\Bo^\theta_n ,\ \ \ \mbox{in}\ \Omega.
\ee

It follows from the regularity theory for the elliptic equation \eqref{vorteq2}(\cite{GT}) and the trace theorem for axisymmetric functions that one has
\be \label{3-4-67} \ba
\| \Bv^*_n \|_{H^3(\Omega) } & \leq C \|\Bo^\theta_n \|_{H^2(\Omega)} +  C \|\Bv^*_n\|_{H^{\frac52} (\partial \Omega)}  \\
& \leq C \|\Bo^\theta_n \|_{H^2(\Omega )} + C \| \Bv^*_n \|_{H^1(\Omega)} +  C \|\partial_z \Bv^*_n \|_{H^1(\Omega)}
+ C \|\partial_z^2 \Bv^*_n \|_{H^1(\Omega)} .
\ea
\ee
Herein,
\be \label{3-4-68} \ba
 \|\Bv^*_n\|_{H^1(\Omega)}^2
 =  \|\omega_n^\theta \|_{L^2(\Omega)}^2 +  \|\Bv_n^*\|_{L^2(\Omega)}^2    \leq C \|\psi\|_{H_{r, n}^4(0, 1)}^2.
\ea
\ee
Similarly, one has
\be \label{3-4-69}  \ba
& \| \partial_z \Bv^*_n \|_{H^1(\Omega)}^2 + \|\partial_z^2 \Bv^*_n \|_{H^1(\Omega)}^2 \\
 \leq &  \|\partial_z \Bo_n^\theta \|_{L^2(\Omega)}^2 +  \|\partial_z \Bv_n^*\|_{L^2(\Omega)}^2 +  \|\partial_z^2 \Bo^\theta_n \|_{L^2(\Omega)}^2
 +  \|\partial_z^2 \Bv_n^* \|_{L^2(\Omega)}^2
 \\ \leq & C \|\psi_n\|_{H_{r, n}^4(D)}.
\ea \ee
The estimates \eqref{3-4-67}--\eqref{3-4-69} together with Lemma \ref{lemma3-4-4} completes  the proof of Lemma \ref{theorem3-4-5}.
\end{proof}

\begin{pro}\label{back} Let $\psi_n$ and $v^\theta_n$ be the solutions obtained in Propositions \ref{existence-stream} and \ref{swirl}, the corresponding velocity field $\Bv= \sum_{n \in \mathbb{Z}} \Bv^*_n + v_n^\theta e^{inz} \Be_z $ is a strong solution to the problem \eqref{2-0-1}-\eqref{slipBC1}, and satisfies the estimates
\be \label{back1}
\|\Bv^*\|_{H^3(\Omega)} \leq C (1 + \Phi ) \| \BF\|_{H^1(\Omega)} \ \ \ \ \text{and}\ \ \
\  \|\Bv^\theta\|_{H^2(\Omega) } \leq C  \|\BF\|_{L^2(\Omega)}.
\ee
\end{pro}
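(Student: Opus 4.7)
The plan is to assemble Proposition \ref{back} from three ingredients: the mode-by-mode regularity for the stream function already established in Propositions \ref{existence-stream} and Lemma \ref{theorem3-4-5}, the analogous result for the swirl component in Proposition \ref{swirl}, and a ``de-curling'' argument that recovers the pressure $P$ so that the full momentum equations \eqref{2-0-1-1}--\eqref{vswirl} (not merely the vorticity formulation used to define $\psi_n$) hold in the strong sense. First I would set $\Bv^*_n$ and $v^\theta_n$ from $\psi_n$ and the scalar swirl problem, check the boundary conditions at $r=0$ (compatibility) and at $r=1$ (the Navier slip condition $\mathcal{L}\psi_n(1)=-\alpha (r\psi_n)'(1)/r|_{r=1}$ is exactly \eqref{2-0-4-5}, which translates back to $\partial_z v^r-\partial_r v^z=\alpha v^z$ on $\partial\Omega$), and verify the flux constraint from $\psi_n(1)=0$.

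The main quantitative step is to sum the $n$-wise estimates. From Lemma \ref{theorem3-4-5} and Proposition \ref{existence-stream},
\[
\|\Bv^*_n\|_{H^3(\Omega)}^2 \leq C \|\psi_n\|_{H^4_{r,n}(0,1)}^2 \leq C(1+\Phi)^2 \|f_n\|_{L^2_r(0,1)}^2.
\]
Since $f_n = in F^r_n - dF^z_n/dr$, one has $\|f_n\|_{L^2_r}^2 \leq C(n^2\|F^r_n\|_{L^2_r}^2 + \|F^z_n\|_{H^1_r}^2)$, and summing in $n$ gives $\sum_n\|f_n\|_{L^2_r}^2 \leq C\|\BF\|_{H^1(\Omega)}^2$. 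Provided the mode-wise $H^3$ bound really controls the full $H^3(\Omega)$ norm (which it does because the $H^4_{r,n}$ norm encodes all $(\partial_r,n)$-combinations up to order four of $\psi_n$, and $\Bv^*_n$ loses one derivative compared to $\psi_n$), Parseval/orthogonality in $z$ delivers the first estimate in \eqref{back1}. The swirl estimate is immediate from Proposition \ref{swirl}, with the compatibility hypothesis \eqref{compatibility1} exactly supplying the solvability condition for the zero mode when $\alpha=0$.

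To recover the pressure, I would argue as follows. By construction each $\psi_n$ satisfies the fourth-order equation obtained by applying $\mathrm{curl}$ to the $(r,z)$-momentum equations. Therefore, setting
\[
\BG := \bBU\cdot\nabla\Bv^* + v^r \partial_r\bBU - \Delta \Bv^* - \BF^*,
\]
one has $\mathrm{curl}\,\BG = 0$ in $\Omega$, and $\BG$ is axisymmetric with no swirl. Since $\Omega=B_1(0)\times\mathbb{T}_{2\pi}$ and $\BG$ has no $\theta$-component, a standard potential-theoretic argument (in the $(r,z)$ half-plane, the $z$-periodicity is consistent with $\mathrm{curl}\,\BG=0$ because the mean value of the $z$-component of $\BG$ over a period can be absorbed into $\partial_z P$) produces a scalar $P\in H^1(\Omega)$ with $\nabla P = -\BG$, which is precisely \eqref{2-0-1-1}. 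The analogous check on the $\theta$-component against \eqref{vswirl} follows directly from \eqref{swirl-2}.

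The step I expect to require the most care is the pressure recovery in the periodic setting: one must ensure that the $z$-periodicity of $\Bv^*$ is compatible with $P$ being a genuine function on $\Omega$ (not multivalued). This reduces to showing that the zero-mode in $z$ of the $z$-component of $\BG$ vanishes, which in turn follows from integrating the $z$-equation in \eqref{2-0-1-1} against $1$ over a cross-section and using $v^r(1,z)=0$ together with the flux constraint $\int_0^1 r v^z_0\,dr=0$; this manipulation is essentially the same ``right-inverse of divergence'' bookkeeping used in Subsection \ref{sec-stream}, so it goes through, but it is the one place where the problem's multiply-connected nature (versus a bounded simply-connected domain) has to be handled explicitly. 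Once this is in place, the two estimates in \eqref{back1} follow by combining the mode-wise bounds with the Parseval identity as described above.
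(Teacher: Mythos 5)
Your proposal follows essentially the same route as the paper: sum the mode-wise bounds $\|\Bv^*_n\|_{H^3}\leq C\|\psi_n\|_{H^4_{r,n}}\leq C(1+\Phi)\|f_n\|_{L^2_r}$ via Parseval to get the first estimate, quote Proposition \ref{swirl} for the swirl part, and recover the pressure by observing that the curl of the residual of the $(r,z)$-momentum equations vanishes. The only difference is that you spell out the single-valuedness of $P$ on the periodic domain (vanishing of the zero $z$-mode of the axial component of the residual), a point the paper's proof leaves implicit; this is a correct and worthwhile refinement but not a different argument.
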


\begin{proof} Since $\psi_n$ is a solution to \eqref{2-0-8}, following similar argument as in the proof of Lemma \ref{theorem3-4-5}, one has
\be \label{vorticityeq}
{\rm curl}~\left( (\bBU \cdot \nabla ) \Bv^*_n + (\Bv^*_n \cdot \nabla) \bBU   \right) - {\rm curl}~(\Delta \Bv^*_n) = {\rm curl}~\BF^*_n\ \ \ \mbox{in}\ \Omega,
\ee
where $\BF^*_n = F^r_n \Be_r + F^z_n \Be_z$. Hence there exists some function $P_n$ with $\nabla P_n \in L^2(\Omega)$ such that
\be \label{ns-new}
 (\bBU \cdot \nabla ) \Bv^*_n + (\Bv^*_n \cdot \nabla) \bBU  - \Delta \Bv^*_n + \nabla P_n = \BF^*_n \ \ \ \mbox{in}\ \Omega.
\ee
Consequently,  $\Bv^*$ is a strong solution to the problem \eqref{2-0-1} and \eqref{slipBC1}.  According to Lemma \ref{theorem3-4-5} and Proposition \ref{existence-stream}, one has
\be \nonumber \ba
 \|\Bv^* \|_{H^3(\Omega)}^2 & \leq \sum_{n \in \mathbb{Z}} \|\Bv^*_n \|_{H^3(\Omega)}^2 \leq
  C \sum_{n \in \mathbb{Z}} \|\psi_n\|_{H_{r, n}^4 (0, 1)}^2  \leq C (1 + \Phi^2 )\sum_{n \in \mathbb{Z}} \|f_n \|_{L_r^2(0, 1)}^2 \\
   & \leq C (1 + \Phi^2 ) \|\BF^*\|_{H^1(\Omega)}^2.
\ea \ee
This, together with the result in Proposition \ref{swirl} for swirl velocity, finishes the proof of Proposition \ref{back}.
\end{proof}


{\bf Acknowledgement.}
The research of Wang was partially supported by NSFC grant 11671289. The research of  Xie was partially supported by  NSFC grants 11971307 and 11631008,   Natural Science Foundation of Shanghai 21ZR1433300, and Institute of Modern Analysis-A Frontier Research Center of Shanghai. The authors would like to thank Professors Congming Li and Zhouping Xin for helpful discussions.

\end{document}